\def\blfootnote{\xdef\@thefnmark{}\@footnotetext}
\newtheorem{thm}{Theorem}[section]
\newtheorem{cor}[thm]{Corollary}
\newtheorem{lem}[thm]{Lemma}
\newtheorem{pro}[thm]{Proposition}
\theoremstyle{remark}
\newtheorem{rem}[thm]{Remark}
\theoremstyle{definition}
\newtheorem{defn}[thm]{Definition}
\newcommand{\mC}{{\mathbb C}}
\newcommand{\mH}{{\mathbb H}}
\newcommand{\mN}{{\mathbb N}}
\newcommand{\mR}{{\mathbb R}}
\newcommand{\mZ}{{\mathbb Z}}
\newcommand{\cQ}{{\mathcal Q}}
\newcommand{\cP}{{\mathcal P}}
\newcommand{\cS}{{\mathcal S}}
\newcommand{\cM}{{\mathcal M}}
\renewcommand{\Re}{\operatorname{Re}}
\renewcommand{\Im}{\operatorname{Im}}
\DeclareMathOperator*{\esssup}{ess\,sup}
\DeclareMathOperator{\tr}{tr}
\begin{document}
\title{Logarithmic Potentials and Quasiconformal Flows on the Heisenberg Group}
\date{First version submitted January 2017. This revised version submitted January 2020.}
\author{Alex D. Austin\thanks{The author was partially funded by grant NSF DMS-1201875 `Geometric mapping theory in sub-Riemannian and metric spaces'.}}
\newcommand{\addresses}{{
		\bigskip
		\footnotesize
		
		\textsc{Department of Mathematics, University of California, Los Angeles, Box 951555, Los Angeles, CA 90095-1555}\par\nopagebreak
		\textit{E-mail address:} \texttt{aaustin@math.ucla.edu}
	}}
\maketitle
\begin{abstract}
	Let $\mH$ be the sub-Riemannian Heisenberg group. That $\mH$ supports a rich family of quasiconformal mappings was demonstrated by Kor\'{a}nyi and Reimann using the so-called flow method. Here we supply further evidence of the flexible nature of this family, constructing quasiconformal mappings with extreme behavior on small sets. More precisely, we establish criteria to determine when a given logarithmic potential $\Lambda$ on $\mH$ is such that there exists a quasiconformal mapping of $\mH$ with Jacobian comparable to $e^{2\Lambda}$ (so that the Jaobian is zero or infinity at the same points as $e^{2\Lambda}$). When $\Lambda$ is continuous and meets the criteria, we show the canonical (sub-Riemannian) metric $g_0$ and the weighted metric $g = e^\Lambda g_0$ generate bi-Lipschitz equivalent distance functions. These results rest on an extension to the theory of quasiconformal flows on $\mH$ and constructions that adapt the iterative method of Bonk, Heinonen, and Saksman.  
\end{abstract}
\blfootnote{Key words and phrases: quasiconformal Jacobian problem, Heisenberg group, logarithmic potential, bi-Lipschitz equivalence, sub-Riemannian geometry, CR geometry, normal metric, strong-$A_{\infty}$ weight, metric doubling measure.}
\section{Introduction} \label{sec:intro}

The purpose of this paper is to take a first look at the quasiconformal Jacobian problem outside the Euclidean setting. We choose as location the sub-Riemannian Heisenberg group $\mH$, a fundamental object that has played many roles. Most recently $\mH$ has attracted considerable interest as a testing ground for the development of analysis in metric spaces. While amenable to analysis, $\mH$ is highly non-Euclidean in that it does not admit a bi-Lipschitz embedding into any finite-dimensional Euclidean space. The results of our investigations are several: we extend the flow method of generating quasiconformal mappings of $\mH$; we identify a rich supply of non-smooth quasiconformal mappings of $\mH$ with certain prescribed behaviors; and we give an application of the existence of such mappings to a geometric recognition problem. The latter is representative of other interesting questions (regarding metric deformations of the Heisenberg group) made more accessible by this work.

Before specializing our discussion to $\mH$, we introduce some terminology for an arbitrary metric measure space $M = \left(X,d_M,\nu\right)$. This is done solely to illustrate that our motivating problem has a natural formulation in this general setting.

A quasiconformal mapping (of $ M $) is a homeomorphism $ f : M \to M $ such that
\begin{equation} \label{eq:qcdef}
	p \mapsto H_f (p) = \limsup_{r \to 0} \frac{\sup_{d_M (p,q)\leq r} d_M (f(p),f(q))}{\inf_{d_M (p,q)\geq r} d_M (f(p),f(q))}
\end{equation}
is bounded on $M$. If $ H_f $ is not only bounded, but essentially bounded by $ K \geq 1 $ then we say $ f $ is a $ K $-quasiconformal mapping.

For $ f $ a quasiconformal mapping, we define the Jacobian of $ f $ as
\begin{equation} \label{eq:volderiv}
	J_f (p) = \limsup_{r \to 0} \frac{\nu\left(f B_{d_M} (p,r)\right)}{\nu \left(B_{d_M} (p,r)\right)}.
\end{equation}
Here $B_{d_M}(p,r)$ is the open ball (with respect to the metric $d_M$) of radius $r>0$ and center $p \in M$.
  
The quasiconformal Jacobian problem on $M$ asks the following: given $\omega \in L_{\text {loc}}^{1}(M)$ with $\omega \geq 0$, when does there exist $C \geq 1$ and a quasiconformal mapping $f: M \to M$ such that
\begin{equation}\label{eq:qcjp}
	\frac{1}{C} \omega \leq J_f \leq C \omega
\end{equation}
almost everywhere? Such $\omega$ is sometimes called a weight on $M$. The problem was first posed for $M = \mR^n$ by David and Semmes in \cite{David&SemmesStrong}, though it was not until \cite{BHSI} that the name `quasiconformal Jacobian problem' was coined. Even in the special case $M=\mR^n$ the problem is wide open, but attempts to elucidate the situation have generated some wonderful mathematics; in addition to the two aforementioned papers see for example \cite{BishopA1}, \cite{BHSII}, \cite{KovalevMWuQCJP3}, \cite{KovalevMQCJP1}, \cite{KMConvex}, \cite{LaaksoPlaneNoBL}, \cite{SemmesStronger}, and \cite{SemmesNoBL}.

\subsection{Statement of Main Result}
The Heisenberg group is the metric measure space $\mH = (\mR^3 , d_{\mH}, m)$. Here $m$ is Lebesgue $3$-measure. References to `almost everywhere' will be to this measure unless specified otherwise. We delay the details of the metric $d_{\mH}$ until Section \ref{sec:not}.

Any signed measure $\mu$ (on $\mH$) is assumed to be defined on the Borel $\sigma$-algebra (so that the constituent parts of its Jordan decomposition $\mu_+$ and $\mu_-$ are Borel measures). A Borel measure is Radon if it is finite on all compact sets, outer regular on all Borel sets, and inner regular on all open sets. A signed measure is Radon if $\mu_+$ and $\mu_-$ are Radon. A signed measure is finite if the total variation $\|\mu\| = \mu_+ (\mH) + \mu_- (\mH)$ is finite. Since $\mH$ is a locally compact Hausdorff space for which every open set is $\sigma$-compact, every finite signed measure is Radon (see \cite[Theorem~7.8,~p.217]{FollandBook}).

If $\mu$ is a signed measure then $\Lambda_\mu : \mH \to [-\infty,\infty]$ given by
\[
    \Lambda_{\mu} (p) = -\int \log \left( d_{\mH} (p,q) \right) \, \mathrm{d}\mu (q)
\]
is called a logarithmic potential (associated to $\mu$). If $\mu$ is a finite signed measure with
\[
    \int \log^+ \left( d_{\mH} (0,q) \right) \,\mathrm{d}|\mu|(q) < \infty
\]
then we call $\mu$ admissible. Let $\cM$ be the set of admissible (signed) measures. For $\epsilon > 0$ write $\cM (\epsilon)$ for those admissible measures with total variation at most $\epsilon$.

Let $\cQ$ be the set of quasiconformal mappings (of $\mH$), and for $K\geq 1$ write $\cQ (K)$ for the $K$-quasiconformal mappings.

The main result of this paper is the following statement.

\begin{thm} \label{thm:mainint}
Given $K\geq 1$, there exist $ \epsilon > 0 $ and $ C, K' \geq 1 $ such that if $\mu \in \cM (\epsilon)$ and $g\in \cQ (K)$, then there is $f \in \cQ (K')$ with
\[
	\frac{1}{C} e^{2 (\Lambda_\mu \circ\, g)} \leq J_f \leq C e^{2 (\Lambda_\mu \circ\, g)}
\]
almost everywhere.
\end{thm}

If $\Lambda_\mu$ is a logarithmic potential and $g$ a quasiconformal mapping, we will sometimes call $\Lambda_\mu \circ g$ a quasilogarithmic potential.

It is implicit in the writing of the theorem (and worth restating for emphasis) that the constants $\epsilon$, $C$, and $K'$ depend on $K$ only. Specializing to the case of $K=1$ and $g$ the identity, the result says essentially this: there exists $\epsilon_0 > 0$ such that for all $\mu \in \cM (\epsilon_0)$ there is a quasiconformal mapping of $\mH$ with Jacobian comparable to $e^{2\Lambda_{\mu}}$.  

The work leading to this theorem was inspired by the beautiful paper \cite{BHSII} of Bonk, Heinonen, and Saksman, and we follow its overall scheme. It has been pleasant to discover (and somewhat surprising) that the Heisenberg group supports the exact analog of the main result of \cite{BHSII}. A rich family of quasiconformal mappings is far from automatic when moving beyond the Euclidean setting. The Heisenberg group is an example of a Carnot group, and the quasiconformal mappings of a Carnot group are a subset of what are typically called its contact mappings. There are many Carnot groups for which the family of contact mappings is finite-dimensional (suitably understood -- see e.g., \cite{OttaziWarhurst1} for the details).

That quasiconformal mappings on $\mH$ have some flexibility is known. This is due to Kor\'{a}nyi and Reimann who developed the flow method of constructing them in \cite{KRI} and \cite{KRII}. An extension to this method is achieved in Propositions \ref{pro:flow} and \ref{pro:link} that are of independent interest. These results are then invoked in an intricate iteration scheme, an adaptation of the machine of \cite{BHSII}. At the heart of the paper, however, is a construction: we build vector fields with prescribed horizontal divergence.
   
Before moving on, we note that the main result of \cite{BHSII} led to some very interesting results in conformal geometry. We hope that our Theorem \ref{thm:mainint} has similar applications to CR geometry, and encourage the reader to consult Section \ref{subsec:intga} of this introduction for a brief discussion of this fascinating topic.
	
\subsection{Outline} \label{subsec:out}

Some background material (and a guide to the notation we use) is collected in Section \ref{sec:not}. In particular, Section \ref{subsec:hg} contains an introduction to the structural basics of the sub-Riemannian Heisenberg group.

In Section \ref{sec:qc}, we take a brisk look at the required features of quasiconformal mappings. Most is well known. We develop some elementary results that (if known) are harder to find, but nothing that will surprise an expert.

Some technical lemmas regarding (quasi)logarithmic potentials are proved in Section \ref{sec:qlplems}.

Section \ref{sec:qcf} contains our first true innovations. There are two parts. The first extends the flow method of Kor\'{a}nyi and Reimann for generating quasiconformal mappings on the Heisenberg group. In \cite{KRI}, the vector field generating the flow is stipulated to be of $C^2$ regularity and long-time existence of the flow is assumed. In \cite{KRII}, minimal regularity is assumed and existence (uniqueness of solutions to the relevant ODE) of the flow is proved, however, the vector fields are compactly supported (so long-time existence is guaranteed). In Proposition \ref{pro:flow}, we introduce some growth conditions on a vector field of possibly unbounded support that allow us to retain minimal regularity and still prove long-time existence of the flow. These growth conditions correspond to similar conditions imposed in the Euclidean case in \cite{ReimannODE}. They should not be considered restrictive since in dimension $2$ the Euclidean conditions are known to be necessary for quasiconformal flow. Proposition \ref{pro:flow} puts quasiconformal flows on $\mH$ on roughly the same footing as those on $\mR^n$, $n \geq 3$. The second part of Section \ref{sec:qcf} identifies (in Proposition \ref{pro:link}) a suitable means of linking the Jacobian of the flow mappings with the horizontal divergence of the vector field.

The vector field constructions of Section \ref{sec:ap} are made with a twofold purpose; they should satisfy the requirements of the results of Section \ref{sec:qcf}, and the horizontal divergence should approximate a given quasilogarithmic potential in a suitable way. When reading the details of the construction, it is useful to keep in mind the following. For $p = (x,y,t) \in \mH$, let $\|p\| = d_H (p,0)$ (this should be considered notation rather than a definition) and consider the quasilogarithmic potential $-2\log \|p\|$ (the signed measure on $\mH$ has been taken to be twice the Dirac measure centered at the origin and the quasiconformal mapping has been taken to be the identity). Now set
\begin{equation}\label{eq:protophi}
	\phi(p) = -2t\log\|p\|.
\end{equation}
Miner in \cite{Miner} identified that if $\phi$ is used as a vector field potential in an appropriate (non-standard) way (see Section \ref{sec:qcf}), then the time-$s$ flow mapping $f_s$ acts `essentially' as $p \mapsto \|p\|^{e^s - 1} p$. The full mappings $f_s$ later appeared as the radial stretch mappings of Balogh, F\"{a}ssler, and Platis in \cite{BFP}. They were identified as being the correct analog (in terms of their extremal properties) of the Euclidean radial stretch mappings. Radial stretch mappings appear frequently in the Euclidean quasiconformal Jacobian problem. They are simple examples of quasiconformal mappings with explosive volume change at a point (the Jacobian is infinite at the origin). The quasiconformal Jacobian problem is only interesting if the given weight comes arbitrarily close to (or equals) either zero or $\infty$. Otherwise, the Jacobian of the identity is comparable. A vector field suitably generated by the $\phi$ of \eqref{eq:protophi} has horizontal divergence $-2\log\|p\| + \zeta(p)$, with $\zeta$ a bounded function. Consequently, the horizontal divergence nicely approximates the logarithmic potential. It is something like this we require in the more general situation. This all being said, we should be careful not to give the wrong impression. Let $\mu_0$ be the Dirac measure centered at the origin. While \eqref{eq:protophi} is a useful heuristic, it is nevertheless true that $\|2 \mu_0 \| \geq \epsilon$ if $\epsilon = \epsilon(1) > 0$ is as in Theorem \ref{thm:mainint}. We know this not because we give an explicit value for $\epsilon$ (which we do not), but because $e^{2(-2\log\|p\|)}=\|p\|^{-4}$ which is not locally integrable at the origin (and so cannot be comparable to a quasiconformal Jacobian).

In Section \ref{sec:ic} we use the constructions of Section \ref{sec:ap}, along with the results of Section \ref{sec:qcf}, to construct the quasiconformal mapping whose Jacobian achieves comparability with a given quasilogarithmic potential. It is here the technical reason for considering \emph{quasi}logarithmic potentials will become apparent (-- at a step in the iterative procedure, we need to feed in the mapping created thus far). We find our desired mapping in the limit of a sequence $(f_m)$, with each $f_m$ the composition of $m$ (normalized) time-$1/m$ flow mappings. This is an adaptation of the machine of \cite[Section 6]{BHSII}. Listing the changes made to the process of \cite{BHSII} would not serve this outline well, however, the reason for making them is illuminating. The main difficulty is that $\mH$ has a somewhat less flexible family of whole-space \textit{conformal} mappings as compared to $\mR^n$. In $\mR^n$ there are translations, dilations, rotations, and compositions thereof. In $\mH$, there are left-translations, intrinsic dilations, rotations about the group center, and compositions thereof. It is the paucity of the rotations in particular that prevents use of the normalization strategy of \cite{BHSII} (such normalization allows utilization of compactness results for our sequences of quasiconformal mappings).

The arguments we use in Section \ref{sec:wsrm} have become standard in the Euclidean case. To the best of our knowledge we are writing them down for the first time in the case of the Heisenberg group. This claim is supported by our use of the recent paper \cite{NagesCurves} -- curve families controlled in measure as in \cite{SemmesFindingCurves} were suspected (or known by indirect arguments) to exist in $\mH$, however, \cite{NagesCurves} is the first explicit construction we are aware of. We use them at a crucial step in our bi-Lipschitz equivalence result (Theorem \ref{thm:specialgeom} below) using a David-Semmes deformation of $\mH$ as an auxiliary space.

The paper ends with a short appendix containing simple results that can be time-consuming to check, might be useful again, and that were better separated from the already congested Section \ref{sec:ap}. 

\subsection{Geometric Applications}\label{subsec:intga}

An interesting class of sub-Riemannian manifolds is given by the `conformal' equivalence class of the sub-Riemannian Heisenberg group, the set of all $(\mH,e^{u}g_0)$ with $g_0$ the canonical sub-Riemannian metric on $\mH$ (see Section \ref{subsec:hg}) and $u:\mH\to\mR$ a continuous function. Let $\rho$ be the (Carnot-Carath\'{e}odory) distance function associated to $g_0$, and $\rho_u$ that associated to $e^{u}g_0$. We call $(\mH,e^{u}g_0)$ and $(\mH,g_0)$ bi-Lipschitz equivalent if there exists $L\geq 1$ and homeomorphism $f:\mH \to \mH$ such that for all $p,q\in\mH$,
\begin{equation} \label{eq:bldef}  
	\frac{1}{L}\rho(p,q) \leq \rho_u (f(p),f(q)) \leq L \rho(p,q).
\end{equation}
It is important to know when one of the $(\mH,e^{u}g_0)$ is bi-Lipschitz equivalent to $(\mH,g_0)$, for then $( \mH, e^{u}g_0 )$ shares many of the (sometimes well understood) geometric and analytic properties of $( \mH,g_0 )$ itself. One goal of the program initiated here is the sub-Riemannian analog of the next theorem.

\begin{thm}[Bonk, Heinonen, Saksman, Wang] \label{thm:gthm}
Suppose $( \mR^4,e^{2u}g_E )$ is a complete Riemannian manifold with normal metric. If the $Q$-curvature satisfies
\[
	\int |Q| \, \mathrm{d}\textnormal{vol} < \infty
\]
and
\[
	\frac{1}{4 \pi^2} \int Q \, \mathrm{d} \textnormal{vol} < 1
\]
then $(\mR^4,e^{2u}g_E)$ and $(\mR^4,g_E)$ are bi-Lipschitz equivalent. 
\end{thm}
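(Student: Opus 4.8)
The plan is to route everything through the quasiconformal Jacobian problem on $\mR^4$: the normality hypothesis makes the conformal factor essentially a logarithmic potential, the main theorem of \cite{BHSII} then solves the Jacobian problem for the associated weight, and the theory of metric doubling measures converts the resulting quasiconformal map into the desired bi-Lipschitz equivalence.

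\emph{Steps 1 and 2 (a logarithmic potential, then the Jacobian problem).} First I would use that, $e^{2u}g_E$ being complete and normal, $u$ is recovered from its $Q$-curvature by the Green representation for the bi-Laplacian on $\mR^4$,
\[
	u(x) = \frac{1}{4\pi^2}\int_{\mR^4}\log\frac{|y|}{|x-y|}\,Q(y)\,\mathrm{d}\textnormal{vol}(y) + c.
\]
Setting $\mathrm{d}\mu = \frac{1}{4\pi^2}Q\,\mathrm{d}\textnormal{vol}$, the assumption $\int|Q|\,\mathrm{d}\textnormal{vol}<\infty$, together with the constraints that normality and completeness place on the distribution of mass at infinity, makes $\mu$ a finite signed Radon measure with the finite logarithmic moment \eqref{eq:finitelogmoment}; hence $\mu$ is admissible and $u = \Lambda_\mu + c'$ for a finite constant $c'$, so the Riemannian volume density $e^{4u}$ of $e^{2u}g_E$ is comparable to $e^{2\Lambda_{2\mu}}$. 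The normalization $\frac{1}{4\pi^2}\int Q\,\mathrm{d}\textnormal{vol}<1$ is precisely the smallness hypothesis on $\mu$ under which the Euclidean counterpart of Theorem \ref{thm:mainint} — the main theorem of \cite{BHSII}, applied with the identity in place of $g$ — produces a quasiconformal $f:\mR^4\to\mR^4$ with $\frac{1}{C}e^{4u}\leq J_f\leq Ce^{4u}$ almost everywhere.

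\emph{Step 3 (from comparable Jacobian to bi-Lipschitz equivalence).} Being quasiconformal on $\mR^4$, $f$ is quasisymmetric, so $J_f$ is a strong $A_\infty$ weight and the David--Semmes distance $\delta_{J_f}$ it generates is bi-Lipschitz equivalent, via $f$ itself, to the Euclidean distance on $\mR^4$; since the David--Semmes construction is stable under comparable weights and $e^{4u}\approx J_f$, also $\delta_{e^{4u}}$ is bi-Lipschitz to the Euclidean distance via $f$. It then remains to identify $\delta_{e^{4u}}$ with the Riemannian distance $\rho_u$ of $e^{2u}g_E$, and here the normality hypothesis is essential: it forces a Harnack-type inequality making $e^{u}$ slowly oscillating, so that $\rho_u(a,b)^4$ is comparable to $\int_{B(a,|a-b|)}e^{4u}$, i.e.\ to $\delta_{e^{4u}}(a,b)^4$. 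Chaining the comparabilities, $f$ satisfies \eqref{eq:bldef}.

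The hard part is Step 3, specifically the passage from ``$e^{4u}$ is comparable to a quasiconformal Jacobian'' to ``$\rho_u$ is bi-Lipschitz to the Euclidean distance''. A general strong $A_\infty$ weight does not enjoy the latter property; the argument must genuinely use both completeness and, above all, normality — it is the normal metric hypothesis, in tandem with the mass bound that prevents $e^{4u}$ from degenerating, that supplies the slow oscillation of $e^u$ needed to compare $\rho_u$ with $\delta_{e^{4u}}$, and this is what separates the present situation from the Laakso-type obstructions to the Euclidean quasiconformal Jacobian problem. A secondary point to watch is the constant bookkeeping in Step 2: when $Q$ changes sign one may have $\|\mu\|$ large while the signed mass $\mu(\mR^4)<1$, so one must check that the hypothesis of \cite{BHSII} sees only the positive part of $\mu$, with $\int|Q|\,\mathrm{d}\textnormal{vol}<\infty$ absorbing the negative part.
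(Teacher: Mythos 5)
The paper does not prove Theorem \ref{thm:gthm} at all: it is quoted verbatim from Wang \cite{WangQ1} (building on \cite{BHSII}), so there is no in-paper argument to compare yours against. Your overall route --- normality expresses $u$ as a logarithmic potential of $\mathrm{d}\mu=\tfrac{1}{4\pi^2}Q\,\mathrm{d}\mathrm{vol}$, the Euclidean quasiconformal Jacobian theorem produces $f$ with $J_f\simeq e^{4u}$, and the David--Semmes deformation converts this into bi-Lipschitz equivalence --- is indeed the known strategy, and it parallels what this paper does on $\mH$ (Theorem \ref{thm:maingeom} via Proposition \ref{pro:bilip}).

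There is, however, a genuine gap at the heart of your Step 2. The main theorem of \cite{BHSII} requires the \emph{total variation} $\|\mu\|$ to lie below an unspecified small $\epsilon$; it does not ``see only the positive part of $\mu$,'' and no bookkeeping makes it do so. The hypotheses as stated --- merely $\int|Q|\,\mathrm{d}\mathrm{vol}<\infty$ together with the sharp signed-mass bound $\tfrac{1}{4\pi^2}\int Q\,\mathrm{d}\mathrm{vol}<1$ --- are exactly what the paper identifies as Wang's contribution (``the negative part need only be finite, and with the Dirac measure identifying the end point for the signed mass''), and obtaining a quasiconformal $f$ under these hypotheses requires substantial new arguments beyond \cite{BHSII}; so what you flag as a ``secondary point to watch'' is in fact the content of the theorem, and your proposal as written only proves the weaker small-$\|\mu\|$ version. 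A lesser, reparable issue is Step 3: the comparison $\rho_u\simeq\delta_{e^{4u}}$ does not come from a Harnack-type inequality supplied by normality, but from the comparability of $e^{4u}$ with a quasiconformal Jacobian --- reverse H\"older for $J_f$ together with curve families controlled in measure, exactly the scheme of Section \ref{sec:wsrm} here; normality enters only through the potential representation of $u$, and attributing the metric comparison to it obscures why the Laakso-type obstructions are avoided.
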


Here $g_E$ is the canonical Euclidean metric and $\mathrm{d}\textnormal{vol}$ indicates (integration with respect to) the volume measure associated to the Riemannian manifold $( \mR^4,e^{2u}g_E )$. In these circumstances the defining equation for the  $Q$-curvature is $\Delta^2 u = 2Q e^{2u}$ (the Paneitz operator associated to $g_E$ reduces to the biharmonic operator $\Delta^2$). A metric $e^{2u}g_E$ on $\mR^4$ is normal if at all $x\in \mR^4$,
\[
	u(x) = -\frac{1}{4\pi^2}\int \log\frac{|x-y|}{|y|} Q(y) e^{4u(y)} \, \mathrm{d}y + C
\]
with $C$ a constant. Here $\mathrm{d}y$ indicates the Lebesgue measure on $\mR^4$. In other words, $u$ is essentially a logarithmic potential (on $\mR^4$) with respect to the measure $\mu$ determined by $\mathrm{d}\mu(y) = Q(y)e^{4u(y)}\,\mathrm{d}y$.
In this paper we take (what should be) a substantial step toward a sub-Riemannian counterpart, proving the following theorem. We write the statement so as to emphasize the similarity with Theorem \ref{thm:gthm}. To aid in this aim, let $\|q^{-1}p\| = d_\mH (p,q)$ (again, for now this should be considered notation). 

\begin{thm} \label{thm:specialgeom}
There exists $ \epsilon > 0 $ such that if $\mu$ is a finite signed measure on $\mH$ with
\[
	\Lambda_\mu (p) = -\int \log\|q^{-1}p\| \, \mathrm{d}\mu (q)
\]
continuous,
\[
	\int \log^+ \|q\| \,\mathrm{d}|\mu|(q) < \infty,
\]
and
\[
	\int \,\mathrm{d}|\mu| < \epsilon,
\]
then $ (\mH,g_0) $ and $ (\mH,e^{\Lambda_{\mu}} g_0) $ are bi-Lipschitz equivalent.
\end{thm}

A stronger version of Theorem \ref{thm:specialgeom} (for quasilogarithmic potentials) holds. It is stated as Theorem \ref{thm:maingeom}.

How this result might be used is open to speculation. In the Heisenberg setting there is currently no notion of normal metric to take aim at\footnote{In the time since this paper was first submitted, important steps in the program hinted at here have been taken by Wang and Yang in \cite{WangYang1} and \cite{WangYang2}}. In a way, we are working backwards; in the Euclidean setting, normal metrics were known (and known to be interesting) prior to Theorem \ref{thm:gthm}. For example, the $Q$-curvature can be thought of as a higher-dimensional version of the Gaussian curvature, and Chang, Qing, and Yang prove in \cite{CQYNormal} something like a Gauss-Bonnet theorem for manifolds satisfying the hypotheses of Theorem \ref{thm:gthm}. In the same paper they show that normal metrics are not unusual: if $(\mR^4,e^{2u}g_E)$ is complete, has integrable $Q$-curvature, and the scalar curvature is non-negative at infinity then the metric is normal. Nevertheless, there is cause for optimism and we take the viewpoint that our results suggest a potentially rich thread in sub-Riemannian / CR geometry. There is other evidence to suggest phenomena similar to the Riemannian case should exist. The correct definition of sub-Riemannian normal metric will likely exploit, then strengthen, what Case and Yang in \cite{CaseYangPPrime} call the `deep analogy between the study of three dimensional CR manifolds, and four dimensional conformal manifolds'. Suitable objects for such an investigation were only recently made available: the Paneitz-type operator and $Q$-like curvature introduced for the CR-sphere and Heisenberg group by Branson, Fontana, and Morpurgo in \cite{BFM}, and abstracted to the more general CR setting in \cite{CaseYangPPrime}. This is a fascinating area, with many strands to pursue, however, we say no more about it here.

Theorem \ref{thm:gthm} as stated is Wang's, it can be found in \cite{WangQ1}. Wang was building on the work of \cite{BHSII}. The primary contribution of Wang to Theorem \ref{thm:gthm} was to give the sharp constants on the size of the measure (which translate into the integral bounds on the $Q$-curvature). Wang showed that the negative part of the measure (the negative variation) need only be finite and that the Dirac measure (on $\mR^4$) identifies the end point for the signed mass. Given these developments and our comments in Section \ref{subsec:out} above, it is tempting to conjecture that Theorem \ref{thm:mainint} is true for all quasilogarithmic potentials with $\mu \in \cM$ such that $\|\mu\|<\infty$ and $\mu(\mH) < 2$.

\subsection{Acknowledgments}
This work was completed while the author was a PhD student. He would like to thank his advisor Jeremy Tyson for all his support. The author also wishes to thank Mario Bonk and Leonid Kovalev for useful conversations. Finally, we thank the first referee for a very careful reading and comments that have significantly improved the exposition.

\section{Background and Notation} \label{sec:not}
\subsection{The Heisenberg Group} \label{subsec:hg}

Beyond the current interest in $\mH$ for the development of analysis in metric spaces, the Heisenberg group plays an important (and more classical) role in harmonic analysis, and is a fundamental example of a sub-Riemannian manifold. When viewed from the right perspective, it is also the prototypical model for CR geometry.

Recall, $m$ denotes the Lebesgue measure on $\mR^3$. For Lebesgue-measurable $ E \subset \mR^3 $, we define $ |E| = m(E) $. As usual we write $\mathrm{d}p$ in place of $\mathrm{d} m (p)$. 

Equip $\mR^3 = \{ (x,y,t) \}$ with group product
\begin{equation} \label{eq:hprod}
( x_1 , y_1 , t_1 ) \star ( x_2, y_2 , t_2 ) = ( x_1 + x_2 , y_1 + y_2 , t_1 + t_2 + 2( x_2 y_1 - x_1 y_2 ) ).
\end{equation}
It is easily checked that the inverse $(x,y,t)^{-1}$ of $(x,y,t)$ with respect to this product is $(-x,-y,-t)$. To alleviate the notational burden, for $ p,q \in \mH $ let $ pq := p \star q$.

For $(x,y,t) \in \mR^3$, let
\begin{equation} \label{eq:hnorm}
\| ( x , y , t ) \| := \left( ( x^2 + y^2 )^2 + t^2 \right)^{\frac{1}{4}}.
\end{equation}
This is sometimes referred to as the Kor\'{a}nyi gauge and satisfies a triangle inequality: if $p,q\in\mR^3$ then $\|q^{-1} p\|\leq \|p\|+\|q\|$ (and also $\|p q\|\leq \|p\|+\|q\|$). The metric (distance function) $ d_{\mH} : \mR^3 \times \mR^3 \to [0,\infty) $ is given by
\begin{equation} \label{eq:hmetric}
    d_{\mH}(p,q) = \| q^{-1} p \|.
\end{equation}
We will describe points in $\mR^3$ and functions defined on $\mR^3$ as being points in $\mH$ and functions defined on $\mH$, respectively, to indicate (in particular) we are interested in the geometry determined by $d_{\mH}$. 

For $r \geq 0$, we define the dilation $\delta_r : \mH \to \mH$ by  
\[
    \delta_r (p) = \delta_r (x,y,t)= (rx,ry,r^2t).
\]
The Kor\'{a}nyi guage $\|\cdot\|$ is referred to as a \emph{homogeneous norm} on $\mH$ because $\|\delta_r (p)\|=r\|p\|$. We will be consistent in our use of $\delta_r$ for the dilations (and for the most part avoid using it for other objects). For $p\in \mH$ and $r>0$, let $B(p,r) \subset \mH$ be the open ball (with respect to the metric $d_{\mH}$) of center $p$ and radius $r$. For $q \in \mH$, let $L_q : \mH \to \mH$ be left-translation by $q$, $L_q (p) = q p$. It is quickly found that $\mH$ is self-similar with respect to the natural operations: $ B(p,r) = L_p(\delta_r(B(0,1))) $. The standard change of variable formula implies there exists absolute constant $C>0$ such that for all $p \in \mH$ and for all $r>0$,
\begin{equation}\label{eq:ar}
	|B(p,r)| = C r^4.
\end{equation}   
We will sometimes refer to \eqref{eq:ar} by saying $\mH$ is Ahlfors 4-regular (though Ahlfors regularity requires only that $|B(p,r)|$ is comparable with $r^4$). The topology of $\mH$ is the same as the topology of Euclidean $\mR^3$, hence $\mH$ has topological dimension $3$. The Lebesgue measure on $\mR^3$ (or $\mH$ if you prefer) is equal (modulo a constant factor) to the Hausdorff measure determined by $d_\mH$. It follows the Hausdorff dimension of $\mH$ is $4$. In that it is both self-similar and has Hausdorff dimension greater than its topological dimension, $\mH$ qualifies as a fractal.

If we give $\mR^3$ its usual smooth manifold structure then $(\mR^3,\star)$ is a Lie group. There is no harm in referring to this as $\mH$ even if (for the moment) we do not have $d_\mH$ etc. in mind. For $p = (x,y,t) \in \mH$, define 
\begin{equation}\label{eq:labasis} 
X_p = \partial_x + 2y \partial_t, \quad Y_p = \partial_y - 2x \partial_t, \quad \text{and} \quad T_p = \partial_t.
\end{equation}
These form a basis for the Lie algebra $\mathfrak h$ of left-invariant vector fields. This basis arises in the following way: $X_p = D L_{p}(0)(\partial_x)$, $Y_p = D L_{p}(0)(\partial_y)$, and $T_p = D L_{p}(0)(\partial_t)$.

If $V$ and $W$ are vector fields on $\mH$ then $[V,W]$ is the usual Lie bracket:
\[
    [V,W] = VW - WV.
\]
Note that $[X,Y]_p = -4 T_p$ so that $X_p$, $Y_p$, and $[X,Y]_p$ span the tangent space at $p$ (in common parlance, the vector fields $X$ and $Y$ satisfy H\"{o}rmander's condition). It follows $\mH$ is a Carnot group. Let $ \textnormal{H}\mH \subset \textnormal{T}\mH $ be defined by $\textnormal{H}_p \mH = \text{span} ( X_p , Y_p )$. We call $\textnormal{H}\mH$ the horizontal layer of the tangent bundle.

We may identify the Lie algebra with the tangent space at the origin: if $V$ is a left-invariant vector field, identify $V$ with $V_0$. The previously given basis corresponds to the basis $X_0 = \partial_x$, $Y_0 = \partial_y$, and $T_0 = \partial_t$. A bracket can be defined on $\textnormal{T}_0 \mH$ by $[V_0,W_0] = [V,W]\big|_0$. It is typically this identification and basis we have in mind when we involve the exponential mapping. It is easy to see the unique one-parameter subgroup $\gamma$ satisfying $\gamma(0)=0$ and $\gamma'(0) = W_0$ is given by $s \mapsto (s w_1 , s w_2 , s w_3)$. Here the $w_i$ are defined by $ W_0 = w_1 X_0 + w_2 Y_0 + w_3 T_0 $. It follows that $ \exp : \mathfrak h \to \mH $ is given by $ \exp(w_1 X_0 + w_2 Y_0 + w_3 T_0 ) = (w_1 , w_2 , w_3) $. When $\mathfrak h$ is identified with $\textnormal{T}_0\mH$, we call $\text{span}( X_0 , Y_0 )$ its horizontal layer.

Define an inner product $g_0(p)$ on each $\textnormal{H}_p \mH $ by
\begin{equation} \label{eq:cansr}
g_0 (X_p,X_p) = 1, \quad g_0 (X_p,Y_p) = 0, \quad\text{and}\quad g_0 (Y_p,Y_p) = 1.
\end{equation}
We refer to $p\mapsto g_0(p)$ as the canonical sub-Riemannian metric on $\mH$. It gives rise to a Carnot-Carath\'{e}odory distance function as follows. Let $ b>0 $. A continuous mapping $ \gamma : [0,b] \to \mH $ is a horizontal curve if $ \gamma \in C^{1}(0,b) $ with $ \gamma'(s) \in \textnormal{H}_{\gamma(s)}\mH $ for all $ s \in (0,b) $. Define
\begin{equation} \label{eq:ccdistance}
\rho(p,q) = \inf_{\gamma} \int_0^b \sqrt{g_0(\gamma'(s),\gamma'(s))} \,\mathrm{d}s,
\end{equation}
where the infimum is taken over all piecewise-horizontal curves joining $p$ and $q$. When $\gamma$ is a horizontal curve, $ g_0(\gamma'(s),\gamma'(s)) = \gamma_1 '(s)^2 + \gamma_2 ' (s)^2 $. If $ V \in \textnormal{H}_p \mH $ for some $p$, we will sometimes write $ |V|_H = \sqrt{g_0(V,V)}$. We intend the distance function $\rho$ be implicit in the notation $(\mH,g_0)$ used for the sub-Riemannian manifold just described.

The majority of our time will be spent working with the metric $ d_{\mH} $ defined in \eqref{eq:hmetric}. Let $d := d_{\mH}$. This is not the Carnot-Carath\'{e}odory distance function associated to a sub-Riemannian metric. Our earlier assertions (such as those of the abstract) are valid since $d$ and $\rho$ are bi-Lipschitz equivalent. The explicit expression for $d$ makes it easier to work with than the Carnot-Carath\'{e}odory distance. That said, $ \rho $ and weighted versions of $g_0$ are the subject of Section \ref{sec:wsrm}. For use in that section, we define the length of a continuous curve $\gamma : [0,b] \to \mH$ (with respect to the metric $d$) as
\begin{equation} \label{eq:ld}
    l_d (\gamma) = \limsup_{m \to \infty} \sum_{i=1}^{m} d(\gamma(s_i),\gamma(s_{i-1}))
\end{equation}
with $s_i = ib/m$. It is shown in \cite[Lemma~2.4, p.~20]{TysonHBook} that if $\gamma \in C^1 (0,b)$ then $l_d (\gamma)$ coincides with $\int_0^b |\gamma'|_H $ if $\gamma$ is horizontal, and is infinite otherwise.

The next fact gives a useful comparison between the Heisenberg and Euclidean metrics on $\mR^3$. Given a compact set $\Omega\subset \mH$ there is $C=C(\Omega)>0$ such that for all $p,q\in\Omega$,
\begin{equation} \label{eq:hemcomp}
\frac{1}{C} |p-q| \leq d(p,q) \leq C |p-q|^{\frac{1}{2}}.
\end{equation} 
Here $|p-q|$ is the Euclidean distance between the points $p,q\in\Omega\subset\mH$ treated as points of $\mR^3$. This says (among other things) the set identity from Euclidean $\mR^3$ to $\mH$ is locally $(1/2)$-H\"{o}lder continuous. A look at the expression for $d$ shows the identity map is not locally $\alpha$-H\"{o}lder continuous for any $\alpha>1/2$ (thus, in particular, not locally Lipschitz).

Lastly, we require the following formula for integration in polar coordinates (for a proof of which see \cite{FollandSteinHardy}),
\begin{equation} \label{eq:polarint}
\int_{\mH} f(p) \, \mathrm{d}p = \int_{S(1)} \int_0^\infty f(\delta_r(q)) r^3 \, \mathrm{d}r \mathrm{d}\sigma(q)
\end{equation} 
with $\sigma$ an appropriate measure on $S(1)$ (the unit sphere with respect to $d$). The formula is valid for all $f\in L^1 (\mH)$.

\subsection{Notation} \label{subsec:not}

If several points are in play, any mention of $x$, $y$, or $t$ always refers to the coordinates of the point labeled $p$.

If $\mu$ is a signed measure with Jordan decomposition $\mu = \mu_{-} - \mu_{+}$, then $\|\mu\|$ is the total variation: $\|\mu\|=\mu_{-}(\mH)+\mu_{+}(\mH)$.

$\chi_E$ is the indicator function of the set $E$.

We write $M_n(\mR)$ for the $n \times n$ matrices with real entries. If $M \in M_n(\mR)$ for some $n$, then $|M|$ is the operator norm ($|M| = \sup_{v\in \mR^n, |v|=1}|Mv|$) and $\det M$, $\tr M$, and $M^T$ are respectively the determinant, trace, and transpose of $M$. $I_n$ is the $n\times n$ identity matrix.

For $p \in \mH$ and $r > 0$, $B(p,r)$ is the open ball of center $p$ and radius $r$. Let $B(r) := B(0,r)$. For $p \in \mH$ and $r>0$, $S(p,r)$ is the sphere of center $p$ and radius $r$. Let $S(r) := S(0,r)$. $B(p,r)$ and $S(p,r)$ are defined with respect to the metric $d$ (as are all other metric statements unless obviously otherwise).

The function spaces $L^r := L^r(\mH)$ for $1\leq r \leq \infty$ (with norm $\|\cdot\|_r$) have their usual definition. Since they are implicitly determined by the Lebesgue measure on $\mR^3$ they are identical to their Euclidean counterparts $L^r(\mR^3) $. Similarly $L_{\text{loc}}^{r} := L_{\text{loc}}^{r}(\mH)$.

The spaces $C^{k} := C^{k}(\mH)$ and $C_{0}^{k} := C_{0}^{k}(\mH)$ for $1\leq k \leq \infty$ are defined with respect to the smooth manifold structure on $\mH$ (and have their usual meaning in that context). Since that structure has the set identity of $\mR^3$ as global chart, they are respectively identical with $C^k (\mR^3)$ and $C_0^k (\mR^3)$. 

$ HC^1$ is the space of continuous functions $ F : \mH \to \mR $ such that the (classical) horizontal derivatives $XF$ and $YF$ exist and are continuous everywhere (we might say they are continuously differentiable in the horizontal directions).

For $1\leq r \leq \infty$, $ HW_{\text{loc}}^{1,r} $ is a first horizontal Sobolev space. This is the set of $F \in L_{\textnormal{loc}}^1 $ with distributional derivatives $ XF , YF \in L_{\text{loc}}^r$. Such distributional derivatives will be referred to as weak derivatives -- see below for more discussion.

If $ F $ is a function on $ \mH $ of several real-valued components we write $ F \in HC^1 $ (respectively $ F \in HW_{\text{loc}}^{1,r}$) if each component is in $ HC^1 $ (respectively in $ HW_{\text{loc}}^{1,r} $).

A left-invariant vector field $V$ determines a differential operator. Let $\Omega \subset \mH$ be open and let $F : \Omega \to \mR$. If $F$ is differentiable in the direction $V_p$ at all $p\in \Omega$, then $VF : \Omega \to \mR$ is given by $VF (p) = (V_p F)(p)$. When $F$ is not necessarily differentiable in the direction $V_p$ at all $p\in \Omega$, $VF$ will represent a distributional derivative on $\Omega$. If a distributional derivative acts via integration against a locally integrable function we call it a weak derivative. This will almost always be the case in this paper. Any weak derivative on $\Omega$ can be represented by many different functions, each defined almost everywhere (on $\Omega$). In this sense a weak derivative $VF$ is an equivalence class of functions, with two functions identified if they are equal almost everywhere. Despite this, it is natural to think of $VF$ as a legitimate function. Such distinctions tend to be ignored in the literature and this rarely causes any danger. Occasionally we will want to draw attention to the use of a particular representative at the risk of some clumsy language. We find it convenient to express some of the relevant differential operators using the Wirtinger-like derivatives
\begin{equation}\label{eq:wderiv}
    Z := \frac{1}{2}\left( X - i Y \right) \quad \text{and} \quad \bar{Z} := \frac{1}{2} \left( X + i Y \right).
\end{equation}

We make heavy use of the notation $\lesssim$, $\gtrsim$, and $\simeq$, writing $A \lesssim B$ to mean there exists $C > 0$ such that $A \leq C B $, $ A \gtrsim B $ to mean $ B \lesssim A $, and $ A \simeq B $ to mean there exists $ C > 0 $ with
\[
	\frac{1}{C} B \leq A \leq C B. 
\]
If $A$ or $B$ are functions then the implied $C$ is a constant in that it does not depend on any variables. It may depend on parameters. Our convention is to identify dependence on pertinent parameters in the statement of a result, using $ A \lesssim_{P_1,\ldots,P_k} B $ for $ A \leq CB $ with $C = C(P_1,\ldots,P_k)>0$ a constant dependent on the parameters $P_1, \ldots, P_k$. Similarly for $\gtrsim$ and $\simeq$. Typically we do not indicate dependence on parameters in the proofs of statements. Whenever we say that $A$ and $B$ are comparable, we mean that $A \simeq B$.

If at any time $C$ appears without introduction then it represents a positive constant, whose value may change at each use, and whose dependencies are unimportant. At times we will have need to be explicit about the form of a constant. In such cases they will typically look like $\exp\left( A K^{\frac{2}{3}} \right)$ where $K$ is a number that has already been introduced. In such an expression $A>0$ is an absolute constant whose value is unimportant, and the value taken by $A$ may vary at each appearance, even within the same line. For example, \eqref{eq:bdist} is shorthand for there exist absolute constants $A', A'' > 0$ such that
\begin{equation}\label{eq:abscaexp}
    \exp\left( - A' K^{\frac{2}{3}} \right) d( f(p) , f(q) ) \lesssim | f B(p,r) |^{\frac{1}{4}}  \lesssim \exp\left( A'' K^{\frac{2}{3}} \right) d( f(p) , f(q) ).
\end{equation}
 
\section{Quasiconformal Mappings of $\mH$} \label{sec:qc}

Aiming for an efficient summary of key aspects of the theory, we generally do not include citations in the body text of this section. Some bibliographical notes appear at the end. Definitions given here are intended to supersede any given in the first paragraphs of the introduction (that said, in the case of $M=\mH$ they are always equivalent when there is an overlap).

Let $ U , U' \subset \mH $ be open connected sets. A homeomorphism $ f : U \to U' $ is said to be a quasiconformal mapping if
\begin{equation}\label{eq:dilatdef}
	p\mapsto H_f ( p ) = \limsup_{r \to 0} \frac{ \max_{d(p,q) = r} d( f(p) , f(q) ) }{ \min_{d(p,q) = r} d( f(p) , f(q) ) }
\end{equation}
is bounded on $U$. The function $p\mapsto H_f (p)$ is called the dilatation of $ f $ at $ p $. We are only interested in whole-space mappings: by quasiconformal mapping we mean a homeomorphism $ f : \mH \to \mH $ with dilatation bounded on $\mH$. Recall we denote the family of quasiconformal mappings by $\cQ$. We call $ f \in \cQ $ a $ K $-quasiconformal mapping and write $f \in \cQ (K)$ if the dilatation is (not only bounded but also) essentially bounded by $ K $ (necessarily $ 1 \leq K < \infty $). Such $ K $ is named the essential dilatation $ f $. It is convenient to define $ K(f) = \esssup H_f $ for quasiconformal $ f $.   

A quasiconformal mapping $f$ is Pansu-differentiable ($\cP$-differentiable) at $p$ if the mappings  
\[
	q \mapsto \delta^{-1}_{s} \left[ f(p)^{-1} f( p \, \delta_s (q) ) \right], \quad q \in \mH
\]
converge locally uniformly as $ s \to 0 $ to a homomorphism of $ \mH $. If this is the case we denote the resulting homomorphism by $h_p f$. A quasiconformal mapping is $\cP$-differentiable almost everywhere. At a point $p$ of $\cP$-differentiability, $h_p f$ gives rise via the exponential mapping to a Lie algebra homomorphism $ (h_p f)_* $. In these circumstances, the horizontal partial derivatives $ X f_1$, $Y f_1$, $X f_2$, and $Y f_2 $ exist at $p$ and $ (h f)_* $ acts on $ \mathfrak h $ with respect to the basis $ \{ X_0 , Y_0 , T_0 \} $ via the matrix
\[
	\cP f =
	\begin{pmatrix}
		X f_1 	& Y f_1 	& 0 \\
		X f_2 	& Y f_2 	& 0 \\
		0		& 0			& X f_1 Y f_2  - X f_2 Y f_1 
	\end{pmatrix}.
\]
The horizontal differential of $ f $ is the matrix $ D_H f $ defined by the relationship
\begin{equation} \label{eq:dhf}
	\cP f = \begin{pmatrix}
					D_H f 	& 0 \\
					0 		& \det D_H f 
				\end{pmatrix},  
\end{equation}
or to be explicit
\[
	D_H f = \begin{pmatrix}
				X f_1 	& Y f_1 \\
				X f_2 	& Y f_2 
	\end{pmatrix}.
\]

The Jacobian of a quasiconformal mapping $ f $ is
\[
	J_f := \det \cP f.
\]
By comments above it exists at almost every $ p \in \mH $. This agrees (at points of existence) with the definition given in \eqref{eq:volderiv} of the introduction (the Jacobian as volume derivative). Note that $ J_f = \det^2 D_H f $. If $ f $ is a $ K $-quasiconformal mapping then
\begin{equation} \label{eq:jdef}
	| D_H f |^4 \leq K^2 J_f
\end{equation}
almost everywhere. Indeed, if $ f $ is quasiconformal then $ f $ is $ K $-quasiconformal if and only if
\begin{equation} \label{eq:analdef}
	| D_H f |^2 \leq K \det D_H f
\end{equation}
almost everywhere.

A mapping $ f : \mH \to \mH $ is contact at $ p \in \mH $ if $ X f_3$ and $Y f_3 $ exist at $p$ with
\begin{align}
	X f_3	&= 2 f_2 X f_1 - 2 f_1 X f_2 \quad \text{and} \label{eq:contact1} \\
	Y f_3	&= 2 f_2 Y f_1 - 2 f_1 Y f_2. \label{eq:contact2} 
\end{align}
If $ f : \mH \to \mH $ is $\cP$-differentiable at $ p \in \mH $ then it is contact at $p$. A quasiconformal mapping is weakly contact in that it is contact almost everywhere. This is a prerequisite for a mapping to act in a constrained manner with respect to the Heisenberg geometry. Suppose a mapping $ f : \mH \to \mH $ is differentiable at a point $ p $ in the Euclidean sense and contact at that point. Then the Euclidean differential $ Df $ maps $ \text{H}_p \mH $ (the horizontal layer at $ p $) to $ \text{H}_{f(p)} \mH $. On the other hand, if $ f $ is $ \cP $-differentiable at $ p $, then the restriction of $ f $ to $ p \exp \left[ \text{span} ( X_0 , Y_0 ) \right] $ is differentiable in the Euclidean sense at $p$. This latter derivative is given by $ h f_* $ restricted to the horizontal layer of $ \mathfrak h $. The matrix of this restriction (with regard to the appropriate bases) is given by $ D_H f $. We will discuss the Sobolev regularity of quasiconformal mappings briefly in Section \ref{sec:qcf}.

We define a conformal mapping to be a $1$-quasiconformal mapping (here as elsewhere we consider only mappings defined on all of $\mH$). It turns out that these are $C^\infty$-smooth and equal to a M\"{o}bius-like mapping (in that they are a composition of a finite number of left-translations, dilations, and rotations about the vertical axis) though we do not require this result.

The following lemma is well known (which is not to say the argument is brief).

\begin{lem} \label{lem:qcinv}
If $ f $ is a $ K $-quasiconformal mapping then $ f^{-1} $ is also $ K $-quasiconformal.
\end{lem}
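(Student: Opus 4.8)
The plan is to work from the analytic characterization of $K$-quasiconformality given in \eqref{eq:analdef}, namely that a quasiconformal $f$ is $K$-quasiconformal if and only if $|D_H f|^2 \leq K \det D_H f$ almost everywhere, and to show that the horizontal differential of $f^{-1}$ satisfies the same bound. The first step is to record that $f^{-1}$ is itself a quasiconformal mapping: this is immediate from the definition of the dilatation $H_{f^{-1}}$ as a $\limsup$ of a ratio of moduli of continuity, since interchanging the roles of balls around $p$ and around $f(p)$ shows $H_{f^{-1}}$ is bounded wherever $H_f$ is (modulo the standard fact that a quasiconformal homeomorphism is a homeomorphism onto, which is part of the setup). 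Hence $f^{-1}$ is $\cP$-differentiable almost everywhere, contact almost everywhere, and has an almost-everywhere defined horizontal differential $D_H(f^{-1})$.

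Next I would establish the pointwise chain rule for the horizontal differential: at a point $q$ where $f^{-1}$ is $\cP$-differentiable with $p = f^{-1}(q)$ a point of $\cP$-differentiability of $f$ and $J_f(p) \neq 0$, one has $\cP(f^{-1})(q) = (\cP f(p))^{-1}$, and consequently, reading off the upper-left $2\times 2$ block in \eqref{eq:dhf}, $D_H(f^{-1})(q) = (D_H f(p))^{-1}$. The justification is that $\cP$-differentiability says the rescaled maps converge to the homomorphism with matrix $\cP f$, and composing the homomorphisms $h_p f$ and $h_q(f^{-1})$ must yield the identity homomorphism; the Lebesgue-measure-theoretic point — that the set of $q$ where $f^{-1}$ is $\cP$-differentiable but the corresponding $p$ is a bad point for $f$ (or where $J_f(p) = 0$) is null — is handled by Lusin's condition (N) for quasiconformal mappings together with the fact that $J_f > 0$ almost everywhere for a quasiconformal $f$.

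With the identity $D_H(f^{-1}) = (D_H f)^{-1}$ in hand (where evaluated at corresponding points), the conclusion is a $2\times 2$ linear-algebra computation: for an invertible $A \in M_2(\mR)$ with $\det A > 0$, one checks $|A^{-1}|^2 \det A^{-1} \leq$ (respectively, that $|A^{-1}|^2 \leq K \det A^{-1}$ follows from $|A|^2 \leq K \det A$), using that $A^{-1} = (\det A)^{-1}\,\mathrm{adj}(A)$, that $|\mathrm{adj}(A)| = |A|$ for $2\times 2$ matrices, and that $\det A^{-1} = (\det A)^{-1}$. This gives $|D_H(f^{-1})|^2 = |D_H f|^2/(\det D_H f)^2 \cdot (\det D_H f) \cdot (\det D_H f)^{-1}$-type manipulation, collapsing to $|D_H(f^{-1})|^2 \leq K \det D_H(f^{-1})$ almost everywhere, whence \eqref{eq:analdef} certifies $f^{-1}$ as $K$-quasiconformal.

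The main obstacle is not the algebra but the measure-theoretic bookkeeping in the second step: one must know that a $K$-quasiconformal $f$ on $\mH$ satisfies Lusin's condition (N) (so that $f^{-1}$ maps the null set of bad points of $f$ to a null set) and the condition (N$^{-1}$), and that $J_f \neq 0$ almost everywhere — these are the substantive ``well known'' ingredients alluded to in the lemma's preamble, and they are what make the chain rule valid almost everywhere rather than merely on a set of full measure in some abstract sense. Everything else is formal.
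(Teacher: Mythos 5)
The paper does not actually prove Lemma \ref{lem:qcinv}: it records it as well known and, in the notes to Section \ref{sec:qc}, attributes it (together with the analytic criterion \eqref{eq:analdef}) to \cite{KRII}. So the comparison is with the standard Kor\'anyi--Reimann argument, and your sketch reconstructs its core correctly: the chain rule for the $\cP$-differential giving $D_H(f^{-1}) = (D_H f)^{-1}$ at corresponding points almost everywhere, the $2\times 2$ identity $|A^{-1}|^2/\det(A^{-1}) = |A|^2/\det A$ (your adjugate/singular-value computation is right), and then \eqref{eq:analdef} to certify the constant $K$. Your list of measure-theoretic inputs (a.e.\ $\cP$-differentiability, Lusin's conditions (N) and (N$^{-1}$), $J_f>0$ a.e.) is also the right list for making the chain rule valid off a null set.

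The genuine gap is in your first step. The claim that boundedness of $H_{f^{-1}}$ is ``immediate from the definition \dots by interchanging the roles of balls'' is not correct. A pointwise bound $H_f(p)\le K$ controls, for small $t$, the ratio of $\sup$ to $\inf$ of the image distances $d(f(p),f(u))$ over $d(p,u)=t$; but the quantity relevant to $H_{f^{-1}}$ at $f(p)$ compares the radii $\rho(s)=\sup\{d(p,u):d(f(p),f(u))\le s\}$ and $\lambda(s)=\inf\{d(p,u):d(f(p),f(u))\ge s\}$, and the pointwise linear dilatation of $f$ gives no bound on $\rho(s)/\lambda(s)$: for intermediate radii $t\in[\lambda(s),\rho(s)]$ one only learns that the sup of image distances at radius $t$ is $\ge s$ while the inf is $\le s$, which is vacuous. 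The standard remedy is global, not pointwise: one first proves distortion estimates of the type \eqref{eq:wqs} (via capacity/modulus estimates), deduces quasisymmetry \eqref{eq:qscontrol}, uses that the inverse of a quasisymmetric homeomorphism is quasisymmetric, and only then applies \eqref{eq:analdef} to $f^{-1}$ --- note that the hypothesis of \eqref{eq:analdef} is precisely that the mapping is already known to be quasiconformal. This is exactly the ``not brief'' part the paper alludes to, so you should either invoke quasisymmetry explicitly at this point or cite \cite{KRII} for it; with that step repaired, the rest of your argument goes through.
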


The next lemma has not been as oft-used as its Euclidean counterpart (and is not readily found in the literature) therefore we provide the short proof. Here as elsewhere in this section we rely on deeper results that are glossed over.

\begin{lem}
If $ f_1 $ and $ f_2 $ are $ K_1 $ and $ K_2 $-quasiconformal mappings respectively, then $ f_1 \circ f_2 $ is a $ (K_1 K_2) $-quasiconformal mapping.
\end{lem}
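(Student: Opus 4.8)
The plan is to reduce the statement to the analytic characterization of $K$-quasiconformality recorded in \eqref{eq:analdef}, by way of a chain rule for horizontal differentials. That $f_1\circ f_2$ is a quasiconformal mapping at all is routine: a composition of homeomorphisms of $\mH$ is a homeomorphism, and a standard argument with the metric definition of the dilatation (tracking how the image of a small sphere $\{d(p,q)=r\}$ under $f_2$ sits inside a thin annulus about $f_2(p)$, then applying the bounded dilatation of $f_1$ on that annulus) shows $H_{f_1\circ f_2}$ is bounded wherever $H_{f_1}$ and $H_{f_2}$ are, hence everywhere. So the real content is the bound on the \emph{essential} dilatation.

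Next I would establish that, for almost every $p\in\mH$,
\[
    D_H(f_1\circ f_2)(p) = \bigl(D_H f_1\bigr)\!\bigl(f_2(p)\bigr)\cdot\bigl(D_H f_2\bigr)(p).
\]
The mechanism is the chain rule for Pansu derivatives: at a point $p$ where $f_2$ is $\cP$-differentiable and $f_1$ is $\cP$-differentiable at $f_2(p)$, the rescalings $q\mapsto\delta_s^{-1}[(f_1 f_2)(p)^{-1}(f_1 f_2)(p\,\delta_s(q))]$ factor, in the limit $s\to0$, through those defining $h_{f_2(p)}f_1$ and $h_p f_2$, so that $h_p(f_1\circ f_2)=h_{f_2(p)}f_1\circ h_p f_2$ as homomorphisms of $\mH$; passing to Lie algebra homomorphisms gives $\cP(f_1\circ f_2)(p)=\cP f_1(f_2(p))\,\cP f_2(p)$, and reading off the upper-left block via \eqref{eq:dhf} yields the displayed identity. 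The one point requiring care — and the main obstacle — is that this is available only for $p$ lying outside both the $\cP$-nondifferentiability set $N_2$ of $f_2$ and the set $f_2^{-1}(N_1)$, where $N_1$ is the $\cP$-nondifferentiability set of $f_1$. We know $|N_1|=|N_2|=0$, but to conclude $|f_2^{-1}(N_1)|=0$ we must invoke that quasiconformal mappings of $\mH$ satisfy Lusin's condition $(N)$ and its inverse (equivalently, by Lemma \ref{lem:qcinv}, that the quasiconformal mapping $f_2^{-1}$ maps null sets to null sets) — one of the deeper facts glossed over in this section.

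With the chain rule in hand the conclusion is immediate. At almost every $p$, using submultiplicativity of the operator norm, multiplicativity of the determinant, and \eqref{eq:analdef} applied to $f_1$ at $f_2(p)$ and to $f_2$ at $p$ — note $\det D_H f_i>0$ almost everywhere, since $J_{f_i}=(\det D_H f_i)^2$ and a quasiconformal mapping is nondegenerate almost everywhere — one has
\[
    |D_H(f_1\circ f_2)|^2 \le |D_H f_1\circ f_2|^2\,|D_H f_2|^2 \le K_1 K_2\,\det(D_H f_1\circ f_2)\,\det(D_H f_2) = K_1 K_2\,\det D_H(f_1\circ f_2).
\]
Since $f_1\circ f_2$ has already been shown to be quasiconformal, \eqref{eq:analdef} (the sufficiency direction) now says precisely that $f_1\circ f_2$ is $K_1 K_2$-quasiconformal. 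The only genuinely nontrivial input is the Lusin condition used to propagate ``almost everywhere'' through $f_2$; everything else is linear algebra together with the dictionary between the metric and analytic definitions set up above.
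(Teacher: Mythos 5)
Your argument is correct and follows essentially the same route as the paper: a chain rule for the Pansu (hence horizontal) differential, the analytic characterization \eqref{eq:analdef}, and the Lusin-type preservation of null sets by $f_2^{-1}$ so that the pointwise identity $D_H(f_1\circ f_2)=(D_H f_1\circ f_2)D_H f_2$ holds almost everywhere. The only slight difference is in disposing of mere quasiconformality of the composition: the paper points at the quasisymmetric characterization, which gives the \emph{uniform} annulus control \eqref{eq:wqs}, whereas the pointwise limsup bound $H_{f_i}(p)<\infty$ that your parenthetical sketch invokes does not compose directly, so that step should really be read as appealing to the same uniform estimate.
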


\begin{proof}
From the quasisymmetric characterization of quasiconformal mappings (discussed below) it is easy to see that $ f_1 \circ f_2 $ is quasiconformal. The only question is with regard to the essential dilatation. For this we use the analytic characterization \eqref{eq:analdef}. For all $ E \subset \mH $,
\[
	| E | = 0 \iff | f_2 E | = 0.
\] 
It follows there is a set $E$ with $ | \mH \setminus E | = 0 $ such that for all $ p \in E $, $ f_1 \circ f_2 $ is $ \cP $-differentiable at $p$, $ f_2 $ is $ \cP $-differentiable at $p$, and $ f_1 $ is $ \cP $-differentiable at $ f_2 (p) $. A calculation similar to that in a typical proof of the chain rule for the traditional derivative shows we have the following,
\[
	h_{p} ( f_1 \circ f_2 ) = h_{ f_2 (p) } f_1 \circ h_{p} f_2 .
\]
Since
\[
	( h_{ f_2 (p) } f_1 \circ h_{p} f_2 )_* = ( h_{ f_2 (p) } f_1 )_* \circ ( h_{p} f_2 )_*  
\]
we have
\[
	\cP ( f_1 \circ f_2 ) = ( \cP f_1 \circ f_2 ) \cP f_2,
\]
and consequently
\[
D_H (f_1 \circ f_2) = ( D_H f_1 \circ f_2) D_H f_2.
\]
It follows that
\begin{align*}
	| D_H (f_1 \circ f_2) |^2
	&\leq | D_H f_1 \circ f_2 |^2 | D_H f_2 |^2 \\
	&\leq K_1 K_2 \det ( D_H f_1 \circ f_2) \det D_H f_2 \\
	&= K_1 K_2 \det D_H ( f_1 \circ f_2 ).							
\end{align*}
Thus $ f_1 \circ f_2 $ is $ (K_1 K_2) $-quasiconformal by \eqref{eq:analdef}. 
\end{proof}

Let $ f $ be a $ K $-quasiconformal mapping and let $ p \in \mH $. Consider the quantity
\[
	H_{f,p} (r,s) := \frac{ \max_{d(p,q) = r} d( f(p) , f(q) ) }{ \min_{d(p,q) = s} d( f(p) , f(q) ) }
\]
for $ 0 < s \leq r < \infty $. It can be shown that
\begin{equation} \label{eq:wqs}
	H_{f,p} (r,s) \leq \exp\left(AK^{\frac{2}{3}}\right)\left(\frac{r}{s}\right)^{K^{\frac{2}{3}}}. 
\end{equation}
Here the convention exemplified by \eqref{eq:abscaexp} is in place. Given the importance of \eqref{eq:wqs} to our development we break with the style of this section and observe this follows from a combination of (the proofs of) Proposition 12 of \cite[p.~48]{KRII} and Lemma 3.2 of \cite[p.~273]{Capogna&Tang}.

If $ p , q , u \in \mH $ are distinct points such that $d( p , q ) \leq d( p , u ) $ then (since $f$ is a homeomorphism) we have
\[
d( f(p) , f(q) ) \leq \max_{ d(p,w) = d(p,u) } d( f(p) , f(w) ).
\] 
Combining this with \eqref{eq:wqs} in the case $r=s=d(p,u)$ we get 
\begin{equation}\label{eq:wqs2}
    d( f(p) , f(q) ) \leq \exp\left( AK^{\frac{2}{3}} \right) \min_{ d(p,w) = d(p,u) } d( f(p) , f(w) ) \leq \exp\left( AK^{\frac{2}{3}} \right) d( f(p) , f(u) ).  
 \end{equation}
This says $ f $ is $H$-weakly-quasisymmetric with $H = \exp\left( AK^{2/3} \right)$ (so dependent on $ K $ only). It happens to be true that $ f $ is also quasisymmetric: there exists a homeomorphism $ \eta : [0,\infty) \to [0,\infty) $ such that for distinct $ p , q , u \in \mH $,
\begin{equation} \label{eq:qscontrol}
	\frac { d( f(p) , f(q) ) }{ d( f(p) , f(u) ) } \leq \eta \left( \frac{ d(p,q) }{ d(p,u) } \right). 
\end{equation}
Let us deduce some easy consequences of \eqref{eq:wqs}.

First, there exists $ C = C(K) = \exp\left( AK^{2/3} \right) >0 $ such that for all $ p \in \mH $ and for all $ r > 0 $ there is $ s = \min_{d(p,q)=r} d(f(p),f(q)) > 0 $ with
\[
	B( f(p) , s ) \subset f B(p,r) \subset B( f(p) , C s ).
\]
Indeed, it is frequently useful that 
\begin{equation} \label{eq:bdist}
    \exp\left( - AK^{\frac{2}{3}} \right) d( f(p) , f(q) ) \lesssim | f B(p,r) |^{\frac{1}{4}}  \lesssim \exp\left( AK^{\frac{2}{3}} \right) d( f(p) , f(q) )
\end{equation}
when $ q $ is any point on $ S(p,r) $.

Now consider $ g $, also $ K $-quasiconformal but with $ g(0) = 0 $. Then \eqref{eq:wqs} leads easily to
\[
	\| g(p) \| \lesssim_K \| g(q) \| \left( 1 + \left( \frac{ \| p \| }{ \| q \| } \right)^{ K^{\frac 2 3} } \right)
\]
for all $ p , q \in \mH $. Suppose $ g_i $ ($ i \in I $, $ I $ some index set) is a family of $ K $-quasiconformal mappings (the same $ K $ for each $ i $) each of which fixes $ 0 $. Furthermore, suppose there exist $ D , D' > 0 $ such that for each $ i\in I $ there is $ q_i \in \mH $ with $ \| q_i \| \geq D $ and $ \| g(q_i) \| \leq D' $. Then we have a uniform distortion estimate for the $ g_i $:
\begin{equation} \label{eq:udist}
	\| g_i (p) \| \lesssim_{K,D'} 1 + \left( \frac{ \| p \| }{ D } \right)^{K^{2/3}}, \quad p \in \mH.
\end{equation}
We typically use this estimate with $ D = D' = 1 $ and for this reason introduce the following notation/definition.

\begin{defn} \label{defn:qsub0}
We write $g \in \cQ_0 (K)$ to mean that $g \in \cQ (K)$ with $g(0) = 0$ and there exists $p \in S(1)$ with $g(p) \in S(1)$. 
\end{defn}

The next lemma follows by specializing the discussion preceding the definition.

\begin{lem} \label{lem:udist}
Given $ R > 0 $ and $K \geq 1$, there exists $ R'=R'(K,R) > 0 $ such that for all $ g \in \cQ_0(K) $,
\[
	g B(R) \subset B(R').
\]
\end{lem}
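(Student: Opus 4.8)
The plan is to read off the result from the uniform distortion estimate \eqref{eq:udist}, established in the paragraph preceding Definition \ref{defn:qsub0}. The key observation is that $\cQ_0(K)$ has been defined precisely so that its members meet the hypotheses of that estimate with $D = D' = 1$. Explicitly, I would fix $g \in \cQ_0(K)$: by (i) it is $K$-quasiconformal, by (ii) it fixes $0$, and by (iii) there is a point $p(g) \in S(1)$, so that $\|p(g)\| = 1 \geq 1$ and $\|g(p(g))\| = 1 \leq 1$. Viewing $g$ as a (trivial, singleton) family with $q_g = p(g)$, the estimate \eqref{eq:udist} applies and yields
\[
	\|g(p)\| \lesssim_K 1 + \|p\|^{K^{2/3}}
\]
for all $p \in \mH$, the dependence on $D' = 1$ being vacuous, so that the implied constant depends only on $K$.

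To finish, I would take any $p \in B(R)$, so $\|p\| < R$, and deduce $\|g(p)\| \leq C(1 + R^{K^{2/3}})$, where $C = C(K)$ is the constant implicit in the displayed inequality. Setting $R' = C(1 + R^{K^{2/3}})$, which depends only on $K$ and $R$, gives $g(p) \in B(R')$; as $p \in B(R)$ was arbitrary, this is exactly $gB(R) \subset B(R')$.

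There is no genuine obstacle: the real work — the weak-quasisymmetry bound \eqref{eq:wqs} and its consequence \eqref{eq:udist} — is already in hand, and the lemma is essentially a convenient repackaging; the only thing to verify is that the quantifiers in the definition of $\cQ_0(K)$ match those in \eqref{eq:udist}. Should one prefer to argue without invoking \eqref{eq:udist} directly, one could instead go through \eqref{eq:wqs} applied to $g$ at the basepoint $0$: for $\|p\| \geq 1$ use radii $r = \|p\|$, $s = 1$, bounding the denominator $\min_{d(0,w)=1} d(g(0),g(w))$ from above by $\|g(p(g))\| = 1$ and the numerator $\max_{d(0,w)=\|p\|} d(g(0),g(w))$ from below by $\|g(p)\|$ (using $g(0)=0$), and for $\|p\| < 1 \le R$ use $r = s = 1$ in the same way; this reproduces the same bound.
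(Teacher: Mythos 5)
Your proof is correct and is essentially identical to the paper's: the paper states the lemma immediately after the phrase ``Specializing the discussion preceding the definition, we have proved,'' which is precisely the application of the uniform distortion estimate \eqref{eq:udist} with $D = D' = 1$ that you carry out. Your explicit verification that the three conditions defining $\cQ_0(K)$ match the hypotheses of \eqref{eq:udist} is exactly the intended argument.
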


Quasiconformal mappings are locally H\"older continuous. Given $K$-quasiconformal mapping $ f $ and $ R > 0 $, let $ R' > 0 $ be such that $ fB(3R+1) \subset B(R') $. Then there exists $ \alpha = \alpha(K) > 0 $ such that for all $p,q\in B(R)$,
\begin{equation} \label{eq:hold}
	d(f(p),f(q)) \lesssim_{K,R,R'} \, d(p,q)^\alpha .
\end{equation}
If we combine this with Lemma \ref{lem:udist} we achieve the following result.

\begin{lem} \label{lem:uhold}
Given $ R > 0 $ and $K \geq 1$, there exists $ \alpha = \alpha(K) > 0 $ such that for all $ g \in \cQ_0 (K) $,
\[
	d(g(p),g(q)) \lesssim_{K,R} \, d(p,q)^\alpha
\]
for all $ p , q \in B(R) $.
\end{lem}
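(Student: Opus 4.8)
The plan is to combine the two facts established immediately before the statement: the uniform boundedness of the family $\cQ_0(K)$ on bounded sets (Lemma \ref{lem:udist}) and the local H\"older continuity of a single quasiconformal mapping (estimate \eqref{eq:hold}). Neither ingredient requires new work; the point of the lemma is only that the resulting H\"older data can be taken uniform across $\cQ_0(K)$.

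First I would apply Lemma \ref{lem:udist}, but with the radius there taken to be $3R+1$ rather than $R$. This furnishes a single radius $R' = R'(K, 3R+1)$, which we may as well write $R' = R'(K,R)$, such that $g\,B(3R+1) \subset B(R')$ holds \emph{simultaneously} for every $g \in \cQ_0(K)$. The essential feature is that $R'$ depends on $K$ and $R$ alone, not on the individual map $g$.

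Next, fix an arbitrary $g \in \cQ_0(K)$. Since $g$ is $K$-quasiconformal and satisfies $g\,B(3R+1) \subset B(R')$, the H\"older estimate \eqref{eq:hold} applies verbatim with $f = g$: there is an exponent $\alpha = \alpha(K) > 0$, depending only on $K$, and a constant $C = C(K,R,R')$ with $d(g(p),g(q)) \leq C\,d(p,q)^{\alpha}$ for all $p,q \in B(R)$. Because $R' = R'(K,R)$, the constant $C$ is in fact a function of $K$ and $R$ only, and — crucially — neither $C$ nor $\alpha$ depends on the choice of $g \in \cQ_0(K)$. This is precisely the assertion, so $d(g(p),g(q)) \lesssim_{K,R} d(p,q)^{\alpha}$ for all $p,q \in B(R)$ and all $g \in \cQ_0(K)$.

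I do not anticipate any real obstacle: the analytic content has been absorbed into the cited results. The only thing that needs a moment's care is the bookkeeping of dependencies — one must note that the exponent in \eqref{eq:hold} depends on $K$ alone, and that once the enlarged target ball $B(R')$ has been chosen uniformly (via Lemma \ref{lem:udist} with radius $3R+1$), the constant in \eqref{eq:hold} no longer sees the particular map. The slack built into the radius $3R+1$ in \eqref{eq:hold} is exactly what allows this uniformization, and we simply use it as given.
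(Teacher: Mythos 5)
Your proposal is correct and is essentially verbatim the paper's own (very brief) argument: the paper introduces \eqref{eq:hold} with the target ball $B(R')\supset fB(3R+1)$, then says "If we combine this with Lemma \ref{lem:udist} we have [Lemma \ref{lem:uhold}]." Your bookkeeping of the dependencies is exactly the point being made, so there is nothing to add.
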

Crucially, in the previous lemma the implied constant is dependent on $ g $ only through its dependence on $ K $.

We now record the various results that are pertinent to our focus on the quasiconformal Jacobian. On numerous occasions we use that a quasiconformal mapping $ f $ satisfies the change of variable formula
\begin{equation} \label{eq:cvf}
	\int_{ f \Omega } u = \int_{ \Omega } ( u \circ f ) J_{f},
\end{equation}
valid for all non-negative, measurable functions $ u : \mH \to \mR $ and measurable $ \Omega \subset \mH $ (with the necessary measurability of $ ( u \circ f ) J_{f} $ part of the result).

The formula just recorded relies on the fact that $J_f > 0$ almost everywhere, and $ J_f \in L_{\text{loc}}^1 $. Actually, more is true: the Jacobian of a quasiconformal mapping $ f $ satisfies a reverse H\"{o}lder inequality. The power of this fact will be amply demonstrated by multiple appearances at critical moments later. To be precise, if $ f $ is a $ K $-quasiconformal mapping there exists $ r = r(K) > 1 $ such that, if $ B \subset \mH $ is a ball then
\begin{equation} \label{eq:rhi}
	\left( \frac{ 1 }{ | B | } \int_B J_{f}^{r} \right)^{ \frac{ 1 }{ r } } \lesssim_{K} \frac{ 1 }{ | B | } \int_{B}J_{f}.
\end{equation}  
We stress that $r$ and the implied constant are independent of $ B $ (indeed both can be taken to depend on $ K $ only). That $ J_f $ satisfies a reverse H\"{o}lder inequality implies it is an $ A_\infty $ weight as in \cite{BigStein}. It is also true that the inequality can be shown to imply the $ A_p $ condition for some $ 1 \leq p < \infty $ (the calculation can be found in \cite{BigStein}). We do not record the $ A_p $ condition here, but observe that it has the following easy implication: there exists $\alpha > 0$ such that if $ B \subset \mH $ is a ball, 
\begin{equation} \label{eq:apw}
	\frac{ 1 }{ | B | } \int_B J_{f}^{ -\alpha } \lesssim \left( \frac{ | B | }{ | f(B) | } \right)^{ \alpha }
\end{equation}  
independently of $ B $ (see (45) of \cite[p.~212]{BigStein}).

Ultimately, the mapping we construct (with Jacobian comparable to a given weight) will be found in the limit of a sequence of mappings. We therefore need to be able to say something useful about the limiting behavior of the Jacobians. The following weak convergence result will suffice.

\begin{lem} \label{lem:wcj}
Suppose $ (f_m) $ is a sequence of quasiconformal mappings converging locally uniformly to a quasiconformal mapping $ f $. Suppose also that the $ f_{m}^{-1} $ converge pointwise to $ f^{-1} $. Then given $ \xi \in C_{0}^{\infty} $ with $ \xi \geq 0 $,
\[
	\lim_{ m \to \infty } \int \xi J_{ f_{m} } = \int \xi J_{ f }.
\] 
\end{lem}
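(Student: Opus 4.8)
The plan is to establish the weak convergence $\int \xi J_{f_m} \to \int \xi J_f$ for test functions $\xi \in C_0^\infty$, $\xi \geq 0$, by combining a uniform integrability bound coming from the reverse Hölder inequality \eqref{eq:rhi} with a change-of-variables identification of $\int \xi J_{f_m}$ in terms of the pushforward measures. The first step is to rewrite each integral using the change of variable formula \eqref{eq:cvf}: since $\xi \geq 0$ is measurable, $\int \xi J_{f_m} = \int (\xi \circ f_m^{-1})$ over $f_m(\mathbb{H})= \mathbb{H}$ — that is, $\int \xi J_{f_m}$ is the integral of $\xi$ against the pullback measure $(f_m)_* (m)$, equivalently $\int \xi \, d((f_m^{-1})^* m)$. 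Actually it is cleaner to note $\int \xi J_{f_m} = \int \xi(f_m^{-1}(y)) \, dy$ is not quite what \eqref{eq:cvf} gives directly; instead apply \eqref{eq:cvf} with $u = \xi \circ f_m^{-1}$ and $\Omega = \mathbb{H}$ to get $\int \xi \circ f_m^{-1} = \int (\xi \circ f_m^{-1} \circ f_m) J_{f_m} = \int \xi J_{f_m}$, so that $\int \xi J_{f_m} = \int \xi\bigl(f_m^{-1}(y)\bigr)\,dy$, i.e. the Jacobian integral equals the ordinary Lebesgue integral of $\xi \circ f_m^{-1}$.

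From here the argument splits into two observations. First, since $f_m^{-1} \to f^{-1}$ pointwise and $\xi$ is continuous, $\xi \circ f_m^{-1} \to \xi \circ f^{-1}$ pointwise. Second, we need domination or uniform integrability to pass the limit inside. Here is where I would use that $\xi$ has compact support: fix a ball $B$ with $\mathrm{supp}\,\xi \subset B$. Local uniform convergence $f_m \to f$ on $\mathbb{H}$, together with Lemma \ref{lem:qcinv} and a normal-families/equicontinuity argument (the $f_m$ are uniformly quasisymmetric, hence the $f_m^{-1}$ are too), forces $f_m^{-1}(B)$ to lie in a fixed ball $B'$ for all large $m$ — here one invokes the uniform distortion estimates of Section \ref{sec:qc}, e.g. Lemma \ref{lem:udist} after normalizing. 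Then $\xi \circ f_m^{-1}$ is supported in $B'$ for all large $m$, and is bounded by $\|\xi\|_\infty \chi_{B'}$, which is integrable. Dominated convergence then yields $\int \xi J_{f_m} = \int \xi \circ f_m^{-1} \to \int \xi \circ f^{-1} = \int \xi J_f$, the last equality again by \eqref{eq:cvf}.

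The main obstacle is controlling the supports: one must show that $f_m^{-1}(\mathrm{supp}\,\xi)$ stays inside a fixed compact set. Pointwise convergence of $f_m^{-1}$ to $f^{-1}$ alone is not obviously enough to get this uniformly over a ball unless one upgrades it — but one can: the $f_m$ converge locally uniformly to the homeomorphism $f$, and on a ball slightly larger than $B'$ the images $f_m(B')$ eventually contain $B$ (by uniform continuity of $f$ and the convergence, $f(B') \supset \overline{B}$ can be arranged, and then $f_m(B') \supset B$ for large $m$), which gives $f_m^{-1}(B) \subset B'$. Thus the compact-support propagation is really a consequence of local uniform convergence plus the open mapping property of homeomorphisms, and no deep quasiconformality input beyond \eqref{eq:cvf} is needed for this particular lemma; the quasiconformality enters only to guarantee the change of variable formula holds. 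An alternative, slicker route avoiding even the support discussion would use that the reverse Hölder inequality \eqref{eq:rhi} makes $\{J_{f_m}\}$ uniformly integrable on compact sets — provided one first knows $\sup_m \int_B J_{f_m} < \infty$, which follows since $\int_B J_{f_m} = |f_m(B)| \leq |B'|$ is uniformly bounded — and then combines vague convergence of the measures $J_{f_m}\,dm$ (a consequence of the $f_m \to f$ convergence and uniform local mass bounds) with uniform integrability to upgrade to the stated convergence against $C_0^\infty$ test functions. I would present the first, more elementary argument, as it is self-contained given \eqref{eq:cvf} and the pointwise convergence hypothesis.
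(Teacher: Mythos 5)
Your overall strategy matches the paper's: rewrite $\int \xi J_{f_m} = \int \xi \circ f_m^{-1}$ via \eqref{eq:cvf}, use the pointwise convergence $f_m^{-1} \to f^{-1}$ and continuity of $\xi$, find a uniform compactly supported dominating function, and apply the Dominated Convergence Theorem. But there is a direction error in your support calculation that leads you to solve a harder, and in fact the wrong, sub-problem.

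The function $\xi \circ f_m^{-1}$ vanishes at $y$ unless $f_m^{-1}(y) \in \text{supp}\,\xi$, i.e.\ unless $y \in f_m(\text{supp}\,\xi)$; thus $\text{supp}(\xi\circ f_m^{-1}) \subset \overline{f_m(\text{supp}\,\xi)}$. So the set you must bound is $f_m(B)$, not $f_m^{-1}(B)$. This is exactly the direction in which the hypothesis of locally uniform convergence $f_m \to f$ gives control for free: $f(\overline{B})$ is compact, hence lies in some $B(R')$, and for all $m$ large so does $f_m(\overline{B})$. That one line is the entirety of the paper's support argument. Your detour through normal families, uniform quasisymmetry, Lemma~\ref{lem:qcinv}, Lemma~\ref{lem:udist}, and the open-mapping observation is machinery deployed to bound $f_m^{-1}(B)$; that bound is true, but it is not what the support of $\xi \circ f_m^{-1}$ asks for. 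You explicitly identify ``controlling $f_m^{-1}(\mathrm{supp}\,\xi)$'' as the main obstacle; once the direction is corrected, the obstacle disappears, no quasisymmetric distortion estimates are needed, and the argument reduces to the paper's. The rest of the proof (change of variables, pointwise convergence, DCT) is fine, as is your remark that the alternative reverse-H\"older/uniform-integrability route is available but unnecessary.
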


\begin{proof}
Let $ R > 0 $ be such that $ \text{support}(\xi) \subset B(R) $. As the $ f_m $ converge locally uniformly, there exists $ R' > 0 $ such that $ f_m B(R) \subset B(R') $ for all $ m $. It follows that $\text{support}( \xi \circ f_m^{-1} ) \subset B(R')$ for all $ m $. Using the change of variable formula \eqref{eq:cvf},
\begin{align*}
	\int \xi J_{f_m}
	&= \int ( \xi \circ f_m^{-1} )( f_m ) J_{ f_m } \\
	&= \int  \xi \circ f_m^{-1} .
\end{align*}
Since $ | \xi \circ f_m^{-1} | \leq \max( | \xi | ) \chi_{B(R')} $ for all $ m $, the dominated convergence theorem applies and we conclude that
\[
	\lim_{ m \to \infty } \int \xi J_{f_m} = \int \xi \circ f^{-1} = \int \xi J_{f}. \qedhere
\]
\end{proof}

We end this section with some instances in which a sequence $(f_m)$ of quasiconformal mappings converges to a quasiconformal mapping $f$. These are mild variations of well known results with the statements tailored to our purpose. 

\begin{lem} \label{lem:qsconv}
Suppose $ (f_m) $ is a sequence in $\cQ_0 (K)$ such that there exists $p_0 \in S(1)$ with $f_m (p_0) \in S(1)$ for all $m$. Then the $ f_m $ subconverge locally uniformly to a $ K $-quasiconformal mapping $ f $. Furthermore, any convergent subsequence $ (f_{m_{k}}) $ has the $ f_{m_{k}}^{-1} $ converging pointwise to $ f^{-1} $. 
\end{lem}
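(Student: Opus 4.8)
The plan is the standard normal-families argument for quasiconformal maps, carried out on $\mH$ by means of the uniform estimates of Lemmas~\ref{lem:udist} and~\ref{lem:uhold}, with the inverses handled symmetrically.

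First I would produce a locally uniformly convergent subsequence. Since each $f_m\in\cQ_0(K)$ with $p(f_m)=p_0$, Lemma~\ref{lem:udist} confines $f_m B(R)$ to a ball $B(R')$ whose radius depends only on $K$ and $R$, so $(f_m)$ is uniformly bounded on each $B(R)$, and Lemma~\ref{lem:uhold} shows it is uniformly H\"older — hence equicontinuous — there. Arzel\`{a}--Ascoli on $B(1),B(2),\ldots$ together with a diagonal extraction then yields a subsequence converging locally uniformly on $\mH$ to a continuous map $f:\mH\to\mH$. The point is that the same uniform estimates apply to the inverses: by Lemma~\ref{lem:qcinv} each $f_m^{-1}$ is $K$-quasiconformal, it fixes $0$ (as $f_m(0)=0$), and it carries the point $f_m(p_0)\in S(1)$ to $p_0\in S(1)$, so $f_m^{-1}\in\cQ_0(K)$ — with $p(f_m^{-1})=f_m(p_0)$, which varies with $m$ but always lies in $S(1)$, and that is all Lemmas~\ref{lem:udist} and~\ref{lem:uhold} require.

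Next I would identify the limit of the inverses with $f^{-1}$. Let $(f_{m_k})$ be any locally uniformly convergent subsequence, with limit $f$. Applying the extraction of the previous paragraph to $(f_{m_k}^{-1})$, every subsequence of it has a further subsequence converging locally uniformly to some continuous $h:\mH\to\mH$. Using that $q_j\to q$ and $g_j\to g$ locally uniformly with $g$ continuous force $g_j(q_j)\to g(q)$, one gets $p=f_{m_k}^{-1}(f_{m_k}(p))\to h(f(p))$ and, symmetrically, $q=f_{m_k}(f_{m_k}^{-1}(q))\to f(h(q))$ along the sub-subsequence, so $h\circ f=f\circ h=\mathrm{id}$. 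Thus $f$ and $h$ are mutually inverse homeomorphisms of $\mH$; in particular $f$ is non-degenerate, and a homeomorphism that is a locally uniform limit of $K$-quasiconformal mappings is $K$-quasiconformal (one of the well-known facts underlying this section). Since the relation $h=f^{-1}$ does not depend on which sub-subsequence was taken, the full sequence $(f_{m_k}^{-1})$ converges to $f^{-1}$, locally uniformly and a fortiori pointwise, as required.

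The hard part is really the bookkeeping around the inverses: one must notice that the hypothesis $f_m(p_0)\in S(1)$ is exactly what keeps $f_m^{-1}$ inside the class $\cQ_0(K)$ so that the uniform distortion and H\"older bounds transfer to them, and then use the inverse-matching identities both to rule out degeneration of the limit $f$ and to pin down $\lim_k f_{m_k}^{-1}$. Everything else is routine Arzel\`{a}--Ascoli together with the standard closure of $K$-quasiconformality under locally uniform limits.
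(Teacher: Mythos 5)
Your argument is correct, and the hypothesis-checking is exactly what the paper also needs: the observation that $f_m^{-1}\in\cQ_0(K)$ with $p(f_m^{-1})=f_m(p_0)\in S(1)$, so that Lemmas~\ref{lem:udist} and~\ref{lem:uhold} transfer to the inverses, is the key set-up step in both proofs. Where you differ is the mechanism for identifying $\lim f_{m_k}^{-1}$. You re-run the compactness argument on $(f_{m_k}^{-1})$, identify any sub-subsequential limit $h$ through the identities $h\circ f=f\circ h=\mathrm{id}$ (obtained from $g_j(q_j)\to g(q)$ under locally uniform convergence), which simultaneously shows $f$ is a homeomorphism, and then upgrade to convergence of the full sequence by uniqueness of subsequential limits. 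The paper avoids any further subsequence extraction and is purely quantitative: taking as given (from the standard normal-families result cited to \cite{KRII}) that the subsequential limit $f$ is already quasiconformal and hence invertible, it fixes $q$, sets $p=f^{-1}(q)$, and uses the uniform H\"older continuity of $(f_m^{-1})$ on a ball containing both $f(p)$ and all $f_m(p)$ to get
\[
d\bigl(f_m^{-1}(q),\,p\bigr) \;=\; d\bigl(f_m^{-1}(f(p)),\,f_m^{-1}(f_m(p))\bigr) \;\lesssim\; d\bigl(f(p),\,f_m(p)\bigr)^{\alpha} \;\longrightarrow\; 0.
\]
Your route buys a self-contained verification that $f$ is a homeomorphism (rather than citing it), and yields local uniform --- not just pointwise --- convergence of the inverses; the paper's route is shorter and dispenses with the soft closure-of-the-limit-set reasoning. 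Both proofs ultimately lean on the fact that the class of $K$-quasiconformal mappings is closed under locally uniform non-degenerate limits, which the paper sources to \cite{KRII}; beyond that outsourced step, both are complete and correct.
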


\begin{proof}
Local uniform subconvergence of the $ f_m $ to a quasiconformal mapping is standard in these circumstances. That the essential dilatation of the limit mapping is the same as those of the sequence is somewhat less expected, a proof can be found in \cite{KRII}. We are left to prove the statement regarding the inverses (which one would think was automatic but we have no better argument than the following).

Abusing notation, let $ (f_m) $ be a convergent subsequence. By Lemma \ref{lem:qcinv} each $ f_m^{-1} $ is $ K $-quasiconformal. It is also true that $ f_m^{-1}(0)=0 $ for all $ m $. Our assumption regarding the existence of $ p_0 $ implies that for each $ m $, there exists $ p_m $ with $ \| p_m \| = 1 $ and $ \| f_m^{-1}(p_m) \| = \| p_0 \| = 1 $. It follows that $ (f_m^{-1}) \subset \cQ_0(K) $.

Choose some $ q \in \mH $, and let $ p $ be such that $ f(p) = q $ (where $ f = \lim f_m $). There is $ 0 < R < \infty $ such that $ f(p) \in B(R) $ and $ f_m(p) \in B(R) $ for all $ m $. As in Lemma \ref{lem:uhold}, let $ \alpha > 0 $ be such that
\[
	d ( f_m^{-1}( u_1 ) , f_m^{-1}( u_2 ) ) \lesssim d( u_1 , u_2 )^{\alpha} 
\]    
for all $ u_1 , u_2 \in B(R) $ independently of $ m $. Then
\begin{align*}
	d( f_m^{-1}(q) , p ) 	&= d( f_m^{-1}(f(p)) , f_m^{-1}( f_m ( p ) ) ) \\
							&\lesssim d( f(p) , f_m (p) )^\alpha.
\end{align*}
Consequently, $ \lim f_m^{-1}(q) = p = f^{-1}(q) $ as required.  
\end{proof}
Once it is known the $ f_m^{-1} $ converge pointwise local uniform convergence follows (in the specified circumstances of the lemma), but this is not a hypothesis of Lemma \ref{lem:wcj} so we can make do without it.

\begin{lem} \label{lem:jacconv} 
Suppose $ (f_m) $ is a sequence of $ K $-quasiconformal mappings with $ f_m (0) = 0 $ for all $ m $ and
\[
	\int_{B(1)} J_{ f_m } \simeq 1
\]
independently of $ m $. Then the $ f_m $ subconverge locally uniformly to a $ K' $-quasiconformal mapping. Furthermore, if $ (f_{m_k}) $ is a convergent subsequence then the $ f_{m_k}^{-1} $ converge pointwise to $ f^{-1} $.
\end{lem}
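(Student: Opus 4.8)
The plan is to reduce to Lemma \ref{lem:qsconv}, the only gap being that here the $f_m$ are known merely to fix $0$, not to additionally map some point of $S(1)$ onto $S(1)$; a dilation will repair this, provided the dilation factors can be confined to a compact subinterval of $(0,\infty)$, and this confinement is precisely what the two-sided bound $\int_{B(1)} J_{f_m} \simeq 1$ provides. First I would note, by the change of variable formula \eqref{eq:cvf} applied with $u=\chi_{f_m B(1)}$ (so that $\chi_{f_m B(1)}\circ f_m=\chi_{B(1)}$), that $|f_m B(1)| = \int_{B(1)} J_{f_m} \simeq 1$ uniformly in $m$. Inserting this into \eqref{eq:bdist} yields, for every $q \in S(1)$ and every $m$,
\[
	\|f_m(q)\| = d(f_m(0),f_m(q)) \simeq_K |f_m B(1)|^{1/4} \simeq 1,
\]
so there are constants $0 < a \leq b < \infty$ with $a \leq \|f_m(q)\| \leq b$ for all $q \in S(1)$ and all $m$.

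Next I would fix a point $p_0 \in S(1)$ once and for all, put $r_m := \|f_m(p_0)\| \in [a,b]$, and set $g_m := \delta_{1/r_m} \circ f_m$. Post-composing with the similarity $\delta_{1/r_m}$ multiplies numerator and denominator in $H_{f_m}(p)$ by the same factor, so it does not change the dilatation, and $g_m$ is again $K$-quasiconformal; moreover $g_m(0)=0$ and $\|g_m(p_0)\| = r_m^{-1}\|f_m(p_0)\| = 1$, so $g_m \in \cQ_0(K)$ with $p(g_m) = p_0$ for every $m$. Lemma \ref{lem:qsconv} then applies and shows that the $g_m$ subconverge, locally uniformly, to a $K$-quasiconformal mapping $g$, and that along any convergent subsequence the corresponding $g_m^{-1}$ converge pointwise to $g^{-1}$. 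Passing to a further subsequence I may in addition assume $r_m \to r_\infty \in [a,b]$; then, using the joint continuity of $(\lambda,p) \mapsto \delta_\lambda(p)$ together with the local uniform boundedness of the $g_m$ (Lemma \ref{lem:udist}), $f_m = \delta_{r_m}\circ g_m$ converges locally uniformly to $f := \delta_{r_\infty}\circ g$. As the composition of a similarity with a $K$-quasiconformal mapping, $f$ is $K$-quasiconformal, so the first assertion holds (indeed with $K' = K$).

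For the statement about inverses, let $(f_{m_j})$ be any convergent subsequence, with limit $f$. Extracting a further subsequence (not relabeled) I may assume, as above, that $g_{m_j} \to g$ locally uniformly and $r_{m_j} \to r_\infty \in [a,b]$, so that $f = \delta_{r_\infty}\circ g$ and hence $f^{-1} = g^{-1}\circ\delta_{1/r_\infty}$. The mappings $g_{m_j}^{-1}$ lie in $\cQ_0(K)$ (as shown within the proof of Lemma \ref{lem:qsconv}), so Lemma \ref{lem:uhold} makes the family $\{g_{m_j}^{-1}\}$ locally equicontinuous; with the pointwise convergence $g_{m_j}^{-1} \to g^{-1}$ this improves to local uniform convergence. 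Since $r_{m_j}$ is bounded away from $0$, for a fixed $q$ the points $\delta_{1/r_{m_j}}(q)$ remain in a fixed compact set, so
\[
	f_{m_j}^{-1}(q) = g_{m_j}^{-1}\!\big(\delta_{1/r_{m_j}}(q)\big) \longrightarrow g^{-1}\!\big(\delta_{1/r_\infty}(q)\big) = f^{-1}(q),
\]
the local uniform convergence of $g_{m_j}^{-1}$ handling the moving argument and the continuity of $g^{-1}$ the convergence of the arguments. This was obtained only along a further subsequence, but $f$, hence $f^{-1}$, is determined by $(f_{m_j})$ alone; since every subsequence of $\big(f_{m_j}^{-1}(q)\big)$ thus has a further subsequence converging to the single value $f^{-1}(q)$, the whole sequence converges to $f^{-1}(q)$, i.e. $f_{m_j}^{-1}\to f^{-1}$ pointwise. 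The steps that require real care are this last bit of subsequence bookkeeping and, just before it, the upgrade from pointwise to local uniform convergence for the $g_{m_j}^{-1}$ (needed because the arguments $\delta_{1/r_{m_j}}(q)$ move with $j$); the rest is the routine dilation normalization, the substance being imported from Lemma \ref{lem:qsconv} and the distortion estimate \eqref{eq:bdist}.
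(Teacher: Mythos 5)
Your proof is correct, and it takes a genuinely different (and in some ways cleaner) route than the paper's. The paper does not factor through a dilation normalization: after deriving $d(f_m(0),f_m(p_0))\simeq 1$ from \eqref{eq:bdist} and \eqref{eq:cvf}, it appeals to general compactness of quasisymmetric families to get subconvergence (``essentially reduced to the hypotheses of Lemma \ref{lem:qsconv}'' rather than literally applying it), and for the pointwise convergence of inverses it observes that for each $m$ there is a point $p_m$ with $R\le\|p_m\|\le R'$ and $\|f_m^{-1}(p_m)\|=1$, feeds this into the uniform distortion estimate \eqref{eq:udist} applied directly to the $f_m^{-1}$, derives H\"older bounds with uniform constants on a suitable ball, and then runs the same estimate as at the end of the proof of Lemma \ref{lem:qsconv}. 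Your approach instead writes $f_m=\delta_{r_m}\circ g_m$ with $g_m\in\cQ_0(K)$ and $p(g_m)=p_0$, so that Lemma \ref{lem:qsconv} applies verbatim to $(g_m)$; this avoids re-deriving the compactness and Hölder estimates, at the cost of the additional bookkeeping you correctly flag: confining $r_m$ to a compact subinterval of $(0,\infty)$ via the two-sided Jacobian bound, upgrading the pointwise convergence of $g_{m_j}^{-1}$ to locally uniform convergence (via Lemma \ref{lem:uhold} equicontinuity) so that the moving arguments $\delta_{1/r_{m_j}}(q)$ can be handled, and the final every-subsequence-has-a-further-subsequence argument identifying the limit with $f^{-1}(q)$. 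Both arguments are sound; yours makes the reduction to Lemma \ref{lem:qsconv} explicit where the paper's leaves it implicit, and also yields the slightly sharper conclusion $K'=K$ (which is consistent with, and just more precise than, the paper's statement).
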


\begin{proof}
Fix a point $ p_0 \in S(1) $. It follows from \eqref{eq:bdist} and \eqref{eq:cvf} that
\[
	\int_{B(1)} J_{f_m} \simeq_{K} d( f_m (0) , f_m ( p_0 ) )^4
\]
independently of $m$. Consequently, our assumption on the size of the integral means that  
\[
	d( f_m (0) , f_m ( p_0 ) ) \simeq 1
\]
independently of $m$. This, coupled with $ f_m (0) = 0 $ for all $ m $, is enough to conclude the locally uniform subconvergence using well known compactness properties of quasisymmetric mappings (in other words we have essentially reduced to the hypotheses of Lemma \ref{lem:qsconv}).

As for the pointwise convergence of $ f_m^{-1} $ to $ f^{-1} $ (with respect to a convergent subsequence we denote $(f_m)$), the argument is largely the same as that for Lemma \ref{lem:qsconv} (we just need to work a little harder). We have the existence of $ 0 < R \leq R' < \infty $ such that for each $ m $ there exists a point $ p_m $ with $ R \leq \| p_m \| \leq R' $ and $ \| f_m^{-1}(p_m) \| = \| p_0 \| = 1 $. We have, therefore, a uniform distortion estimate for the $ f_m $ as in \eqref{eq:udist}. We can use this to derive H\"{o}lder continuity with uniform constants (as in \eqref{eq:hold}) on a suitable ball, then proceed as in the proof of Lemma \ref{lem:qsconv}. 
\end{proof}

\textbf{Notes to Section \ref{sec:qc}:} The primary reference for the results of this section is \cite{KRII}. That said, the statements of \cite{KRII} are for the $n$-th Hesisenberg group, $1\leq n < \infty$. Our present context is $\mH = \mH^1$ which is also the setting for \cite{KRI}. The reader may like to consult \cite{KRI} for an (easier) introduction to the basics. The main obstacle to our citing \cite{KRI} more frequently (other than that certain aspects of the theory simply are not explored) is that usually more regularity is assumed than is suitable for our purposes. For the almost everywhere differentiability of quasiconformal mappings see \cite{PansuCC}. For the matrix of the $\cP$-differential see \cite{DairbekovHGroup}. The analytic criterion \eqref{eq:analdef} can be found in \cite{KRII} along with Lemma \ref{lem:qcinv}. The case $ r = s $ of \eqref{eq:wqs} is in \cite{KRII}, the general form is in \cite{Capogna&Tang}. It should be observed that the proof of \eqref{eq:wqs} rests on a suitable capacity estimate as proved in \cite{ReimannCapacity}.  The local H\"{o}lder continuity of quasiconformal mappings is in \cite{KRII}. The change of variable formula \eqref{eq:cvf} is in \cite{DairbekovHGroup}. The reverse H\"{o}lder inequality is proved in \cite{KRII}. Lemma \ref{lem:qsconv} is in \cite{KRII} -- but these things hold for quasisymmetric mappings in a more general setting with the results nicely stated in \cite{HeinonenLectures}.

\section{Some Lemmas About Quasilogarithmic Potentials} \label{sec:qlplems}

In Section \ref{sec:ic} we will have need to work with regularized or restricted measures, achieving our desired outcomes as we take a limit of regularizations or restrictions. For this reason we need to know that the relevant objects behave well in the limit, and the purpose of this section is to ascertain exactly that (the pertinent results being Lemmas \ref{lem:theta1} and \ref{lem:theta2}). While the development follows very closely that of Section 4 of \cite{BHSII}, that things go through as they do could be considered non-obvious in this new setting; thus we include the proofs for the convenience for the reader.

In this section we will largely write $\left\| q^{-1}p \right\|$ for $d(p,q)$. It will be useful at times to keep in mind that the polar integration formula \eqref{eq:polarint} can be used to show that for all $p\in \mH$, for all $\lambda < 4$, and for all $R>0$,
\[
	\int_{B (R)} \frac{1}{\| q^{-1}p \|^\lambda} \,\mathrm{d}\nu (q) \lesssim_{R, \lambda}\, 1.
\]	
We also make use of the following elementary inequalities, both here and in later sections:
\begin{equation}
    \log^+ (ab) \leq \log^+ (a) + \log^+ (b), \quad a,b > 0, \text{ and}
\end{equation}
\begin{equation}
    \log^+ (a+b) \leq 1 + \log^+ (a) + \log^+ (b), \quad a,b > 0.
\end{equation}
In order to streamline the exposition, in general we will not make explicit reference to the use of the observations contained in this paragraph.

Before getting to the results regarding good approximations of (quasi)logarithmic potentials, we first take care of a result of a different nature that will also play a crucial role later. To be precise, we will prove that in certain special circumstances a logarithmic potential is a Lipschitz function on $\mH$. This is achieved in Lemma \ref{lem:lpsilip} below.

For $\mu$ a signed measure on $\mH$ we define the maximal function,
\[
	M\mu(p) = \sup_{r>0} \frac{1}{| B (p,r) |}\int_{B (p,r)} \,\mathrm{d} |\mu|(q).
\]
It can be shown (see \cite[p.~43]{BigStein}) that if $\mu$ is finite then for all $\alpha > 0$,
\[
    | \{ p \in \mH \,:\, M\mu (p) > \alpha \} | \lesssim \frac{\| \mu \|}{\alpha}.
\]
This supplies the proof of the next lemma.

\begin{lem}\label{lem:maxfinae}
If $\mu$ is a finite signed measure on $\mH$ then $M\mu < \infty$ almost everywhere.
\end{lem}

The maximal function of $\mu$ provides a useful control of the logarithmic potential of $\mu$ via the following.

\begin{lem} \label{lem:finmax}
Let $\mu$ be a signed measure on $\mH$. If $p\in \mH$ is such that $M\mu(p) < \infty$ then
\[
	\int \log^+ \left( \frac{1}{\| q^{-1}p \|} \right)\,\mathrm{d}|\mu|(q) < \infty.
\]
\end{lem}
\begin{proof}
By the definition of $\log^+$ we have
\[
	\int \log^+ \left( \frac{1}{\| q^{-1}p \|} \right)\,\mathrm{d}|\mu|(q) = \int_{B (p,1)} \log^+ \left( \frac{1}{\| q^{-1}p \|} \right)\,\mathrm{d}|\mu|(q).
\]	
Let $A (p,r,R)$ be the annulus $B(p,R)\setminus B(p,r)$. Decomposing along annuli, making use of the Ahlfors $4$-regularity of $\mH$, and invoking the assumption on $M\mu(p)$ we find,
\begin{align*}
	\int_{B (p,1)} \log^+ \left( \frac{1}{\| q^{-1}p \|} \right)\,\mathrm{d}|\mu|(q) &= \sum_{j=0}^{\infty} \int_{A (p,2^{-(j+1)},2^{-j})} \log^+ \left( \frac{1}{\| q^{-1}p \|} \right)\,\mathrm{d}|\mu|(q) \\
	&\lesssim \sum_{j=0}^{\infty} 2^{j+1} \int_{A (p,2^{-(j+1)},2^{-j})}\,\mathrm{d}|\mu|(q) \\
	&\lesssim \sum_{j=0}^{\infty} 2^{j+1}\frac{2^{-4j}}{\left| B (p,2^{-j}) \right|}\int_{B (p,2^{-j})}\,\mathrm{d}|\mu|(q) \\
	&\lesssim \sum_{j=0}^{\infty} \left(\frac{1}{2^{3}} \right)^j <\infty. \qedhere
\end{align*}
\end{proof}	

Recall that a finite signed measure on $\mH$ is called admissible if $\int \log^+ \| q \| \,\mathrm{d}|\mu|(q) < \infty$ and we write $\cM$ for the set of admissible measures. Recall also that for $\mu \in \cM$ the logarithmic potential $\Lambda_\mu$ is given by $\Lambda_\mu (p)= -\int \log \| q^{-1}p \| \,\mathrm{d}\mu(q)$.

\begin{lem}\label{lem:finwmfin}
Let $\mu \in \cM$. Then $\Lambda_{\mu}$ is finite almost everywhere.	
\end{lem}

\begin{proof}
Suppose $p\in \mH$ is a point at which $M\mu$ is finite. It follows, 
\begin{align*}
	|\Lambda_{\mu} (p)| &\leq \int |\log \| q^{-1}p \| |\,\mathrm{d}|\mu|(q) \\
	&= \int  \log^+\| q^{-1}p \| + \log^+\left(\frac{1}{\| q^{-1}p \|} \right)  \,\mathrm{d}|\mu|(q) \\
	&\leq \int 1 + \log^+ \|p\| + \log^+ \|q\| + \log^+\left(\frac{1}{\| q^{-1}p \|} \right)  \,\mathrm{d}|\mu|(q).
\end{align*}
This is finite by the admissibility of $\mu$ and Lemma \ref{lem:finmax}. The statement is now immediate from Lemma \ref{lem:maxfinae}.
\end{proof}

If $\psi \in L^1$ then
\[
	\Lambda_{\psi} (p) := -\int \log \| q^{-1}p \| \,\psi(q)\mathrm{d}q.
\]

\begin{lem}\label{lem:lpsilip}
Let $\psi \in L^\infty$ with compact support, and let $R>0$ be such that $\text{support}(\psi)\subset B (R)$. Then $\Lambda_{\psi}$ is $L$-Lipschitz continuous with $L = L(\|\psi\|_\infty ,R)$.
\end{lem}
\begin{proof}
Let $\mu$ be the signed measure determined by $\mathrm{d}\mu(q) = \psi(q)\mathrm{d}q$. The obvious inequality
\[
	\int \log^+ \|q\| \,|\psi(q)|\mathrm{d}q \leq \| \psi \|_\infty \int_{B(R)} \log^+ \|q\| \,\mathrm{d}q < \infty
\]
identifies $\mu$ as an admissible measure. It is also easy to see that for all $p\in \mH$, $M\mu(p)\leq \|\psi\|_\infty$. Looking at the proof of Lemma \ref{lem:finwmfin} we see $\Lambda_\psi$ is finite everywhere. If $p,u, q\in \mH$ are such that $\| q^{-1}p \|,\|q^{-1}u\|>0$ then single variable calculus and the triangle inequality tell us
\begin{align*}
	|\log \| q^{-1}p \| - \log\|q^{-1}u\|| &= \left| \int_{\|q^{-1}u\|}^{\| q^{-1}p \|} \frac{1}{s}\,\mathrm{d}s \right|\\
	&\leq \max\left\{\frac{1}{\|q^{-1}u\|} ,\frac{1}{\| q^{-1}p \|} \right\}|\| q^{-1}p \| - \|q^{-1}u\||\\
	&\leq \left(\frac{1}{\|q^{-1}u\|}+\frac{1}{\| q^{-1}p \|}\right) \|u^{-1}p\|.
\end{align*}
We now find that $\Lambda_{\psi}$ is Lipschitz as follows,
\begin{align*}
 |\Lambda_{\psi}(p)-\Lambda_{\psi}(u)| &\leq \|\psi\|_\infty \|u^{-1}p\| \int_{B (R)}\frac{1}{\|q^{-1}u\|}+\frac{1}{\| q^{-1}p \|}\,\mathrm{d}q \\
 &\lesssim \|u^{-1}p\|. \qedhere
\end{align*}
\end{proof}

For $\mu \in \cM$ let 
\[
	c_{\mu}:= \int \log^+ \|q\| \,\mathrm{d}|\mu|(q).
\]
When $\mu$ is determined by $d\mu(q) = \psi(q)\mathrm{d}q$ with $\psi \in L^1$ we write $c_{\psi}$ for $c_{\mu}$.

Again for $\mu \in \cM$, we use the Jordan decomposition to define a helper function
\[
    \Lambda_{\mu}^+ (p) = \int \log^+ (\| q^{-1}p \|)\,\mathrm{d}\mu_- (q) + \int \log^+ \left(\frac{1}{\| q^{-1}p \|}\right)\,\mathrm{d}\mu_+ (q).
\]

\begin{lem}\label{lem:eblest}
Let $\mu \in \cM$ and let $R>0$. For every $0<\beta<\frac{4}{\| \mu \|}$ we have
\[
	\int_{B (R)} \exp\left( \beta \Lambda_\mu^+ (p) \right)\,\mathrm{d}p \lesssim_{R,c_{\mu},\beta,\|\mu\|} 1.	
\]
\end{lem}
\begin{proof}
Let $p \in B (R)$. We have
\[
	\int \log^+ \| q^{-1}p \|\,\mathrm{d}\mu_- (q) \leq \int  \left( 1 + \log^+ \|p\| + \log^+ \|q\| \right)  \,\mathrm{d}|\mu|(q) \lesssim 1
\]
so that if $\|\mu_+\| = 0$ there is nothing left to prove.

If $\|\mu_+\| > 0$ we define probability measure $\kappa = \frac{\mu_+}{\|\mu_+\|}$. Let $\beta$ be as in the statement and set $\lambda = \beta\|\mu_+\|<4$. The observation made in the first part of this proof followed by an application of Jensen's inequality give
\begin{align*}
	\int_{B (R)} \exp\left( \beta\Lambda_\mu^+ (p) \right)\,\mathrm{d}p &\lesssim \int_{B (R)}\exp\left( \beta \int \log^+ \left(\frac{1}{\|q^{-1}p\|}\right)\,\mathrm{d}\mu_+ (q) \right)\,\mathrm{d}p\\
	&=\int_{B (R)} \exp\left( \lambda\int \log^+\left( \frac{1}{\|q^{-1}p\|} \right) \,\mathrm{d}\kappa(q) \right) \,\mathrm{d}p\\
	&\leq \int_{B (R)} \int \max\left\{1,\frac{1}{\|q^{-1}p\|^{\lambda}}\right\} \,\mathrm{d}\kappa(q)\mathrm{d}p.
\end{align*}
It follows by Fubini's theorem,
\[
	\int_{B (R)} \exp\left( \beta\Lambda_\mu^+ (p) \right)\,\mathrm{d}p \lesssim \int \int_{B (R)} \left(1+\frac{1}{\|q^{-1}p\|^\lambda}\right) \,\mathrm{d}p\mathrm{d}\kappa(q) \lesssim 1. \qedhere
\]
\end{proof}

The many invocations of Fubini's theorem in the remainder of the section will be made tacitly.

\begin{cor}\label{cor:eblest}
Let $\mu \in \cM$ and let $R>0$. For every $0<\beta<\frac{4}{\|\mu\|}$ we have
\[
	\int_{B (R)} \exp\left( \beta \Lambda_\mu (p) \right)\,\mathrm{d}p \lesssim_{R,c_{\mu},\beta,\|\mu\|} 1.	
\]
\end{cor}
\begin{proof}
Notice that $\Lambda_\mu (p)$ is the sum of four parts,
\begin{align*}
\Lambda_{\mu}(p) = \int_{B(p,1)} \log \|q^{-1}p\|\,\mathrm{d}&\mu_{-}(q) + \int_{\mH\setminus B(p,1)} \log \|q^{-1}p\|\,\mathrm{d}\mu_{-}(q) \\ &-\int_{B(p,1)} \log \|q^{-1}p\|\,\mathrm{d}\mu_{+}(q)-\int_{\mH\setminus B(p,1)} \log \|q^{-1}p\|\,\mathrm{d}\mu_{+}(q).
\end{align*}
Looking at the sign of each part we find
\begin{align*}
    \Lambda_{\mu}(p) &\leq \int_{\mH\setminus B(p,1)} \log \|q^{-1}p\|\,\mathrm{d}\mu_{-}(q) + \int_{B(p,1)} \log \left( \frac{1}{\|q^{-1}p\|} \right)\,\mathrm{d}\mu_{+}(q)\\
    &= \int \log^+ \|q^{-1}p\|\,\mathrm{d}\mu_{-}(q) + \int \log^+ \left(\frac{1}{\|q^{-1}p\|}\right)\,\mathrm{d}\mu_{+}(q) = \Lambda_{\mu}^+ (p).
\end{align*}
It follows that for all $\beta > 0$, $\exp(\beta\Lambda_{\mu}(p)))\leq \exp(\beta\Lambda_{\mu}^+ (p)))$ and the desired conclusion is obtained from Lemma \ref{lem:eblest}.
\end{proof}

It is time to introduce the regularization procedure that will play an important role later in this paper. Let non-negative $ \Psi \in C_0^{\infty} $ be such that $ \text{support}(\Psi) \subset B(1) $ and $ \int \Psi = 1 $. For each $ k \in \mN $ let
\begin{equation} \label{eq:regularizers}
\Psi_k (p) := k^4 \Psi( \delta_k (p) ). 
\end{equation}
Given a finite signed measure $ \mu $, the $ k^{\text{th}} $ smooth regularization of $ \mu $ is
\begin{equation} \label{eq:regmu}
 \psi_k (p) = \psi_k^{\mu} (p) = \int \Psi_k (q^{-1}p) \, \mathrm{d}\mu(q).
\end{equation}

\begin{lem}\label{lem:flmpsik}
If $\mu$ is a finite signed measure then for all $k \geq 1$ we have
\begin{gather}
    \|\psi_k \|_1 \leq \| \mu \| \text{ and}  \label{eq:l1bound}\\
    c_{\psi_k} = \int \left(\log^+ \|p\|\right) |\psi_k (p) | \, \mathrm{d}p \lesssim_{\|\mu\|,c_{\mu}} \,1.\label{eq:cpsibnd}
\end{gather}
\end{lem}
\begin{proof}
Estimate \eqref{eq:l1bound} is straightforward since
\[
	\| \psi_k \|_1 = \int |\psi_k (p)| \,\mathrm{d}p = \int \left| \int \Psi_k (q^{-1}p) \mathrm{d}\mu (q) \right| \mathrm{d}p \leq \int \int \Psi_k (q^{-1} p )\,\mathrm{d}p \mathrm{d}|\mu| (q) = \| \mu \|.
\]

As for \eqref{eq:cpsibnd}, the definition of $\psi_k$ gives
\begin{align*}
    \int \left(\log^+ \|p\|\right) |\psi_k (p) |\, \mathrm{d}p &\leq \int\int (\log^+ \|p\|) \Psi_k(q^{-1}p) \,\mathrm{d}|\mu|(q)\mathrm{d}p \\
    &= \int\int (\log^+ \|p\|) \Psi_k(q^{-1}p) \,\mathrm{d}p\mathrm{d}|\mu|(q).
\end{align*}
From there, a simple change of variable and the triangle inequality allow us to continue as follows,
\begin{align*}
    \int \left(\log^+ \|p\|\right) |\psi_k (p) |\, \mathrm{d}p &\leq \int \int (\log^+ \|q p\|) \Psi_k (p) \,\mathrm{d}p\mathrm{d}|\mu|(q)\\
    &\leq \int \int \left(1 + \log^+ \|q\| + \log^+ \| p\| \right) \Psi_k (p) \,\mathrm{d}p\mathrm{d}|\mu|(q).
\end{align*}
In order to simplify the last expression we observe (among other things) that 
\[
    \int (\log^+ \|p\|) \Psi_k (p)\,\mathrm{d}p = \int_{B(1/k)} (\log^+ \|p\|) \Psi_k (p)\,\mathrm{d}p = 0,
\]
and find that
\[
    \int \left(\log^+ \|p\|\right) |\psi_k (p) |\, \mathrm{d}p \leq \|\mu\| + c_{\mu}
\]
as required.
\end{proof}

We pause here to introduce a known inequality that will be used in the next lemma: for all $a,b>0$ and for all $0<\alpha \leq 1$, 
\[
	|\log a - \log b| \leq \frac{1}{\alpha}|a-b|^\alpha \left(\frac{1}{a^{\alpha}} + \frac{1}{b^{\alpha}}\right).
\]	
(For the interested reader, the proof can be thought of as divided into two stages.  First, since $\log a - \log b = \frac{1}{\alpha}(\log a^{\alpha} - \log b^{\alpha})$ we have by a single variable integral estimate that
\[
	|\log a - \log b | \leq \frac{1}{\alpha} |a^{\alpha} - b^{\alpha}|\left(\frac{1}{a^{\alpha}} + \frac{1}{b^{\alpha}}\right).
\]
Second, given our assumption $0<\alpha\leq 1$, it is true that $|a^\alpha - b^\alpha|\leq |a-b|^{\alpha}$. The result follows.)

\begin{lem}\label{lem:1limonzero}
Let $\mu \in \cM$. If $B\subset \mH$ is a ball and $l \geq 1$ then
\[
	\int_B |\Lambda_{\psi_k}(p) - \Lambda_{\mu} (p)|^l \,\mathrm{d}p \to 0 \quad \text{as}\quad k\to \infty.
\]	
\end{lem}
\begin{proof}
Expand according to the definitions to find
\begin{align*}
	\int_B &|\Lambda_{\psi_k}(p) - \Lambda_{\mu} (p)|^l \,\mathrm{d}p \\
	&= \int_B \left|\int \log \|q^{-1}p\| \,\psi_k(q)\mathrm{d}q - \int\log \|u^{-1}p\| \,\mathrm{d}\mu (u)\right|^l \,\mathrm{d}p \\
	&= \int_B \left|\int\left(\int \log \|q^{-1}p\| \Psi_k(u^{-1}q)\mathrm{d}q - \log \|u^{-1}p\|\right) \,\mathrm{d}\mu (u)\right|^l \,\mathrm{d}p.
\end{align*}
Since $\int \Psi_k (u^{-1}q)\,\mathrm{d}q = 1$, we may restate this as
\[
	\int_B |\Lambda_{\psi_k}(p) - \Lambda_{\mu} (p)|^l \,\mathrm{d}p = \int_B \left|\int\int(\log \|q^{-1}p\|  - \log \|u^{-1}p\| )\,\Psi_k(u^{-1}q)\mathrm{d}q\mathrm{d}\mu (u)\right|^l \,\mathrm{d}p.
\]
Next, the inequality mentioned before this lemma and the triangle inequality give
\begin{align*}
	\int_B &|\Lambda_{\psi_k}(p) - \Lambda_{\mu} (p)|^l  \,\mathrm{d}p\\
	&\lesssim \int_B\left(\int\int |\|q^{-1}p\|-\|u^{-1}p\||^{\frac{1}{l}}\left( \frac{1}{\|q^{-1}p\|^{\frac{1}{l}}} + \frac{1}{\|u^{-1}p\|^{\frac{1}{l}}} \right) \, \Psi_k(u^{-1}q)\mathrm{d}q\mathrm{d}\mu(u)\right)^l \mathrm{d}p\\
	&\leq \int_B\left(\int\int \|u^{-1}q\|^{\frac{1}{l}}\left( \frac{1}{\|q^{-1}p\|^{\frac{1}{l}}} + \frac{1}{\|u^{-1}p\|^{\frac{1}{l}}} \right) \, \Psi_k(u^{-1}q)\mathrm{d}q\mathrm{d}\mu(u)\right)^l \mathrm{d}p.
\end{align*}
Judicious application of the H\"{o}lder inequality and a simple integral estimate results in
\begin{align*}
	\int_B &|\Lambda_{\psi_k}(p) - \Lambda_{\mu} (p)|^l  \,\mathrm{d}p\\
	&\lesssim \int_B \int\int \|u^{-1}q\|\left( \frac{1}{\|q^{-1}p\|} + \frac{1}{\|u^{-1}p\|} \right) \, \Psi_k(u^{-1}q)\mathrm{d}q\mathrm{d}\mu(u)\mathrm{d}p \\
	&\lesssim \int\int \|u^{-1}q\| \, \Psi_k(u^{-1}q)\mathrm{d}q\mathrm{d}\mu(u). 
\end{align*}
Observing that $\Psi_k (u^{-1}q)$ is non-zero only when $\|u^{-1}q\|\leq 1/k$ allows us to conclude that
\[
		\int_B |\Lambda_{\psi_k}(p) - \Lambda_{\mu} (p)|^l  \,\mathrm{d}p \lesssim \frac{1}{k}
\] 
(with the implied constant independent of $k$) and the statement easily follows.
\end{proof}	

\begin{lem}\label{lem:2limonzero}
Let $\mu \in \cM$ and let $l \geq 1$. If $\beta > 0$ is such that $\beta l < 4/\| \mu \|$ then for every ball $B\subset \mH$ we have
\[
	\int_B \left| e^{\beta \Lambda_{\psi_k}(p)} - e^{\Lambda_\mu (p)} \right|^l \,\mathrm{d}p \to 0 \quad \text{as} \quad k\to \infty.
\] 	
\end{lem}
\begin{proof}
Let $\beta > 0$ be such that $\beta l < 4/\| \mu \|$. Choose $\alpha$ such that $0 < \beta l < \alpha < 4/\| \mu \|$. It follows by \eqref{eq:l1bound} that for all $k$ we have $0<\alpha<4/\| \psi_k \|_1$. Consequently, a combination of Corollary \ref{cor:eblest} and Lemma \ref{lem:flmpsik} allows us to deduce that for all $k$,
\[
    \int \exp\left( \alpha \Lambda_{\psi_k} (p) \right) \,\mathrm{d}p \lesssim \, 1
\]
with the implied constant independent of $k$.

Let $r := \alpha / (\beta l)>1$ and let $s:= r / (r-1)$. Recalling that $e^b - e^a = \int_a^b e^x \,\mathrm{d}x$, it is easily found that $|e^a - e^b| \leq |b-a|(e^a + e^b)$ (while obvious, this appears in the next calculation sufficiently obscured as to be mysterious). The H\"{o}lder inequality, Corollary \ref{cor:eblest}, and the preceding observations can now be used to obtain
\begin{align*}
    \int_B &\left| e^{\beta \Lambda_{\psi_k}(p)} - e^{\beta \Lambda_{\mu}(p)} \right|^l \, \mathrm{d}p \\
    &\lesssim \left( \int_B \left( e^{\beta l \Lambda_{\psi_k}(p)} + e^{\beta l \Lambda_{\mu}(p)} \right)^{r} \,\mathrm{d}p\right)^{\frac{1}{r}} \left( \int_B \left|  \Lambda_{\psi_k}(p) - \Lambda_{\mu}(p) \right|^{ls} \,\mathrm{d}p \right)^{\frac{1}{s}}\\
    &\lesssim \left( \int_B \left|  \Lambda_{\psi_k}(p) - \Lambda_{\mu}(p) \right|^{ls} \,\mathrm{d}p \right)^{\frac{1}{s}}.
\end{align*}
As $r>1$ we have $ls\geq 1$, thus $\int_B \left| e^{\beta \Lambda_{\psi_k}(p)} - e^{\beta \Lambda_{\mu}(p)} \right|^l \, \mathrm{d}p \to 0$ as $k \to \infty$ by Lemma \ref{lem:1limonzero}
\end{proof}

The next two lemmas are the main purpose of this section and will be applied directly later.

Recall, if $\mu$ is a finite signed measure then $ \psi_k = \psi_k^{\mu} $ is the $ k^{\text{th}} $ smooth regularization of $ \mu $ as in \eqref{eq:regmu}.

\begin{lem} \label{lem:theta1}
Let $\mu \in \cM$, let $K\geq 1$, and let $g \in \cQ (K)$. Let $ \Lambda $ denote the quasilogarithmic potential $ \Lambda_\mu \circ g $. For each $ k \in \mN $, define $ \Lambda_k = \Lambda_{\psi_k} \circ g $. In these circumstances there exists $ \theta = \theta(K) > 0 $ such that for every $ 0 < \beta < \theta / \| \mu \| $ the function $ e^{\beta \Lambda} $ is locally integrable, and for every ball $ B \subset \mH $ we have
\[
	\int_B \left| e^{ \beta \Lambda_k (p)} - e^{ \beta \Lambda (p) } \right| \,\mathrm{d}p \to 0 \quad \text{as} \quad k \to \infty.
\]
\end{lem}

\begin{proof}
Let $\alpha>0$ be as in \eqref{eq:apw} (so that $\alpha$ depends on $K$ only). Set $\theta = \frac{4\alpha}{1+\alpha}$ and let $0<\beta<\theta/\| \mu \|$. Now set $l = 1 + 1/\alpha$ so that $\beta l < 4/\| \mu \|$.

Fix a ball $B\subset \mH$. We will first demonstrate the local integrability of $e^{\beta \Lambda}$. Invoking H\"{o}lder's inequality and the change of variable formula \eqref{eq:cvf} we find, 
\begin{align*}
 \int_B  e^{ \beta (\Lambda_\mu \circ g)(p)} \,\mathrm{d}p &= \int_B  e^{ \beta (\Lambda_\mu \circ g)(p)} J_g (p)^{\frac{1}{l}}J_g (p)^{-\frac{1}{l}} \,\mathrm{d}p \\
 &\leq \left(\int_B  e^{ \beta l (\Lambda_\mu \circ g)(p)} J_g (p) \,\mathrm{d}p\right)^{\frac{1}{l}} \left(\int_B J_g (p)^{-\alpha} \,\mathrm{d}p\right)^{\frac{1}{1+\alpha}} \\
 &\lesssim \left(\int_{g(B)}  e^{ \beta l \Lambda_\mu(q)} \,\mathrm{d}q\right)^{\frac{1}{l}}.
\end{align*}
Note the defining property of $\alpha$ was used in moving to the last line. Since $\beta l < 4 / \| \mu \|$ and there exists $R>0$ such that $g(B)\subset B(R)$, the desired conclusion follows from Corollary \ref{cor:eblest}. 

The remainder of the statement follows in a similar way; observe that
\begin{align*}
    \int_B &\left| e^{ \beta (\Lambda_{\psi_k} \circ g)(p) } - e^{ \beta (\Lambda_\mu \circ g)(p) } \right| \,\mathrm{d}p \\
    &\leq \left( \int_B \left| e^{ \beta (\Lambda_{\psi_k} \circ g)(p) } - e^{ \beta (\Lambda_\mu \circ g)(p) } \right|^l J_g (p) \,\mathrm{d}p \right)^{\frac{1}{l}} \left( \int_B J_g(p)^{-\alpha} \,\mathrm{d}p \right)^{\frac{1}{1+\alpha}} \\
    &\lesssim \left( \int_{g(B)} \left| e^{ \beta \Lambda_{\psi_k} (q) } - e^{ \beta \Lambda_\mu (q) } \right|^l \,\mathrm{d}q \right)^{\frac{1}{l}}.
\end{align*}
Consequently, $\int_B \left| e^{ \beta (\Lambda_{\psi_k} \circ g)(p) } - e^{ \beta (\Lambda_\mu \circ g ) (p) } \right| \,\mathrm{d}p \to 0$ as $k \to \infty$ by Lemma \ref{lem:2limonzero}. 
\end{proof}

\begin{lem} \label{lem:theta2}
Let $\mu \in \cM$, let $K\geq 1$, and let $g \in \cQ (K)$. Let $ \Lambda $ denote the quasilogarithmic potential $ \Lambda_\mu \circ g $. For each $ k \in \mN $, define $ \Lambda_k = \Lambda_{\mu_k} \circ g $ with $ \mu_k : = \mu\big|_{B(k)} $. In these circumstances there exists $ \theta = \theta(K) > 0 $ such that for every $ 0 < \beta < \theta / \| \mu \| $ the function $ e^{\beta \Lambda} $ is locally integrable, and for every ball $ B \subset \mH $ we have
\[
	\int_B |e^{ \beta \Lambda_k (p) } - e^{ \beta \Lambda (p) } | \,\mathrm{d}p \to 0 \quad \text{as} \quad k \to \infty.
\]
\end{lem} 

\begin{proof}
Fix $p\in \mH$ such that $|\Lambda_{\mu}(p)| < \infty$ (by Lemma \ref{lem:finwmfin} we have almost every $p \in \mH$ to choose from). Now observe,
\begin{align*}
    |\Lambda_{\mu_k}(p) - \Lambda_{\mu}(p)| &= \left| \int \log \|q^{-1}p\|  \,\mathrm{d}\mu(q) - \int_{B(k)} \log \|q^{-1}p\| \,\mathrm{d}\mu(q) \right| \\
    &\leq \int_{\mH\setminus B(k)} |\log \|q^{-1}p\| | \, \mathrm{d}|\mu|(q) \\
    &\leq \int_{\mH\setminus B(k)} \log^+ \|q^{-1}p\|  + \log^+ \left(\frac{1}{\|q^{-1}p\|}\right) \, \mathrm{d}|\mu|(q),
\end{align*}
so that if $k$ is large enough,
\[
    |\Lambda_{\mu_k}(p) - \Lambda_{\mu}(p)| \leq \int_{\mH\setminus B(k)} \log^+ \|q^{-1}p\| \, \mathrm{d}|\mu|(q).
\]
We thus have
\[
    |\Lambda_{\mu_k}(p) - \Lambda_{\mu}(p)| \leq \int_{\mH\setminus B(k)} \left(1 + \log^+ \|p\| + \log^+ \|q\| \right) \, \mathrm{d}|\mu|(q),
\]
and since $\|\mu\|<\infty$ it must be that $\Lambda_{\mu_k}(p) \to \Lambda_{\mu}(p)$ as $k \to \infty$.

If $\Omega \subset \mH$ is such that $| \Omega | = 0$, then as $g$ is a quasiconformal mapping $|g^{-1} \Omega| = 0$ also. It follows from the above that $\Lambda_k \to \Lambda$ pointwise almost everywhere. 

Now proceed in a similar way to the proof of the last lemma. Let $\alpha > 0$ be as in \eqref{eq:apw}. Setting $\theta = \alpha/(1+\alpha)$, $l=1+1/\alpha$, and letting $0<\beta<\theta/\| \mu \|$, we have as before that $\beta l < 4/\| \mu \|$. As such Lemma \ref{lem:eblest} shows $\exp\left(\beta l \Lambda_{\mu}^+\right)$ to be locally integrable. The implication of the reverse H\"{o}lder inequality (that is, the defining property of $\alpha$) and the change of variable formula \eqref{eq:cvf} (used in a way very similar to the proof of the previous lemma) give that $\exp\left( \beta \Lambda_{\mu}^+ \circ g \right)$ is locally integrable. As in the proof of Corollary \ref{cor:eblest}, we have $\Lambda = \Lambda_\mu \circ g \leq \Lambda_{\mu}^+ \circ g$. It is also easy to deduce that for all $k$, 
\[
    \Lambda_k = \Lambda_{\mu_k} \circ g \leq \Lambda_{\mu_k}^+ \circ g \leq \Lambda_{\mu}^+ \circ g.
\]

In summary, the functions $|e^{ \beta \Lambda_k } - e^{ \beta \Lambda } |$ converge to zero pointwise almost everywhere as $k\to \infty$, and are bounded above by the locally integrable function $2\exp\left( \beta \Lambda_{\mu}^+ \circ g \right)$. Thus the dominated convergence theorem applies to give 
\[
    0 = \lim_{k\to \infty} \int_B \left| e^{ \beta \Lambda_k (p) } - e^{ \beta \Lambda (p) } \right| \,\mathrm{d}p 
\]
for any given ball $B\subset \mH$ as desired.
\end{proof}

\section{Quasiconformal Flows on $\mH$} \label{sec:qcf}

The measurable Riemann mapping theorem guarantees a plentiful supply of quasiconformal mappings $ f : \mC \to \mC $. It is a consequence of this theorem that any quasiconformal mapping of the complex plane embeds as the time-$ s $ flow mapping of a suitably well-behaved vector field.

While quasiconformal mappings of the Heisenberg group satisfy a ``Beltrami system'' of equations, no similar results on the existence of solutions are known. We may, however, identify suitable conditions on a vector field $ v : \mH \to \text{T}\mH $ such that the flow is quasiconformal. Such conditions were first identified by Kor\'{a}nyi and Reimann in \cite{KRI} and \cite{KRII}. The results of \cite{KRI} are for reasonably smooth flows. In \cite{KRII} the main relevant result requires significantly less regularity, but demands that the vector field be compactly supported. See the introduction for more discussion.

Our task requires both low regularity and unbounded support. It is the purpose of the first part of this section to remove the assumption of compact support from Theorem H of \cite{KRII}. In its place we make stipulations on the growth of the vector field, then use a cut-off argument to reduce to the compactly supported case.

Remember that a quasiconformal mapping of $ \mH $ is almost everywhere contact. It is a theorem of Liebermann \cite{Liebermann} that in order for a vector field to generate contact flow it must be of the form
\begin{equation} \label{eq:contactv}
	v = v_{\phi} = -\tfrac{1}{4} (Y \phi) X + \tfrac{1}{4} (X \phi) Y + \phi T,
\end{equation}
for a function $ \phi : \mH \to \mR $. In this context $ \phi $ is called a contact-generating potential, or simply a potential. Whenever a function $\phi$ is in play and we write $ v_\phi $, we mean the above expression. As in the work of Kor\'{a}nyi and Reimann mentioned above, we will typically work at the level of the potential, deducing from its properties the properties of the flow. Indeed, Section \ref{sec:ap} is all about constructing a potential that matches our requirements.

On occasion we will need to discuss the component functions of a vector field $v$ as in \eqref{eq:contactv}. Perhaps the natural choice would be to define these with respect to the basis $ X , Y , T $ of $ \text{T} \mH $. Nevertheless, in order to be consistent with something that comes later we let $ v_1 , v_2 , v_3 $ be defined by
\[
	v = v_1 \partial_x + v_2 \partial_y + v_3 \partial_t.
\]
To be explicit, we have
\begin{equation}\label{eq:vcomp}
    v_1 = -\tfrac{1}{4} Y\phi, \quad v_2 = \tfrac{1}{4} X\phi, \quad \text{and} \quad v_3 = -\tfrac{1}{4} (2y) Y\phi - \tfrac{1}{4} (2x) X\phi + \phi.  
\end{equation}

The second part of this section is dedicated to proving a variational equation that links the Jacobian of the flow mapping to the horizontal divergence of the vector field. The results of this part follow a similar sequence of results in \cite{BHSII}. If $ \phi $ is a potential the horizontal divergence of $ v_{ \phi } $ is given by $ T\phi $:
\begin{equation}\label{eq:mif}
    \text{div}_H (v_{\phi}) = \tr (D_H v_{\phi}) = X v_1 + Y v_2 = -\tfrac{1}{4}(XY - YX)\phi = T\phi.
\end{equation}
A large part of the work of Section \ref{sec:ap} goes toward designing a $ \phi $ with $ T $ derivative resembling a given logarithmic potential. The variational equation is then the key stepping stone linking Jacobian to the weight determined by the logarithmic potential.

The notation $D_H v_{\phi}$ in \eqref{eq:mif} is currently an abuse that deserves clarification (up till now $D_H f$ is defined only for $f$ a quasiconformal mapping, and that definition was given in Section \ref{sec:qc} in terms of the Pansu-derivative). From here on, if function $F$ defined on open $U\subset \mH$ takes values in a space that is $\mR^3$ as a set and $ F_1 , F_2 \in HW_{\text{loc}}^{1} $, then
\[  
	D_H F := \begin{pmatrix} 
		    XF_1 & YF_1 \\
		    XF_2 & YF_2
		    \end{pmatrix}.
\]
Note we do not require such $ F $ to be contact (not even weakly so) in order to discuss $ D_H F $. At this level (the Sobolev level) of regularity we refer to $ D_H F $ as \emph{the} formal horizontal differential (of $F$) if the choice of almost everywhere defined functions is unimportant, and as \emph{a} formal horizontal differential if we are talking about a particular representative.

Given its importance to this paper we end the introduction to this section by recording that
\[
    ZZ = \tfrac{1}{4}\left[(XX - YY) - i (XY + YX) \right]
\]
(see \eqref{eq:wderiv} for the definition of $Z$).

\subsection{Vector Fields with Unbounded Support} 

The following is a special case of Theorem H of \cite{KRII}. 

\begin{thm}[Kor\'{a}nyi, Reimann] \label{thm:kr}
Suppose $ \phi \in HC^1 $ is compactly supported and that $ ZZ \phi \in L^{\infty} $. Let $ 0 \leq c < \infty $ be such that
\[
	\sqrt{2} \| ZZ\phi \|_{\infty} \leq c.
\]
Then for each $ p \in \mH $ the flow equation for $ v_{\phi} $ at $p$,
\[
	\gamma'(s) = v_{\phi} ( \gamma(s) ), \quad \gamma(0) = p,
\]
has exactly one solution $ \gamma_p : \mR \to \mH $. Furthermore, for $ s \geq 0 $ the time-$s$ flow mapping $f_s : \mH \to \mH$ defined by
\[
 	f_s (p) = \gamma_p (s)
\]
is a $K$-quasiconformal homeomorphism with $ K + K^{-1} \leq 2 e^{cs} $.
\end{thm}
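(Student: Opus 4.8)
\emph{This being a known result of Kor\'anyi and Reimann (Theorem H of \cite{KRII}), I will only sketch the scheme I would follow.} The plan is: (i) reduce to the case of a smooth, compactly supported potential by mollification; (ii) in that case, prove the quasiconformality estimate by tracking the complex dilatation of the flow along an auxiliary matrix differential equation; (iii) recover the low-regularity statement by a compactness argument for quasiconformal mappings together with an Osgood-type uniqueness lemma for the flow equation. The main obstacle I anticipate is step (ii) --- arranging the evolution equation so that $ZZ\phi$ genuinely governs the dilatation, decoupled from the divergence, and extracting the sharp constant, uniformly in the mollification parameter --- together with the subelliptic regularity of $v_\phi$ that step (iii) needs for uniqueness.

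\emph{Reduction to the smooth case.} Take an approximate identity $g_\epsilon \in C_0^\infty$, $g_\epsilon \geq 0$, $\int g_\epsilon = 1$, and set $\phi_\epsilon(p) = \int g_\epsilon(q)\,\phi(q^{-1}p)\,\mathrm{d}q$, the \emph{left} group convolution. Since $X,Y,T$ are left-invariant, $ZZ\phi_\epsilon = \int g_\epsilon(q)(ZZ\phi)(q^{-1}p)\,\mathrm{d}q$, so $\|ZZ\phi_\epsilon\|_\infty \leq \|ZZ\phi\|_\infty$ and hence $\sqrt 2\,\|ZZ\phi_\epsilon\|_\infty \leq c$; moreover $\phi_\epsilon \in C_0^\infty$ with supports contained in a fixed compact set, and $\phi_\epsilon \to \phi$, $X\phi_\epsilon \to X\phi$, $Y\phi_\epsilon \to Y\phi$ locally uniformly because the limits are continuous. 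Thus $v_{\phi_\epsilon}\to v_\phi$ locally uniformly, each $v_{\phi_\epsilon}$ is a smooth compactly supported vector field, its flow $f_s^\epsilon$ exists and is unique by classical ODE theory, is a smooth diffeomorphism, fixes $\mH\setminus B(R_0)$ for a fixed $R_0$ and all $s$, and (being the flow of a contact vector field) consists of contact diffeomorphisms, so $D_H f_s^\epsilon$ is defined.

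\emph{The estimate in the smooth case.} Differentiating $\partial_s f_s^\epsilon = v_{\phi_\epsilon}\circ f_s^\epsilon$ in the horizontal directions, and using the $D_H$ chain rule together with $D_H f_h^\epsilon = I_2 + h\,(D_H v_{\phi_\epsilon}) + O(h^2)$, yields the linear matrix equation $\partial_s (D_H f_s^\epsilon) = \big((D_H v_{\phi_\epsilon})\circ f_s^\epsilon\big)(D_H f_s^\epsilon)$ along each trajectory. The trace of $D_H v_{\phi_\epsilon}$ equals the horizontal divergence $T\phi_\epsilon$ and drives only $\det D_H f_s^\epsilon$ (the conformal scale), while the trace-free part of $D_H v_{\phi_\epsilon}$, whose magnitude is $\|ZZ\phi_\epsilon\|_\infty$ up to an absolute constant, drives the complex dilatation $\mu_s^\epsilon$ of $D_H f_s^\epsilon$. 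A Gr\"onwall / Riccati comparison then gives $|\mu_s^\epsilon| \leq \tanh(cs/2)$ pointwise, hence $K(f_s^\epsilon) \leq e^{cs}$, and therefore $K(f_s^\epsilon) + K(f_s^\epsilon)^{-1} \leq 2e^{cs}$, with $c$ replaced by $\sqrt 2\,\|ZZ\phi_\epsilon\|_\infty \leq c$.

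\emph{Passing to the limit and uniqueness.} The $f_s^\epsilon$ are uniformly $K$-quasiconformal and uniformly normalized (they fix $\mH\setminus B(R_0)$), so by the compactness properties of quasiconformal mappings recorded in Section \ref{sec:qc} they subconverge, locally uniformly, to a $K$-quasiconformal map; passing to the limit in the integral form $f_s^\epsilon(p) = p + \int_0^s v_{\phi_\epsilon}(f_\tau^\epsilon(p))\,\mathrm{d}\tau$, using $v_{\phi_\epsilon}\to v_\phi$ and $f_\cdot^\epsilon \to f_\cdot$ uniformly on compacta, shows that any subsequential limit $f_s$ solves the flow equation for $v_\phi$ and inherits $K(f_s) + K(f_s)^{-1} \leq 2e^{cs}$. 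Finally I would show that the hypotheses force $v_\phi$ to have an Osgood (log-Lipschitz-type) modulus of continuity --- this is the content of subelliptic Calder\'on--Zygmund estimates applied to $ZZ\phi \in L^\infty$ with $\phi$ of compact support --- so that Osgood's uniqueness lemma gives at most one solution of $\gamma' = v_\phi(\gamma)$, $\gamma(0)=p$, through each point; hence the subsequential limit is independent of the subsequence, $\gamma_p$ exists and is unique, and $f_s$ is the asserted quasiconformal flow mapping.
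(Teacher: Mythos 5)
The paper does not prove this statement at all: Theorem \ref{thm:kr} is quoted verbatim as Theorem H of \cite{KRII}, so there is no internal proof to compare against. The only fragment of your scheme that the paper actually carries out is your step (ii): the smooth-case dilatation estimate is precisely Lemma \ref{lem:dil}, where the evolution equation $(D_H f_s)' = (D_H v \circ f_s)\,D_H f_s$ is combined with the identity $\sqrt{2}\,|ZZ\phi| = 2\|\cS_H v\|$ and an eigenvalue/Gr\"onwall argument to get $K(f_s)\le e^{cs}$ (which indeed implies the stated $K+K^{-1}\le 2e^{cs}$, since $K^{-1}\le 1\le e^{cs}$). Your steps (i) and (iii) reproduce the architecture of Kor\'anyi--Reimann's own argument rather than anything in this paper; the mollification step is set up correctly (with $\phi_\epsilon = g_\epsilon * \phi$ the left-invariant derivatives fall on $\phi$, giving $\|ZZ\phi_\epsilon\|_\infty\le\|ZZ\phi\|_\infty$, while rewriting the convolution as $\int g_\epsilon(pw^{-1})\phi(w)\,\mathrm{d}w$ gives smoothness), and the compactness-plus-normalization passage to the limit is standard.

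The one place your sketch is genuinely thin is the uniqueness step, and that is exactly where the content of Theorem H lies. The hypothesis bounds only $ZZ\phi$, i.e.\ the trace-free (conformal strain) part of the horizontal Hessian; the sub-Laplacian part and $T\phi$ are not controlled, so an Osgood or log-Lipschitz modulus of continuity for $v_\phi$ does not follow from elementary estimates. Invoking ``subelliptic Calder\'on--Zygmund estimates'' is a black box standing in for the nontrivial singular-integral argument (the Heisenberg analogue of the Euclidean fact, due to Reimann, that bounded Ahlfors strain places the field in a Zygmund-type class), and without it the limit flow could a priori fail to be the unique solution through each point. As a sketch of a cited theorem this is an acceptable outline, correctly locating the difficulty; as a standalone proof it would be incomplete precisely at that point.
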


In the statement of the theorem, $ZZ\phi$ is a distributional (or weak) derivative.

\begin{rem}
Theorem H of \cite{KRII} is stated and proved for the $n$-th Heisenberg group, $1 \leq n < \infty$. The reader may like to consult also Theorem 6 of \cite{KRI} which is for $\mH = \mH^1$ only. The downside (for us) of this latter theorem is that it assumes the vector field is $C^2$-smooth (and so the flow mappings are $C^2$-smooth also). This is not true of the vector fields we construct in Section \ref{sec:ap}.  
\end{rem}

We intend to adapt Theorem \ref{thm:kr} by identifying suitable means of removing the assumption of compact support. First we have a smaller (but still important) improvement to make. The proof of Theorem \ref{thm:kr} makes use of the square  (or Frobenius) norm on $ D_H f_s $, which leads to the form of the bound on $ K $. Unfortunately, this bound is not suitable for our later arguments as the factor $2$ accumulates problematically on taking repeated compositions. This can be avoided if we rework part of the proof in the smooth case, using the operator norm in place of the square norm. We need only the smooth case since it is this that feeds into the proof of Theorem \ref{thm:kr} in an approximation argument. We thank Jeremy Tyson for improving the proof of the following lemma.
\begin{lem} \label{lem:dil}
Suppose $ \phi \in C_{0}^{\infty} $ and $ \sqrt{2} \| ZZ\phi \|_{\infty} \leq c $ for some $ 0 \leq c < \infty $. Then $ v_{\phi} $ generates a smooth flow of homeomorphisms and each time-$ s $ flow mapping, $ 0 \leq s < \infty $, is $ K $-quasiconformal with $ K \leq e^{ c s } $.  
\end{lem}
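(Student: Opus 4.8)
The plan is to follow the horizontal differential of the flow along its orbits and run a Gr\"onwall estimate, but to measure distortion in the \emph{operator} norm rather than the Frobenius norm used in \cite{KRII}; this single change is what turns the Kor\'anyi--Reimann bound $K+K^{-1}\le2e^{cs}$ into the multiplicatively well-behaved $K\le e^{cs}$ required by the iterated compositions of Section \ref{sec:ic}. Since $\phi\in C_0^\infty$, the field $v_\phi$ of \eqref{eq:contactv} is smooth with compact support, hence complete, so the classical theory of ordinary differential equations gives a smooth one-parameter group of $C^\infty$ diffeomorphisms $f_s$, each contact by the form \eqref{eq:contactv} (Liebermann); a smooth contact diffeomorphism with invertible horizontal differential is in particular quasiconformal, so it remains only to bound the essential dilatation via the analytic criterion \eqref{eq:analdef}, which I would do pointwise. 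Fix $p\in\mH$ and put $A(s)=D_H f_s(p)\in M_2(\mR)$, so $A(0)=I_2$. Differentiating the chain rule $D_H(f_1\circ f_2)=(D_H f_1\circ f_2)D_H f_2$ along the flow identity $f_{t+s}=f_t\circ f_s$ at $t=0$ gives the linear matrix equation
\[
	A'(s)=B(f_s(p))\,A(s),\qquad A(0)=I_2,
\]
where $B(q)=\tfrac{d}{dt}\big|_{t=0}D_H f_t(q)$ is the infinitesimal horizontal differential of $v_\phi$; reading it off from \eqref{eq:contactv} and \eqref{eq:labasis} one finds $B=\tfrac14\left(\begin{smallmatrix}-XY\phi & -YY\phi\\ XX\phi & YX\phi\end{smallmatrix}\right)$ at $q$, a smooth matrix field with $\tr B=T\phi$, the horizontal divergence of $v_\phi$.

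Next, the estimate. Split the symmetric part as $\tfrac12(B+B^T)=\tfrac12(\tr B)\,I_2+S$ with $S$ symmetric and trace free. The Kor\'anyi--Reimann computation --- which is where the $\sqrt2$ of the hypothesis enters, being the ratio between the Frobenius and operator norms of a trace-free symmetric $2\times2$ matrix --- gives $\|S(q)\|=|ZZ\phi(q)|/\sqrt2\le c/2$ for all $q$, with $\|\cdot\|$ the operator norm (this can also be checked directly from the explicit form of $B$ above). Hence the largest eigenvalue of $\tfrac12(B+B^T)$ is $\tfrac12\tr B+\|S\|$, and for each fixed unit vector $\bv\in\mR^2$,
\[
	\frac{d}{ds}|A(s)\bv|^2=2\langle B(f_s(p))A(s)\bv,\,A(s)\bv\rangle\le\bigl(\tr B+2\|S\|\bigr)(f_s(p))\,|A(s)\bv|^2.
\]
Gr\"onwall's inequality and $|A(0)\bv|=1$ give $|A(s)\bv|^2\le\exp\int_0^s(\tr B+2\|S\|)(f_\tau(p))\,\mathrm d\tau$, and taking the supremum over unit $\bv$ yields the same bound for $|A(s)|^2$. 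Liouville's formula applied to the same equation gives $\det A(s)=\exp\int_0^s\tr B(f_\tau(p))\,\mathrm d\tau$ exactly (so $A(s)$ is invertible). Dividing, the trace terms cancel and
\[
	\frac{|A(s)|^2}{\det A(s)}\le\exp\int_0^s2\|S(f_\tau(p))\|\,\mathrm d\tau\le e^{cs}.
\]
Since $p$ was arbitrary, $|D_H f_s|^2\le e^{cs}\det D_H f_s$ everywhere on $\mH$, and as $J_{f_s}=(\det D_H f_s)^2>0$ the criterion \eqref{eq:analdef} identifies $f_s$ as $K$-quasiconformal with $K\le e^{cs}$.

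The one technical point needing care is that $s\mapsto|A(s)|$ is merely Lipschitz, not $C^1$; I avoid differentiating it by running Gr\"onwall on the smooth scalar $|A(s)\bv|^2$ for each fixed $\bv$ and taking the supremum only afterwards. Every other ingredient is either classical --- existence and smoothness of the flow, Liouville's formula --- or is precisely the pointwise computation of $B$ and its relation to $ZZ\phi$ already carried out in \cite{KRII}; the real content here is just in assembling these in the operator norm, which is what yields the constant needed downstream.
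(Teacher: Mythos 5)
Your proof is correct and takes essentially the same route as the paper: the same conformal-strain decomposition measured in the operator norm, the same linear equation $A'(s)=B(f_s(p))A(s)$ along the flow, and the determinant formula, with your fixed-vector Gr\"onwall step (sup over unit $\bv$ taken only after integrating) being a clean variant of the paper's differentiation of the top eigenvalue of $A^TA$. One small inaccuracy: the operator norm of the trace-free symmetric part $S$ equals $|ZZ\phi|/2$, not $|ZZ\phi|/\sqrt{2}$ (the latter is its Frobenius norm), but since the operator norm is dominated by the Frobenius norm the bound $\|S\|\leq c/2$ you actually use still holds, so the argument is unaffected.
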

\begin{proof}
Taking $ \phi \in  C_{0}^{\infty} $ is already enough for existence and uniqueness of solutions to the flow equation for $ v_{\phi} $. The time-$ s $ flow mappings will be well-defined $ C^{\infty} $-smooth homeomorphisms of $ \mH $.

In the present context, $ ZZ \phi $ should be considered the Heisenberg equivalent of what in the Euclidean case is sometimes called the Ahlfors conformal strain of the vector field. For $M\in M_2(\mR)$, let 
\[
	\cS ( M ) := \tfrac 1 2 ( M + M^T ) - \tfrac 1 2 ( \tr M) I_2
\]
(this is the symmetric, trace-free part of $ M $). Writing $ v = v_{\phi} $, let
\[
	\cS_H v := \frac 1 2 	\begin{pmatrix} 
						Xv_1-Yv_2 & Xv_2+Yv_1 \\
						Xv_2+Yv_1 & Yv_2-Xv_1
						\end{pmatrix} = \cS (D_H v).
\]
For $M\in M_2(\mR)$, let $ \| M  \| := \sqrt{\tr [ M M^T ] } $ (called the square norm of $M$). Note that (or see \cite[p.~333]{KRI}), 
\[
	\sqrt 2 | ZZ\phi | = 2 \| \cS_H v \|.
\]
As $ | \cS_H v | \leq \| \cS_H v \| $, our assumed bound on $ | ZZ\phi | $ translates to
\[
	2 \| \cS_H v \|_{\infty} \leq c	
\]
where (at the risk of confusion) we write $ \| \cS_H v \|_{\infty} $ for $\sup_{p \in \mH} | \cS_H v (p) |$. For $s\geq 0$ let $ f_s $ be the time-$ s $ flow mapping generated by $ v $. Fix $p \in \mH$. The integral formula for solutions to the flow equation and the contact equations \eqref{eq:contact1} and \eqref{eq:contact2} lead to
\begin{equation} \label{eq:diffint}
	( D_H f_s (p) )' = D_H v ( f_s (p) ) D_H f_s (p).
\end{equation}
For notational convenience, let $ A = A (s) := D_H f_s (p) $ and for all $q\in \mH$ let $ B = B(q) := D_H v (q) $. Then \eqref{eq:diffint} becomes
\[
	A' = B ( f_s ) A.
\]
For all $s \geq 0$ the matrix $A(s)$ is invertible, and we find 
\begin{equation}\label{eq:sbfs}
	\cS (B ( f_s ) ) = \tfrac  1 2 A' A^{-1} + \tfrac 1 2 ( A^{-1} )^T ( A' )^T - \tfrac 1 2 \tr( A' A^{-1} ) I_2.
\end{equation}

Our chosen $p$ was arbitrary, hence if the smooth quasiconformal mapping $ f_s $ satisfies 
\begin{equation} \label{eq:analdil}
	\frac{ | D_H f_s (p) |^2 }{ \det D_H f_s (p) } = \frac{ | A |^2 }{ \det A} \leq K 
\end{equation}
it is $K$-quasiconformal -- this is Theorem 4 of \cite{KRI}. Consequently, we need only show
\[
	\frac{ | A |^2 }{ \det A} \leq \exp \left( c s  \right).
\]
To this end, recall that $ | A |^2 $ is equal to the larger eigenvalue $ \lambda = \lambda(s) $ of the matrix $ A^T A $. For each $ s \geq 0 $, there is a unit eigenvector $ u = u(s) $ for the eigenvalue $ \lambda $ such that
\[
	\lambda = \langle A^T A u, u \rangle = |A u|^2.
\]
Differentiating with respect to $ s $ gives
\begin{equation} \label{eq:eigdiff}
	\lambda' = 2 \langle A' u, A u \rangle + 2 \langle A u', A u \rangle.
\end{equation}
As $ | u |^2 = 1 $ it must be that $ u' $ and $ u $ are orthogonal:
\[
	0 = (| u |^2) ' = \langle u , u \rangle' = 2 \langle u' , u \rangle. 
\]
It follows the second term on the right side of \eqref{eq:eigdiff} is zero, indeed
\[
	\langle A u', A u \rangle = \langle u', A^T A u \rangle = \langle u', \lambda u \rangle = \lambda \cdot 0.
\]

Using the standard formula (sometimes called Jacobi's formula) for $s \mapsto M(s) \in M_2 (\mR)$ such that each $M(s)$ is differentiable and invertible 
\[
	( \det M )' = ( \det M ) \tr( M' \, M^{-1} ),
\]
we have
\[
	\left( \log \frac{ | A |^2 }{ \det A} \right)' = \frac{ \lambda' }{ \lambda } - \frac{ (\det A)' }{ \det A } = \frac{ 2 \langle A' u , A u \rangle }{ | A u |^2 } - \tr ( A' A^{-1} ).
\]
Set $w = A u$. Then
\begin{equation*}
	\begin{split}
		\left( \log \frac{ | A |^2 }{ \det A} \right)' &= 2 \frac{ \langle A' A^{-1} w , w \rangle }{ | w |^2 } - \tr ( A' A^{-1} ) \\
		&= \left\langle \frac{ w }{ | w | } , \frac{ A' A^{-1} w }{ | w | } + \frac{ ( A^{-1} )^T ( A' )^T w }{ | w | } - \tr ( A' A^{-1} ) \frac{ w }{ | w | } \right\rangle \\
		&\leq \frac{ | A' A^{-1} w + ( A^{-1} )^T ( A' )^T w -  \tr ( A' A^{-1} ) w | }{ | w | } \\
		&\leq | A' A^{-1} + ( A^{-1} )^T ( A' )^T - ( \tr A' A^{-1} ) I_2 | \\
		&= 2 | \cS ( B ( f_s ) ) |
	\end{split}
\end{equation*}
by \eqref{eq:sbfs}. Observe that $ A(0) = I_2 $, $ \cS (B( f_s )) = \cS_H v ( f_s ) $, and for all $s\geq 0$, $ | \cS_H v ( f_s )| \leq \| \cS_H v \|_\infty \leq c/2 $. Thus
\[
    \frac{|A|^2}{\det A} \leq \exp\left( 2 s \sup_{\sigma \in [0,s]} | \cS_H v ( f_\sigma )| \right) \leq e^{cs}
\]
as desired.
\end{proof}

A careful check of the proof of Theorem \ref{thm:kr} in \cite{KRII} shows that the quasiconformal mappings it promises have dilatation bounded in the same manner as the smooth mappings of Lemma \ref{lem:dil}. We are now ready to formulate a new version of Theorem \ref{thm:kr} with the assumption of compact support replaced by some natural growth conditions. The proof uses some ideas from Reimann's work in the Euclidean setting \cite{ReimannODE}, especially Propositions 4 and 12 of that paper.

\begin{pro} \label{pro:flow}
Suppose $ \phi \in HC^1 $ and that $ ZZ \phi \in L^{\infty} $. Let $ 0 \leq c < \infty $ be such that
\begin{equation} \label{eq:zzbnd}
	\sqrt{2} \| ZZ \phi \|_{\infty} \leq c.
\end{equation}
Further suppose that
\begin{equation} \label{eq:growth1}
	|\phi(p)| 	\lesssim 1 + \|p\|^{2} \log \|p\| 
\end{equation}
and
\begin{equation} \label{eq:growth2}
	|Z \phi (p)| \lesssim 1 + \|p\| \log\|p\|.
\end{equation}
Then for all $ p \in \mH $, the flow equation for $ v_{\phi} $ at $p$,
\[
	\gamma'(s) = v_\phi ( \gamma(s) ), \quad \gamma(0) = p,
\]
has a unique solution that exists for all time. We denote this solution $ \gamma_p : \mR \to \mH $. For $ s \geq 0 $, the time-$s$ flow mapping $f_s : \mH \to \mH$ defined by
\[
 	f_s (p) = \gamma_p (s)
\]
is a $K$-quasiconformal homeomorphism with $ K \leq e^{cs} $.
\end{pro}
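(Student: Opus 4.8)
The plan is to reduce to Theorem~\ref{thm:kr} by a cut-off argument, using the growth conditions \eqref{eq:growth1} and \eqref{eq:growth2} to bound how far the flow can travel in bounded time, so that the truncation is invisible on any fixed ball over any fixed time interval. For each $R>0$ fix a smooth $\eta_R$, compactly supported in $B(2R)$ with $\eta_R\equiv 1$ on $B(R)$, and put $\phi_R=\eta_R\phi$. Then $\phi_R\in HC^1$ is compactly supported and, by the Leibniz rule, $ZZ\phi_R=\eta_R\,ZZ\phi+2(Z\eta_R)(Z\phi)+(ZZ\eta_R)\phi$, each summand bounded with compact support, so $ZZ\phi_R\in L^\infty$. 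Hence Theorem~\ref{thm:kr} applies: the flow of $v_{\phi_R}$ exists and is unique for all time, with time-$s$ maps $f_s^{(R)}$ quasiconformal. I make no attempt to bound $\|ZZ\phi_R\|_\infty$ uniformly in $R$ --- the growth hypotheses are scale-critical, so the cross terms $(Z\eta_R)(Z\phi)$ and $(ZZ\eta_R)\phi$ are genuinely $O(1)$ at best --- so the quasiconformality constant of $f_s^{(R)}$ will degenerate; what saves the day is that these error terms are supported on the receding annulus $B(2R)\setminus B(R)$.

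The crux is an a priori bound on the speed of the flow. Writing $\gamma$ for a solution of $\gamma'=v_\phi(\gamma)$ and using $\dot x=-\tfrac14 Y\phi$, $\dot y=\tfrac14 X\phi$, $\dot t=-\tfrac12 yY\phi-\tfrac12 xX\phi+\phi$ together with $|x|,|y|\le\|p\|$, $|t|\le\|p\|^2$ and \eqref{eq:growth1}, \eqref{eq:growth2}, a direct computation with $\|p\|^4=(x^2+y^2)^2+t^2$ gives
\[
	\frac{d}{ds}\,\|\gamma(s)\|^4\;\lesssim\;1+\|\gamma(s)\|^4\log\bigl(e+\|\gamma(s)\|\bigr).
\]
Since $\int^\infty\frac{du}{1+u\log(e+u)}=\infty$, this forbids finite-time blow-up and, by ODE comparison, yields $\|\gamma(s)\|\le\Phi(\|\gamma(0)\|,|s|)$ with $\Phi$ continuous and nondecreasing in each argument; this is exactly where the growth conditions enter, and they play the same role as the hypotheses in \cite{ReimannODE}. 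Because $v_{\phi_R}=v_\phi$ on $B(R)$, the same estimate governs solutions of $\gamma'=v_{\phi_R}(\gamma)$ while they remain in $B(R)$, so a continuation argument shows: whenever $\Phi(\|p\|,|s|)<R$, the trajectory $\gamma_p^{(R)}$ stays in $B(R)$ on $[-|s|,|s|]$ and hence solves $\gamma'=v_\phi(\gamma)$ there. By uniqueness at each level $R$, the $\gamma_p^{(R)}$ and the $f_\sigma^{(R)}$ agree, for all large $R$, on the exhausting family $\Omega_{R,s}=\{\,p:\Phi(\|p\|,|s|)<R\,\}$, so one may define $\gamma_p(\sigma)=\gamma_p^{(R)}(\sigma)$ and $f_\sigma(p)=f_\sigma^{(R)}(p)$ unambiguously; this gives the flow on all of $\mH$ for all time, with $f_\sigma\circ f_{-\sigma}=\mathrm{id}$, so each $f_\sigma$ is a homeomorphism, and uniqueness of $\gamma_p$ follows by the same continuation-plus-uniqueness reasoning applied to any competing local solution.

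For the sharp constant $K(f_s)\le e^{cs}$ I would use that the dilatation estimate behind Lemma~\ref{lem:dil} is local along trajectories: its proof gives, for a smooth potential $\psi$, the pointwise inequality $\bigl(\log H_{f_\sigma}\bigr)'\le 2\bigl|\cS_H v_\psi(f_\sigma(\cdot))\bigr|\le\sqrt2\,\bigl|ZZ\psi(f_\sigma(\cdot))\bigr|$, so if the trajectory of a point over $[0,s]$ lies in a region $W$, then $H_{f_s}(\cdot)\le\exp\bigl(s\sqrt2\sup_W|ZZ\psi|\bigr)$ there; the mollification underlying Theorem~\ref{thm:kr} then replaces $\sup_W$ by $\|\cdot\|_{L^\infty(W)}$ for $\psi\in HC^1$. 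Applying this with $\psi=\phi_R$ and $W$ a ball slightly larger than $B(\Phi(\|p\|,s))$ but still inside $B(R)$, where $v_{\phi_R}=v_\phi$ and so $\sqrt2\|ZZ\phi_R\|_{L^\infty(W)}=\sqrt2\|ZZ\phi\|_{L^\infty(W)}\le c$, gives $\esssup_{\Omega_{R,s}}H_{f_s}\le e^{cs}$ for every large $R$; since the $\Omega_{R,s}$ exhaust $\mH$, $K(f_s)\le e^{cs}$.

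The main obstacle is the a priori speed estimate together with its continuation argument: one must verify that \eqref{eq:growth1} and \eqref{eq:growth2} are tuned just strongly enough that the flow of the uncut field, restricted to a bounded region, cannot escape in finite time, so that the truncation --- which unavoidably ruins the global strain bound, and with it the crude dilatation control --- never reaches the part of $\mH$ that the flow on that region can see. Everything else is bookkeeping: consistency of the truncated flows and tracking that Lemma~\ref{lem:dil} is local enough to survive the truncation.
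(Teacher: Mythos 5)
Your a~priori speed estimate is correct and plays exactly the role of the paper's Lemma~\ref{lem:bndsolns}, and the continuation argument identifying the truncated flows on an exhausting family of sets is sound. But the way you handle the cutoff diverges from the paper at the crucial technical point, and that divergence is where the gap is.

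You take a standard cutoff $\eta_R$ at scale $R$ and then (correctly) give up on bounding $\|ZZ\phi_R\|_\infty$ uniformly. Two remarks. First, your assessment that the cross terms are ``$O(1)$ at best'' is too optimistic: with $|Z\eta_R|\sim R^{-1}$, $|ZZ\eta_R|\sim R^{-2}$ and the growth bounds \eqref{eq:growth1}, \eqref{eq:growth2}, the terms $(Z\eta_R)(Z\phi)$ and $(ZZ\eta_R)\phi$ are $O(\log R)$ on the annulus, so they actually diverge. Second, and more seriously, you then compensate by invoking a \emph{local-along-trajectories} version of the dilatation bound in Theorem~\ref{thm:kr}: that $\esssup_{\Omega}H_{f_s}$ is controlled by $\|ZZ\phi_R\|_{L^\infty(W)}$ with $W$ a neighborhood of the trajectories of points in $\Omega$. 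That estimate holds for the smooth case (as you read off from the proof of Lemma~\ref{lem:dil}), but Theorem~\ref{thm:kr}, as stated, only gives a global bound $K\le e^{\sqrt2\|ZZ\phi\|_\infty s}$. Transferring the local estimate through the $HC^1$ mollification argument of \cite{KRII} is not a one-line observation; it is a nontrivial re-examination of that proof, and your proposal asserts rather than demonstrates it. This is a genuine gap: your entire argument hinges on this localized form of Theorem~\ref{thm:kr}, which is neither stated nor proved.

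The paper's route avoids this entirely by using a much more spread-out cutoff. Rather than $\eta_R$ with compact transition on an annulus $B(2R)\setminus B(R)$, it uses $G_l(\|p\|^4)$ where $\tilde G_l(r)=1-l^{-1}(\log\log r - \log\log l)$, so the transition takes place over $l\le \|p\|^4 \le l'$ with $\log l'=e^l\log l$ --- an enormous scale. This produces the derivative bounds \eqref{eq:gderiv}, which, tested against the growth hypotheses, yield $\sqrt2\|ZZ\phi_l\|_\infty \le Cl^{-1}+c$. The cross terms are thus $O(l^{-1})$ (vanishing, not merely bounded), so the \emph{global} bound of Theorem~\ref{thm:kr} applied to $\phi_l$ already gives $K(f_s^l)\le e^{(Cl^{-1}+c)s}$, and restricting to $B(D)$ and letting $l\to\infty$ finishes the argument --- no local version of the theorem is required. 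If you want to pursue your route, the honest to-do item is the localization of the dilatation bound through the low-regularity approximation; otherwise, replacing $\eta_R$ by a cutoff calibrated to the growth hypotheses (so that the cross terms are $o(1)$ rather than divergent) sidesteps the issue and matches the paper.
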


We separate the proof into two parts. The first part is contained in the following lemma of independent interest.

\begin{lem}\label{lem:bndsolns}
Suppose $\phi \in HC^1$ is such that \eqref{eq:growth1} and \eqref{eq:growth2} hold. Then for all $p\in\mH$, any solution to the flow equation for $v_{\phi}$ at $p$ remains a bounded distance from the origin on any interval of existence $(-s_0, s_0)$ with $0 \leq s_0 < \infty$.
\end{lem}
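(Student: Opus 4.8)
The plan is to reduce the statement to a scalar Grönwall-type differential inequality for the fourth power of the Kor\'anyi gauge along a flow line, and then close it by comparison with a scalar ODE that has no finite-time blow-up. Fix a solution $\gamma : (-s_0,s_0) \to \mH$, write $\gamma(s) = (x(s),y(s),t(s))$, and expand $v_\phi$ in the coordinate frame: $v_1 = -\tfrac14 Y\phi$, $v_2 = \tfrac14 X\phi$, and $v_3 = \phi - \tfrac{x}{2}X\phi - \tfrac{y}{2}Y\phi$. Since $\phi \in HC^1$ these coefficients are continuous, so $\gamma \in C^1$. I would then work not with $\|\gamma(s)\|$ (which fails to be differentiable wherever $\gamma$ passes through $0$) but with
\[
	u(s) := \|\gamma(s)\|^4 = \bigl(x(s)^2 + y(s)^2\bigr)^2 + t(s)^2,
\]
a $C^1$ function of $s$, being a polynomial in the $C^1$ coordinate functions.

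Differentiating $u$ and substituting the flow equation, a short computation gives
\[
	u'(s) = X\phi\,\bigl[(x^2+y^2)y - tx\bigr] + Y\phi\,\bigl[-(x^2+y^2)x - ty\bigr] + 2t\phi,
\]
all terms evaluated at $\gamma(s)$. Using $x^2+y^2 \le \|\gamma\|^2$, $|x|,|y| \le \|\gamma\|$, and $|t| \le \|\gamma\|^2$, the two bracketed coefficients are $O(\|\gamma\|^3)$, so
\[
	|u'(s)| \lesssim \|\gamma\|^3\bigl(|X\phi| + |Y\phi|\bigr) + \|\gamma\|^2|\phi|.
\]
Combining this with $|X\phi| + |Y\phi| \lesssim |Z\phi|$ and the growth hypotheses \eqref{eq:growth1} and \eqref{eq:growth2} yields
\[
	|u'(s)| \lesssim \|\gamma\|^3\bigl(1 + \|\gamma\|\log(e+\|\gamma\|)\bigr) + \|\gamma\|^2\bigl(1 + \|\gamma\|^2\log(e+\|\gamma\|)\bigr) \lesssim 1 + u(s)\log\bigl(e+u(s)\bigr).
\]
It is worth emphasizing that the exponents in \eqref{eq:growth1} and \eqref{eq:growth2} are precisely those for which both nonlinear contributions land on $\|\gamma\|^4\log\|\gamma\| = u\log u$; this is exactly what keeps the resulting inequality subcritical.

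Finally, having $u \in C^1$ with $u'(s) \le C\bigl(1 + u(s)\log(e+u(s))\bigr)$ on $(-s_0,s_0)$, for a constant $C$ depending only on the implied constants in \eqref{eq:growth1} and \eqref{eq:growth2}, I would compare $u$ with the solution $w$ of the scalar initial value problem $w' = C(1 + w\log(e+w))$, $w(0) = u(0)$. Because $\int^{\infty} \frac{dw}{1 + w\log(e+w)} = \infty$ (the integrand is $\sim (w\log w)^{-1}$ near infinity), this ODE exhibits no finite-time blow-up: $w$ is finite and increasing on all of $[0,\infty)$. The standard ODE comparison principle then gives $u(s) \le w(s) \le w(s_0)$ for $s \in [0,s_0)$, and applying the identical argument to $s \mapsto \gamma(-s)$ (whose fourth gauge power satisfies the same inequality) covers $s \in (-s_0,0]$. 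Hence $\sup_{|s|<s_0}\|\gamma(s)\| \le w(s_0)^{1/4} < \infty$, which is the claim. The one point requiring care is the bookkeeping in the differential inequality above — tracking the powers of $\|\gamma\|$ so that the comparison ODE stays free of finite-time blow-up — but this is exactly what the growth conditions \eqref{eq:growth1} and \eqref{eq:growth2} are calibrated to ensure, so the remainder is routine.
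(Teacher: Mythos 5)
Your proof is correct, and it takes a genuinely different route from the paper's. The paper's proof works with the box-type homogeneous norm $\|(z,t)\|_a = |z|+|t|^{1/2}$, treats the horizontal magnitude $|\gamma_I|$ and the vertical magnitude $|\gamma_3|$ separately through a pair of coupled integral inequalities, introduces monotone auxiliary majorants $\lambda_I,\lambda_3$, eliminates $\lambda_3$ via the auxiliary comparison $w'(\lambda) \leq 1+\lambda+\sqrt{w}$ (yielding $\lambda_3 \leq (\lambda_I+1)^2$), and only then closes a Gr\"onwall estimate on $\lambda_I$. You sidestep all of this bookkeeping by observing that $u=\|\gamma\|^4$ --- unlike $\|\gamma\|$ itself --- is a polynomial in the $C^1$ coordinate functions and hence $C^1$ along the flow, computing $u'$ in closed form from the Liebermann structure of $v_\phi$, and reading off a single scalar Osgood inequality $|u'|\lesssim 1+u\log(e+u)$ which is subcritical (the Osgood integral $\int^\infty dw/(1+w\log(e+w))$ diverges). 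I have checked the algebra: $u'=X\phi\,[(x^2+y^2)y-tx]+Y\phi\,[-(x^2+y^2)x-ty]+2t\phi$, the brackets are $O(\|\gamma\|^3)$, and the growth hypotheses \eqref{eq:growth1}--\eqref{eq:growth2} together give $|u'|\lesssim\|\gamma\|^4\log(e+\|\gamma\|)+\text{lower order}\lesssim 1+u\log(e+u)$, exactly as you say. The backward-time case is handled correctly by reversing time (equivalently, replacing $\phi$ by $-\phi$, which preserves the growth hypotheses). What your version buys is economy: one scalar inequality rather than a two-variable system with an intermediate substitution. What the paper's version buys is that the estimates track the components $v_1,v_2,v_3$ directly, matching the anisotropic scaling of the Heisenberg group more transparently and avoiding the explicit differentiation of $\|\cdot\|^4$; but yours is the cleaner argument. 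The one presentational wrinkle worth flagging is that the paper states \eqref{eq:growth1}--\eqref{eq:growth2} with bare $\log\|p\|$, which is negative for $\|p\|<1$; both you and the paper silently replace it by a harmless positive variant ($\log(e+\cdot)$ in your case, $\log(2+\cdot)$ in the paper's), and this is justified since the bound is only meaningful for large $\|p\|$ once the $1+$ term is present.
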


\begin{proof}
Recall that
\[
	v_{\phi} = -\tfrac{1}{4}(Y\phi) X + \tfrac{1}{4}(X\phi) Y + \phi T
\]
and the components $v_i$ are determined by $v_{\phi} = v_1 \partial_x + v_2 \partial_y + v_3 \partial_t$ as in \eqref{eq:vcomp}. Write $v = v_{\phi}$.

Let $\|(z,t)\|_{a} := |z|+|t|^{\frac{1}{2}}$. This is a homogeneous norm comparable to $\| \cdot \|$. Our assumptions imply that for $i=1,2$,
\begin{equation}\label{eq:vi}
	|v_i(p)| \lesssim 1 + \|p\|_a \log\|p\|_a
\end{equation}
and
\begin{equation}\label{eq:v3}
	|v_3(p)| \lesssim 1 + \|p\|_a^2 \log\|p\|_a . 
\end{equation}

Fix $u \in \mH$ and let $\gamma = (\gamma_1 , \gamma_2 , \gamma_3) : (-s_0,s_0) \to \mH$ be a solution to the flow equation for $v$ at $u$ with $0 < s_0 < \infty$. Let $s \in (-s_0,s_0)$. Since $\widetilde{\gamma} : [0,s_0) \to \mH$ defined by $\widetilde{\gamma}(s) = \gamma(-s)$ is a solution to $\eta' = -v(\eta)$ with $\eta(0) = u$, and $-v$ satisfies the same estimates as $v$, we may in fact assume $s \in [0,s_0)$.  

Define $\gamma_I = \gamma_1 + i \gamma_2$. Using \eqref{eq:vi} and \eqref{eq:v3} we may choose $C_1, C_2 >0$ such that
\[
	|\gamma_I (s)| \leq C_1 + C_2 \int_0^s \|\gamma(\sigma)\|_a \log \|\gamma(\sigma)\|_a \,\mathrm{d}\sigma
\]
and
\[
	|\gamma_3 (s)| \leq (C_1+1)^2 + C_2 \int_0^s \|\gamma(\sigma)\|_a^2 \log \|\gamma(\sigma)\|_a \,\mathrm{d}\sigma.
\]
Note that $C_1$ depends on $u$ and $s_0$.
	
Define
\[
	\lambda_I (s) = C_1 + C_2 \int_0^s (1 + \|\gamma(\sigma)\|_a) \log( 2 + \|\gamma(\sigma)\|_a ) \,\mathrm{d}\sigma
\]
and
\[
	\lambda_3 (s) = (C_1+1)^2 + C_2 \int_0^s (1+\|\gamma(\sigma)\|_a)^2 \log (2+ \|\gamma(\sigma)\|_a) \,\mathrm{d}\sigma.
\]
Then
\[
	\lambda_I '(s) = C_2 (1 + \|\gamma(s)\|_a) \log( 2 + \|\gamma(s)\|_a )
\]
and
\[
	\lambda_3 '(s) = C_2 (1+\|\gamma(s)\|_a)^2 \log (2+ \|\gamma(s)\|_a).
\]
In particular, $\lambda_I$ is $C^1$-smooth and strictly increasing on $[0,s_0)$ with $C^1$-smooth inverse. Define
\[
	w = \lambda_3 \circ \lambda_{I}^{-1}
\]
so that $\lambda_3 (s) = (w \circ \lambda_I)(s)$. It follows
\[
	w'(\lambda_I) = \frac{\lambda_{3}'}{\lambda_{I}'} = 1 + |\gamma_I| + |\gamma_3|^{\frac{1}{2}} \leq 1+ \lambda_I + \sqrt{w(\lambda_I)}.
\]
In these circumstances, $w$ is dominated by any solution of the equation
\[
	g'(\lambda) = 1 + \lambda + \sqrt{g(\lambda)}, \quad g(C_1) = (C_1+1)^2.
\]
On inspection we see that $g(\lambda) = (\lambda+1)^2$ is a solution for $\lambda + 1 \geq 0$. Consequently, 
\[
	w(\lambda_I(s)) \leq (\lambda_I (s) + 1)^2
\]
or to put it another way
\begin{equation}\label{eq:l3bnd}
	\lambda_3 (s) \leq (\lambda_I (s)+1)^2.
\end{equation}
This allows us to bound $\lambda_I$ using a standard Gr\"{o}nwall-type argument. Observe,
\begin{align*}
	\lambda_I' &= C_2\left(1+|\gamma_I|+|\gamma_3|^{\frac{1}{2}}\right)\log\left(2 +|\gamma_I| + |\gamma_3|^{\frac{1}{2}}\right) \\
	&\leq C_2\left(1+\lambda_I + \lambda_3^{\frac{1}{2}}\right)\log\left(2 +\lambda_I + \lambda_3^{\frac{1}{2}}\right) \\
	&\leq C_2(2+2\lambda_I)\log(3 + 2\lambda_I).
\end{align*}
From this we have
\begin{equation}\label{eq:lIbnd}
	\log(3+2\lambda_I(s)) \leq \log(3+2C_1)e^{2C_2 s}.
\end{equation}
With this bound on $\lambda_I$ in place, a bound on $\lambda_3$ is immediate from \eqref{eq:l3bnd} and together these give the desired bound on $\|\gamma\|_a$. 
\end{proof}

\begin{proof}[Proof of Proposition \ref{pro:flow}]
Write $ v = v_{\phi} $. Let $ u \in \mH $ be given. As $ v $ is continuous, a solution to the flow equation for $ v $ at $ u $ exists on some interval $ ( -s_0 , s_0 ) $ with $ s_0 > 0 $. We denote this solution $\gamma$. By Lemma \ref{lem:bndsolns} there exists finite $ R > 0 $ such that $ \gamma ( s ) \in B(R) $ for all $ s \in ( -s_0 , s_0) $ (and this is true for any other solution at $ u $ when restricted to this same interval).

To complete the proof we use a cut-off argument. The auxiliary functions that allow us to smoothly truncate our vector field are defined as follows. For $ e \leq l < \infty $, let $ \widetilde{G}_l : [e,\infty) \to \mR $ be given by
\[
	\widetilde{G}_l ( r ) = 1 - l^{-1} ( \log \log r - \log \log l ).
\]
Now take a smooth function $P:\mR\to \mR$ satisfying $ P(0) = 0, \, P(1) = 1, \, P'(0) = P'(1) = P''(0) = P''(1) = 0 $, and $ 0 \leq P(\sigma) \leq 1 $ when $ 0 \leq \sigma \leq 1 $ (e.g. $ P(z) = 6 z^5 - 15 z^4 + 10 z^3 $), and use this to form
\[
	G_l (r) = \left \{ 
   			\begin{array} {l l}
     		1 & \quad \text{if} \quad 0 \leq r \leq l, \\
     		P(\widetilde{G}_{l}(r)) & \quad \text{if} \quad l \leq r \leq l', \\
     		0 & \quad \text{if} \quad l' \leq r .
   			\end{array} \right.
\]
Here $ l' $ is chosen to be the smallest number such that $ \widetilde{G}_l \left( l' \right) = 0 $ (to be exact, choose $l'$ so that $ \log l' = e^l \log l $). The function $G_l$ is $C^2$-smooth, decreasing from $1$ to $0$, and with the following bounds on its derivatives: for all $r \geq 0$,
\begin{equation}\label{eq:gderiv}
	| G_l ' ( r ) | \lesssim \left( \frac{1}{l} \right)\frac{ 1 }{ r \log r } \quad \text{and} \quad | G_l '' ( r ) | \lesssim \left(\frac{1}{l}\right)\frac{ 1 }{ r^2 \log r }.
\end{equation}

For $ l \geq e$ we form the truncated potential
\[
	\phi_l (p) = G_l \left( \| p \|^4 \right) \phi(p).
\]
Each $ \phi_l $ has continuous horizontal derivatives $ X \phi_l $ and $ \, Y \phi_l $, and is compactly supported. Define $N : \mH \to [0,\infty)$ by $ N(p) = \| p \|^4 $. For all $l$, the weak derivative $ ZZ \phi_l $ exists and 
\[
	ZZ \phi_l = ZZ ( G_l \circ N ) \phi +2 Z ( G_l \circ N ) Z \phi + ( G_l \circ N ) ZZ \phi .
\]
The first computations of the appendix applied to this expression give
\[
    	\sqrt{2} \| ZZ \phi_l \|_{\infty} \leq C \sup_{ l \leq \| p \|^4 \leq l' } \left[ \| p \|^2 \left| G_l '' \left( \| p \|^4 \right) \right| | \phi(p) | + \| p \|^3 \left| G_l ' \left( \| p \|^4 \right) \right| | Z \phi(p) | \right] + c
\]
where $c$ is as in \eqref{eq:zzbnd}. Since $l \geq e > 1$ this immediately gives
\[
    	\sqrt{2} \| ZZ \phi_l \|_{\infty} \leq C \sup_{ l \leq \| p \|^4 \leq l' } \left[ \| p \|^6 \left| G_l '' \left( \| p \|^4 \right) \right| | \phi(p) | + \| p \|^3 \left| G_l ' \left( \| p \|^4 \right) \right| | Z \phi(p) | \right] + c
\]
which is presently more useful. The estimates \eqref{eq:gderiv} can now be used to find
\[
    \sqrt{2} \| ZZ \phi_l \|_{\infty} \leq C\left(\frac{1}{l}\right) \sup_{ l \leq \| p \|^4 \leq l' } \left[ \frac {| \phi(p) | }{ \| p \|^2 \log \| p \| } + \frac{| Z \phi(p) | }{ \| p \| \log \| p \| } \right] + c.
\]
Observe that for $\|p\|\geq e$, $1 + \|p\| \log \|p\| \lesssim \|p\| \log \|p\|$ and $1 + \|p\|^2 \log \|p\| \lesssim \|p\|^2 \log \|p\|$. Consequently, we may now make use of our assumptions \eqref{eq:growth1} and \eqref{eq:growth2} to arrive at
\begin{equation} \label{eq:zzlbnd}
    \sqrt{2} \| ZZ \phi_l \|_{\infty} \leq \frac{C}{l} + c
\end{equation}
when $l \geq e^4$.

Making a choice of $ l \geq e^4 $ so that $ ( G_l \circ N ) \equiv 1 $ on $ B( R ) $ (recall $R$ is such that $ \gamma (s) \in B(R) $ for all $s \in ( -s_0, s_0 ) $), we have that $ v $ and
\[
	v_l := -\tfrac 1 4 (Y \phi_l ) X + \tfrac 1 4 (X \phi_l ) Y + \phi_l T
\]
coincide on $ B(R) $. It is part of Theorem \ref{thm:kr} that the flow equation for $ v_l $ at $ u $ has a unique solution. It follows that $ \gamma $ is the unique solution on the interval $ (-s_0 , s_0) $ to the flow equation for $ v $ at $ u $.

As $ u $ was arbitrary, we have shown that at all $ p \in \mH $ there is a unique solution to the flow equation at $ p $. Moreover, this solution remains bounded on any finite-time interval of existence. It follows that such a unique solution may be continued unambiguously and therefore  will exist for all time. Consequently, we find that $ v $ has a well-defined flow of homeomorphisms $ f_s $ for all $ s \in \mR $.

It remains to show that the time-$ s $ flow mappings $ f_s $ with $ s \geq 0 $ are quasiconformal with the claimed bound on the dilatation. Let $ f_{s}^{l} $ denote the time-$ s $ flow mapping associated to $ v_l $. Using \eqref{eq:zzlbnd} along with Theorem \ref{thm:kr} and Lemma \ref{lem:dil}, we find that $ f_s^l $ is $K$-quasiconformal with $ K \leq e^{ \left( C l^{-1} + c \right) s } $.

Fix $s \geq 0$. Let $ D > 0 $ be given and choose $ D' > 0 $ such that $ f_{s} B(D) \subset B(D') $. Choosing $ l $ so that $ v_l \equiv v $ on $ B(D') $, it follows that the restriction $ f_s \big|_{B(D)} $ is quasiconformal with
\[
	K\left(f_s \big|_{B(D)}\right) \leq e^{ \left( C l^{-1} + c \right) s }.
\]
This is true for all $l$ sufficiently large and the left-hand side does not depend on $l$. We let $ l \to \infty $ to find $ K\left(f_s \big|_{B(D)}\right) \leq e^{ c s } $. As this procedure works for any $ D > 0$, we must have that $ f_s $ is quasiconformal with $ K(f_s) \leq e^{cs} $ as required.
\end{proof}

\subsection{A Variational Equation} \label{subsec:vareq}

For the remainder of the section we fix $ \phi: \mH \to \mR $ satisfying the hypotheses of Proposition \ref{pro:flow}. This includes that $0 \leq c < \infty$ is such that $\sqrt{2}\|ZZ\phi\|_{\infty} \leq c$. We make the additional assumption $ X\phi , Y\phi \in HW_{\text{loc}}^{1,r} $ for all $ 1 \leq r < \infty $. Let $v = v_{\phi}$.

Let $ D_H v $ denote a formal horizontal differential of $ v $. Since
\[
    D_H v = \frac{1}{4}\begin{pmatrix} -XY \phi & -YY\phi \\ XX\phi & YX\phi  \end{pmatrix},
\]
and $|D_H v| \lesssim \max_{i,j = 1,2}|(D_H v)_{i,j}|$, our local integrability assumption on the weak second horizontal derivatives of $ \phi $ is equivalent to $ D_H v $ having the same local integrability (with respect to the operator norm).

In Section \ref{sec:qc} we mentioned that a quasiconformal mapping is $\cP$-differentiable almost everywhere. We also have horizontal Sobolev regularity: if $f$ is a quasiconformal mapping then $ f \in HW_{\text{loc}}^{1,4} $. Moreover, \eqref{eq:jdef} and the reverse H\"{o}lder inequality \eqref{eq:rhi} imply there exists $\epsilon > 0$ such that $f \in HW_{\text{loc}}^{1,4+\epsilon}$. The almost everywhere defined classical horizontal differential determined by the $\cP$-derivative (as in \eqref{eq:dhf}) may serve as representative of the formal horizontal differential. Reserving the notation $ f_s $ for the time-$ s $ flow mappings of $ v $, we let $ D_H f_s $ indicate this choice.

The nature of the argument that follows is designed precisely so that our end goal (Proposition \ref{pro:link}) holds for \textit{all} values $ s \in [0,1]$. It is likely a similar statement could be proved without as much preparation if we were aiming only for almost every $ s \in [0,1] $.       
\begin{lem} \label{lem:jc}
The mapping $( p , s ) \mapsto f_s (p) $ is continuous on $\mH \times [0,1]$. Moreover, for each ball $ B \subset \mH $,
\[
	| f_s (B) | \gtrsim 1.
\]
The implied constant is dependent on $\|ZZ\phi\|_{\infty}$ and $B$, but not on $s$.
\end{lem}
\begin{proof}
We will prove the second statement first. By assumption, there exists $c\geq 0$ such that each $ f_s $ is $ e^{ c s } $-quasiconformal. With $ s \in [0,1] $, each $ f_s $ is $ K $-quasiconformal with $ K = e^{ c } $ independent of $ s $. Let $ p \in \mH $ and let $ R > 0 $. Fix a point $ q \in S(p,R) $. It follows from \eqref{eq:bdist} that
\[
	\left| f_s B(p,R) \right| \gtrsim d \left( f_s (p) , f_s (q) \right)^4 .
\]
Continuity of solutions to the flow equation implies continuity of $ s \mapsto d( f_s (p) , f_s (q) )^4 $. Since each $ f_s $ is injective, $d \left( f_s (p) , f_s (q) \right)^4$ has a positive minimum on $ [0,1] $ as desired.

As for the first statement, let $ s \in [0,1] $ and let $ (p_k,s_k) $ be a sequence in $ \mH \times [0,1] $ such that $ (p_k,s_k) \to (p,s) $. In particular, $ p_k \to p $ and we may assume that $ p_k \in B(p,1) $ for all $ k $. It follows from our bound on solutions to the flow equation (Lemma \ref{lem:bndsolns} -- in particular \eqref{eq:l3bnd} and \eqref{eq:lIbnd}) that there exists $ R' > 0 $ such that for all $ s_k $, 
\[
	f_{s_k} B\left(3(\|p\| + 1)+1\right) \subset B\left( R' \right).
\]
It follows from \eqref{eq:hold} that each $ f_{s_k} $ is H\"{o}lder continuous on $ B(p,1) $ with the coefficient (no name required) and exponent $\alpha$ independent of $ k $. Use first the triangle inequality then the observation just made to find 
\begin{align*}
	d(f_{s_k}(p_k),f_s (p)) 	&\leq d(f_{s_k}(p_k),f_{s_k}(p)) + d(f_{s_k}(p),f_s(p)) \\
								&\lesssim d(p_k,p)^{\alpha} + d(f_{s_k}(p),f_s(p)).
\end{align*}
That $ ( p , s ) \mapsto f_s (p) $ is jointly continuous on $ \mH \times [0,1] $ is easily deduced from this last estimate.  
\end{proof} 

\begin{lem} \label{lem:intdhv}
Suppose $ B \subset \mH $ is a ball. Then the mappings
\[
	(p,s) \mapsto D_H v \left( f_s (p) \right) \quad \quad \text and \quad \quad (p,s) \mapsto D_H v \left( f_s (p) \right) D_H f_s (p)
\]
are measurable and integrable on $ B \times [0,1] $. 
\end{lem}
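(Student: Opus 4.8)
The plan is to establish measurability first, then integrability, for both mappings, reducing the second to the first together with known Sobolev integrability of the flow maps.

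\medskip

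\noindent\textbf{Measurability.} For the first mapping, $(p,s) \mapsto D_H v(f_s(p))$, I would argue as follows. The entries of $D_H v$ are the weak horizontal derivatives $Xv_i, Yv_i$, which by our standing assumption ($X\phi, Y\phi \in HW^{1,r}_{\mathrm{loc}}$ for all finite $r$) lie in $L^r_{\mathrm{loc}}$ for every $r$; fix a Borel representative of each. So $D_H v : \mH \to M_2(\mR)$ is a Borel function. By Lemma \ref{lem:jc}, $(p,s) \mapsto f_s(p)$ is continuous, hence Borel measurable as a map $\mH \times [0,1] \to \mH$. The composition of a Borel function with a Borel map is Borel, so $(p,s) \mapsto D_H v(f_s(p))$ is measurable. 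For the second mapping, I need $(p,s) \mapsto D_H f_s(p)$ to be measurable; here I would invoke \eqref{eq:diffint}, which gives $(D_H f_s)' = D_H v(f_s)\, D_H f_s$ with $D_H f_0 = I_2$, and reconstruct $D_H f_s(p) = I_2 + \int_0^s D_H v(f_\sigma(p)) D_H f_\sigma(p)\,\mathrm{d}\sigma$ as a solution of a linear integral equation with measurable coefficient; its solution (obtained, say, as the locally uniform limit of Picard iterates, each of which is visibly measurable in $(p,s)$ once the previous one is) is jointly measurable. The product of two measurable matrix-valued functions is measurable, giving measurability of the second mapping.

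\medskip

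\noindent\textbf{Integrability.} Fix a ball $B \subset \mH$. By Lemma \ref{lem:jc} there is $R' > 0$ with $f_s(B) \subset B(R')$ for all $s \in [0,1]$, and by Lemma \ref{lem:jc} again $|f_s(B)| \gtrsim 1$ uniformly in $s$. For the first mapping, I would use the change of variables formula \eqref{eq:cvf}: for each fixed $s$,
\[
	\int_B |D_H v(f_s(p))|\,\mathrm{d}p = \int_B |D_H v(f_s(p))|\, J_{f_s}(p)^{-1/2} J_{f_s}(p)^{1/2}\,\mathrm{d}p,
\]
and after applying Hölder with the reverse Hölder exponent for $J_{f_s}$ (from \eqref{eq:rhi}, with constants depending only on $K = e^c$) together with the $A_p$-type bound \eqref{eq:apw} controlling the negative power $J_{f_s}^{-\alpha}$, one reduces to $\int_{f_s(B)} |D_H v|^{q}$ for some $q < \infty$ depending only on $K$; since $D_H v \in L^q(B(R'))$ this is finite, uniformly in $s$. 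Then Fubini on $B \times [0,1]$ finishes the first mapping. For the second mapping, one further factor of $D_H f_s$ appears; since $f_s \in HW^{1,4+\epsilon}_{\mathrm{loc}}$ uniformly in $s \in [0,1]$ (by \eqref{eq:jdef} and \eqref{eq:rhi}), I would run a three-term Hölder inequality splitting the integrand as $|D_H v(f_s)| \cdot |D_H f_s| \cdot 1$ against exponents tuned so that the $|D_H f_s|$ factor is paired with an exponent below $4+\epsilon$, the $|D_H v(f_s)|$ factor is handled as above via change of variables and the reverse Hölder inequality, and the leftover exponent lands on the bounded region $B(R')$. Integrating in $s$ by Fubini then yields integrability on $B \times [0,1]$.

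\medskip

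\noindent\textbf{Main obstacle.} The routine part is the measurability of the first mapping; the genuinely delicate point is joint measurability of $(p,s) \mapsto D_H f_s(p)$, since $D_H f_s$ is only an almost-everywhere-defined object (the $\cP$-differential representative) and a priori one knows \eqref{eq:diffint} holds in a pointwise sense only in the smooth approximation. I expect to handle this by passing through the compactly supported smooth truncations $\phi_l$ of the proof of Proposition \ref{pro:flow}: for these, $f_s^l$ is smooth and $(p,s) \mapsto D_H f_s^l(p)$ is continuous, hence measurable; then on each ball one has $f_s^l = f_s$ for $l$ large, and the identification of formal and $\cP$-differentials lets the measurability descend to $D_H f_s$. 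The second genuine obstacle is bookkeeping the exponents in the Hölder argument so that every factor lands in a space where it is known to be integrable uniformly in $s$; this is where the quantitative uniformity ($K = e^c$ independent of $s$, hence uniform reverse Hölder exponent and uniform Sobolev exponent $4+\epsilon$) does the real work.
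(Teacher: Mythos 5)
Your treatment of measurability for $(p,s)\mapsto D_H v(f_s(p))$ and your integrability estimates are essentially the paper's argument: uniform $K=e^c$-quasiconformality on $s\in[0,1]$, the bound $\int_B J_{f_s}^{-\alpha}\lesssim 1$ from \eqref{eq:apw} plus Lemma \ref{lem:jc}, H\"older, the change of variables \eqref{eq:cvf}, and \eqref{eq:jdef} to absorb the $D_H f_s$ factor (the paper does the second mapping more simply, via $(\int_B|D_H v(f_s)D_H f_s|)^4\lesssim\int_B|D_H v(f_s)|^4 J_{f_s}=\int_{f_sB}|D_H v|^4$, which avoids your three-term H\"older bookkeeping, but your version can be made to work).

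The genuine gap is in your route to joint measurability of $(p,s)\mapsto D_H f_s(p)$. You propose to reconstruct it from the integral equation coming from \eqref{eq:diffint} via Picard iteration, but \eqref{eq:diffint} is only established for $\phi\in C_0^\infty$ (Lemma \ref{lem:dil}); for the $\phi$ of this section it is precisely the content of Lemmas \ref{lem:dhcomp} and \ref{lem:altrep}, together with the product-measure identification $F(p,s)=D_H f_s(p)$ a.e., and Lemma \ref{lem:altrep} is proved \emph{using} the present lemma (Fubini needs the joint integrability you are trying to establish). Even granting a measurable Picard solution $F$, you would still have to identify it a.e.\ with the chosen representative $D_H f_s$ (the one coming from the $\cP$-differential), which is again the later result — so the argument is circular. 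Your fallback through the truncations $\phi_l$ does not repair this: $\phi_l=G_l(\|\cdot\|^4)\phi$ inherits only the $HC^1$ regularity of $\phi$, so the flows $f_s^l$ are not smooth and $(p,s)\mapsto D_H f_s^l(p)$ is not continuous; smoothness is available only for the $C_0^\infty$ approximants inside the proof of Theorem \ref{thm:kr}, which do not hand you pointwise identification of horizontal differentials. The correct (and much shorter) argument is the paper's: at points of classical existence, $D_H f_s(p)$ is a limit of difference quotients of $f_s$, and since $(p,s)\mapsto f_s(p)$ is jointly continuous by Lemma \ref{lem:jc}, those difference quotients are jointly measurable, hence so is $D_H f_s$; measurability of $(D_H v)\circ f_s$ then follows as you say (or, as in the paper, because each $f_s$ preserves null sets, so the composition is independent of the representative of $D_H v$ up to a null set), and the product of the two measurable matrix functions is measurable.
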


Integrability of a matrix valued function refers to integrability of the operator norm.

\begin{proof}
It was already observed in the comments with which Section \ref{subsec:vareq} begins that $ D_H v $ is measurable and locally integrable to the power $r$ for any $ 1 \leq r < \infty $.

Let $p\in\mH$ be a point at which $ D_H f_s (p) $ exists in the classical sense. As such, it is the limit of a sequence of matrices, the entries of which are difference quotients. As $ f_s(p) $ is jointly continuous in $ s $ and $ p $, the difference quotients are measurable. It follows that $ D_H f_s $ is measurable. Furthermore, since each $ f_s $ preserves sets of measure zero, $ (D_H v) \circ f_s $ is measurable also. 

The integrability required by the claim will follow if we can show that for each $ s \in [0,1] $, the $L^1$ norm of either function over an arbitrary ball is bounded above by a constant independent of $ s $.

Fix a ball $ B \subset \mH $. Note (as in Lemma \ref{lem:jc}) for all $ s \in [0,1] $, $ f_s $ is $ K $-quasiconformal with $ K $ independent of $ s $. Consequently, by \eqref{eq:apw} and Lemma \ref{lem:jc} there exists $ \alpha > 0 $ such that for all $ s \in [0,1] $,
\begin{equation} \label{eq:apapp}
	\int_B J_{ f_s }^{ -\alpha } \lesssim 1
\end{equation}
independently of $ s $.
 
Now let $ r = 1 + 1 / \alpha $. Again by (or as in the proof of) Lemma \ref{lem:jc}, there exists a ball $ B' $ such that $ f_s B \subset B' $ for all $ s \in [0,1] $. Using H\"{o}lder's inequality for the first estimate, \eqref{eq:apapp} and \eqref{eq:cvf} for the second, and our assumption that $|D_H v|\in L_{\text{loc}}^r$ for the third, we find
\begin{align*}
	\int_B | D_H v ( f_s ) |
	&\leq \left( \int_B | D_H v ( f_s ) |^r J_{f_s} \right)^{1/r} \left( \int_B J_{f_s}^{-\alpha} \right)^{ 1 / ( 1 + \alpha ) } \\
	&\lesssim \left( \int_{B'} | D_H v |^r \right)^{ 1/r } \lesssim 1, 
\end{align*}
where the implied constants depend on $ B $ but are independent of $ s $.

Similarly,
\begin{align*}
	\left( \int_B | D_H v( f_s ) D_H f_s | \right)^4
	&\lesssim \int_B | D_H v( f_s ) D_H f_s |^4 \\
	&\lesssim \int_B | D_H v( f_s ) |^4 J_{f_s} \\
	&\lesssim \int_{B'} | D_H v |^4 \lesssim 1,
\end{align*}
where we used \eqref{eq:jdef} for the second estimate. Again, the implied constants do not depend on $s$.
\end{proof}

The following can be found on page 46 of \cite{KRII}.

\begin{lem} \label{lem:dhcomp}
For all $ s \in [0,1] $, $ v \circ f_s \in HW_{\textnormal{loc}}^{1,1}$ with
\[
	D_H ( v \circ f_s ) := ((D_H v) \circ f_s ) D_H f_s
\]
a formal horizontal differential.
\end{lem}

The next lemma gives an alternative (to the one we have been using) representative of the formal horizontal differential of $ f_s $. It is formally identical to differentiating solutions to the flow equation in the smooth case.

\begin{lem} \label{lem:altrep}
For all $ s \in [0,1] $, the matrix-valued function $ F(\cdot,s) $ defined almost everywhere on $\mH$ by
\[
	F(p,s) = I_2 + \int_0^s D_H v( f_{\sigma}(p) ) D_H f_{\sigma}(p) \, \mathrm d \sigma.
\]
is a formal horizontal differential of $ f_s $.
\end{lem}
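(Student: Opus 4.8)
The plan is to verify directly that $F(\cdot,s)$ satisfies the integration-by-parts identity characterizing the formal horizontal differential of $f_s$. Since $f_s$ is quasiconformal (Proposition \ref{pro:flow}), it belongs to $HW^{1,4}_{\text{loc}}$ and hence admits a formal horizontal differential; as this is determined up to a.e.\ equality, it suffices to show that the matrix-valued function $F(\cdot,s)$ is one of its representatives, i.e.\ that for every $\xi\in C_0^\infty(\mH)$, every $i\in\{1,2\}$, and every $Z\in\{X,Y\}$,
\[
	\int_{\mH} (f_s)_i \, Z\xi = -\int_{\mH} \big( F(\cdot,s) \big)_{i,j_Z}\, \xi, \qquad j_X := 1,\quad j_Y := 2.
\]
Throughout I will use that $X=\partial_x+2y\,\partial_t$ and $Y=\partial_y-2x\,\partial_t$ are divergence-free as vector fields on $\mR^3$, so the distributional identity defining a weak horizontal derivative is simply $\int (Zu)\,\xi = -\int u\,Z\xi$ with no zeroth-order correction. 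Measurability of $F(\cdot,s)$ in $p$, needed before one speaks of the class it represents, is part of Lemma \ref{lem:intdhv}.

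First I would record the integral form of the flow equation, valid pointwise for \emph{every} $p$ and $s$: reading $\gamma_p' = v(\gamma_p)$, $\gamma_p(0)=p$, in the first two coordinates gives $(f_s)_i(p)=p_i+\int_0^s v_i(f_\sigma(p))\,\mathrm d\sigma$ for $i=1,2$. Substituting this into the left side and splitting the integral, the contribution of $p_i$ is elementary: $Xp_1=1$, $Yp_1=0$, $Xp_2=0$, $Yp_2=1$, and an integration by parts against $\xi$ produces exactly the $I_2$ summand of $F$. For the remaining term $\int_{\mH}\big(\int_0^s v_i(f_\sigma(p))\,\mathrm d\sigma\big)Z\xi(p)\,\mathrm dp$ I would apply Fubini to bring the $\sigma$-integral outside; this is legitimate because $v=v_\phi$ is continuous ($\phi\in HC^1$) and, by Lemma \ref{lem:bndsolns} and Lemma \ref{lem:jc}, $f_\sigma$ carries $\operatorname{support}(\xi)$ into one fixed ball uniformly in $\sigma\in[0,1]$, so $(p,\sigma)\mapsto v_i(f_\sigma(p))$ is bounded, hence integrable, on $\operatorname{support}(\xi)\times[0,1]$.

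With the $\sigma$-integral outside, I would fix $\sigma\in[0,1]$ and invoke Lemma \ref{lem:dhcomp}: $v\circ f_\sigma$ has formal horizontal differential $\big((D_H v)\circ f_\sigma\big)D_H f_\sigma$, which lies in $L^1_{\text{loc}}$ by Lemma \ref{lem:intdhv}; hence $v_i\circ f_\sigma\in HW^{1,1}_{\text{loc}}$ and $\int v_i(f_\sigma)\,Z\xi = -\int \big[\big((D_H v)\circ f_\sigma\big)D_H f_\sigma\big]_{i,j_Z}\,\xi$. Inserting this and applying Fubini a second time — now justified by the integrability on $\operatorname{support}(\xi)\times[0,1]$ of $(p,\sigma)\mapsto D_H v(f_\sigma(p))D_H f_\sigma(p)$, which is exactly Lemma \ref{lem:intdhv} — recombines the right-hand side into $-\int\big(F(\cdot,s)\big)_{i,j_Z}\xi$, completing the verification.

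The argument is essentially bookkeeping and introduces no new ingredient beyond Lemmas \ref{lem:jc}, \ref{lem:intdhv}, and \ref{lem:dhcomp}. The one point that genuinely needs the groundwork already laid is that the identity must hold for \emph{every} $s\in[0,1]$, not merely almost every $s$: this is why it matters that Lemma \ref{lem:dhcomp} is available for each $\sigma$ individually (no exceptional null set of $\sigma$ enters) and that the two Fubini steps rest on honest local-integrability statements on $\operatorname{support}(\xi)\times[0,1]$ rather than on a.e.-in-$s$ bounds. Were one content with almost every $s$, the whole chain could be shortened considerably, as noted just before the statement.
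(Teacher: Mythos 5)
Your argument is correct and is essentially the paper's own proof run in the opposite direction: the paper starts from $\int F_{i,j}\xi$ and, via Lemma \ref{lem:dhcomp}, Fubini (justified by Lemma \ref{lem:intdhv}), integration by parts, and the integrated flow equation, arrives at $-\int (f_s)_i\, Z\xi$, while you start from $\int (f_s)_i\, Z\xi$ and reverse the same chain. The supporting ingredients you cite (Lemmas \ref{lem:jc}, \ref{lem:intdhv}, \ref{lem:dhcomp}, and the divergence-free integration by parts) are exactly those the paper relies on, so no further comment is needed.
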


\begin{proof}
At almost every $ p \in \mH $ we have
\[
	F(p,s) = I_2 + \int_0^s D_H ( v \circ f_{\sigma} )(p) \, \mathrm d \sigma
\]
by Lemma \ref{lem:dhcomp}. We need to show the components of $ F( \cdot , s ) $ are weak horizontal derivatives as claimed. To this end, let $ \xi \in C_0^{\infty} $. By Lemma \ref{lem:intdhv}, $ (p,s)\mapsto ( D_H (v \circ f_s) (p) )_{i,j} \xi (p) \in L^{1}(\mH \times [0,1]) $ for each choice of $ i,j = 1,2 $. This allows application of Fubini's theorem. For example, the $ (1,1) $-component of $ F $ satisfies
\begin{align*}
	\int F_{1,1}(p,s) \xi(p) \, \mathrm{d} p
	&= \int \xi + \int \int_{0}^{s} X( v \circ f_{\sigma} )_{1}(p) \xi(p) \, \mathrm{d} \sigma \, \mathrm{d} p	\\
	&= \int \xi - \int_{0}^{s} \int( v \circ f_{\sigma} )_{1}(p) X \xi(p) \, \mathrm{d} p \, \mathrm{d} \sigma \\
	&= \int \xi - \int \int_{0}^{s} \frac{ \mathrm{d} }{ \mathrm{d} \sigma }( f_{\sigma} )_{1}(p) X \xi(p) \, \mathrm{d} \sigma \, \mathrm{d} p \\
	&= \int \xi - \int (f_{s})_{1}(p) X \xi(p) \, \mathrm{d}p + \int x X \xi(p) \, \mathrm{d} p \\
	&= - \int (f_{s})_{1}(p) X \xi(p) \, \mathrm{d} p.
\end{align*}
The other components are similar.
\end{proof}

Let $ F $ be as in Lemma \ref{lem:altrep}. Standard product measure arguments imply that $  F(p,s) = D_H f_s (p) $ at almost every $(p,s) \in \mH \times [0,1] $. Consequently, for almost every $ p \in \mH $, $ D_H f_s (p) = F(p,s) $ at almost every $ s \in [0,1] $. We will use this later in conjunction with the next lemma (taken unchanged from \cite{BHSII}).

\begin{lem}[Bonk, Heinonen, Saksman] \label{lem:dettr}
Let $ F , G : [0,1] \to M_n ( \mR ) $ be matrix-valued functions. Suppose that $ F $ is continuous, $ G $ is integrable, and
\[
	F(s) = I_2 + \int_0^s G( \sigma ) F( \sigma ) \, \mathrm d \sigma
\]
for all $ s \in [0,1] $. Then
\[
	\det( F(s) ) = \exp \left( \int_0^s \tr ( G ( \sigma ) ) \, \mathrm d \sigma \right)
\]
for all $ s \in [0,1] $.
\end{lem}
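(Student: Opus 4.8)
The plan is to prove that $\phi(s) := \det F(s)$ is absolutely continuous on $[0,1]$, that $\phi'(s) = \tr(G(s))\,\phi(s)$ for almost every $s$, and then to integrate this scalar linear equation with the initial condition $\phi(0) = 1$.

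First I would observe that $F$ is absolutely continuous. Since $F$ is continuous on the compact interval $[0,1]$ it is bounded, so the integrand $\sigma \mapsto G(\sigma)F(\sigma)$ satisfies $|G(\sigma)F(\sigma)| \leq \|F\|_{\infty}|G(\sigma)|$ and hence lies in $L^1([0,1])$; thus $s \mapsto \int_0^s GF$ is absolutely continuous with a.e.\ derivative $G(s)F(s)$, and $F$ itself is absolutely continuous with $F' = GF$ a.e. As $F$ takes values in a bounded subset of $M_n(\mR)$ and $\det$ is a polynomial (in particular $C^1$, and Lipschitz on bounded sets), $\phi = \det \circ F$ is absolutely continuous, and the chain rule for a $C^1$ function composed with an absolutely continuous curve gives, for a.e.\ $s$,
\[
\phi'(s) = \sum_{i,j} \frac{\partial \det}{\partial F_{ij}}\big(F(s)\big)\,F_{ij}'(s) = \sum_{i,j} \operatorname{cof}_{ij}\!\big(F(s)\big)\,\big(G(s)F(s)\big)_{ij},
\]
where $\operatorname{cof}_{ij}(M)$ is the $(i,j)$ cofactor of $M$ and we used $\frac{\partial \det}{\partial F_{ij}} = \operatorname{cof}_{ij}$.

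Next comes the algebraic simplification. Writing $(GF)_{ij} = \sum_l G_{il}F_{lj}$ and invoking the Laplace-expansion identity $\sum_j \operatorname{cof}_{ij}(F)\,F_{lj} = \delta_{il}\det F$ (the left side is the determinant of $F$ with its $i$-th row overwritten by the $l$-th, hence vanishes unless $l = i$), one finds
\[
\phi'(s) = \sum_{i,l} G_{il}(s)\sum_j \operatorname{cof}_{ij}\!\big(F(s)\big)\,F_{lj}(s) = \sum_{i} G_{ii}(s)\,\det F(s) = \tr\!\big(G(s)\big)\,\phi(s)
\]
for a.e.\ $s \in [0,1]$. (One could instead differentiate column by column via multilinearity of the determinant, or, at points where $F(s)$ is invertible, use $(\det F)' = (\det F)\,\tr(F^{-1}F') = (\det F)\,\tr(F^{-1}GF) = (\det F)\,\tr(G)$; the cofactor route has the merit of not presupposing invertibility.)

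Finally I would solve the scalar equation. Put $\psi(s) := \phi(s)\exp\!\big(-\int_0^s \tr G\big)$. Since $\tr G \in L^1$, the map $s \mapsto \int_0^s \tr G$ is absolutely continuous, so $\psi$ is a product of bounded absolutely continuous functions and is therefore absolutely continuous, with
\[
\psi'(s) = \Big(\phi'(s) - \tr(G(s))\,\phi(s)\Big)\exp\!\Big(-\!\int_0^s \tr G\Big) = 0 \quad \text{for a.e.\ } s.
\]
An absolutely continuous function with a.e.\ vanishing derivative is constant, so $\psi \equiv \psi(0) = \phi(0) = \det F(0) = 1$, i.e.\ $\det F(s) = \exp\!\big(\int_0^s \tr G\big)$ for all $s \in [0,1]$. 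The one step that genuinely demands care is the first one — checking that $F$, hence $\det F$, is absolutely continuous (not merely differentiable a.e.), so that the fundamental theorem of calculus and the chain rule are legitimate in this low-regularity setting; granting that, everything else is formal.
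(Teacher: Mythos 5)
Your proof is correct and complete. Note, however, that the paper itself offers no argument for this lemma: it is quoted verbatim from \cite{BHSII} and used as a black box, so there is no in-paper proof to compare against. What you have written is the standard Carath\'eodory-setting Liouville/Abel argument, and each step holds up: $F$ is absolutely continuous because it is $I$ plus the indefinite integral of the $L^1$ function $GF$ (boundedness of the continuous $F$ times integrability of $G$); $\det$ is a polynomial, hence Lipschitz on the compact image of $F$, so $\phi=\det F$ is absolutely continuous and the a.e.\ chain rule applies; Jacobi's cofactor formula together with the Laplace-expansion identity $\sum_j \operatorname{cof}_{ij}(F)F_{lj}=\delta_{il}\det F$ yields $\phi'=\tr(G)\,\phi$ a.e.\ without any invertibility assumption on $F(s)$ (a genuine merit of the cofactor route over writing $(\det F)'=(\det F)\tr(F^{-1}F')$); and the integrating-factor step, using that a product of bounded absolutely continuous functions is absolutely continuous and that an absolutely continuous function with a.e.\ vanishing derivative is constant, closes the argument with $\phi(0)=\det F(0)=1$. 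The only cosmetic remark is that the lemma's statement mixes $M_n(\mR)$ with $I_2$ (it is applied in the paper with $n=2$); your computation $\det F(0)=1$ is of course unaffected.
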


We are now ready to assemble the previous string of results into our variational equation.

\begin{pro} \label{pro:link}
Let $\phi : \mH \to \mR$ satisfy the hypotheses of Proposition \ref{pro:flow}. Further assume that $ X\phi, Y\phi \in HW_{\textnormal{loc}}^{1,r} $ for all $1\leq r < \infty$. Then for all $ s \in [0,1] $ we have
\[
	\log J_{f_s} ( p ) = 2 \int_{0}^{s} T \phi ( f_{\sigma} (p) ) \, \mathrm{d} \sigma
\]
at almost every $p\in\mathbb{H}$.
\end{pro}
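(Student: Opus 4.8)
The plan is to feed the alternate representative $F$ of the formal horizontal differential from Lemma \ref{lem:altrep} into the determinant--trace identity of Lemma \ref{lem:dettr}, after first recording the elementary fact that the horizontal divergence of $v_\phi$ equals $T\phi$. Concretely, from $v_\phi = -\tfrac14 Y\phi\,X + \tfrac14 X\phi\,Y + \phi T$ one reads off the Cartesian components $v_1 = -\tfrac14 Y\phi$ and $v_2 = \tfrac14 X\phi$, so that $\tr D_H v = Xv_1 + Yv_2 = \tfrac14(YX-XY)\phi = \tfrac14[Y,X]\phi = T\phi$, using $[X,Y]=-4T$. Since $J_{f_s} = (\det D_H f_s)^2$ and $J_{f_s}>0$ almost everywhere, it suffices to show, for each fixed $s\in[0,1]$, that $\det D_H f_s(p) = \exp\bigl(\int_0^s T\phi(f_\sigma(p))\,\mathrm d\sigma\bigr)$ at almost every $p$.

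The first real step is to isolate a single full-measure set of ``good'' base points. By Lemma \ref{lem:intdhv}, both $(p,\sigma)\mapsto D_H v(f_\sigma(p))$ and $(p,\sigma)\mapsto D_H v(f_\sigma(p))D_H f_\sigma(p)$ are integrable over $B\times[0,1]$ for every ball $B$, and by the product-measure remark following Lemma \ref{lem:altrep}, $F(p,\sigma)=D_H f_\sigma(p)$ for almost every $(p,\sigma)\in\mH\times[0,1]$. Applying Fubini to these three facts produces a full-measure set $E\subset\mH$ such that, for every $p\in E$: the maps $\sigma\mapsto D_H v(f_\sigma(p))$ and $\sigma\mapsto D_H v(f_\sigma(p))D_H f_\sigma(p)$ belong to $L^1([0,1])$, and $D_H f_\sigma(p)=F(p,\sigma)$ for almost every $\sigma\in[0,1]$.

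Now fix $p\in E$. The second integrability makes $s\mapsto F(p,s)=I_2+\int_0^s D_H v(f_\sigma(p))D_H f_\sigma(p)\,\mathrm d\sigma$ absolutely continuous, hence continuous, on $[0,1]$; and since $D_H f_\sigma(p)=F(p,\sigma)$ for almost every $\sigma$, the integrands agree almost everywhere, whence $F(p,s)=I_2+\int_0^s D_H v(f_\sigma(p))F(p,\sigma)\,\mathrm d\sigma$ for \emph{all} $s\in[0,1]$. Lemma \ref{lem:dettr}, applied with $G(\sigma)=D_H v(f_\sigma(p))$ and $F(s)=F(p,s)$, then yields $\det F(p,s)=\exp\bigl(\int_0^s \tr D_H v(f_\sigma(p))\,\mathrm d\sigma\bigr)=\exp\bigl(\int_0^s T\phi(f_\sigma(p))\,\mathrm d\sigma\bigr)$ for all $s\in[0,1]$. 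Finally, for each fixed $s$, $F(\cdot,s)$ and the Pansu-derived $D_H f_s$ are both representatives of the formal horizontal differential of $f_s$, so they agree almost everywhere; intersecting with $E$, for almost every $p$ we obtain $\det D_H f_s(p)=\det F(p,s)>0$, and therefore $\log J_{f_s}(p)=2\log\det F(p,s)=2\int_0^s T\phi(f_\sigma(p))\,\mathrm d\sigma$, as asserted.

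The only genuinely new ingredient is the one-line computation $\tr D_H v = T\phi$; the analytic substance has been front-loaded into Lemmas \ref{lem:jc}--\ref{lem:altrep}. The point that needs care — and the reason for all the preparation — is the order of quantifiers: the integrable slices and the identification $F=D_H f_{(\cdot)}$ hold only almost everywhere on $\mH\times[0,1]$, so Fubini must be invoked to pin down \emph{one} full-measure set $E$ of base points on which Lemma \ref{lem:dettr} can be run for every $s$ simultaneously; this is precisely what lets the conclusion hold for all $s\in[0,1]$ rather than only for almost every $s$, after which one slices back in $s$ to recover $J_{f_s}$ from $F(\cdot,s)$.
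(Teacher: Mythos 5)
Your proposal is correct and follows essentially the same route as the paper: use Fubini on the integrability statements of Lemma \ref{lem:intdhv} and the product-measure identification $F = D_H f_{(\cdot)}$ after Lemma \ref{lem:altrep} to extract a single full-measure set $E$ of base points, run Lemma \ref{lem:dettr} on $G(\sigma)=D_H v(f_\sigma(p))$ and $F(p,\cdot)$ for every $p\in E$ and all $s\in[0,1]$ at once, and then, for each fixed $s$, identify $\det F(\cdot,s)$ with $\det D_H f_s$ a.e.\ and compute $\tr D_H v = -\tfrac14[X,Y]\phi = T\phi$. You are merely a bit more explicit than the paper about the final intersection with the ($s$-dependent) full-measure set on which the two representatives of the formal horizontal differential of $f_s$ coincide, and about absolute continuity of $F(p,\cdot)$; these are points the paper leaves implicit, and your unpacking of them is accurate.
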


\begin{proof}
With the above in place, the proof goes through as in the Euclidean case. Let $ F $ be as in Lemma \ref{lem:altrep}. Let $ p_0 \in \mH $ be such that (i) $ s \mapsto D_H v( f_s ( p_0 ) ) $ is integrable on $ [0,1] $, (ii) $ s\mapsto D_H v ( f_s ( p_0 ) ) D_H f_s ( p_0 ) $  is integrable on $ [0,1] $, and (iii) $ F( p_0 ,s) = D_H f_s ( p_0 ) $ at almost every $ s \in [0,1] $ (we have almost every point of $\mH$ to choose from).

Now let $ G(s) := D_H v( f_s ( p_0 ) ) $ on $[0,1]$. By (i), $ G $ is integrable on $ [0,1] $. Let $ s \mapsto F(s) $ be defined by $ F(s) = F( p_0 ,s) $ on $[0,1]$. By (ii), $ F $ is continuous on $ [0,1] $. Furthermore, (iii) allows us to replace $ \sigma \to D_H f_{\sigma} ( p_0 ) $ with $ \sigma \mapsto F(\sigma) $ in the expression for $ F( p_0 ,s) $,
\[
	F(s) = F( p_0 ,s) = I_2 + \int_0^s G(\sigma) F(\sigma) \, \mathrm{d} \sigma.
\]
Consequently, $ G $ and $ F $ satisfy the hypotheses of Lemma \ref{lem:dettr} and so
\begin{align*}
2 \log \left[ \det F( p_0 ,s) \right] &= 2 \int_0^s \tr D_H v \left( f_{\sigma}( p_0 ) \right) \, \mathrm{d} \sigma \\
	&= - 2 \int_0^s \tfrac 1 4 \left( [X,Y] (\phi) \circ f_{\sigma} \right) ( p_0 ) \,\mathrm{d} \sigma \\
	&= 2 \int_0^s T \phi ( f_{\sigma} ( p_0 ) ) \, \mathrm{d} \sigma.
\end{align*}

Let $ \Omega \subset \mH $ be the set at which properties (i)-(iii) hold. Lemma \ref{lem:altrep} says that for all $s \in [0,1]$, $D_H f_s (p) = F(p,s)$ at almost every $p \in \Omega$. It follows for all $s \in [0,1]$,
\[
	\log J_{f_s} ( p ) = 2\log \left[ \det D_H f_s (p) \right] = 2 \int_0^s T \phi ( f_{\sigma} ( p ) ) \, \mathrm{d} \sigma
\]
at almost every $p \in \mH $.
\end{proof}

\section{Vector Fields with Prescribed Horizontal Divergence} \label{sec:ap}

Consider a logarithmic potential as given. This section demonstrates we can construct a contact-generating potential $\phi : \mH \to \mR$ (see the paragraph containing \eqref{eq:contactv} for terminology) for which all the following hold. First, $\phi$ meets the requirements of Proposition \ref{pro:flow} so that it generates a quasiconformal flow. Second, $X\phi , Y\phi \in HW_{\text{loc}}^{1,r}$ for all $ 1 \leq r < \infty $ so that the results of Section \ref{subsec:vareq} hold (in particular Proposition \ref{pro:link}). Third, the (formal) horizontal divergence of $ v_\phi $ approximates the logarithmic potential in a suitable way so that (by Proposition \ref{pro:link}) the Jacobian of the quasiconformal flow mapping approximates the logarithmic potential.

Constructing such a $\phi$ requires the most granular of our arguments and some of the computations deserve to be described as tedious. It will be convenient to represent the standard basis of the horizontal layer using the notation
\[
	X_1 (p) = X_p,\quad\quad X_2 (p) = Y_p.
\]
To avoid repetition, if we say something is true for $X_i$ then it is true independently of whether $i=1$ or $i=2$.

If $\Omega \subset \mH \times \overbrace{\mH \times \cdots \times \mH}^{n \text{ copies}} $, and $(q_1,\ldots,q_n)$ is such that there exists $p$ with $(p,q_1,\ldots,q_n)\in \Omega$ we define 
\[
    \Omega_{q_1,\ldots,q_n} = \{ p \in \mH \;|\; (p,q_1,\ldots,q_n)\in \Omega \}.
\]
If $F$ is a real valued function with domain $\Omega$ then $F_{q_1,\ldots,q_n} : \Omega_{q_1,\ldots,q_n} \to \mR$ is defined by $F_{q_1,\dots,q_n}(p) = F(p,q_1,\dots,q_n)$. Suppose $\Omega_{q_1,\ldots,q_n}$ is open for each $(q_1,\ldots,q_n)$. Then 
\begin{equation} \label{eq:derivconvention}
    X_i F (p,q_1,\dots,q_n) := X_i F_{q_1,\ldots,q_n} (p)
\end{equation}
whenever the derivative on the right-side of this expression exists. In other words, $ X_i F $ always refers to differentiation in the first coordinate. We will be consistent in our use of $ p $ for this first coordinate and continue the convention that $ p = (x,y,t) $. We say that $X_i F$ exists and is continuous (on $\Omega$) to mean that for all $(p,q_1,\ldots,q_n) \in \Omega$ the derivative $X_i F_{q_1,\ldots,q_n}(p)$ exists and $X_i F$ is (jointly) continuous on $\Omega$. Similarly for second horizontal derivatives.

We begin with some elementary results largely avoiding the proofs. The first is a mild extension of classical differentiation under the integral (we use a horizontal derivative and tailor the statement to our purpose). 

\begin{lem} \label{lem:hdui}
Let $ V \subset \mH \times \mH $ be open and let $ f : V \times \mH \to \mR $ be continuous. Assume for each $ (p,q) \in V $ that $ f(p,q,u) = 0 $ whenever $ u $ is outside $ B(p,d(p,q)/2) $. Let $ \mu $ be a measure on $ \mH $ absolutely continuous with respect to Lebesgue measure. Define $ F : V \to \mR $ by 
\[
	F(p,q) = \int f(p,q,u) \, \mathrm{d}\mu (u).
\]
Then $ F $ is continuous. Furthermore, if $ X_i f, \, X_j X_i f $ exist and are continuous on $ V \times \mH $ then $ X_i F, \, X_j X_i F $ exist and are continuous on $ V $ with
\begin{align*}
	X_i F(p,q) 		&= \int X_i f(p,q,u) \, \mathrm{d}\mu (u) \quad \text{and} \\
	X_j X_i F(p,q) 	&= \int X_j X_i f (p,q,u) \, \mathrm{d}\mu (u).	
\end{align*}   
\end{lem}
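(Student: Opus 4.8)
The plan is to reproduce the classical differentiation-under-the-integral argument, carried along the integral curves of the horizontal fields. The key observation is that, writing $e_1(\tau)=(\tau,0,0)$ and $e_2(\tau)=(0,\tau,0)$, the curve $\tau\mapsto p\star e_i(\tau)$ is the integral curve of $X_i$ through $p$, so that $(X_iG)(p)=\frac{\mathrm{d}}{\mathrm{d}\tau}\big|_{\tau=0}G(p\star e_i(\tau))$ and, along the whole curve, $\frac{\mathrm{d}}{\mathrm{d}\tau}G(p\star e_i(\tau))=(X_iG)(p\star e_i(\tau))$ whenever $X_iG$ exists and is continuous near $p$ (this is immediate from the definitions, using that $\tau\mapsto e_i(\tau)$ is a one-parameter subgroup so $e_i(\tau)\star e_i(h)=e_i(\tau+h)$, together with the continuity of $X_iG$, which makes $\tau\mapsto G(p\star e_i(\tau))$ genuinely $C^1$). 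Consequently, under the hypotheses on $f$, the fundamental theorem of calculus in $\tau$ yields
\[
	f(p\star e_i(h),q,u)-f(p,q,u)=\int_0^h (X_if)(p\star e_i(\tau),q,u)\,\mathrm{d}\tau
\]
for all $(p,q)\in U$, all $u\in\mH$, and all sufficiently small $h$.

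The step I expect to be the main obstacle — in fact the only point requiring real care — is a localization of the compact-support hypothesis. I would fix $(p_0,q_0)\in U$ and produce a compact neighbourhood $N\subset U$ of $(p_0,q_0)$ together with a \emph{single} compact $\Omega\subset\mH$ such that $f(p,q,u)=0$ for every $(p,q)\in N$ and every $u\notin\Omega$. I would take this local uniformity of the supports $\Omega_{p,q}$ as built into the hypothesis; in any case it is automatic in the applications, where $f$ always carries a fixed compactly supported cut-off factor. Since $\mu$ is locally finite and absolutely continuous, $\mu(\Omega)<\infty$. Continuity of $F$ is then immediate: $f$ is uniformly continuous on the compact set $N\times\Omega$, so for $(p,q)\in N$ close to $(p_0,q_0)$ the quantity $\sup_{u\in\Omega}|f(p,q,u)-f(p_0,q_0,u)|$ is as small as we please, while $f(p,q,u)-f(p_0,q_0,u)$ vanishes for $u\notin\Omega$; integrating against $\mu$ and using $\mu(\Omega)<\infty$ gives $F(p,q)\to F(p_0,q_0)$.

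Next I would treat the first horizontal derivative. Fix $(p,q)\in U$ and restrict $h$ so that $\{p\star e_i(\tau):|\tau|\le|h|\}$ lies in a neighbourhood of $p$ on which a fixed $\Omega$ controls the supports as above. In the displayed identity the integrand is continuous, hence bounded on the relevant compact set, and vanishes for $u\notin\Omega$; as $\mu(\Omega)<\infty$, Fubini's theorem lets us interchange $\int_0^h$ with $\int\,\mathrm{d}\mu$, so that after dividing by $h$
\[
	\frac{F(p\star e_i(h),q)-F(p,q)}{h}=\frac{1}{h}\int_0^h G_i\big(p\star e_i(\tau),q\big)\,\mathrm{d}\tau,\qquad G_i(p,q):=\int (X_if)(p,q,u)\,\mathrm{d}\mu(u).
\]
Now $X_if$ is continuous and, since $X_i$ differentiates only in the first slot, retains locally uniform compact support in $u$; hence the continuity statement just proved, applied to $X_if$ in place of $f$, shows $G_i$ is continuous on $U$. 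Averaging the continuous function $\tau\mapsto G_i(p\star e_i(\tau),q)$ over the shrinking interval $[0,h]$ forces the difference quotient to converge to $G_i(p,q)$, so $X_iF$ exists, is continuous, and equals $\int X_if\,\mathrm{d}\mu$.

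Finally, the second-order statement follows by running the previous paragraph once more with $X_if$ — which is continuous, has locally uniform compact support, and whose horizontal derivative $X_jX_if$ exists and is continuous by hypothesis — in the role of $f$; this gives that $X_j(X_iF)=X_jG_i$ exists, is continuous, and equals $\int X_jX_if\,\mathrm{d}\mu$. In short, once the localization of the second paragraph is secured, everything reduces to the familiar dominated-convergence and Fubini bookkeeping, transported along the horizontal flow.
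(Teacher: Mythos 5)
The paper states this lemma without proof (it is introduced among the ``elementary results largely avoiding the proofs''), so there is no argument of the paper to compare against; your proof is the natural intended one and it is correct. Right translation $\tau\mapsto p\star e_i(\tau)$ is indeed the integral curve of the left-invariant field $X_i$, the fundamental theorem of calculus along it is legitimate because the pointwise derivative $X_if$ is assumed continuous (so $\tau\mapsto f(p\star e_i(\tau),q,u)$ is genuinely $C^1$), and the Fubini/averaging step then yields both derivative formulas, the second-order one by rerunning the argument with $X_if$ in place of $f$. Your localization caveat is well taken and is in fact necessary: with only the literal pointwise hypothesis (for each $(p,q)$ some compact $\Omega_{p,q}$), the conclusion can fail --- a bump in $u$ escaping to infinity as $(p,q)\to(p_0,q_0)$ produces a continuous, fibrewise compactly supported $f$ whose integral $F$ is discontinuous --- so the statement must be read with the supports locally uniform in $(p,q)$, which is exactly what holds in the paper's application, where $f(p,q,u)=\xi_0(\Gamma_u(p,q))$ vanishes for $u$ outside $\overline{B}(p,d(p,q)/2)$ and $\mathrm{d}\mu(u)=J_g(u)\,\mathrm{d}u$ is locally finite (local finiteness, which you also invoke, is likewise implicit in the statement rather than explicit). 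Note finally that your remark that $X_if$ ``retains'' compact support in $u$ itself relies on this locally uniform reading: it is because $f(\cdot,\cdot,u)\equiv 0$ on a whole neighbourhood of $(p,q)$ for $u\notin\Omega$ that its horizontal derivative vanishes there as well.
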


If $p_k \to p$ in $\mH$ then for fixed $q \in \mH$ we will have all $B(p_k,d(p_k,q)/2)$ contained in the same large ball for large enough $k$. This allows use of the dominated convergence theorem in the proof of the above lemma. The next lemma is very similar to the first.

\begin{lem} \label{lem:hdui2}
Suppose $ f : \mH \times \mH \to \mR $ is continuous and that $ X_i f $ exists and is continuous on $ \mH \times \mH $. Let $ \psi \in C_0^{\infty} $. For $p \in \mH$ let
\[
	F(p) := \int f(p,q) \psi(q) \, \mathrm{d}q.
\]
Then $ F \in HC^1 $ with
\[
	X_i F(p) = \int X_i f(p,q)\psi(q) \, \mathrm{d}q.
\]
\end{lem}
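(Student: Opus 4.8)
The plan is to treat this as differentiation under the integral sign, with the compact support of $\psi$ supplying the needed uniformity, just as in the classical Euclidean statement; I expect no essential obstacle, only a couple of points of care.

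First I would fix a base point $p_0 \in \mH$ and recall that $X_i F(p_0)$ denotes $X_i(p_0)F(p_0)$, i.e. the derivative at $s=0$ of $s \mapsto F(\gamma(s))$, where $\gamma$ is the integral curve of the left-invariant field $X_i$ through $p_0$ (a straight line in the coordinates of $\mR^3$, whatever its precise parametrization). By the same token, the hypothesis that $X_i f$ exists and is continuous on $\mH\times\mH$ says exactly that, for each fixed $q$, the map $\sigma \mapsto f(\gamma(\sigma),q)$ is differentiable with derivative $X_i f(\gamma(\sigma),q)$, jointly continuous in $(\sigma,q)$; hence $\sigma \mapsto f(\gamma(\sigma),q)$ is $C^1$ and the fundamental theorem of calculus gives
\[
	\frac{f(\gamma(s),q) - f(p_0,q)}{s} = \frac{1}{s}\int_0^s X_i f(\gamma(\sigma),q)\,\mathrm{d}\sigma
\]
for $s \neq 0$. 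Integrating against $\psi$ and restricting to $|s|\leq 1$, the curve segment $\gamma([-1,1])$ lies in a fixed compact set $K$, so $X_i f$ is bounded, say by $M$, on the compact set $K\times\operatorname{supp}(\psi)$; the integrand above, multiplied by $\psi(q)$, is therefore dominated by $M\,|\psi(q)|\,\chi_{\operatorname{supp}(\psi)}(q)\in L^1(\mH)$, uniformly in $s$. Joint continuity of $X_i f$ gives $\frac{1}{s}\int_0^s X_i f(\gamma(\sigma),q)\,\mathrm{d}\sigma \to X_i f(p_0,q)$ as $s\to 0$ for every $q$, so the Dominated Convergence Theorem yields
\[
	X_i F(p_0) = \lim_{s\to 0}\int \frac{f(\gamma(s),q)-f(p_0,q)}{s}\,\psi(q)\,\mathrm{d}q = \int X_i f(p_0,q)\,\psi(q)\,\mathrm{d}q,
\]
which is the claimed formula, and the argument is identical for $i=1$ and $i=2$.

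Finally, to conclude $F \in HC^1$, I would observe that $F$ is continuous: for $p$ in a compact neighbourhood of any given point, $f(p,q)$ is bounded on the relevant compact set, so continuity of $F$ follows from dominated convergence together with continuity of $f$; running the same argument with $X_i f$ in place of $f$ shows $p \mapsto \int X_i f(p,q)\psi(q)\,\mathrm{d}q$ is continuous, and since this equals $X_i F$ for each $i\in\{1,2\}$, $F$ has continuous horizontal derivatives. The only steps demanding attention are the correct reading of $X_i F$ as a derivative along the flow line of $X_i$, and the verification that the chain rule and FTC apply along that line even though $f$ is only assumed continuous (not $C^1$) in all variables jointly — but this is automatic once one notes that continuity of $X_i f$ forces $\sigma \mapsto f(\gamma(\sigma),q)$ to be $C^1$.
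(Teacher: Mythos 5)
Your argument is correct and is exactly the route the paper has in mind: it states Lemmas \ref{lem:hdui} and \ref{lem:hdui2} without proof, describing them as classical differentiation under the integral in which joint continuity of $f$ and $X_i f$ (plus the compact support of $\psi$) justifies the passage to the limit, which is precisely your dominated-convergence argument along the integral curve of $X_i$. The two points you flag — reading $X_i F$ as a derivative along the flow line, and noting that everywhere-existence plus continuity of $X_i f$ makes $\sigma \mapsto f(\gamma(\sigma),q)$ genuinely $C^1$ so that the fundamental theorem of calculus applies — are handled correctly, so nothing is missing.
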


The preceding two lemmas rely on joint continuity (of both function and derivative) to allow differentiation under the integral in the classical sense. In the next lemma, we want to differentiate under the integral but only weakly so. We retain joint continuity of the function and swap joint continuity of the derivative for joint integrability of the weak derivative. This allows for a Fubini-type argument (which we omit).

\begin{lem} \label{lem:wdui}
Let $ f: \mH \times \mH \to \mR $ be continuous and such that for each $q\in \mH$, $f_q \in HW_{\text{loc}}^{1,1}$. Suppose for some choice of $X_i f_q$  the function $(p,q) \mapsto X_i f (p,q) := X_i f_q (p)$ is well-defined almost everywhere on $\mH \times \mH$ with $ X_i f \in L_{\text{loc}}^r ( \mH \times \mH ) $ for all $ 1 \leq r < \infty $. Let $\psi \in C_0^{\infty}$ and for $p \in \mH$ define
\[
		F(p) = \int f(p,q) \psi (q) \, \mathrm{d}q.
\]
Then $ F \in HW_{\text{loc}}^{1,r} $ for all $ 1 \leq r < \infty $ with 
\[
	X_i F(p) = \int X_i f(p,q) \psi (q) \, \mathrm{d}q
\]
an almost everywhere defined representative of $X_i F$.
\end{lem}

To make our goal more precise, we seek to approximate a given \textit{quasi}logarithmic potential, 
\[
	(\Lambda_\psi \circ g) (p) = -\int \log d(g(p),q) \psi(q)\,\mathrm{d}q
\]
for some $ g \in \cQ_0 (K) $ and $ \psi \in C_0^{\infty} $.

Let $K\geq 1$. Fix $ g \in \cQ_0 (K) $ and  $ \psi \in C_0^{\infty} $. Recall, the notation $\cQ_0 (K)$ means that $ g $ is a $ K $-quasiconformal mapping such that $ g(0) = 0 $ and $ \| g(q) \| = 1 $ for at least one $ q \in S(1) $. 

For ease of reference, we collect here the various layers of our construction. Let $E\subset \mH \times \mH$ be the diagonal, $E = \{ (p,q) \in \mH \times \mH \,:\, p=q \}$. For $ q \in \mH $, let $ L_{q} : \mH \to \mH $ be left translation by $q$, $ L_{q}(p) = qp$.

For $ u \in \mH$, define $ \Gamma_u : (\mH \times \mH)\setminus E \to \mH $ by
\begin{equation}\label{eq:gammadef}
    \Gamma_u (p,q) = \delta_{ d(p,q)^{-1} } ( L_{u^{-1}} p ).
\end{equation} 
Fix a function $ \xi_{0} \in C_{0}^{\infty} $ with $ 0 \leq \xi_{0} \leq 1 $, $ \xi_{0} \equiv 1 $ on $B(1/4) $, and $ \textnormal{support}(\xi_{0}) \subset B(1/2) $. Now define $ \lambda_g : \mH \times \mH \to [0,\infty) $ by
\begin{equation} \label{eq:lambdadef}
\lambda_{g}(p,q) = \left\{ 
\begin{array}{l l}
\left( \int \xi_{0}(\Gamma_{u}(p,q)) J_{g}(u) \, \mathrm{d}u \right)^{\frac{1}{4}} & \quad \text{if $ p\neq q $} \\
0 & \quad \text{if $ p = q $.}
\end{array} \right.
\end{equation}
As we shall see, $\lambda_g (p,q)$ approximates $d(g(p),g(q))$ and has the added benefit that derivatives land on the smooth part of the integrand.

Let $ U_{g} : = \{ (p,q) \in \mH \times \mH \, : \, p \neq g^{-1}(q) \} $. Since $ g^{-1} $ is continuous we have that $ U_g $ is open. Define $ \eta_g : U_g \to \mR $ by
\begin{equation} \label{eq:etadef}
\eta_{g}(p,q) = -\log\lambda_g \left( p , g^{-1}(q) \right).
\end{equation}
With the comment regarding the role of $\lambda_g$ in mind, notice the similarity between $\eta_g$ and the integrand of a quasilogarithmic potential. 

Let $ \widetilde{ \phi }_g : \mH \times \mH \to \mR $ be defined by
\begin{equation} \label{eq:tphidef}
\widetilde{ \phi }_g (p,q) = \left\{ 
\begin{array}{l l}
\eta_g (p,q)\left( g^{-1}(q)^{-1}p \right)_{3} & \quad \text{if $ p \neq g^{-1}(q) $}\\
0 & \quad \text{if $ p = g^{-1}(q) $.}
\end{array} \right.
\end{equation}
Here $ \left( g^{-1}(q)^{-1}p \right)_{3} $ is the third component of $ g^{-1}(q)^{-1}p $. If $q$ is zero then $ \left( g^{-1}(q)^{-1}p \right)_{3} = t$. This exhibits the (deliberate) similarity between $\widetilde{\phi}_g$ and \eqref{eq:protophi}.

We now bring $ \psi $ into the picture ($ \psi $ is to be thought of as the density of a measure associated with our given quasilogarithmic potential). Let $ R > 0 $ be such that $ \textnormal{support}( \psi ) \subset B(R) $. Define $ \phi_{g,\psi}^{1} : \mH \to \mR $ by
\begin{equation} \label{eq:phi1def}
\phi_{g,\psi}^{1}(p) = \int \widetilde{ \phi }_g (p,q) \psi(q) \, \mathrm{d}q.
\end{equation}

We remind the reader that we write $v_\phi$ for the vector field generated by potential $\phi$ as in \eqref{eq:contactv}. Letting 
\begin{equation} \label{eq:videf}
v_{ \phi_{g,\psi}^1 } = v_{1} \partial_{x} + v_{2} \partial_{y} + v_{3} \partial_{t}
\end{equation} 
determine $ v_{1} , v_{2} , v_{3} $, set
\begin{equation} \label{eq:phi2def}
\phi_{g,\psi}^2 (x,y,t) = c_{3} - 4 c_{1}y + 4 c_{2}x
\end{equation}
with
\[
( c_{1} , c_{2} , c_{3} ) := ( v_{1}(0) , v_{2}(0) , v_{3}(0) ).
\]
The vector field $v_{\phi_{g,\psi}^2}$ generates a flow of left-translations which are conformal mappings.

Lastly, define $ \phi_{g,\psi} : \mH \to \mR $ by
\begin{equation} \label{eq:phidef}
\phi_{g,\psi}(p) = \phi_{g,\psi}^1 (p) - \phi_{g,\psi}^2 (p).
\end{equation}
The role of $\phi_{g,\psi}^2$ is to ensure the flow mappings of $v_{\phi_{g,\psi}}$ preserve the origin. This useful normalizing effect can be thought of as a result of a translation at the level of the flow, or more simply because $\phi_{g,\psi}^2$ is chosen so that $v_{\phi_{g,\psi}}(0) = 0$ (as a quick computation shows). 

We now make the statements we will spend the remainder of the section working toward. The first establishes the conditions required by Propositions \ref{pro:flow} and \ref{pro:link}.

\begin{pro}\label{pro:growth}
Let $K\geq 1$ and let $R>0$. Let $ g \in \cQ_0 (K) $ and let $ \psi \in C_0^\infty $ with $ \textnormal{support} (\psi) \subset B(R) $. Define $ \phi = \phi_{g,\psi} $ as in \eqref{eq:phidef}. Then (i) $\phi \in HC^1$ with
\[
	|\phi(p)| 	\lesssim_{K,R,\|\psi\|_1} 1 + \|p\|^{2} \log \|p\| \quad \text{and} 
\]
\[
	|Z \phi (p)| \lesssim_{K,R,\|\psi\|_1} 1 + \|p\| \log\|p\|;
\]
(ii) $ZZ\phi\in L^\infty$ with absolute constants $A_1, A_2 > 0$ such that
\[
	\sqrt{2} \|ZZ\phi\|_{\infty} \leq A_2 \exp\left( A_1 K^{\frac{2}{3}} \right) \| \psi \|_1; 
\]	
and (iii) $X_i \phi \in HW_{\text{loc}}^{1,r}$ for all $1\leq r < \infty$.
\end{pro}

The following statement is an immediate consequence of Propositions \ref{pro:flow}, \ref{pro:link}, and \ref{pro:growth} (along with the observation made above that $v_{\phi_{g,\psi}}(0) = 0$).

\begin{cor} \label{cor:flowprops}
Let ($K$, $R$, $g$, $\psi$ and) $\phi$ be as in Proposition \ref{pro:growth}. Then $ v_\phi $ generates a well-defined flow of homeomorphisms. For $ 0 \leq s < \infty $, let $ h_s $ be the time-$s$ flow mapping of $ v_{\phi} $, and let $ C = A_2 \exp\left( A_1 K^{\frac{2}{3}} \right) $ with $A_1,A_2$ as in Proposition \ref{pro:growth}. Then for all $ s \in [0,\infty)$, $ h_{s}(0) = 0 $ and $ h_{s} $ is quasiconformal with $ K(h_s) \leq e^{C \|\psi\|_1 s} $. Lastly, for all $ s \in [0,1] $,
\begin{equation} \label{eq:vareqinpro}
	\log J_{h_s}(p) = 2 \int_0^s T \phi (h_\sigma (p)) \, \mathrm{d}\sigma
\end{equation}
at almost every $ p \in \mH $.
\end{cor}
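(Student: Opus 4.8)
The plan is to assemble Propositions \ref{pro:flow}, \ref{pro:link}, and \ref{pro:growth}; essentially all the substance has already been done, and what remains is bookkeeping. Writing $\phi = \phi_{g,\psi}$, I would first observe that Proposition \ref{pro:growth}(i) supplies $\phi \in HC^1$ together with the growth bounds \eqref{eq:growth1} and \eqref{eq:growth2}, while Proposition \ref{pro:growth}(ii) supplies $ZZ\phi \in L^\infty$. To get the dilatation bound with the dependence claimed in the corollary, I would use that the construction \eqref{eq:lambdadef} through \eqref{eq:phidef} is linear in $\psi$: $\phi_{g,\psi}^1$ depends linearly on $\psi$ by \eqref{eq:phi1def}, the constants $(c_1,c_2,c_3)$ of \eqref{eq:phi2def} depend linearly on $v_{\phi_{g,\psi}^1}(0)$ hence on $\psi$, and therefore $ZZ\phi$ is linear in $\psi$. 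Tracing this through the proof of Proposition \ref{pro:growth}(ii) gives a bound of the exact form $\sqrt 2\,\|ZZ\phi\|_\infty \leq C(K)\|\psi\|_1$, i.e. \eqref{eq:zzbnd} with $c = C(K)\|\psi\|_1$. Hence $\phi$ satisfies the hypotheses of Proposition \ref{pro:flow}, which yields that $v_\phi$ generates a flow of homeomorphisms for all $s \in \mR$, and that for $s \geq 0$ the time-$s$ mapping $h_s$ is quasiconformal with $K(h_s) \leq e^{cs} = e^{C(K)\|\psi\|_1 s}$.

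Next I would establish $h_s(0) = 0$. The potential $\phi_{g,\psi}^2$ is constructed, via the choice \eqref{eq:phi2def}, precisely so that $v_\phi(0) = 0$, and this is the quick computation mentioned just before the corollary. Granting it, the constant curve $\gamma \equiv 0$ solves $\gamma'(s) = v_\phi(\gamma(s))$, $\gamma(0) = 0$; by the uniqueness of solutions in Proposition \ref{pro:flow} it is the flow line through the origin, so $h_s(0) = \gamma(s) = 0$ for every $s \geq 0$.

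Finally, for the variational identity \eqref{eq:vareqinpro}, I would note that Proposition \ref{pro:growth}(iii) gives $X\phi, Y\phi \in HW_{\text{loc}}^{1,r}$ for all $1 \leq r < \infty$ (recall that in this section $X_1 = X$ and $X_2 = Y$). Combined with the hypotheses of Proposition \ref{pro:flow} verified above, this is exactly the hypothesis set of Proposition \ref{pro:link}, whose conclusion reads $\log J_{h_s}(p) = 2\int_0^s T\phi(h_\sigma(p))\,\mathrm{d}\sigma$ for all $s \in [0,1]$ at almost every $p$, which is \eqref{eq:vareqinpro}. I do not expect any genuine obstacle; the one point requiring care is confirming that the constants furnished by Proposition \ref{pro:growth} carry the advertised dependence on $K$ and $\|\psi\|_1$ — for the dilatation this reduces, as indicated, to the linearity of the construction in $\psi$. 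All the real work sits upstream in Propositions \ref{pro:flow}, \ref{pro:link}, and \ref{pro:growth}.
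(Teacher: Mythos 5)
Your proposal is correct and follows the paper's route exactly: the corollary is stated there as an immediate consequence of Propositions \ref{pro:flow}, \ref{pro:link}, and \ref{pro:growth}, together with the computation $v_{\phi_{g,\psi}}(0)=0$ and uniqueness of flow lines, which is precisely your argument. Your extra remark on linearity in $\psi$ is a fair way to pin down the dependence $K(h_s)\leq e^{C(K)\|\psi\|_1 s}$, and it matches what the proof of Proposition \ref{pro:growth}(ii) actually delivers, since the pointwise bound $|ZZ\tilde{\phi}|\lesssim_K 1$ is integrated against $|\psi|$.
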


The next result contains the important approximating property of the horizontal divergence of $v_{\phi_{g,\psi}}$. This can also be regarded as a splitting for quasilogarithmic potentials.

\begin{pro} \label{pro:vprops}
Let ($K$, $R$, $g$, $\psi$ and) $\phi$ be as in Proposition \ref{pro:growth}. Then  
\[
	\textnormal{div}_{H} v_{\phi} = \Lambda_{\psi} \circ g + \zeta
\]
with $ \zeta \in L^{\infty} $ such that $ \| \zeta \|_{\infty} \lesssim_{K,\| \psi \|_{1}} \, 1  $.
\end{pro}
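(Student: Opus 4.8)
The plan is to compute $\mathrm{div}_H v_\phi = T\phi$ directly and peel off the logarithmic potential. Since $\phi^2_{g,\psi}(x,y,t) = c_1 - 4c_2 y + 4 c_3 x$ (see \eqref{eq:phi2def}) is affine in $x,y$ and $t$-independent, $T\phi^2_{g,\psi} = \partial_t \phi^2_{g,\psi} = 0$, so $T\phi = T\phi^1_{g,\psi}$. I would differentiate \eqref{eq:phi1def} in the $t$-direction (which is legitimate by an argument like that for Lemma \ref{lem:hdui2}, once one notes that $q \mapsto T_p\tilde\phi_g(p,q)$ has, along $\{p = g^{-1}(q)\}$, only a logarithmic singularity, hence is locally integrable uniformly for $p$ in compacta), and then use that the third coordinate $(g^{-1}(q)^{-1}p)_3 = t - (g^{-1}(q))_3 - 2(g^{-1}(q))_2\, x + 2(g^{-1}(q))_1\, y$ has $T_p$-derivative identically $1$; the product rule applied to $\tilde\phi_g = \eta_g \cdot (g^{-1}(q)^{-1}p)_3$ in \eqref{eq:tphidef} then gives
\[
	T\phi(p) = \int \eta_g(p,q)\,\psi(q)\,\mathrm{d}q + \int \bigl(T_p\eta_g(p,q)\bigr)\,(g^{-1}(q)^{-1}p)_3\,\psi(q)\,\mathrm{d}q.
\]

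For the first integral I would show $\eta_g(p,q) = -\log d(g(p),q) + O_K(1)$. Fixing $w = g^{-1}(q)$ and $d = d(p,w)$, the cutoff $\xi_0\circ\Gamma_u(p,w)$ equals $1$ for $d(u,p)\le d/4$ and vanishes for $d(u,p)\ge d/2$; combined with $\int_B J_g = |g(B)|$ (a consequence of \eqref{eq:cvf}) this sandwiches $\lambda_g(p,w)^4$ between $|g(B(p,d/4))|$ and $|g(B(p,d/2))|$. By \eqref{eq:bdist} both bounds are comparable, with constant depending only on $K$, to $d(g(p),g(w))^4 = d(g(p),q)^4$, after invoking \eqref{eq:wqs} to pass between the spheres $S(p,d/4)$, $S(p,d/2)$ and $S(p,d)$. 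Hence $\lambda_g(p,w)\simeq_K d(g(p),q)$, so by \eqref{eq:etadef} the quantity $\eta_g(p,q) + \log d(g(p),q)$ is bounded by a constant depending only on $K$, and the first integral equals $(\Lambda_\psi\circ g)(p)$ plus a function of $L^\infty$-norm $\lesssim_{K,\|\psi\|_1} 1$.

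For the second integral it is enough to establish the pointwise bound $|T_p\eta_g(p,q)|\lesssim_K d(p,g^{-1}(q))^{-2}$, since $|(g^{-1}(q)^{-1}p)_3|\le\|g^{-1}(q)^{-1}p\|^2 = d(p,g^{-1}(q))^2$ then forces the integrand to be $\lesssim_K|\psi(q)|$, making the integral an $L^\infty$ function of norm $\lesssim_{K,\|\psi\|_1}1$. Writing $\Phi(p,q) = \lambda_g(p,w)^4 = \int J_g(u)\,\xi_0(\Gamma_u(p,w))\,\mathrm{d}u$, one has $T_p\eta_g = -T_p\Phi/(4\Phi)$ from \eqref{eq:etadef}, and $\Phi\simeq_K d(g(p),q)^4$ by the previous paragraph. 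To bound $T_p\Phi$, differentiate $\xi_0(\Gamma_u(p,w)) = \xi_0\bigl(\delta_{d(p,w)^{-1}}(u^{-1}p)\bigr)$ in $t$: the coordinates of $\Gamma_u$ are $X_0/d,\,Y_0/d,\,T_0/d^2$ with $(X_0,Y_0,T_0) = u^{-1}p$, one has $|X_0|,|Y_0|\le\|u^{-1}p\|\lesssim d$ and $|T_0|\le\|u^{-1}p\|^2\lesssim d^2$ on the $u$-support $\{d(u,p)<d/2\}$, and $|\partial_t d| = |(w^{-1}p)_3|/(2d^3)\le 1/(2d)$; collecting the terms (including the cross terms from the $t$-dependence of $d$) shows each is $O(d^{-2})$, so $|T_p[\xi_0\circ\Gamma_u(\cdot,w)]|\lesssim d^{-2}$ on the support. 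Integrating against $J_g$ over $\{d(u,p)<d/2\}$ and applying $\int_B J_g=|g(B)|$ with the comparisons above gives $|T_p\Phi(p,q)|\lesssim_K d(p,w)^{-2}\,d(g(p),q)^4$, and dividing by $4\Phi\simeq_K d(g(p),q)^4$ yields the claim.

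Putting the two integrals together, $\mathrm{div}_H v_\phi = T\phi = \Lambda_\psi\circ g + \zeta$ with $\zeta\in L^\infty$ and $\|\zeta\|_\infty\lesssim_{K,\|\psi\|_1}1$. The main obstacle is the scaling estimate $|T_p[\xi_0\circ\Gamma_u(\cdot,w)]|\lesssim d(p,w)^{-2}$: one must expand $\partial_t$ of $\delta_{d(p,w)^{-1}}(u^{-1}p)$ paying full attention to the cross terms generated by the $t$-dependence of $d(p,w)$, and verify that the anisotropic weights $d^{-1}$ (horizontal) and $d^{-2}$ (vertical) in the dilation conspire to place every term at size $d^{-2}$. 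A subsidiary technical point is justifying the interchange of $T_p = \partial_t$ with the two integrations near $\{p = g^{-1}(q)\}$, where $T_p\eta_g$ and $\eta_g$ each blow up (like $d^{-2}$ and like $|\log d|$ respectively) but the products appearing in $T\phi$ remain locally integrable.
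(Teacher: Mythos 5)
Your proposal reaches the correct conclusion, and the overall decomposition is the same as the paper's: both reduce to $T\phi^1$ (since $T\phi^2=0$), both arrive at $T\tilde\phi = \eta + u_3\,T\eta$, and both control the first term via $\eta(p,q)+\log d(g(p),q)=O_K(1)$ (this is the paper's \eqref{eq:eta+log}). Where you genuinely diverge is in bounding $T\eta$. The paper exploits the commutator identity $T=-\tfrac14[X,Y]$, so that $T\eta = -\tfrac14(X_1X_2-X_2X_1)\eta$ and the bound $|T\eta|\lesssim_K d(p,g^{-1}(q))^{-2}$ falls out of the second horizontal derivative estimate \eqref{eq:etad2} already proved in Lemma \ref{lem:lambda}; moreover, since $T\phi^1$ is then literally a combination of the $X_jX_i\phi^1$, the differentiation-under-the-integral is subsumed by Lemma \ref{lem:wdphi1}, with no extra justification required. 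You instead differentiate the cutoff kernel $\xi_0\circ\Gamma_u$ directly in $t$, track the anisotropic scaling of the dilation to show each term is $O(d^{-2})$, and then invoke the sandwich $\Phi\simeq_K d(g(p),q)^4$; this is a valid computation and arguably more self-contained (it does not presuppose the bracket relation or Lemma \ref{lem:lambda}(ii)), but it is a fresh estimate not covered by the preparatory lemmas, and it leaves the interchange of $\partial_t$ with the integrations as an open ``subsidiary technical point'' that the paper gets for free. In short: same skeleton, different key estimate for $T\eta$---the paper trades a clever algebraic reduction for less computation, you trade more computation for not needing the reduction.
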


Now begin the technicalities. The expressions $ \Gamma_u^k $ and $ L_{u^{-1}}^k $ will refer to the $ k^{\text{th}} $ component function of $ \Gamma_u $ and $ L_{u^{-1}} $ respectively, whereas $ d^k $ refers to the $ k^{\text{th}} $ power of the distance function. We suppress $p$ and $q$ from the notation when this cannot cause confusion. To illustrate these choices we record that for $p,q \in \mH$ with $p\neq q$,
\[
    \Gamma_u (p,q) = \left( \frac{(u^{-1}p)_1}{d(p,q)} , \frac{(u^{-1}p)_2}{d(p,q)} , \frac{(u^{-1}p)_3}{d^2 (p,q)} \right) = \left( \frac{L_{u^{-1}}^1}{d} , \frac{L_{u^{-1}}^2}{d} , \frac{L_{u^{-1}}^3}{d^2} \right)
\]
(see \eqref{eq:gammadef} for the definition of $\Gamma_u$).

At times we make use of the fortuitous fact that
\begin{equation}\label{eq:3cd1}
    (X L_{q^{-1}}^3)(p) = X_p (q^{-1} p)_3 = 2(y-q_2) = 2(q^{-1}p)_2 =  2 L_{q^{-1}}^2 (p) 
\end{equation}
and
\begin{equation}\label{eq:3cd2}
    (Y L_{q^{-1}}^3)(p) = Y_p (q^{-1} p)_3 = -2(x-q_1) = -2(q^{-1}p)_1 =  -2 L_{q^{-1}}^1 (p)
\end{equation}
for all $p,q\in\mH$.

Before proceeding, we remind the reader that convention \eqref{eq:derivconvention} with regard to derivatives and the convention exemplified by \eqref{eq:abscaexp} with regard to absolute constants are in place.

The purpose of $\lambda_g$ as in \eqref{eq:lambdadef} is to provide a suitably smoothed version of $ (p,q) \mapsto d(g(p),g(q)) $ from which it is possible to extract some useful estimates. The next lemma summarizes the important properties of $ \lambda_g $. As $ g $ is fixed, we will write $ \lambda = \lambda_g $.

\begin{lem} \label{lem:lambda}
$ \lambda : \mH \times \mH \to [0,\infty) $ is continuous and such that
\begin{description}
\item[(i)]
\[
    \lambda(p,q) \simeq_{K} d(g(p),g(q))
\]
for all $ (p, q) \in \mH\times \mH $; and
\item[(ii)] $ X_i \lambda $, $ X_j X_i \lambda $ exist and are continuous on $(\mH \times \mH)\setminus E$ with
\begin{align*}
    \frac{ | X_i \lambda(p,q) | }{ \lambda(p,q) } &\lesssim \frac{ 1 }{ d(p,q) }\exp\left( A K^{\frac{2}{3}} \right) \quad \text{ and} \\
    \frac{ | X_j X_i \lambda(p,q) | }{ \lambda(p,q) } &\lesssim \frac{ 1 }{ d(p,q)^{2} } \exp\left( A K^{\frac{2}{3}} \right).
\end{align*}
\end{description}
\end{lem}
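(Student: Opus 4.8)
Here is the plan. The point is that, away from the diagonal, $\lambda_g^4$ is an innocuous integral of a fixed smooth cutoff against the locally integrable weight $J_g\,\mathrm{d}u$, so that differentiation under the integral is legitimate and all estimates reduce to tracking how the anisotropic dilation inside $\Gamma_u$ scales. First I would record the support bookkeeping: since $\xi_0\equiv 1$ on $B(1/4)$ and $\xi_0\equiv 0$ off $B(1/2)$, and $\|\Gamma_u(p,q)\| = \|\delta_{d(p,q)^{-1}}(u^{-1}p)\| = d(p,u)/d(p,q)$, one gets $\xi_0(\Gamma_u(p,q)) = 1$ for $u\in B(p,d(p,q)/4)$, $\xi_0(\Gamma_u(p,q)) = 0$ for $u\notin B(p,d(p,q)/2)$, and every first- or second-order horizontal $p$-derivative of $\xi_0\circ\Gamma_u$ is supported in $\{d(p,q)/4\le d(p,u) < d(p,q)/2\}$. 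Combined with the change of variable formula \eqref{eq:cvf} in the form $\int_B J_g = |gB|$, this gives
\[
	|gB(p,d(p,q)/4)| \;\le\; \lambda_g(p,q)^4 \;\le\; |gB(p,d(p,q)/2)|.
\]
By \eqref{eq:bdist}, each of these volumes is comparable, with $K$-constants, to $d(g(p),g(w))^4$ for $w$ on the corresponding sphere, and since $d(p,q)/d(p,w)$ equals $2$ or $4$, weak quasisymmetry \eqref{eq:wqs} compares $d(g(p),g(w))$ with $d(g(p),g(q))$ from both sides; this is exactly part (i). For continuity of $\lambda$ on $\{p\ne q\}$ I would invoke Lemma \ref{lem:hdui} with measure $\mathrm{d}\mu(u) = J_g(u)\,\mathrm{d}u$ and $f(p,q,u) = \xi_0(\Gamma_u(p,q))$, which is jointly continuous there and, for fixed $(p,q)$, supported in the compact set $\overline{B(p,d(p,q)/2)}$; continuity on the diagonal is then immediate from part (i) and continuity of $g$.

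For part (ii), note that $(p,q,u)\mapsto\Gamma_u(p,q)$ is $C^\infty$ on $\{p\ne q\}\times\mH$, because the Kor\'{a}nyi gauge is smooth off the origin and the dilations $\delta_\rho$ are polynomial in $(\rho,v)$; hence $\xi_0\circ\Gamma_u$ has jointly continuous horizontal derivatives $X_i$ and $X_jX_i$ in $p$ there. Lemma \ref{lem:hdui}, again with $\mathrm{d}\mu = J_g\,\mathrm{d}u$, then yields that $X_i(\lambda_g^4)$ and $X_jX_i(\lambda_g^4)$ exist, are continuous on $\{p\ne q\}$, and are computed by differentiating under the integral. Since $\lambda_g^4 > 0$ there, the chain rule transfers these conclusions to $\lambda_g = (\lambda_g^4)^{1/4}$, with
\[
	\frac{|X_i\lambda_g|}{\lambda_g} = \frac{|X_i(\lambda_g^4)|}{4\lambda_g^4}, \qquad \frac{|X_jX_i\lambda_g|}{\lambda_g} \lesssim \frac{|X_jX_i(\lambda_g^4)|}{\lambda_g^4} + \frac{|X_i(\lambda_g^4)|\,|X_j(\lambda_g^4)|}{\lambda_g^8}.
\]
So it suffices to prove $|X_i(\lambda_g^4)(p,q)|\lesssim_K \lambda_g(p,q)^4/d(p,q)$ and $|X_jX_i(\lambda_g^4)(p,q)|\lesssim_K \lambda_g(p,q)^4/d(p,q)^2$.

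To do this, write $r = d(p,q)$ and $u^{-1}p = (a,b,c)$, so that the three components of $\Gamma_u(p,q)$ are $a/r$, $b/r$, $c/r^2$. Left-invariance of $X_i$, the explicit Heisenberg product, and the weighted homogeneity of the gauge — which makes $X_i\|\cdot\|$ bounded and $X_jX_i\|\cdot\|$ of size $O(\|\cdot\|^{-1})$, hence $|X_i r|\lesssim 1$ and $|X_jX_i r|\lesssim 1/r$ — lead, by a direct computation, to the conclusion that on the support $\{r/4\le d(p,u) < r/2\}$, where $|a|,|b|\le r/2$ and $|c|\le r^2/4$, each $X_i$ of a component of $\Gamma_u$ is $O(1/r)$ and each $X_jX_i$ of a component is $O(1/r^2)$. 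Running this through the chain rule for $\xi_0\circ\Gamma_u$ gives the pointwise bounds $|X_i(\xi_0\circ\Gamma_u)(p,q)|\lesssim d(p,q)^{-1}\chi_{B(p,d(p,q)/2)}(u)$ and $|X_jX_i(\xi_0\circ\Gamma_u)(p,q)|\lesssim d(p,q)^{-2}\chi_{B(p,d(p,q)/2)}(u)$. Integrating against $J_g\,\mathrm{d}u$ converts the indicator into $|gB(p,d(p,q)/2)|\lesssim_K\lambda_g(p,q)^4$ by part (i), which is the desired pair of derivative bounds; substituting both into the chain-rule inequality above then yields $|X_jX_i\lambda_g|/\lambda_g\lesssim_K 1/d(p,q)^2$, and we are done. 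The main obstacle is precisely this last bookkeeping: one must verify that the interaction of the horizontal fields $X,Y$ with the non-isotropic dilation $\delta_{d(p,q)^{-1}}$, which contracts the $t$-direction by a square, still produces exactly one, respectively two, powers of $1/d(p,q)$, with all quasiconformality constants entering only through the comparison in part (i).
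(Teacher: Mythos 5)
Your proposal is correct and follows essentially the same route as the paper: bracket $\lambda^4$ between $|gB(p,d(p,q)/4)|$ and $|gB(p,d(p,q)/2)|$ via the support of $\xi_0$ and the change of variable formula, conclude part (i) from \eqref{eq:bdist} and quasisymmetry, differentiate under the integral (Lemma \ref{lem:hdui} with $\mathrm{d}\mu = J_g\,\mathrm{d}u$), and obtain the logarithmic-derivative bounds by the same chain-rule estimates $|X_i(\xi_0\circ\Gamma_u)|\lesssim d^{-1}$, $|X_jX_i(\xi_0\circ\Gamma_u)|\lesssim d^{-2}$ on the annular support, transferred from $\lambda^4$ to $\lambda$ by the identical quotient identities. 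No gaps of substance.
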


Recall $E$ is the diagonal.

\begin{proof}
Let $p,q\in\mH$ with $p\neq q$. Let $u\in S(p,d(p,q)/2)$ and $w\in S(p,d(p,q)/4)$. We first observe that by \eqref{eq:bdist},
\[
    \left|gB\left(p,\frac{d(p,q)}{2}\right)\right|^{\frac{1}{4}} \lesssim \exp\left( A K^{\frac{2}{3}} \right) d(g(p),g(u)).
\]
Since $d(p,u)\leq d(p,q)$, this gives by \eqref{eq:wqs2} that
\begin{equation}\label{eq:ballest1}
    \left|gB\left(p,\frac{d(p,q)}{2}\right)\right|^{\frac{1}{4}} \lesssim \exp\left( A K^{\frac{2}{3}} \right) d(g(p),g(q)).
\end{equation}
Using \eqref{eq:bdist} again, we find
\[
    \left|gB\left(p,\frac{d(p,q)}{4}\right)\right|^{\frac{1}{4}} \gtrsim \exp\left( -A K^{\frac{2}{3}} \right) d(g(p),g(w)).
\]
An application of \eqref{eq:wqs} leads to 
\[
    d(g(p),g(q)) \leq 4^{K^\frac{2}{3}} \exp\left( A K^{\frac{2}{3}} \right) d(g(p),g(w)) 
\]
so that
\begin{equation}\label{eq:ballest2}
    \left|gB\left(p,\frac{d(p,q)}{4}\right)\right|^{\frac{1}{4}} \gtrsim \exp\left(-A K^{\frac{2}{3}}\right) d(g(p),g(q)).
\end{equation}
The definition of $ \lambda $ (see \eqref{eq:lambdadef}) together with \eqref{eq:cvf} and \eqref{eq:ballest1} gives
\begin{equation}\label{eq:lambdaest1}
	\lambda (p,q) \leq \left( \int_{ B\left( p, \frac{d(p,q)}{2} \right) } J_g \right)^{\frac{1}{4}} = \left| g B \left( p, \frac{d(p,q)}{2} \right) \right|^{\frac{1}{4}} \lesssim d(g(p),g(q)),
\end{equation}
and using \eqref{eq:ballest2} in place of \eqref{eq:ballest1} provides
\begin{equation}\label{eq:lambdaest2}
	\lambda (p,q) \geq \left( \int_{ B \left( p, \frac{d(p,q)}{4} \right) } J_g \right)^{\frac{1}{4}} = \left| g B\left( p, \frac{d(p,q)}{4} \right) \right|^{\frac{1}{4}} \gtrsim d(g(p),g(q)).
\end{equation}
Together \eqref{eq:lambdaest1} and \eqref{eq:lambdaest2} constitute the comparability of $\lambda (p,q)$ and $d(g(p),g(q))$ (as in statement (i)). Continuity of $ \lambda $ at $(p,q)$ follows from Lemma \ref{lem:hdui} with measure $ \mathrm{d} \mu (u) = J_g (u) \mathrm{d} u $.

Now let $p,q \in\mH$ with $p=q$. Recall $\lambda$ was defined to be zero on the diagonal. Since $\lambda(p,q) = d(g(p),g(q)) = 0$ the desired comparability is trivial. We rely on \eqref{eq:lambdaest1} and the (local) H\"{o}lder continuity of $g$ as in Lemma \ref{lem:uhold} to find $\lambda$ continuous at $(p,q)$. This concludes the proof of statement (i).

The existence and continuity of $ X_i \lambda $ and $ X_j X_i \lambda $ off the diagonal follow from Lemma \ref{lem:hdui} with measure $ \mathrm{d} \mu (u) = J_g (u) \mathrm{d} u $. In order to complete the proof of statement (ii) we make a series of estimates, beginning with the innermost function $\Gamma_u$ and working our way to the exterior.

Let $ p , q \in \mH $ with $ p \neq q $. We remain in this case for the rest of the proof. 

Let $u \in \mH $ satisfy $2 d(p,u) \leq d(p,q)$. Note that for $k=1,2$, $|L_{u^{-1}}^k (p)| \leq d(p,u) \leq d(p,q) $, and $|L_{u^{-1}}^3 (p)| \leq d^2 (p,u) \leq d^2 (p,q) $.

For $k=1,2$, we have 
\[
    X_i \Gamma_{u}^{k} = \frac{\left( X_i L_{u^{-1}}^k \right) d - L_{u^{-1}}^k X_i d}{d^2}
\]
so that
\[
| X_i \Gamma_{u}^{k} | \lesssim \frac{ | X_i L_{u^{-1}}^{k} | }{ d } + \frac{ | X_i d | }{ d }.
\]
We use the fact that $X_i L_{u^{-1}}^{k}$ is either $1$ or $0$ in combination with \eqref{eq:1ddistest} of the appendix to find 
\[
    | X_i \Gamma_{u}^{k} | \lesssim d^{-1}, \quad k = 1,2.
\]
The preliminary estimate for the third coordinate is
\[
    | X_i \Gamma_{u}^{3} | \lesssim \frac{ | X_i L_{u^{-1}}^{3} | }{ d^{2} } + \frac{ | L_{u^{-1}}^{3} X_i \left(d^{2}\right) | }{ d^{4} }
\]
which also leads to
\[
    | X_i \Gamma_{u}^{3} | \lesssim d^{-1}.
\]
In summary,
\begin{equation} \label{eq:Gammaest1}
| X_i \Gamma_{u}^{k} | \lesssim d^{-1}
\end{equation}
for all $ k = 1,2,3 $.

Similarly,
\[
    | X_j X_i \Gamma_{u}^{k} | \lesssim \frac{ | X_i L_{u^{-1}}^{k} X_j d| + |X_j L_{u^{-1}}^{k} X_i d| + |L_{u^{-1}}^{k} X_j X_i d | }{ d^{2} } + |X_i \Gamma_u^k|\left(\frac{ | X_j (d^2) | }{ d^{2} }\right)
\]
when $k=1,2$, and
\begin{equation*}
\begin{split}
    | X_j X_i \Gamma_{u}^{3} | \lesssim &\frac{ | (X_j X_i L_{u^{-1}}^3) d^2 | + | X_i L_{u^{-1}}^3 X_j (d^2)| + |X_j L_{u^{-1}}^3 X_i (d^2)| + |L_{u^{-1}}^3 X_j X_i (d^2) | }{ d^4 } \\
    &+ |X_i \Gamma_u^3|\left(\frac{ | X_j (d^4) | }{ d^4 }\right).
\end{split}
\end{equation*}
Here the conclusion is
\begin{equation} \label{eq:Gammaest2}
| X_j X_i \Gamma_{u}^{k} | \lesssim d^{-2}
\end{equation}
for all $ k = 1,2,3 $.

As $ \xi_0 $ is smooth and compactly supported there are bounds on the size of its derivatives. Since $ \xi_0 $ is fixed and does not depend on any varying quantity or function we introduce, we may consider these bounds as absolute constants. With this in mind, observe that $ X_i ( \xi_{0} \circ \Gamma_{u} ) = ( \nabla \xi_{0} ) ( \Gamma_{u} ) \cdot X_i \Gamma_{u} $ (where we write $ X_i \Gamma_{u} $ for $ ( X_i \Gamma_{u}^{1} , X_i \Gamma_{u}^{2} , X_i \Gamma_{u}^{3} ) $) so that by \eqref{eq:Gammaest1} we have
\begin{equation} \label{eq:lambdaker1}
| X_i ( \xi_{0} \circ \Gamma_{u} ) | \lesssim d^{-1}.
\end{equation}
Similarly, $ X_j X_i ( \xi_{0} \circ \Gamma_{u} ) = \sum_{k=1}^{3} \left[ ( ( \nabla\partial_{k}\xi_{0} ) ( \Gamma_{u} ) \cdot X_i \Gamma_{u} ) X_i \Gamma_{u}^{k} + \partial_{k}\xi_{0} ( \Gamma_{u} ) X_j X_i \Gamma_{u}^{k} \right] $ so that by \eqref{eq:Gammaest2},
\begin{equation} \label{eq:lambdaker2}
| X_j X_i ( \xi_{0} \circ \Gamma_{u} ) | \lesssim d^{-2}.
\end{equation}

Note by part (i) that $ \lambda > 0 $ off the diagonal. Lemma \ref{lem:hdui} allows us to differentiate under the integral to find
\[
    X_i \lambda (p,q) = \frac{1}{4} \frac{X_i (\lambda^4) (p,q)}{\lambda^3 (p,q)} = \frac{1}{4 \lambda^3 (p,q)} \int X_i (\xi_0 \circ \Gamma_u ) (p,q) J_{g}(u) \, \mathrm{d}u
\]
As $ \xi_0 (\Gamma_u (p,q)) = 0 $ for all $ u $ such that $ 2d(p,u) > d(p,q) $, this becomes
\[
	X_i \lambda (p,q) = \frac{1}{4 \lambda^3 (p,q)} \int_{B\left( p , \frac{d(p,q)}{2} \right)} X_i (\xi_0 \circ \Gamma_u ) (p,q) J_{g}(u) \, \mathrm{d}u
\]
and we apply to this expression the estimates derived under the assumption $ 2 d(p,u) \leq d(p,q) $ to find
\[
	\frac{ | X_i \lambda | }{ \lambda } \lesssim \left(\frac{1}{d}\right) \frac{ \int_{ B\left( p , \frac{d}{2} \right) } J_{g} }{ \int_{ B \left( p , \frac{d}{4} \right) } J_{g} } = \left(\frac{1}{d}\right) \frac{ \left| g B \left( p, \frac{d}{2} \right) \right| }{ \left| g B \left( p, \frac{d}{4} \right) \right| }.
\]
Applying \eqref{eq:ballest1} and \eqref{eq:ballest2} we find
\[
	\frac{ | X_i \lambda | }{ \lambda } \lesssim \frac{1}{d} \exp\left( A K^{\frac{2}{3}} \right).
\]

Similarly (but this time using \eqref{eq:lambdaker2} also),
\[
	\frac{ | X_j X_i \lambda | }{ \lambda } \lesssim \frac{ | X_j X_i (\lambda^{4}) | }{ \lambda^{4} } + \frac{ | X_j (\lambda^{4}) X_i (\lambda^{4}) | }{ \lambda^{8} } \lesssim \left(\frac{1}{d^2}\right) \left[ \frac{ \int_{ B\left( p, \frac{d}{2} \right) } J_{g} }{ \int_{ B\left( p, \frac{d}{4} \right) } J_{g} } + \left( \frac{\int_{ B\left( p, \frac{d}{2} \right)} J_{g} }{ \int_{ B\left( p, \frac{d}{4} \right) } J_{g} } \right)^2 \right],
\]
from which
\[
	\frac{ | X_j X_i \lambda | }{ \lambda } \lesssim \frac{1}{d^2} \exp\left( A K^{\frac{2}{3}} \right)
\]
follows.
\end{proof}

Lemma \ref{lem:lambda} can be thought of as giving estimates on logarithmic derivatives of $\lambda$. This was done due to the nature of $\eta_g$ (see \eqref{eq:etadef}) which we now begin working with. Write $\eta = \eta_g$. Recall that $U_g = \{(p,q) \,:\, p\neq g^{-1}(q)\}$.

\begin{lem}\label{lem:etag}
The function $\eta : U_g \to \mR$ satisfies 
\begin{equation}\label{eq:eta}
    | \eta(p,q) | \lesssim_{K} 1 + | \log[ d(g(p),q) ] |.
\end{equation}
Furthermore, $ X_i \eta $, $ X_j X_i \eta $ exist and are continuous on $ U_g $ with 
\begin{equation}\label{eq:etad1}
    | X_i \eta(p,q) | \lesssim \frac{ 1 }{ d(p,g^{-1}(q)) } \exp\left( A K^{\frac{2}{3}} \right)
\end{equation}
and
\begin{equation}\label{eq:etad2}
    | X_j X_i \eta(p,q) | \lesssim \frac{1}{d^2 ( p,g^{-1}(q) )} \exp\left( A K^{\frac{2}{3}} \right).
\end{equation}
\end{lem}

\begin{proof}
It follows from Lemma \ref{lem:lambda} part (i) that 
\begin{equation} \label{eq:eta+log}
\left| \eta(p,q) + \log[ d(g(p),q) ] \right| \lesssim_{K} 1
\end{equation}
and so
\[
    | \eta(p,q) | \lesssim_{K} 1 + | \log[ d(g(p),q) ] |.
\]
By Lemma \ref{lem:lambda} part (ii) we have that $ X_i \eta $, $ X_j X_i \eta $ exist and are continuous on $ U_g $ with 
\[
    | X_i \eta(p,q) | = \frac{ | X_i \lambda( p,g^{-1}(q) ) | }{ \lambda( p,g^{-1}(q) ) } \lesssim \frac{1}{d(p,g^{-1}(q))} \exp\left(AK^{\frac{2}{3}}\right)
\]
and
\begin{equation*}
\begin{split}
    | X_j X_i \eta(p,q) | &\leq \frac{ | X_j X_i \lambda( p,g^{-1}(q) ) | }{ \lambda( p,g^{-1}(q) ) } + \frac{ | X_j \lambda( p,g^{-1}(q) ) X_i \lambda( p,g^{-1}(q) ) | }{ \lambda( p,q^{-1}(q) )^{2} } \\
    &\lesssim \frac{1}{d^2 (p,g^{-1}(q))} \exp\left(AK^{\frac{2}{3}}\right) \qedhere
\end{split}
\end{equation*}
\end{proof}

Moving on to $ \widetilde{ \phi } = \widetilde{\phi}_g $ as in \eqref{eq:tphidef} we need several regularity statements and prefer to break them into small pieces.
\begin{lem} \label{lem:phitcts}
$ \widetilde{ \phi } $ is continuous on $ \mH \times \mH $. 
\end{lem}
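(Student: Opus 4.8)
The plan is to separate the analysis according to whether the base point lies on the set $\Delta_g := \{(p,q)\in\mH\times\mH : p = g^{-1}(q)\}$, i.e.\ the locus where $\tilde\phi = \tilde\phi_g$ is defined to vanish. Away from $\Delta_g$, that is on the open set $U_g$, continuity is automatic: by \eqref{eq:tphidef} we have $\tilde\phi = \eta_g\cdot\pi$ there, where $\pi(p,q) := (g^{-1}(q)^{-1}p)_3$ is continuous (since $g^{-1}$ is a homeomorphism and the group law and coordinate projections are continuous), while $\eta_g = -\log\lambda_g(\cdot\,,g^{-1}(\cdot))$ is continuous on $U_g$ by Lemma~\ref{lem:lambda} (continuity of $\lambda_g$, together with $\lambda_g(p,g^{-1}(q))\simeq_K d(g(p),q) > 0$ precisely on $U_g$). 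Thus it remains to check continuity at a point $(p_0,q_0)\in\Delta_g$, where I must show $\tilde\phi(p,q)\to 0$ as $(p,q)\to(p_0,q_0)$.

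For this, the main estimate I would establish is that, for $(p,q)\in U_g$ with $w := g^{-1}(q)$,
\[
	|\tilde\phi(p,q)| \lesssim_K \bigl(1 + |\log d(g(p),q)|\bigr)\, d(p,w)^2 .
\]
The quadratic factor comes from $|\pi(p,q)| = |(w^{-1}p)_3| \le \|w^{-1}p\|^2 = d(p,w)^2$, using $|t|\le \|(x,y,t)\|^2$ from \eqref{eq:hnorm}; the logarithmic factor is exactly \eqref{eq:eta}. Now, as $(p,q)\to(p_0,q_0)$, we have $w = g^{-1}(q)\to g^{-1}(q_0) = p_0$ and $p\to p_0$, so all relevant points stay in a fixed ball $B(R)$ and $d(p,w)\to 0$; fix $R'$ with $gB(R)\subset B(R')$. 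The only remaining issue is the logarithm, where $d(g(p),q) = d(g(p),g(w))$ could a priori be extremely small. Here I would invoke that $g^{-1}$ is $K$-quasiconformal (Lemma~\ref{lem:qcinv}) and hence locally H\"older \eqref{eq:hold}: $d(p,w) = d(g^{-1}(g(p)),g^{-1}(g(w))) \lesssim_{K,R} d(g(p),g(w))^\beta$ for some $\beta = \beta(K) > 0$, so $|\log d(g(p),q)| \lesssim_{K,R} 1 + |\log d(p,w)|$. Substituting,
\[
	|\tilde\phi(p,q)| \lesssim_{K,R} \bigl(1 + |\log d(p,w)|\bigr)\, d(p,w)^2 \longrightarrow 0
\]
as $d(p,w)\to 0$, because $r\mapsto r^2|\log r|$ vanishes at the origin. (When $(p,q)\in\Delta_g$ the bound is vacuous, since then $\tilde\phi(p,q) = 0$.) This yields continuity at $(p_0,q_0)$ and completes the argument.

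I expect the only genuine difficulty to be at the diagonal $\Delta_g$: there $\eta_g$ blows up logarithmically while the third-coordinate factor $\pi$ decays quadratically, and the proof hinges on the decay winning. Making that precise forces one to trade $d(g(p),q)$ inside the logarithm for $d(p,g^{-1}(q))$, which is exactly where the local bi-H\"older continuity of the quasiconformal map — via Lemma~\ref{lem:qcinv} applied to $g^{-1}$ — is used. Everything else is the routine reduction to a bounded region so that those H\"older estimates apply.
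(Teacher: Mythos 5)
Your proposal is correct and follows essentially the same route as the paper: off the diagonal continuity is automatic from Lemma \ref{lem:lambda}, and at points $(p_0,g(p_0))$ one combines the bound \eqref{eq:eta}, the estimate $|u_3|\le\|u\|^2$, and the local H\"older continuity of the quasiconformal map $g^{-1}$ on a bounded region. The only difference is cosmetic: you substitute the H\"older estimate into the logarithmic factor (obtaining $d(p,w)^2(1+|\log d(p,w)|)\to 0$), while the paper substitutes it into the quadratic factor (obtaining $|\log d(g(p),q)|\,d(g(p),q)^{2\alpha}\to 0$).
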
  
\begin{proof}
If $ (p,q) \in \mH \times \mH $ is such that $ p \neq g^{-1}(q) $ then by Lemma \ref{lem:lambda} part (i), $ \lambda(p,g^{-1}(q)) \neq 0 $ and so $ \widetilde{ \phi } $ is continuous at $ (p,q) $ by the continuity of $ \log $ away from $ 0 $. Suppose, therefore, that $ q = g(p) $ so that $ \widetilde{ \phi }(p,q) = 0 $. Let $ ( p_k, q_k ) $ be a sequence of points limiting on $ (p,q) $ and such that for all $ k $, $ ( p_k, q_k ) \in U_g = \{(p,q) \,:\, p\neq g^{-1}(q) \} $. (The presence of points outside $ U_g $ would not disturb the argument because $ \widetilde{ \phi } $ is zero at these points and we wish to show that $ \widetilde{ \phi }(p_k,q_k) $ converges to zero.) Then for all $ k $,
\[
	\widetilde{ \phi }(p_k,q_k) = \eta( p_k,q_k )( g^{-1}(q_k)^{-1}p_k )_{3}.
\] 
It follows from \eqref{eq:eta} that
\[
\left| \widetilde{ \phi }(p_k,q_k) \right| \lesssim \left| \left( g^{-1}(q_k)^{-1}p_k \right)_{3} \right| + \left| \log[ d\left( g(p_k) , q_k \right) ] \right| \left| \left( g^{-1}(q_k)^{-1}p_k \right)_{3} \right|.
\]
Our assumptions make it obvious that the first term on the right hand side tends to zero as $ k \to \infty $, therefore we only need work with the second term. For all $ u \in \mH $ we have $ |u_3| \leq \| u \|^2 $ so that
\[
	\left| \log\left[ d\left( g\left(p_k\right) , q_k \right) \right] \right| \left| \left( g^{-1}\left(q_k\right)^{-1}p_k \right)_{3} \right| \leq \left| \log\left[ d\left( g\left(p_k\right) , q_k \right) \right] \right| d^{2}\left( p_k , g^{-1}\left(q_k\right) \right).
\]
We complete the proof using the local H\"{o}lder continuity of quasiconformal mappings. Assume the $ ( p_k , q_k ) $ are close to the point $ ( p , q ) $. Then there exists $ 0 < R < \infty $ such that for all $k$ we have $ g(p_k), q_k \in B(R)$. Consequently, there exists $ \alpha > 0 $ such that for all $ k $,
\[
d( p_k , g^{-1}(q_k) ) = d( g^{-1}(g( p_k )) , g^{-1}( q_k ) ) \lesssim d^{\alpha}( g(p_k) , q_k ).
\]
Putting these last two observations together we get
\[
| \log[ d( g(p_k) , q_k ) ] | | ( g^{-1}(q_k)^{-1}p_k )_{3} | \lesssim \log[ d( g(p_k) , q_k ) ] | d^{ 2\alpha }( g(p_k) , q_k ).
\]
It is easy to see the right-side and hence the left-side goes to $ 0 $ as $ k \to \infty $.
\end{proof}

\begin{lem} \label{lem:dphitcts}
$X_i \widetilde{\phi}$ exists and is continuous on $\mH \times \mH$.
\end{lem}

\begin{proof}
Existence and continuity is immediate from previous observations if we are at a point $ (p,q) $ such that $ q \neq g(p) $.

Let us consider these things at a point of the form $ ( p , g(p) ) $. By the definition of the derivative we have
\[
    X_i \widetilde{ \phi }(p,g(p)) = \lim_{h\to 0} \frac{\widetilde{\phi}(p + h X_i (p), g(p)) - \widetilde{\phi}(p,g(p))}{h}. 
\]
This expression is rather jarring in two respects: besides the usual abuse that $X_i (p)$ is in the tangent space, not the space itself, we also have Euclidean addition in an inappropriate place. We rewrite it to be as intrinsic as possible arriving at
\[
    X_i \widetilde{ \phi }(p,g(p)) = \lim_{h\to 0} \frac{\widetilde{\phi}(p \delta_h (\exp X_i (0) ), g(p)) - \widetilde{\phi}(p,g(p))}{h}.
\]
Our first observation is that $\widetilde{\phi}(p,g(p))=0$ by definition (see \eqref{eq:tphidef}). Furthermore,
\[
    \widetilde{\phi}(p \delta_h (\exp X_i (0),g(p)) = \eta (p \delta_h (\exp X_i (0)),g(p)) (p^{-1}p \delta_h (\exp X_i (0)))_3 = 0
\]
because
\[
    (p^{-1}p \delta_h (\exp X_i (0)))_3 = (\delta_h (\exp X_i (0)))_3 = 0.
\]
Consequently, 
\[
    X_i \widetilde{ \phi }(p,g(p)) = \lim_{h\to 0} \frac{0}{h} = 0
\]
and we have demonstrated that for all $p\in\mH$ the derivative $X_i \widetilde{\phi} (p,g(p)) = X_i \widetilde{\phi}_{g(p)}(p)$ exists. As for continuity, the argument is similar to that of the continuity of $ \widetilde{ \phi } $ itself. Let $ (p_k,q_k) \to ( p , g(p) ) $ through points in $ U_g $. Then
\[
X_i\widetilde{ \phi }(p_k,q_k) = X_i\eta(p_k,q_k)( g^{-1}(q_k)^{-1} p_k )_3 + (-1)^j 2 \eta(p_k,q_k) ( g^{-1}(q_k)^{-1} p_k )_j,  
\]
where if $i=1$ then $j=2$ and if $i = 2$ then $j=1$ (here we have made use of \eqref{eq:3cd1} and \eqref{eq:3cd2} again).

Using \eqref{eq:eta}, \eqref{eq:etad1}, and that for $u\in \mH$, $|u_i|\leq \|u\|$ when $i=1,2$, this leads to
\[
| X_i\widetilde{ \phi }(p_k,q_k) | \lesssim d( p_k , g^{-1}(q_k) ) + | \log[ d( g(p_k) , q_k ) ] | d( p_k , g^{-1}(q_k) ).
\]
Now use H\"{o}lder continuity (as we did in Lemma \ref{lem:phitcts}) to conclude that the right hand side goes to $ 0 $ as $ k \to \infty $. It follows that $ X_i \widetilde{ \phi } $ is continuous on $ \mH \times \mH $.
\end{proof}

The observations (made in the proofs of the above two lemmas, possibly not for the first time) that for $u\in \mH$, $|u_i|\leq \|u\|$ when $i=1,2$, and $|u_3|\leq \|u\|^2$ will hereon be used without comment.

\begin{lem} \label{lem:wdphit}
For each $ q \in \mH $, $X_i \widetilde{\phi}_q \in HW_{\text{loc}}^{1,1}$. Furthermore, for a particular choice of $X_j X_i \widetilde{\phi}_q$ the function $(p,q) \mapsto X_j X_i \widetilde{ \phi }(p,q) := X_j X_i \widetilde{\phi}_q (p)$ is well-defined at almost every $(p,q) \in \mH \times \mH$, and $X_j X_i \widetilde{\phi} \in  L_{\textnormal{loc}}^r ( \mH \times \mH ) $ for all $1\leq r < \infty $.
\end{lem}

\begin{proof}
Let $ q \in \mH $ be given. We have seen that $X_i \widetilde{ \phi }_q$ is continuous. Moreover, at $ p \neq g^{-1}(q) $ we have that $ X_i \widetilde{ \phi }_q $ is continuously differentiable in all directions. It follows that $ X_i \widetilde{ \phi }_q $ is absolutely continuous on almost every integral curve of the horizontal, left-invariant vector field determined by $ X_j $. (We have not defined the measure on this fibration of $ \mH $, the details can be found in \cite{KRII}. Suffice to say $ g^{-1}(q) $ lies on only one curve, and a single curve has measure zero.)

It follows (see \cite[pp. 41-42]{KRII}) that the almost everywhere defined classical derivative $ X_j X_i \widetilde{ \phi }_q $ is a representative of the corresponding distributional derivative. Let us record explicit expressions for those derivatives. Let $ u := g^{-1}(q)^{-1} p $ so that $ \widetilde{ \phi } = u_3 \eta $ when $ p \neq g^{-1}(q) $. Then for $ p \neq g^{-1}(q) $ we have 
\begin{align*}
	X_1 X_1 \widetilde{ \phi } &= u_3 X_1 X_1 \eta + 4 u_2 X_1 \eta,\\
	X_1 X_2 \widetilde{ \phi } &= u_3 X_1 X_2 \eta + 2 u_2 X_2 \eta - 2 u_1 X_1 \eta - 2 \eta, \\
	X_2 X_1 \widetilde{ \phi } &= u_3 X_2 X_1 \eta + 2 u_2 X_2 \eta - 2 u_1 X_1 \eta + 2 \eta, \\
	X_2 X_2 \widetilde{ \phi } &= u_3 X_2 X_2 \eta - 4 u_1 X_2 \eta. 
\end{align*}
These expressions give measurable, almost everywhere defined functions on $ \mH \times \mH $. We now consider the local integrability. Given the estimates \eqref{eq:etad1} and \eqref{eq:etad2} worked out above,
\[
	| X_i X_i \widetilde{ \phi } | \lesssim_K \, 1  
\]
almost everywhere on $ \mH \times \mH $.

In the case $ i \neq j $ we use \eqref{eq:eta} in addition to \eqref{eq:etad1} and \eqref{eq:etad2} to find
\[
	| X_j X_i \widetilde{ \phi } | \lesssim_K \, 1 + | \log [ d( g(p) , q ) ] |
\]
almost everywhere on $ \mH \times \mH $. Let $ 1 \leq r < \infty $. Let $\Omega_1 \subset \mH $ be compact. We will show that
\[
	\int_{\Omega_1} | \log [ d( g(p) , q ) ] |^r \, \mathrm{d}p \lesssim h(q)
\]
for $ h \in L_{ \text{loc} }^1 (\mH) $. The claimed local integrability to the power $ r $ on $ \mH \times \mH $ follows.

To this end, let $ r_1 > 1 $ be the exponent appearing in the reverse H\"{o}lder inequality \eqref{eq:rhi} for $ g^{-1} $, and $ r_2 $ the conjugate exponent. Let $ 0 < R < \infty $ be such that $ g \Omega_1 \subset B(R) $. Then
\begin{align*}
	\int_{\Omega_1} | \log [ d( g(p) , q ) ] |^r \, \mathrm{d}p
	&= \int_{ g \Omega_1 } | \log [ d( p , q ) ] |^r J_{ g^{-1}} ( p ) \, \mathrm{d}p \\ 
	&\lesssim \left( \int_{g\Omega_1} | \log [ d( p , q ) ] |^{r r_2} \, \mathrm{d}p \right)^{ \frac{1}{r_2} } \left( \frac{ 1 }{ | B(R) | }\int_{B(R)} J_{g^{-1}}(p)^{r_1} \, \mathrm{d}p \right)^{ \frac{1}{r_1} } \\
	&\lesssim \left( \int_{g\Omega_1} | \log [ d( p , q ) ] |^{r r_2} \, \mathrm{d}p \right)^{ \frac{1}{r_2} }
\end{align*}
with the implied constants dependent on a variety of things but not $ q $. Now let $ \Omega_2 \subset \mH $ be compact and observe by H\"{o}lder's inequality that
\[
	\int_{\Omega_2} \left( \int_{g\Omega_1} | \log [ d( p , q ) ] |^{r r_2} \, \mathrm{d}p \right)^{ \frac{1}{r_2} } \, \mathrm{d}q \lesssim \left( \int_{\Omega_2} \int_{g\Omega_1} | \log [ d( p , q ) ] |^{r r_2} \, \mathrm{d}p \, \mathrm{d}q \right)^{ \frac{1}{r_2} }. 
\]
Note that
\[
	 \int_{g\Omega_1} | \log [ d( p , q ) ] |^{r r_2} \, \mathrm{d}p = \int_{ g \Omega_1 } | \log \| q^{-1} p \| |^{ r r_2 } \, \mathrm{d}p = \int_{ q^{-1} g \Omega_1 } | \log \| p \| |^{ r r_2 } \, \mathrm{d}p.
\]  
As $ \Omega_2 $ is compact, there exists $ 0 < R' < \infty $ such that $ q^{-1} g \Omega_1 \subset B(R') $ for all $ q \in \Omega_2 $.  Consequently, by formula \eqref{eq:polarint},
\[
	\int_{\Omega_2} \int_{g\Omega_1} | \log [ d( p , q ) ] |^{r r_2} \, \mathrm{d}p \, \mathrm{d}q \lesssim \int_{B(R')} | \log \| p \| |^{ r r_2 } \, \mathrm{d}p \lesssim \int_0^{R'} | \log(s) |^{ r r_2 } s^3 \, \mathrm{d}s < \infty.	\qedhere
\]     
\end{proof}

We next take a big step toward verifying Proposition \ref{pro:growth}.

\begin{lem} \label{lem:v3bound}
Let $R>0$. For all $ q \in B(R) $ and for all $ p \in \mH $,
\[
	| \widetilde{ \phi }(p,q) | \lesssim_{K,R} 1 + \|p\|^2 \log \|p\|
\]
and
\[
	| Z \widetilde{\phi}(p,q) | \lesssim_{K,R} 1 + \|p\|\log\|p\|.
\]
\end{lem}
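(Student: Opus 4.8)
The plan is to reduce both inequalities to a single estimate for the product $(1 + |\log d(g(p),q)|)\,d(p,g^{-1}(q))^{k}$, $k\in\{1,2\}$, and then settle it by a two-case argument on the size of $d(g(p),q)$. At $p = g^{-1}(q)$ both $\tilde{\phi}(p,q)$ and $X_i\tilde{\phi}(p,q)$ vanish by Lemma \ref{lem:dphitcts}, so one may assume $p \neq g^{-1}(q)$ and set $u = g^{-1}(q)^{-1}p$, so that $d(p,g^{-1}(q)) = \|u\|$, $|u_3| \leq \|u\|^{2}$ and $|u_1|,|u_2| \leq \|u\|$. Since $\tilde{\phi}(p,q) = \eta(p,q)\,u_3$, the bound \eqref{eq:eta} yields
\[
	|\tilde{\phi}(p,q)| \lesssim_{K} \big(1 + |\log d(g(p),q)|\big)\,d(p,g^{-1}(q))^{2},
\]
while the formula for $X_i\tilde{\phi}$ displayed in the proof of Lemma \ref{lem:dphitcts}, together with \eqref{eq:eta}, \eqref{eq:etad1}, and $|Z\tilde{\phi}| \lesssim |X_1\tilde{\phi}| + |X_2\tilde{\phi}|$, gives
\[
	|Z\tilde{\phi}(p,q)| \lesssim_{K} \big(1 + |\log d(g(p),q)|\big)\,d(p,g^{-1}(q)).
\]
Hence it suffices to prove, for $k = 1,2$, that $\big(1 + |\log d(g(p),q)|\big)\,d(p,g^{-1}(q))^{k} \lesssim_{K,R} 1 + \|p\|^{k}\log\|p\|$ for all $p \in \mH$ and $q \in B(R)$, where we use that the right-hand side is bounded below by a positive absolute constant.

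Next I would collect the consequences of $g \in \cQ_0(K)$. Since $g$ is $K$-quasiconformal with $g(0)=0$ and maps some point of $S(1)$ into $S(1)$, the inverse $g^{-1}$ (which is $K$-quasiconformal by Lemma \ref{lem:qcinv}) also lies in $\cQ_0(K)$; thus Lemmas \ref{lem:udist} and \ref{lem:uhold} apply to $g^{-1}$, while the distortion bound \eqref{eq:udist} with $D = D' = 1$ applies to $g$. This gives: for $q \in B(R)$ one has $\|g^{-1}(q)\| \lesssim_{K,R} 1$, hence $d(p,g^{-1}(q)) \leq \|g^{-1}(q)\| + \|p\| \lesssim_{K,R} 1 + \|p\|$; and $\|g(p)\| \lesssim_{K} 1 + \|p\|^{K^{2/3}}$ for every $p$.

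Then comes the case split. If $d(g(p),q) \geq 1$, then $|\log d(g(p),q)| = \log d(g(p),q) \leq \log(\|g(p)\| + \|q\|) \lesssim_{K,R} \log(2 + \|p\|^{K^{2/3}})$, which is $\lesssim_{K} 1$ for $\|p\| \leq e$ and $\lesssim_{K} \log\|p\|$ for $\|p\| \geq e$; combined with $d(p,g^{-1}(q)) \lesssim_{K,R} 1 + \|p\|$ this gives the required inequality by an elementary comparison (the left side is a constant for $\|p\| \leq e$ and a constant multiple of $\|p\|^{k}\log\|p\|$ for $\|p\| \geq e$). If instead $d(g(p),q) < 1$, then $g(p) \in B(\|q\|+1) \subset B(R+1)$, so $p = g^{-1}(g(p)) \in g^{-1}B(R+1)$ lies in a ball of radius depending only on $K$ and $R$ by Lemma \ref{lem:udist}, whence $\|p\| \lesssim_{K,R} 1$; and Lemma \ref{lem:uhold} applied to $g^{-1}$ on $B(R+1)$ (which contains both $g(p)$ and $q$) gives $d(p,g^{-1}(q)) = d(g^{-1}(g(p)),g^{-1}(q)) \lesssim_{K,R} d(g(p),q)^{\alpha}$ for some $\alpha = \alpha(K) > 0$, so the left side is $\lesssim_{K,R} (1 + |\log d(g(p),q)|)\,d(g(p),q)^{k\alpha} \lesssim_{K,R} 1$ because $s \mapsto s^{k\alpha}(1 + |\log s|)$ is bounded on $(0,1]$, and $1 \lesssim 1 + \|p\|^{k}\log\|p\|$.

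I expect the only delicate point to be the regime $d(g(p),q) < 1$, near the diagonal $p = g^{-1}(q)$: there the logarithm blows up while $d(p,g^{-1}(q))$ collapses, and it is precisely the local Hölder continuity of $g^{-1}$ --- with exponent and constant depending only on $K$ and the radius in play --- that tames the product (and at the same time pins $p$ to a bounded set). The remainder is bookkeeping; the main thing to keep an eye on is that every implied constant stays uniform over $q \in B(R)$ and $p \in \mH$, which it does because the distortion and Hölder estimates for maps in $\cQ_0(K)$ depend only on $K$ and the relevant radius.
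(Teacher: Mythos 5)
Your proof is correct and follows essentially the same approach as the paper: the same reduction to bounding $(1+|\log d(g(p),q)|)\,d(p,g^{-1}(q))^k$, the same case split on whether $d(g(p),q) \ge 1$ or $< 1$, the same use of the uniform distortion estimate \eqref{eq:udist} to control $\|g(p)\|$, and the same appeal to the local H\"older continuity of $g^{-1}$ (via Lemma \ref{lem:uhold}) to tame the product near the diagonal while also pinning $p$ to a bounded set. The paper packages this by first converting $\log$ to $\log^+$ and then using the subadditivity properties of $\log^+$ for the final arithmetic, but the underlying mechanics are identical.
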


\begin{proof}
Let $ R > 0 $ be given. Let $ p,q \in \mH $ with $ \| q \| < R $. Recall, we are assuming $ g \in \cQ_0(K) $ and the same is therefore true of $ g^{-1} $. By Lemma \ref{lem:udist} we have that both $ gB(R) $ and $ g^{-1}B(R) $ are contained in a ball, the radius of which depends only on $ K $ and $ R $. It follows (in the appropriate situation) we can replace a dependence on one of $ \|q\| $, $ \| g(q) \| $, or $ \| g^{-1}(q) \| $ with a dependence on $ K $ and $ R $. That said, dependence of constants on either of $ K $ or $ R $ will typically not be commented on.

Let $ R' > 0 $ be such that $ g^{-1}B( R + 1 ) \subset B(R') $. Such an $ R' $ depends only on $ K $ and $ R $ as guaranteed by Lemma \ref{lem:udist}.

Let $ u := g^{-1}(q)^{-1}p $. Notice that $\|u\| \lesssim 1 + \|p\|$ which is important throughout the following computations. 

By \eqref{eq:eta} and \eqref{eq:etad1}, 
\begin{equation}
	\left| \widetilde{\phi}(p,q) \right| \lesssim 1 + |u_3| + |u_3| \left| \log \left\| q^{-1}g(p) \right\| \right| .
\end{equation}

The purpose of the next computations is to replace the $\log$ in the previous expression with $\log^+$ so we may use the property $\log^+ (s_1 + s_2) \leq 1 + \log^+ s_1 + \log^+ s_2$ for $s_1, s_2 \geq 0$ (we also use that $\log^+ \left(s^r\right)=r\log^+ s$ when $s\geq 0$ and $r>0$, but this is less important).

If $ \| p \| > R' $ and $ \| q^{-1}g(p) \| = d(g(p),q) < 1 $ then $ g(p) \in B(R+1) $ and $ p = g^{-1}(g(p)) \in B(R') $, a contradiction.

Let $ \| p \| \leq R' $ and assume $\| q^{-1} g(p) \| < 1$. Let $ R'' > 0 $ be such that $ g B(R') \subset B(R'') $ (again, ultimately such an $ R'' $ depends only on $ K $ and $ R $). Then by Lemma \ref{lem:uhold} there exists $ \alpha > 0 $ (dependent on $ K $ only) such that $\| g^{-1}(b)^{-1} g^{-1}(a) \| \lesssim \| b^{-1}a \|^{\alpha}$ whenever $ a, \, b \in B(R'') $. Assuming (as we may) that $ R'' \geq R' \geq R $ we have $ q, g(p) \in B(R'') $ and
\[
	|u_3| \leq \| u \|^2 = \| g^{-1}(q)^{-1} g^{-1}(g(p)) \|^2 \lesssim \| q^{-1}g(p) \|^{2\alpha}.
\]
It follows,
\[
	|u_3| | \log \| q^{-1}g(p) \| | \lesssim \sup_{ \| q^{-1}g(p) \| < 1 } \| q^{-1}g(p) \|^{2\alpha} | \log \| q^{-1}g(p) \| | \lesssim 1.
\]

Consequently, in all cases
\begin{equation} \label{eq:Fbound}
	| \widetilde{\phi}(p,q) | \lesssim 1 + |u_3| + |u_3| \log^{+} \| q^{-1}g(p) \| .
\end{equation}

Since
\begin{equation} \label{eq:ubound}
	|u_3|  \lesssim 1 + \|p\|^2
\end{equation}
and (by \eqref{eq:udist})
\[
	\| q^{-1}g(p) \| \lesssim 1 + \| g(p) \| \lesssim 1 + \| p \|^{K^{2/3}},
\]
the aforementioned properties of $\log^+$ give
\begin{align*}
	| \widetilde{\phi}(p,q) | &\lesssim 1 + \|p\|^2 + \log^{+} \left( 1 + \| p \|^{K^{2/3}} \right) + \|p\|^2 \log^{+} \left( 1 + \| p \|^{K^{2/3}} \right) \\ 
							&\lesssim 1 + \|p\|^2 + \log^+ \|p\| + \|p\|^2 \log^{+} \| p \| \\
							&\lesssim 1 + \|p\|^2 \log^+ \| p \|
\end{align*}
(divide into cases $\|p\|\leq e$ and $\|p\| > e$ for the last line). There exists an absolute constant $C>0$ such that for all $p\in\mH$, $1 + \|p\|^2 \log \| p \| \geq C > 0$ (most pertinently for $\|p\|<1$). This allows us to conclude that
\[
    | \widetilde{\phi}(p,q) | \lesssim 1 + \|p\|^2 \log \|p\|,
\]
the first of the estimates we require.

Moving on to the estimate for the $Z$ derivative, observe (as in the proof of Lemma \ref{lem:dphitcts}) that $X_i \phi(p,q)$ is either zero or (with $u = u(p,q)$ as above) given by
\[
    X_i \widetilde{ \phi }(p,q) = ( X_i \eta(p,q) )u_3 + (-1)^j 2 \eta(p,q) u_j  
\]
with $j=2$ if $i=1$ and $j=1$ if $i = 2$. It follows by \eqref{eq:eta} and \eqref{eq:etad1} that
\[
	\left| X_i \widetilde{\phi}(p,q) \right| \lesssim \|u\| + \left( 1 + \left| \log\left\| q^{-1} g(p) \right\| \right| \right)\|u\|.
\]
Either $\|q^{-1}g(p)\| \geq 1$ or (by observations already made in this proof) $\|p\|\leq R'$ and
\[
	|X_i \widetilde{ \phi }(p,q)| \lesssim 1 + \|p\| + \left\| q^{-1}g(p) \right\|^{\alpha} \left| \log\| q^{-1} g(p) \| \right|
\]
with $\alpha > 0$ as before. In either case (arguing as above) we have
\begin{align*}
    |X_i \widetilde{ \phi }(p,q)| &\lesssim 1 + \|p\| + \log^+ \| q^{-1} g(p) \| + \|p\| \log^+ \|q^{-1} g(p)\| \\
    &\lesssim 1 + \|p\| \log^+ \|p\| \\
    &\lesssim 1 + \|p\| \log \|p\|
\end{align*}
as desired.
\end{proof}

We established regularity of $ \widetilde{ \phi } $ in such a way that it now transfers easily to $ \phi^1  = \phi_{g,\psi}^1$ (see \eqref{eq:phi1def} for the definition of $\phi^1$).

\begin{lem} \label{lem:phi1cts}
$ \phi^1 \in HC^1 $ with
\[
    X_i \phi^1 (p) = \int X_i \widetilde{\phi}(p,q)\psi (q) \,\mathrm{d}q.
\]
\end{lem}

\begin{proof}
This follows from Lemmas \ref{lem:hdui2}, \ref{lem:phitcts}, and \ref{lem:dphitcts}.
\end{proof}

\begin{lem} \label{lem:wdphi1}
$ X_i \phi^1 \in HW_{\text{loc}}^{1,r} $ for all $ 1 \leq r < \infty $ with
\[
	X_j X_i \phi^{1}(p) = \int X_j X_i \widetilde{ \phi }(p,q) \psi(q) \, \mathrm{d}q.
\]
\end{lem}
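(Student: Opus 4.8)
The plan is to read this off from Lemma \ref{lem:wdui}, once the claim about $\phi$ has been reduced to one about $\phi^1$ and $X_i\phi^1$ has been recognized as an integral of the shape treated there.

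First I would dispose of $\phi^2$. Since $\phi^2_{g,\psi}(x,y,t) = c_1 - 4c_2 y + 4c_3 x$ is affine, a one-line computation gives $X_1\phi^2 \equiv 4c_3$ and $X_2\phi^2 \equiv -4c_2$; in particular each $X_i\phi^2$ is constant, hence smooth with $X_jX_i\phi^2 \equiv 0$. Therefore $X_i\phi \in HW^{1,r}_{\text{loc}}$ if and only if $X_i\phi^1 \in HW^{1,r}_{\text{loc}}$, and when this holds $X_jX_i\phi = X_jX_i\phi^1$. So it suffices to prove the statement with $\phi$ replaced by $\phi^1$.

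Next I would invoke Lemma \ref{lem:phi1cts} (which rests on Lemmas \ref{lem:hdui2}, \ref{lem:phitcts} and \ref{lem:dphitcts}): $\phi^1 \in HC^1$, with
\[
	X_i\phi^1(p) = \int X_i\tilde{\phi}(p,q)\,\psi(q)\,\mathrm{d}q .
\]
Thus $X_i\phi^1$ is exactly of the form $p \mapsto \int f(p,q)\psi(q)\,\mathrm{d}q$ with $f := X_i\tilde{\phi}$ and $\psi \in C_0^\infty$. Now I would verify that $f = X_i\tilde{\phi}$ satisfies the hypotheses of Lemma \ref{lem:wdui} for differentiation in the direction $X_j$: it is continuous on $\mH\times\mH$ by Lemma \ref{lem:dphitcts}, and its distributional derivative $X_j X_i\tilde{\phi}$ is, by Lemma \ref{lem:wdphit}, an almost everywhere defined function on $\mH\times\mH$ lying in $L^r_{\text{loc}}(\mH\times\mH)$ for every $1\le r<\infty$. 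Applying Lemma \ref{lem:wdui} (with the roles of $i$ and $j$ swapped relative to its statement) then yields $X_i\phi^1 \in HW^{1,r}_{\text{loc}}$ for all $1\le r<\infty$, together with the representative
\[
	X_jX_i\phi^1(p) = \int X_jX_i\tilde{\phi}(p,q)\,\psi(q)\,\mathrm{d}q .
\]
Combining with the first step then gives the assertion for $\phi$ as well.

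There is no serious obstacle here: all the genuine analysis — joint continuity of $X_i\tilde{\phi}$, and the joint local $L^r$ bound on its weak derivative, the latter via the reverse H\"{o}lder inequality \eqref{eq:rhi} and the polar-coordinate estimate \eqref{eq:polarint} — was already carried out in Lemmas \ref{lem:dphitcts} and \ref{lem:wdphit}. The only point that needs care is the bookkeeping: confirming that $X_i\phi^1$ really presents itself in the ``$\int f\psi$'' form demanded by Lemma \ref{lem:wdui}, and keeping straight which horizontal vector field plays the role of the differentiation variable when that lemma is applied.
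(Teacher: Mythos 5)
Your argument is correct and is essentially the paper's own proof, which simply cites Lemmas \ref{lem:wdui} and \ref{lem:wdphit}; your extra steps (dismissing the affine term $\phi^2$, and using Lemma \ref{lem:phi1cts} to recognize $X_i\phi^1$ as $\int X_i\tilde{\phi}(\cdot,q)\psi(q)\,\mathrm{d}q$ before applying Lemma \ref{lem:wdui} with relabeled indices) are exactly the bookkeeping the paper leaves implicit.
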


\begin{proof}
This follows from Lemmas \ref{lem:wdui}, \ref{lem:dphitcts}, and \ref{lem:wdphit}.
\end{proof}

The preliminaries are complete and we are now in a position to supply the proofs of Propositions \ref{pro:growth} and \ref{pro:vprops}. 

\begin{proof}[Proof of Proposition \ref{pro:growth}]
Recall that $K= K(g)$ and $\text{support}(\psi)\subset B(R)$.

Since $ \phi^2 = \phi_{g,\psi}^2 $ (defined in \eqref{eq:phi2def}) is a polynomial, continuity of both $ \phi $ and its first horizontal derivatives follows from Lemma \ref{lem:phi1cts}. Lemmas \ref{lem:hdui2}, \ref{lem:v3bound}, and the simple nature of $\phi^2$ easily give that
\[
	| \phi(p) | \lesssim_{K,R,\|\psi\|_1} 1 + \|p\|^{2} \log\|p\|
\]
and
\[
	| Z\phi(p) | \lesssim_{K,R,\|\psi\|_1} 1 + \|p\| \log\|p\|.
\]
This takes care of part (i).

Let $ u := g^{-1}(q)^{-1}p $. Then
\[
	\Re \left( ZZ \widetilde{\phi} \right) = u_{1} X_2 \eta + u_{2} X_1 \eta + \frac{u_{3}}{4}( X_1 X_1 - X_2 X_2 ) \eta,
\]
and
\[
	\Im \left( ZZ \widetilde{\phi} \right) = u_{1} X_1 \eta - u_{2} X_2 \eta - \frac{u_{3}}{4}( X_1 X_2 + X_2 X_1 ) \eta.
\]
Consequently, using Lemmas \ref{lem:phi1cts} and \ref{lem:wdphi1} along with \eqref{eq:etad1} and \eqref{eq:etad2} we find that	
\[
	\sqrt{2} \| ZZ\phi^{1} \|_{\infty} \lesssim \exp\left(A K^{\frac{2}{3}}\right) \|\psi\|_1
\]
with the implied constant absolute (in particular, independent of $K$).

A quick computation shows that
\[
	\sqrt{2} \| ZZ\phi^{2} \|_{\infty} = 0
\]
(as should be the case given our previous claim that the flow mappings of $v_{\phi^2}$ are conformal) which concludes the proof of part (ii).

Part (iii) of the statement follows from Lemma \ref{lem:wdphi1}.
\end{proof}

\begin{proof}[Proof of Proposition \ref{pro:vprops}]
Let $ p,q \in \mH $ be such that $ p \neq g^{-1}(q) $. Define $ u = g^{-1} (q)^{-1} p $. Then
\begin{align*}
	T\widetilde{\phi} 	&= \eta + u_{3} T\eta \\
					&= \eta - \frac{1}{4} u_{3} [X_1,X_2] \eta.
\end{align*}
Consequently, by Lemma \ref{lem:wdphi1} we have
\[
	T\phi^{1} = \Lambda_{\psi} \circ g + \zeta,
\]
where
\[
	\zeta(p) = \int \left[ \eta(p,q) + \log ( d(g(p),q) ) - \tfrac{1}{4} u_{3} \left(( X_1 X_2 - X_2 X_1 ) \eta\right)(p,q) \right] \psi(q) \, \mathrm{d}q.
\]
The function $ \zeta $ is easily seen to be measurable and (as follows from \eqref{eq:eta+log} and \eqref{eq:etad2}) is essentially bounded,
\[
	\| \zeta \|_{\infty} \lesssim_{K,\| \psi \|_{1}} \,1.
\]
Since $ T\phi^{2} = 0 $, the proof is concluded on remembering that $ \textnormal{div}_{H} v_{\phi} = T\phi $. 
\end{proof}

\section{Iteration and Convergence} \label{sec:ic}
With the large part of the technical work behind us, we are ready to construct the mapping which has Jacobian comparable to a suitable given quasilogarithmic potential.

In a first case, the desired mapping $ f $ is found in the limit of a sequence of mappings $ f_m $. Each $ f_m $ is the composition of $m$ time-$(1/m)$ flow mappings (modulo a minor technical detail that we save for later). The arguments consider the competition between the accumulation of a quantity in one direction, and the contracting effects of a diminishing time step in the other. Keeping the positive part of our measures small enough we will have enough uniformity in the estimates so that the squeeze from the time step dominates the process.


\subsection{Reduction of the Main Theorem}
We may reduce Theorem \ref{thm:mainint} to the following proposition.
\begin{pro} \label{pro:mainthmred}
Given $ K \geq 1 $, there exist $ \epsilon = \epsilon(K) > 0 $ and $ K' = K'(K) \geq 1 $ such that, if $ \mu \in \cM (\epsilon) $ and $ g \in \cQ_0 (K) $ then there is $f \in \cQ (K')$ with $J_f \simeq_K e^{2\Lambda_{\mu} \circ g}$ almost everywhere.
\end{pro}
The only difference between this proposition and Theorem \ref{thm:mainint}, is that here we assume $ g \in \cQ_0(K) $ as opposed to being simply a $ K $-quasiconformal mapping. Recall that $ g \in \cQ_0 (K) $ iff g is a $ K $-quasiconformal mapping such that $ g(0) = 0 $ and there exists $ p_g \in S(1) $ with $ g(p_g) \in S(1) $.

Let us assume Proposition \ref{pro:mainthmred} and explain why it implies Theorem \ref{thm:mainint}. Let $ K \geq 1 $ be given, $ \epsilon > 0 $ as in Proposition \ref{pro:mainthmred}, and $\Lambda = \Lambda_{\mu} \circ g $ a quasilogarithmic potential with $ \mu \in \cM (\epsilon) $ and $ g \in \cQ (K) $.

Pick $ q_0 \in \mH $ such that $ \| g( g^{-1}(0) q_0 ) \| = 1 $. It is automatic that $ q_0 \neq 0 $. Let $ p_0 := \delta_{\| q_0 \|^{-1}} ( q_0 ) $. Note that $ \| p_0 \| = 1 $ and $ q_0 = \delta_{\| q_0 \|}( p_0 ) $. Now define
\[
	h(p) = g\left( g^{-1}(0)\delta_{\|q_0\|}(p) \right).
\]
Since $ h $ is a composition of conformal mappings and a $ K $-quasiconformal mapping it is $ K $-quasiconformal. It is easily checked that $ h \in \cQ_0 (K) $ with $ p_h = p_0 $. Let $ \Lambda_h = \Lambda_\mu \circ h $. From Proposition \ref{pro:mainthmred} we have quasiconformal mapping $f_h$ such that
\begin{equation} \label{eq:hcomp}
	J_{f_h} \simeq_K e^{2 \Lambda_h}
\end{equation}
almost everywhere, with $ K(f_h) $ dependent on $ K $ only.

The definition of $ h $ makes it clear that
\begin{equation} \label{eq:gh}
	g = h \circ \delta_{\|q_0\|^{-1}} \circ L_{g^{-1}(0)^{-1}}. 
\end{equation}
If we define
\[
	f = \delta_{\|q_0\|} \circ f_h \circ \delta_{\|q_0\|^{-1}} \circ L_{g^{-1}(0)^{-1}} ,
\]
then $ f $ is quasiconformal with essential dilatation equal to that of $ f_h $. If $p \in \mH$ is such that $f_h$ is differentiable at $\delta_{ \|q_0\|^{-1} } ( g^{-1}(0)^{-1} p )$ then 
\begin{equation} \label{eq:jfjfh}
	J_f(p)  = J_{f_h} \left( \delta_{ \|q_0\|^{-1} } ( g^{-1}(0)^{-1} p ) \right).
\end{equation}
As $ \delta_{\|q_0\|^{-1}} \circ L_{g^{-1}(0)^{-1}} $ preserves sets of measure zero, it follows from \eqref{eq:hcomp} that
\[
	J_{f_h} \left(\delta_{ \|q_0\|^{-1} } ( g^{-1}(0)^{-1} p )\right) \simeq e^{2 \Lambda_h \left(\delta_{ \|q_0\|^{-1} } ( g^{-1}(0)^{-1} p )\right)}
\]
at almost every $ p $ in $ \mH $. This is seen by \eqref{eq:gh} and \eqref{eq:jfjfh} to be equivalent to $J_f \simeq e^{2\Lambda}$ almost everywhere, which is the conclusion of Theorem \ref{thm:mainint}.

We break the proof of Proposition \ref{pro:mainthmred} into three stages of increasing generality; first a quasilogarithmic potential of the form $\Lambda_{\psi} \circ g $ with $ \psi \in C_0^{\infty} $ (Proposition \ref{pro:mainpsi}), then $\Lambda_{\mu} \circ g $ with $ \mu $ compactly supported (Proposition \ref{pro:maincptsupp}), and finally  $\Lambda_{\mu} \circ g $ with general admissible $ \mu $ (which is Proposition \ref{pro:mainthmred}).

\subsection{$ \mathrm{d} \mu(q) = \psi(q) \, \mathrm{d}q $ with $ \psi \in C_{0}^{\infty} $}

This subsection is dedicated to the proof of the following statement.

\begin{pro} \label{pro:mainpsi}
Given $ K \geq 1 $, there exists $ \epsilon = \epsilon(K) > 0 $ such that if $ \psi \in C_0^{\infty} $ with $ \| \psi \|_1 < \epsilon $ and $ g \in \cQ_0 (K) $, then there exists a quasiconformal mapping $ f $ with $J_f \simeq_K e^{2\Lambda_{\psi} \circ g}$ almost everywhere. The dilatation $ K(f) $ depends on $ K = K(g) $ only.
\end{pro}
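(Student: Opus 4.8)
The plan is to follow the iterative scheme of \cite[Section 6]{BHSII}, using the constructions of Section \ref{sec:ap} as the engine of each step: a potential built there generates a flow whose time-$1/m$ mapping has, by the variational equation of Corollary \ref{cor:flowprops} and the splitting of Proposition \ref{pro:vprops}, Jacobian $\exp\!\big(\tfrac{2}{m}(\Lambda_\psi\circ g)+O(1/m)\big)$ near the start of the flow. Fix $g\in\cQ_0(K)$ and $\psi\in C_0^\infty$ with $\mathrm{support}(\psi)\subset B(R)$; after a conformal rescaling we may take $R\le 1$, so that henceforth all constants depend only on $K$ and $\|\psi\|_1$, hence (since $\|\psi\|_1<\epsilon(K)$) only on $K$. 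Write $\Lambda=\Lambda_\psi\circ g$. For each $m$ I build $f_m$ by iteration: set $f_0=\mathrm{id}$, and given $f_{j-1}$, normalized so that $f_{j-1}\in\cQ_0(K')$ for a constant $K'=K'(K)$ fixed below, observe that $g\circ f_{j-1}^{-1}$ is $KK'$-quasiconformal and fixes $0$ (Lemma \ref{lem:qcinv}); composing with a dilation gives $g_j\in\cQ_0(KK')$, and feeding $g_j,\psi$ into the construction of Section \ref{sec:ap} yields $\phi_j=\phi_{g_j,\psi}$ with time-$1/m$ flow mapping $h^{(j)}$. By Corollary \ref{cor:flowprops}, $h^{(j)}$ fixes $0$, has $K(h^{(j)})\le e^{C\|\psi\|_1/m}$ with $C=C(KK')$, and satisfies $\log J_{h^{(j)}}(q)=2\int_0^{1/m}T\phi_j(h^{(j)}_\sigma(q))\,\mathrm d\sigma$. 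I set $f_j$ to be $h^{(j)}\circ f_{j-1}$ renormalized (by a dilation and a central rotation) back into $\cQ_0(K')$, and $f_m:=f_m^{(m)}$. The choice $g_j=g\circ f_{j-1}^{-1}$ is forced by the requirement that $\log J_{h^{(j)}}(f_{j-1}(p))$ approximate $\tfrac{2}{m}\Lambda(p)$ and the fact that $T\phi_j\approx\Lambda_\psi\circ g_j$.

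First I check that the dilatations stay bounded. Since dilatation is submultiplicative, and dilations and central rotations are conformal, $K(f_m)\le\prod_{j=1}^m e^{C\|\psi\|_1/m}=e^{C\|\psi\|_1}$ with $C=C(KK')$; a bootstrap (attempting, say, $K'=2K$) then closes if $\epsilon=\epsilon(K)$ is small enough, giving $K(f_m)\le K'$ for every $m$. With $K'$ now fixed, every constant produced by Propositions \ref{pro:growth} and \ref{pro:vprops} at every step of the iteration depends only on $K$ and $\|\psi\|_1$; this uniformity in the step index is the crux of the construction.

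Next, the Jacobian identity. Using the chain rule for Jacobians and, at each step, the variational equation of Corollary \ref{cor:flowprops} together with $T\phi_j=\Lambda_\psi\circ g_j+\zeta_j$ (Proposition \ref{pro:vprops}, $\|\zeta_j\|_\infty\lesssim_{K,\|\psi\|_1}1$), and after accounting for the normalizing dilations (each of which alters $\Lambda_\psi$ only by an additive constant and a mass-preserving reparametrization of $\psi$), one arrives at
\[
	\log J_{f_m}(p)=2\Lambda(p)+b_m(p)+E_m(p),
\]
where $b_m(p)=2\sum_{j=1}^m\int_0^{1/m}\zeta_j(h^{(j)}_\sigma(f_{j-1}(p)))\,\mathrm d\sigma$ satisfies $|b_m|\le 2\sup_j\|\zeta_j\|_\infty\lesssim_{K,\|\psi\|_1}1$ uniformly in $m$, and $E_m$ collects the discrepancy arising because the intermediate maps $h^{(j)}_\sigma$, $0\le\sigma\le 1/m$, differ from the identity. \textbf{The main obstacle is the uniform control of $E_m$}: one needs, roughly, $|E_m|\lesssim_{K,\|\psi\|_1}1$ almost everywhere, uniformly in $m$. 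Since $\Lambda_\psi$ has a logarithmic singularity, plain continuity will not do; instead one uses that $h^{(j)}_\sigma$ displaces points by $O(1/m)$ uniformly in $j$ and $m$ --- by the uniform growth bounds of Proposition \ref{pro:growth} and the Gr\"{o}nwall estimate of Lemma \ref{lem:bndsolns} --- together with the uniform H\"{o}lder continuity of the $g_j$ (Lemma \ref{lem:uhold}) and the local integrability of $\log$, so that the $m$ individually small contributions sum to a quantity that stays bounded provided $\|\psi\|_1$ is small. This is the competition between accumulation over the $m$ steps and the contracting time step, the Heisenberg analog of the central estimates of \cite[Section 6]{BHSII}, and I expect it to absorb the bulk of the work.

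Granting this estimate, $\tfrac1C e^{2\Lambda}\le J_{f_m}\le C e^{2\Lambda}$ almost everywhere with $C=C(K)$ independent of $m$. Shrinking $\epsilon$ so that also $\epsilon\le\theta(K)/2$, Lemma \ref{lem:theta1} with $\beta=2$ gives $e^{2\Lambda}\in L^1_{\mathrm{loc}}$, hence $\int_{B(1)}J_{f_m}\simeq 1$ independently of $m$. Lemma \ref{lem:jacconv} then supplies a subsequence $f_{m_k}$ converging locally uniformly to a $K'$-quasiconformal mapping $f$ with $f_{m_k}^{-1}\to f^{-1}$ pointwise, and Lemma \ref{lem:wcj} gives $\int\xi J_f=\lim_k\int\xi J_{f_{m_k}}$ for every $\xi\in C_0^\infty$, $\xi\ge0$. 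Combined with the uniform two-sided bound on $J_{f_{m_k}}$, this yields $\tfrac1C e^{2\Lambda}\le J_f\le C e^{2\Lambda}$ almost everywhere with $C=C(K)$ and $K(f)\le K'=K'(K)$, which is Proposition \ref{pro:mainpsi}.
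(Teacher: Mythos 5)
Your scheme is the paper's own (the same iteration $v_{m,j}=v(g\circ f_{m,j-1}^{-1})$, time-$1/m$ flow steps, dilation normalization at $p_0$, and a weak-convergence-of-Jacobians endgame), but the two steps that carry the real content are left unexecuted, and in one of them the mechanism is misidentified. For the error term you call $E_m$: since $\psi\in C_0^{\infty}$, Lemma \ref{lem:lip} makes $\Lambda_\psi$ globally Lipschitz --- there is no logarithmic singularity at this stage, so your remark that ``plain continuity will not do'' and your appeal to local integrability of $\log$ point at the wrong tools (those are what the later reductions, Lemmas \ref{lem:theta1} and \ref{lem:theta2}, are for). The paper's estimate is exactly: on a fixed ball $B(D)$, $|(\Lambda\circ f_{m,j-1}^{-1})(h_{m,j,s}(f_{m,j-1}(p)))-\Lambda(p)|\lesssim d(h_{m,j,s}(f_{m,j-1}(p)),f_{m,j-1}(p))^{\alpha}\lesssim m^{-\alpha/2}$, using Lipschitz continuity of $\Lambda_\psi$, the uniform H\"older continuity of the $g\circ f_{m,j-1}^{-1}$ (Lemma \ref{lem:uhold}), the Euclidean displacement bound $|h_{m,j,s}-\mathrm{id}|\lesssim 1/m$ from the flow equation, and \eqref{eq:hemcomp}; summed over the $m$ steps this gives $C_2(D)\,m^{-\alpha/2}$, which is not merely bounded but tends to zero, so the $D$- and support-dependent constant disappears in the limit. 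As written, your central estimate is an expectation, not a proof.

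The second gap is the treatment of the normalizing dilations. Your identity $\log J_{f_m}=2\Lambda+b_m+E_m$ silently discards the accumulated constants $c_m=-4\sum_{j}\log\|h_{m,j}(f_{m,j-1}(p_0))\|$, which are not a priori bounded by anything depending only on $K$ and $\|\psi\|_1$; the parenthetical about dilations altering $\Lambda_\psi$ ``by an additive constant'' does not control them. The paper keeps $c_m$, proves the two-sided bound for $e^{-c_m}J_{f_m}$, shows $\limsup_m e^{-c_m}\in(0,\infty)$ by testing against $\xi\in C_0^\infty$ with Lemmas \ref{lem:wcj} and \ref{lem:qsconv}, and then absorbs the limiting constant by post-composing $f$ with a dilation --- this is precisely how the final comparability constant is made to depend on $K$ alone, independent of the support radius of $\psi$; your preliminary ``rescale so $R\le 1$'' does not deliver that, since the constant you would need (essentially $\int_{B(1)}e^{2\Lambda}$) is not controlled by $K$ and $\|\psi\|_1$ by anything you cite. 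This also undercuts your route to compactness, since you derive $\int_{B(1)}J_{f_m}\simeq 1$ from the unproven uniform comparability (it is, however, repairable: $\int_{B(1)}J_{f_m}\simeq_K 1$ follows directly from the normalization $\|f_m(p_0)\|=1$ via \eqref{eq:bdist}, or one can invoke Lemma \ref{lem:qsconv} with the common point $p_0$, as the paper does). A smaller point: your dilatation ``bootstrap'' as phrased (renormalizing ``back into $\cQ_0(K')$'' by conformal maps) is not literally meaningful, since conformal factors do not change dilatation; it can be made to close as an induction giving $K(f_{m,j})\le e^{C(KK')\|\psi\|_1 j/m}$, which yields some $\epsilon(K)>0$, but the paper's Lemma \ref{lem:diffineq} (the ODE comparison with $G$ as in \eqref{eq:Gdef}) is what it actually uses to produce $\epsilon$.
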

In this case, identification of the required $ \epsilon > 0 $ comes from the following lemma (which is essentially Lemma 6.1 of \cite{BHSII}).
\begin{lem} \label{lem:diffineq}
Let $ F_s $, $ s \in [0,1] $, be a family of quasiconformal mappings with $F_0$ the identity. Let $ G : [0,\infty) \to (0,\infty) $ be a continuous, increasing, and locally Lipschitz function, and let $ \epsilon' > 0 $ be such that
\[
	\epsilon' < \epsilon := \int_{0}^{\infty} \frac{1}{G(\sigma)} \, \mathrm{d}\sigma.
\]
Define
\[
	\Phi(s) = \sup_{0 \leq \sigma \leq s} \log K(F_{\sigma}) , \quad 0 \leq s \leq 1,
\]
and assume that for each $m\in\mN$ and $1\leq j\leq m$,
\[
	\sup_{\frac{j-1}{m} \leq \sigma \leq \frac{j}{m}} K( F_{\sigma} ) \leq \exp \left[ \frac{\epsilon'}{m} G\left(\Phi\left(\frac{j-1}{m}\right)\right) \right] \sup_{0 \leq \sigma \leq \frac{j-1}{m}} K(F_{\sigma}).
\]
Then $ F_1 $ is $ K $-quasiconformal with $ K $ dependent only on $ G $.  
\end{lem}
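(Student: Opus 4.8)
\emph{Strategy.} The statement is a discretised Gr\"onwall estimate. Passing to logarithms turns the multiplicative hypothesis into an additive recursion for the accumulated log-dilatation $\Phi$, and the finiteness of $\epsilon=\int_0^\infty G(\sigma)^{-1}\,\mathrm{d}\sigma$ together with $\epsilon'<\epsilon$ is precisely what keeps $\Phi$ from escaping to $+\infty$ before time $1$.

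\emph{Finiteness and the recursion.} Fix $m\in\mN$ and abbreviate $a_j=\Phi(j/m)$ for $j=0,\dots,m$; thus $a_0=\log K(F_0)$ and $(a_j)$ is nondecreasing. First, each $a_j$ is finite, by induction on $j$: if $a_{j-1}<\infty$ then the right-hand side of the hypothesis equals $\exp\!\big[\tfrac{\epsilon'}{m}G(a_{j-1})\big]\exp(a_{j-1})<\infty$, and $a_j=\max\{a_{j-1},\ \sup_{(j-1)/m\le\sigma\le j/m}\log K(F_\sigma)\}$ is then finite as well. Taking logarithms in the hypothesis, using $\sup K(F_\sigma)=\exp(\sup\log K(F_\sigma))$ and $G>0$ to dispose of the $a_{j-1}$ branch of the maximum, we obtain
\[
	a_j \le a_{j-1}+\frac{\epsilon'}{m}\,G(a_{j-1}),\qquad 1\le j\le m.
\]

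\emph{Integrating the recursion.} Put $H(x)=\int_0^x G(u)^{-1}\,\mathrm{d}u$. Since $G$ is continuous and positive, $H$ is a strictly increasing $C^1$ map of $[0,\infty)$ onto $[0,\epsilon)$ with continuous inverse $H^{-1}$. Because $G$ is increasing and $a_{j-1}\le a_j$,
\[
	H(a_j)-H(a_{j-1}) = \int_{a_{j-1}}^{a_j}\frac{\mathrm{d}u}{G(u)} \le \frac{a_j-a_{j-1}}{G(a_{j-1})} \le \frac{\epsilon'}{m},
\]
so summing over $j=1,\dots,m$ telescopes to $H(a_m)\le H(a_0)+\epsilon'$. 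In the setting in which the lemma is applied the family $(F_s)$ starts at $F_0=\mathrm{id}$, whence $a_0=0$, $H(a_0)=0$, and $\epsilon'<\epsilon$ gives $H(\Phi(1))=H(a_m)\le\epsilon'<\epsilon$; therefore
\[
	\log K(F_1)\le \Phi(1)\le H^{-1}(\epsilon'),
\]
a bound depending only on $G$ and $\epsilon'$ and, notably, independent of $m$, so $F_1$ is $K$-quasiconformal with $K=\exp H^{-1}(\epsilon')$. For an arbitrary starting mapping the same telescoping gives $K=\exp H^{-1}\!\big(H(\log K(F_0))+\epsilon'\big)$, valid as long as $\epsilon'<\epsilon-H(\log K(F_0))$.

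\emph{The main obstacle.} There is no genuine difficulty here; the work is bookkeeping — extracting the clean scalar recursion from a hypothesis featuring nested suprema and $\Phi$ on both sides, and verifying that $H^{-1}$ is evaluated inside its domain $[0,\epsilon)$, which is exactly where $\epsilon'<\epsilon$ and the convergence of $\int_0^\infty G^{-1}$ enter. Since the final bound is independent of $m$, one fixed value of $m$ already delivers the estimate; the ``for each $m\in\mN$'' form of the hypothesis merely reflects how the per-step dilatation bound arises in applications, namely from a quasiconformal flow run over a time step of length $1/m$.
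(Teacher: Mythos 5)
Your proof is correct, and it reaches the paper's conclusion by a slightly different, somewhat more elementary route. You and the paper derive the identical discrete recursion $\Phi(j/m)\le \Phi((j-1)/m)+\tfrac{\epsilon'}{m}G(\Phi((j-1)/m))$ (the paper disposes of the $\Phi((j-1)/m)$ branch of the maximum using $\exp[\tfrac{\epsilon'}{m}G(\cdot)]\ge 1$, exactly as you do using $G>0$). The paper then closes the estimate by comparing, via induction on $j$, with the solution $\Phi_0$ of the separable ODE $\Phi_0'(s)=\epsilon' G(\Phi_0(s))$, $\Phi_0(0)=0$, whose finiteness on $[0,1]$ is where $\epsilon'<\int_0^\infty G^{-1}$ enters; you instead integrate the recursion directly against $H(x)=\int_0^x G(u)^{-1}\,\mathrm{d}u$ and telescope, a Bihari-type argument. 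The two are quantitatively the same bound, since the paper's $\Phi_0$ satisfies $H(\Phi_0(s))=\epsilon' s$, i.e.\ $\Phi_0(1)=H^{-1}(\epsilon')$. What your route buys: you never need existence or uniqueness for the ODE, so the locally Lipschitz hypothesis on $G$ becomes superfluous, and the role of $\epsilon'<\epsilon$ is laid bare as the condition that $H^{-1}$ is evaluated inside its domain $[0,\epsilon)$; you also make explicit the finiteness of the $\Phi(j/m)$ and the normalization $\Phi(0)=0$ (i.e.\ $K(F_0)=1$), which the paper's proof uses tacitly ("As $\Phi(0)=0$ it is trivially valid for $j=0$") but which is not literally in the hypotheses — your remark covering general $\Phi(0)$ under $\epsilon'<\epsilon-H(\Phi(0))$ patches this cleanly. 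One cosmetic point: your final constant $\exp H^{-1}(\epsilon')$ depends on $\epsilon'$ as well as $G$, but the paper's $\Phi_0(1)$ has the same dependence, so this matches the intended reading of "dependent only on $G$."
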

\begin{proof}
As $ \exp \left[ \frac{\epsilon'}{m} G\left(\Phi\left(\frac{j-1}{m}\right)\right) \right] \geq 1 $ we have
\[
	\sup_{0 \leq \sigma \leq \frac{j-1}{m}} K( F_{\sigma} ) \leq \exp \left[ \frac{\epsilon'}{m} G\left(\Phi\left(\frac{j-1}{m}\right)\right) \right] \sup_{ 0 \leq \sigma \leq \frac{j-1}{m}} K(F_{\sigma}),
\]
which coupled with our assumption gives
\[
	\sup_{ 0 \leq \sigma \leq \frac{j}{m}} K ( F_{\sigma} ) \leq \exp \left[ \frac{\epsilon'}{m} G\left(\Phi\left(\frac{j-1}{m}\right)\right) \right] \sup_{ 0 \leq \sigma \leq \frac{j-1}{m}} K(F_{\sigma}).
\]
It follows that
\[
	\Phi \left( \frac{j}{m} \right) \leq \Phi \left( \frac{j-1}{m} \right) + \frac{\epsilon'}{m} G \left( \Phi \left( \frac{j-1}{m} \right) \right).
\]
Given our assumptions on $ G $ and the choice of $ \epsilon' $, the equation
\[
	\Phi_{0}'(s) = \epsilon' G( \Phi_{0}(s) ), \quad \Phi_{0}(0) = 0, \quad 0 \leq s \leq 1,
\]
has a unique, finite solution that we call $\Phi_0$. Note that $ \Phi $ is increasing. We now show by induction that $\Phi(j/m)\leq \Phi_{0}(j/m)$ for all $j=0,\ldots,m$.

As $ \Phi(0) = 0 $ it is trivially valid for $ j = 0 $. Further, if $1\leq j \leq m$, and $\Phi((j-1)/m)\leq \Phi_{0}((j-1)/m)$, we find
\begin{align*}
	\Phi \left( \frac{j}{m} \right)
	&\leq \Phi\left( \frac{j-1}{m} \right) + \frac{\epsilon'}{m} G \left( \Phi \left( \frac{j-1}{m} \right) \right) \\
	&\leq \Phi_{0}\left( \frac{j-1}{m} \right) + \frac{\epsilon'}{m} G \left( \Phi_{0} \left( \frac{j-1}{m} \right) \right) \\
	&=\Phi_{0}\left( \frac{j-1}{m} \right) + \epsilon'\int_{ (j-1)/m}^{j/m}G \left( \Phi_{0} \left( \frac{j-1}{m} \right) \right) \, \mathrm{d}s\\
	&\leq \Phi_{0}\left( \frac{j-1}{m} \right) + \epsilon'\int_{(j-1)/m}^{j/m} G(\Phi_{0}(s)) \, \mathrm{d}s\\
	&=\Phi_{0} \left( \frac{j}{m} \right).
\end{align*}
In conclusion, $\Phi(1)\leq \Phi_{0}(1)$, and $\Phi_0$ depends only on $G$.
\end{proof}

Let us now fix (for the remainder of this subsection) $ K \geq 1 $, $ \psi \in C_{0}^{\infty} $, and $ g \in \cQ_0(K) $. We will write $ \Lambda = \Lambda_{\psi} \circ g $. Let $ p_0 \in S(1) $ be a point such that $ g(p_0) \in S(1) $.
 
In the following $m$ is always a natural number, and once such an $ m $ has been introduced $j$ is a natural number between $1$ and $m$. We reserve $s$ for our time variable, $ s \in [0,1] $.

If $ F \in \cQ_0(K'') $ for some $ K'' \geq 1 $, let $ \phi(F) = \phi_{F,\psi} $ be as in Proposition \ref{pro:growth}, and write $ v(F) = v_{\phi(F)} $.

For each $ m $, we run the following iterative procedure. Step $ 0 $ is to define $ f_{m,0} $ as the identity. Then for $j = 1, 2, \ldots, m$ we perform in turn
\[
	\text{step } j \;
		\left\{
			\begin{array}{l}
				v_{m,j} := v( g \circ f_{m,j-1}^{-1} ), \\
				h_{m,j} \textnormal{ is defined to be the time-}(1/m) \textnormal{ flow mapping of } v_{m,j} , \text{ and}\\
				f_{m,j}:=\delta_{\|h_{m,j}(f_{m,j-1}(p_0))\|^{-1}}(h_{m,j}\circ f_{m,j-1}).
			\end{array}	
		  \right.
\]
We define $ f_m $ to be the mapping $ f_{m,m} $ created by this process. In truth (given our agreed notation) for this algorithm to be well defined we need to know that each $ g \circ f_{m,j-1}^{-1} $ is in $ \cQ_0(K'') $ for some $ K'' $. This observation is included in the proof of the next proposition.

\begin{pro} \label{pro:fmconv}
There exists $ \epsilon = \epsilon(K) > 0 $ such that if $ \| \psi \|_1 < \epsilon $, then the $ f_m $ subconverge as $m\to \infty$ to a $ K' $-quasiconformal mapping with $ K' $ dependent on $ K $ only.
\end{pro}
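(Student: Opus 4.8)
The plan is to reproduce the Bonk--Heinonen--Saksman iteration, control the dilatation that accumulates along the composition by means of the differential inequality in Lemma~\ref{lem:diffineq}, and then extract a locally uniformly convergent subsequence from Lemma~\ref{lem:qsconv}. First I would check that the algorithm is well defined for every $m$. Argue by induction on $j$: suppose $f_{m,j-1}$ is a quasiconformal homeomorphism with $f_{m,j-1}(0)=0$ and $f_{m,j-1}(p_0)\in S(1)$, which is clear for $f_{m,0}=\mathrm{id}$. By Lemma~\ref{lem:qcinv} and the composition lemma, $g\circ f_{m,j-1}^{-1}$ is $K\,K(f_{m,j-1})$-quasiconformal; it fixes $0$ and carries $f_{m,j-1}(p_0)\in S(1)$ to $g(p_0)\in S(1)$, so $g\circ f_{m,j-1}^{-1}\in\cQ_0\bigl(K\,K(f_{m,j-1})\bigr)$. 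Hence $\phi_{g\circ f_{m,j-1}^{-1},\psi}$ is available, and by Corollary~\ref{cor:flowprops} its vector field $v_{m,j}$ generates a flow of homeomorphisms fixing $0$; writing $h_{m,j}^{\tau}$ for the time-$\tau$ mapping, so $h_{m,j}=h_{m,j}^{1/m}$, we have
\[
	K\bigl(h_{m,j}^{\tau}\bigr)\le\exp\bigl(C\bigl(K\,K(f_{m,j-1})\bigr)\,\|\psi\|_1\,\tau\bigr),
\]
where, after passing to an increasing majorant, $C:[1,\infty)\to[0,\infty)$ is the constant of Corollary~\ref{cor:flowprops} (a function of the $\cQ_0$-parameter alone). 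In particular $h_{m,j}$ is a quasiconformal homeomorphism, the factor $r_{m,j}=\|h_{m,j}(f_{m,j-1}(p_0))\|^{-1}$ is finite and positive since $f_{m,j-1}(p_0)\neq0$, and $f_{m,j}=\delta_{r_{m,j}}(h_{m,j}\circ f_{m,j-1})$ is a quasiconformal homeomorphism with $f_{m,j}(0)=0$ and $\|f_{m,j}(p_0)\|=1$; this closes the induction, and shows $f_m=f_{m,m}\in\cQ_0(K(f_m))$ with $p(f_m)=p_0$ for every $m$.

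Next I would interpolate the composition. For each $m$ let $F^{(m)}_s$, $s\in[0,1]$, be the family of quasiconformal mappings obtained by flowing along $v_{m,j}$ on the $j$-th subinterval starting from $f_{m,j-1}$: explicitly $F^{(m)}_s=h_{m,j}^{\,s-(j-1)/m}\circ f_{m,j-1}$ for $(j-1)/m\le s<j/m$, and $F^{(m)}_1=h_{m,m}^{1/m}\circ f_{m,m-1}$. Then $F^{(m)}_0=\mathrm{id}$, $F^{(m)}_{(j-1)/m}=f_{m,j-1}$, and $F^{(m)}_1$ has the same essential dilatation as $f_m$ because the $\delta_{r_{m,j}}$ are conformal. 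Put $\Phi_m(s)=\sup_{0\le\sigma\le s}\log K\bigl(F^{(m)}_\sigma\bigr)$, a nondecreasing function. The composition lemma together with the bound above gives, for $\sigma\in[(j-1)/m,j/m]$, $K\bigl(F^{(m)}_\sigma\bigr)\le e^{C(K\,K(f_{m,j-1}))\|\psi\|_1/m}\,K(f_{m,j-1})$; and since $K(f_{m,j-1})=K\bigl(F^{(m)}_{(j-1)/m}\bigr)\le e^{\Phi_m((j-1)/m)}$ and $C$ is nondecreasing, the exponent is at most $C\bigl(Ke^{\Phi_m((j-1)/m)}\bigr)\|\psi\|_1/m$. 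Now fix, depending only on $K$, a continuous, positive, increasing, locally Lipschitz function $G$ with $G(x)\ge C(Ke^x)$ for all $x\ge0$ and $\int_0^\infty G(\sigma)^{-1}\,\mathrm{d}\sigma<\infty$ — a smoothed version of $x\mapsto C(Ke^x)+e^x$ serves — and set $\epsilon=\epsilon(K):=\tfrac12\int_0^\infty G(\sigma)^{-1}\,\mathrm{d}\sigma>0$. If $\|\psi\|_1<\epsilon$, then with $\epsilon'=\|\psi\|_1$ the preceding yields, for $1\le j\le m$,
\[
	\sup_{\frac{j-1}{m}\le\sigma\le\frac{j}{m}}K\bigl(F^{(m)}_\sigma\bigr)\le\exp\!\Bigl[\tfrac{\epsilon'}{m}\,G\!\bigl(\Phi_m(\tfrac{j-1}{m})\bigr)\Bigr]\sup_{0\le\sigma\le\frac{j-1}{m}}K\bigl(F^{(m)}_\sigma\bigr),
\]
which is exactly the hypothesis of Lemma~\ref{lem:diffineq} for the family $F^{(m)}$ and its partition into $m$ equal pieces.

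The argument in the proof of Lemma~\ref{lem:diffineq} — which uses the step inequality only for this single $m$ and nowhere uses continuity of $s\mapsto F^{(m)}_s$ — then gives $\Phi_m(1)\le\Phi_0(1)$, where $\Phi_0'=\epsilon'G(\Phi_0)$, $\Phi_0(0)=0$. Separating variables, $\Phi_0(1)=H^{-1}(\epsilon')$ with $H(y)=\int_0^yG(\sigma)^{-1}\,\mathrm{d}\sigma$; since $\epsilon'<\epsilon<H(\infty)$ this is at most $H^{-1}(\epsilon)<\infty$, a quantity depending only on $G$, hence only on $K$. Consequently $K(f_m)=K\bigl(F^{(m)}_1\bigr)\le K':=\exp H^{-1}(\epsilon)$ for all $m$. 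Since $f_m\in\cQ_0(K')$ with $p(f_m)=p_0$ for every $m$, Lemma~\ref{lem:qsconv} shows the $f_m$ subconverge, locally uniformly, to a $K'$-quasiconformal mapping, which is the assertion.

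The step I expect to be the main obstacle is closing the self-referential loop in the second paragraph: the flow estimate at step $j$ involves $C$ evaluated at the $\cQ_0$-parameter $K\,K(f_{m,j-1})$ of $g\circ f_{m,j-1}^{-1}$, i.e.\ at the dilatation accumulated through step $j-1$, and one must see that this is tamed by the automatic bound $K(f_{m,j-1})\le e^{\Phi_m((j-1)/m)}$, so that the resulting recursion is precisely of the form handled by Lemma~\ref{lem:diffineq}; it is this feedback, together with the finiteness of $\int_0^\infty G^{-1}$, that forces the smallness hypothesis $\|\psi\|_1<\epsilon$. A secondary nuisance is keeping the normalizing dilations $\delta_{r_{m,j}}$ out of the dilatation bookkeeping — they are present only so that all the $f_m$ lie in a single class $\cQ_0(K')$ sharing the point $p_0$, permitting the application of Lemma~\ref{lem:qsconv} — which is why $F^{(m)}$ is built without them.
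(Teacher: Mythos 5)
Your proposal is correct and follows the paper's own scheme: the same induction showing that each $g\circ f_{m,j-1}^{-1}$ lies in $\cQ_0\bigl(K\,K(f_{m,j-1})\bigr)$ with the common witnessing point coming from $p_0$, the same interpolating family of flow mappings, the same combination of the composition lemma with Corollary \ref{cor:flowprops} to produce the step inequality of Lemma \ref{lem:diffineq}, and the same final appeal to Lemma \ref{lem:qsconv}. The one genuine deviation is how you manufacture $G$: the paper first asserts (with details omitted) the quantitative bound $C(K'')\le A_1e^{A_2(K'')^{2/3}}$ on the constant of Corollary \ref{cor:flowprops} and then uses the explicit double exponential \eqref{eq:Gdef}, whereas you take for $G$ any continuous, increasing, locally Lipschitz majorant of $x\mapsto C(Ke^{x})+e^{x}$, observing that enlarging $G$ only strengthens the verified step inequality and that $G\ge e^{x}$ already forces $\int_0^\infty G^{-1}<\infty$. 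This spares you the omitted growth estimate entirely — all you need is that the constant of Corollary \ref{cor:flowprops} is finite for each value of the $\cQ_0$-parameter and may be taken nondecreasing in it, which is automatic since $\cQ_0(K_1)\subset\cQ_0(K_2)$ for $K_1\le K_2$ — at the harmless cost of a possibly smaller $\epsilon(K)$ and larger $K'(K)$, neither of which matters for the statement. Two further touches are matters of precision rather than substance, but worth keeping: you parameterize the flow on the $j$-th subinterval by the elapsed time $s-(j-1)/m$, which is what the paper's subsequent estimate $\sup_{0\le\sigma\le 1/m}K(h_{m,j,\sigma})$ shows it intends, and you note explicitly that the proof of Lemma \ref{lem:diffineq} uses the step inequality only for the single subdivision into $m$ equal pieces — exactly how it must be applied here, since the family $F^{(m)}$ is only known to satisfy the hypothesis for its own partition.
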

\begin{proof}
Obviously the identity is a quasiconformal mapping fixing both $ 0 $ and the unit sphere. We have by Corollary \ref{cor:flowprops} that $ h_{m,1} $ is a quasiconformal mapping fixing $ 0 $. Consequently, as dilations are quasiconformal, also fix $ 0 $, and the dilation in play is designed to make $ \| f_{m,1}(p_0) \| = 1 $, we have that $ f_{m,1} \in \cQ_0(K_{m,1}) $ for some $ K_{m,1} \geq 1 $. Furthermore, that
\[
	\left\| (g \circ f_{m,1}^{-1}) (f_{m,1}(p_0)) \right\| = \| g( p_0 ) \| = 1
\]
gives $ g \circ f_{m,1}^{-1} \in \cQ_0 (K K_{m,1}) $.

Working iteratively, we see for each $m$ and $j$ that $ f_{m,j} \in \cQ_0(K_{m,j}) $ with $ 1 \leq K_{m,j} < \infty $ and, in particular, this is true of $ f_m $. Actually more is true since for all $ m $ we can take the same point $ p_0 \in S(1) $ as the point $p_{f_m}$ for which $ \| f_m (p_{f_m}) \| = 1 $.  Define $ K_m = K_{m,m} $.

Given the preceding observations, it follows from Lemma \ref{lem:qsconv} we will have subconvergence if we can demonstrate there exists $ K' $ such that each $ K_m \leq K' $ for all $m$. This is where Lemma \ref{lem:diffineq} comes in.

For each $ m $, define the family of quasiconformal mappings $ f_m (\cdot,s) $, $ s \in [0,1] $, as follows:
\[
	\text{if }  s \in \left[ \frac {j-1}{m} , \frac{j}{m}  \right), \text{ then } f_m (\cdot,s) = h_{m,j,s} \circ f_{m,j-1}  
\]
with $ h_{m,j,s} $ the time-$s$ flow mapping associated to $ v_{m,j} $ (with this notation $ h_{m,j}$ in the above algorithm is the same as $h_{m,j,1/m}$). Since dilations are $1$-quasiconformal,
\begin{align*}
	\sup_{\frac{j-1}{m} \leq \sigma \leq \frac{j}{m}} K( f_m (\cdot,\sigma ) )
	&\leq  \sup_{0 \leq \sigma \leq \frac{1}{m}} K(h_{m,j,\sigma}) K( f_{m,j-1} ) \\
	&\leq \sup_{0 \leq \sigma \leq \frac{1}{m}} K(h_{m,j,\sigma}) \sup_{0 \leq \sigma \leq \frac{j-1}{m}} K( f_m (\cdot,\sigma) ).
\end{align*}
At this point, we only need express $ \sup_{0 \leq \sigma \leq \frac{1}{m}} K(h_{m,j,\sigma}) $ in an appropriate form and we will be ready to invoke Lemma \ref{lem:diffineq}.

First we observe by Corollary \ref{cor:flowprops} that for $ s \in [0,1/m] $,
\[
	K(h_{m,j,s}) \leq e^{ C \| \psi \|_{1} s } \leq e^{ \frac{C \| \psi \|_1 }{ m } }
\]
where
\[
	C = C( K K_{m,j-1} ) = A_2 e^{A_1 (K K_{m,j-1})^{\frac{2}{3}}}
\]
($ A_1, A_2 > 0 $ are absolute constants). Let us define
\begin{equation} \label{eq:Gdef} 
	G(r) = A_2 \exp \left[ A_1 K^{\frac{2}{3}}\exp \left( \frac{2}{3}r \right)  \right], \quad r \in [0,\infty). 
\end{equation}
Then $ C( K K_{m,j-1} ) \leq G( \log K_{m,j-1} ) $ and as $ G $ is an increasing function it follows that
\[
	C( K K_{m,j-1} ) \leq G \left( \sup_{0 \leq \sigma \leq \frac{j-1}{m}} \log K( f_m (\cdot,\sigma) ) \right).
\]	
Note that $ G > 0$ and as $ G \in C^1 [0,\infty) $ it is locally Lipschitz. To summarize, $ G $ and the family of mappings $ f_m (\cdot,s) $ meet the requirements of Lemma \ref{lem:diffineq}, with $ G $ dependent on $ K $ only. It follows that so long as $ \| \psi \|_1 < \epsilon := \int_0^{\infty}1/G $, we will have each $ f_m $ a $ K' $-quasiconformal mapping with $ K' $ dependent on $ K $ only.
\end{proof}

Let us add to our standing assumptions that $\|\psi\|_1 < \epsilon$, where $\epsilon = \epsilon(K) > 0$ is as given by Proposition \ref{pro:fmconv}.

Using Lemma \ref{lem:qsconv}, Proposition \ref{pro:fmconv} identifies a subsequence of the $ f_m $ (which we continue to denote $ f_m $) that converges to a $ K' $-quasiconformal mapping $ f $. This mapping $ f $ is (modulo a small adjustment to come later) our candidate for comparability.

We will hereon use the words uniform and uniformly to indicate that something is independent of both $ m $ and $ j $.

The proof of Proposition \ref{pro:fmconv} gives that the $ g \circ f_{m,j-1}^{-1} $ are uniformly $ K'' $-quasiconformal with $K''= K K'$. This is crucial because it provides uniform estimates on the $ v_{m,j} $. To be more precise, recall by Proposition \ref{pro:vprops} that for each $ v_{m,j} $ we have
\[
	\Lambda \circ f_{m,j-1}^{-1} = \textnormal{div}_{H}v_{m,j} + \zeta_{m,j}
\]
with essentially bounded $ \zeta_{m,j} $ such that $\|\zeta_{m,j}\|_{\infty} \lesssim_{K K_{m,j-1}} \|\psi\|_1 $. With our assumption on $\|\psi\|_1$ and our uniform bound on $ K K_{m,j-1} $, we now have
\begin{equation} \label{eq:zetaest}
	\|\zeta_{m,j}\|_\infty \lesssim_K 1.
\end{equation}

\begin{proof}[Proof of Proposition \ref{pro:mainpsi}.]
For each $ m $ and at almost every $ p $, $ 0 < J_{f_m}(p) < \infty $ with
\[
	J_{f_m}(p) = \prod_{j=1}^{m} \| h_{m,j} (f_{m,j-1}(p_0)) \|^{-4} J_{h_{m,j}}( f_{m,j-1}(p) ).
\]
Consequently, at those same $ p $,
\[
	\log(J_{f_m}(p)) = \sum_{j=1}^{m} \log(J_{h_{m,j}}(f_{m,j-1}(p))) - 4\sum_{j=1}^{m}\log( \| h_{m,j}(f_{m,j-1}(p_0)) \| ).
\]
From now on $ c_m := -4\sum_{j=1}^{m} \log( \| h_{m,j}(f_{m,j-1}(p_0)) \| ) $. As above, we write $ h_{m,j,s} $ for the time-$ s $ flow mapping generated by $ v_{m,j} $ and we suppress dependence on the point $ p $. Using Corollary \ref{cor:flowprops} and Proposition \ref{pro:vprops} we may develop this as
\begin{align*}
	\log(J_{f_m})
	&= 2 \sum_{j=1}^{m} \int_{0}^{1/m} \textnormal{div}_{H} v_{m,j} ( h_{m,j,\sigma}(f_{m,j-1}) ) \, \mathrm{d}\sigma + c_m \\
	&= 2 \sum_{j=1}^{m} \int_{0}^{1/m} ( \Lambda \circ f_{m,j-1}^{-1}) ( h_{m,j,\sigma}(f_{m,j-1}) ) - \zeta_{m,j}( h_{m,j,\sigma}(f_{m,j-1}) ) \, \mathrm{d}\sigma + c_m.
\end{align*}
At those same points
\begin{align}
\begin{split} \label{eq:jacapprox}
	| \log J_{f_{m}}-2\Lambda
	&-c_{m} | \\
	&\leq \frac{2}{m} \sum_{j=1}^{m} \bigg[ \sup_{s\in[0,1/m]} | ( \Lambda\circ f_{m,j-1}^{-1} ) ( h_{m,j,s}(f_{m,j-1}) ) - \Lambda | \\
	& \quad \quad \quad \quad + \sup_{s\in[0,1/m]} | \zeta_{m,j} ( h_{m,j,s}(f_{m,j-1}) ) | \bigg] \\
	&\leq \frac{2}{m} \left[ \sum_{j=1}^{m} \sup_{s\in[0,1/m]} | ( \Lambda \circ f_{m,j-1}^{-1} ) ( h_{m,j,s}(f_{m,j-1}) ) - \Lambda | \right] + C_1,
\end{split}
\end{align}
with $ C_1 > 0 $ a constant dependent only on $ K $ (the appearance of which is justified by the uniform essential boundedness of $ \zeta_{m,j} $ as in \eqref{eq:zetaest}).

Since $ \psi \in C_{0}^{\infty} $, it follows from Lemma \ref{lem:lpsilip} that $ \Lambda_{\psi} $ is Lipschitz continuous. This gives
\begin{align}
\begin{split} \label{eq:uapprox}
	| ( \Lambda \circ f_{m,j-1}^{-1} )( h_{m,j,s}(f_{m,j-1}) ) - \Lambda |
	&= | \Lambda_{\psi} ( g( f_{m,j-1}^{-1}(h_{m,j,s}(f_{m,j-1})) ) ) - \Lambda_{\psi}(g) | \\
	&\lesssim d( g(f_{m,j-1}^{-1}(h_{m,j,s}(f_{m,j-1}))) , g ).
\end{split}
\end{align}
Let $ \xi \in C_{0}^{\infty} $, $ \xi \geq 0 $, $ \int \xi = 1 $, and $ D > 0 $ be such that $ \textnormal{support}( \xi ) \subset B(D) $. The $f_{m,j-1}$ are uniformly $ K' $-quasiconformal and (as already noted) satisfy the hypotheses of Lemma \ref{lem:udist}. Consequently, there is a $ D' > 0 $ such that for all $ m , j $,
\[
	f_{m,j-1}(B(D)) \subset B(D').
\]
The flow mapping $ h_{m,j,s} $ is generated by $ v_{m,j} = v\left( g\circ f_{m,j-1}^{-1} \right)$. In turn, $v_{m,j}$ corresponds to a potential $\phi_{m,j} := \phi \left( g\circ f_{m,j-1}^{-1} \right)$. This notation is detailed in the paragraph preceding the statement of Proposition \ref{pro:fmconv}. Observations already made and Proposition \ref{pro:growth} give uniform estimates
\[
    |\phi_{m,j} (p)| \lesssim 1 + \|p\|^2 \log \|p\| \quad \text{and} \quad |\phi_{m,j}(p)| \lesssim 1 + \|p\| \log \|p\| 
\]
at all $p \in \mH$. Lemma \ref{lem:bndsolns} and its proof now supply $ D'' > 0 $ such that for all $ m , j $, and for all $ s \in [0,1/m] $,
\[
	h_{m,j,s}( B(D') ) \subset B(D'').
\]
Lemma \ref{lem:uhold} gives H\"{o}lder continuity uniformly for the  $ g \circ f_{m,j-1}^{-1} $ on $ B(D'') $. This implies the existence of $ \alpha > 0 $ such that for all $ m , j $,
\begin{align}
\begin{split} \label{eq:holderonbd}
	d(g(f_{m,j-1}^{-1}(h_{m,j,s}(f_{m,j-1}))),g)
	&= d( g(f_{m,j-1}^{-1}( h_{m,j,s}( f_{m,j-1} ) )) , g( f_{m,j-1}^{-1}(f_{m,j-1}) ) ) \\
	&\lesssim d( h_{m,j,s}( f_{m,j-1} ) , f_{m,j-1} )^{\alpha}
\end{split}
\end{align}
at every point of $ B(D) $.
As $h_{m,j,s}(p)$ is the time-$s$ flow mapping of the vector field $v_{m,j}$, we have
\[
	h_{m,j,s}(p)=p+\int_{0}^{s}v_{m,j}(h_{m,j,\sigma}(p))\,\mathrm{d}\sigma.
\]
Having identified the $v_{m,j}$ as uniformly bounded on $ B(D'') $, the preceding expression provides the Euclidean estimate
\[
	|h_{m,j,s}(f_{m,j-1})-f_{m,j-1}|\lesssim \frac{1}{m}
\]
on $B(D)$. Via \eqref{eq:hemcomp} we get the still-useful estimate on the Heisenberg distance, 
\begin{equation} \label{eq:heholder}
	d(h_{m,j,s}(f_{m,j-1}),f_{m,j-1})\lesssim_{D''} |h_{m,j,s}(f_{m,j-1})-f_{m,j-1}|^{\frac{1}{2}}.
\end{equation}
Putting together \eqref{eq:uapprox}, \eqref{eq:holderonbd}, and \eqref{eq:heholder} at points of $B(D)$,
\[
	| ( \Lambda \circ f_{m,j-1}^{-1} ) ( h_{m,j,s} ( f_{m,j-1} ) ) - \Lambda | \lesssim \left( \frac{1}{m} \right)^{\alpha/2} .
\]
Plugging this into \eqref{eq:jacapprox} we find there exists constant $ C_2 > 0 $ such that at almost every $ p \in B(D) $,
\[
	| \log J_{f_{m}} - 2\Lambda - c_{m} | \leq C_2 m^{-\alpha/2} + C_1,
\]
or (to write it another way)
\begin{equation} \label{eq:firstcomp}
	e^{ -C_{2} m^{-\alpha/2} - C_1 } e^{ 2\Lambda } \leq e^{ -c_m } J_{f_m} \leq e^{ C_2 m^{-\alpha/2} + C_1 } e^{ 2\Lambda }.
\end{equation}
It is worth noting that $C_2$ depends on the radius of support of $ \psi $ (in addition to $ K $). This would be problematic were it not for the fact that it is the coefficient of $m^{-\alpha/2}$, and so the term involving $C_2$ will vanish when $m\to \infty$. In contrast, the constant $ C_1 $ (which survives the limit) depends on $ K $ only (as already noted).

Multiplying \eqref{eq:firstcomp} by $\xi$ and integrating, we find
\begin{equation} \label{eq:wjaccomp}
	e^{ - ( C_2 m^{-\alpha/2} + C_1 ) } \int \xi e^{2\Lambda} \leq e^{-c_m} \int \xi J_{f_m} \leq e^{ C_2 m^{-\alpha/2} + C_1 } \int \xi e^{2\Lambda}.
\end{equation}
Lemmas \ref{lem:wcj} and \ref{lem:qsconv} give $ \limsup_{m\to\infty} \int \xi J_{f_m} = \int \xi J_{f} $ which (since $ J_{f} $ is locally integrable and almost everywhere greater than zero) is finite and positive. Taking the $ \limsup $ as $ m \to \infty $ of \eqref{eq:wjaccomp}, we find
\[
	e^{-C_1} \int \xi e^{2\Lambda} \leq \limsup_{m\to\infty} ( e^{-c_m} ) \int \xi J_{f} \leq e^{C_1} \int \xi e^{2\Lambda}.
\]
As $ \int \xi J_{f} < \infty $ and $ e^{-C_1} \int \xi e^{2\Lambda} > 0 $ we must have $ \limsup_{m\to\infty} ( e^{-c_m} ) > 0 $. Similarly, given that $ e^{C_1} \int \xi e^{2\Lambda} < \infty $ and $ \int \xi J_{f} > 0 $ we must have $ \limsup_{m\to\infty} ( e^{-c_m} ) < \infty $. Let $ c_0 := \limsup_{m\to\infty} e^{-c_m} $.

The above being true for all $\xi\in C_{0}^{\infty}$ with $\xi\geq 0$ and $\int\xi=1$ means it is true for the mollifier $ \xi_{q,r} $ of center $ q $ and radius $ r > 0 $ (we may use the standard (Euclidean) mollifiers here, there is no need for `twisted convolution'). As both $ J_f $ and $ e^{2\Lambda} $ are locally integrable they have Lebesgue points almost everywhere. At a common Lebesgue point $ q $ we have
\[
	\int \xi_{q,r} e^{2\Lambda} \to e^{ 2 \Lambda(q) } \quad \text{ and} \quad \int \xi_{q,r} J_f \to J_{f}(q)  
\]
as $ r \to 0 $. See \cite{ZiemerBook} for these last couple of points. Hence we arrive at $c_0 J_f \simeq e^{2\Lambda}$ almost everywhere.

It might seem natural to include $ c_0 $ in the implied constant of comparability. It is likely, however, that $c_0$ depends not only on $K$ but on $R$ the radius of support of $ \psi $. As will soon become clear, it is important that the constant of comparability is independent of $R$. Instead we postcompose $ f $ with the dilation $\delta_{r_0}$, $r_0 := c_0^{1/4}$ and call the result $f$ again. We thus have a $ K' $-quasiconformal mapping $f$ such that $J_f \simeq e^{2\Lambda}$ almost everywhere, and with the implied constant dependent on $ K $ only.
\end{proof}
\subsection{Conclusion of the Proof of Theorem \ref{thm:mainint}}
Moving from the special case of the preceding subsection to the general case now follows as it does in the Euclidean case. With Proposition \ref{pro:mainpsi} in place, progress rests principally on Lemmas \ref{lem:theta1} and \ref{lem:theta2}. Let $\cM_0$ and $\cM_0 (\epsilon)$ comprise the compactly supported members of $\cM$ and $\cM (\epsilon)$, respectively.

\begin{pro} \label{pro:maincptsupp}
Given $ K \geq 1 $, there exist $ \epsilon = \epsilon(K) > 0 $ and $ K' = K'(K) \geq 1 $ such that, if $ \mu \in \cM_0 (\epsilon) $ and $ g \in \cQ_0 (K) $ then there is $f \in \cQ (K')$ with $J_f \simeq_K e^{2\Lambda_{\mu} \circ g}$ almost everywhere.
\end{pro}

\begin{proof}
Let $ \Lambda = \Lambda_\mu \circ g $ be a  quasilogarithmic potential with $ \mu \in \cM_0 $ and $ g \in \cQ_0(K) $. Let $ ( \psi_k ) $ be a sequence of smooth regularizations of $ \mu $ as in \eqref{eq:regmu}. Since $\mu$ is compactly supported so too are the $ \psi_k $. Let $ \Lambda_k := \Lambda_{\psi_k} \circ g $.

Proposition \ref{pro:mainpsi} tells us there exists $ \epsilon' > 0 $ such that, if $ \| \mu \| < \epsilon' $ (so that each $ \| \psi_k \|_1 < \epsilon' $) then for each $ k $ there is a quasiconformal mapping $ f_k $ with $ f_k(0) = 0 $ and
\begin{equation} \label{eq:kcomp}
	J_{f_k} \simeq e^{2 \Lambda_k}
\end{equation}
almost everywhere. The dilatation of $ f_k $ and the constant of comparability in \eqref{eq:kcomp} are both dependent only on $ K $ (the dilatation of the given $ g $), hence are each independent of $ k $.

If $ \theta = \theta(K) > 0 $ is as given by Lemma \ref{lem:theta1} and we let $ \epsilon'' > 0 $ be defined by $ \epsilon'' = \theta / 2 $, then if $ \| \mu \| < \epsilon'' $ (so that each $ \| \psi_k \|_1 < \epsilon'' $) we may conclude $ e^{ 2 \Lambda } \in L_{\text{loc}}^1 $ and that for any ball $ B \subset \mH $,
\[
	\int_B | e^{ 2 \Lambda_k } - e^{ 2 \Lambda } | \to 0 \quad \text{as} \quad k \to \infty.
\]   
It follows $ \int_{B(1)} e^{ 2 \Lambda_k } \to \int_{B(1)} e^{ 2 \Lambda } $ which combined with \eqref{eq:kcomp} gives $\int_{B(1)} J_{f_k} \simeq 1$ independently of $ k $. Using Lemma \ref{lem:jacconv} we may pass to a subsequence that converges, locally uniformly, to a $ K' $-quasiconformal mapping with $K'=K'(K)$.

Weak convergence of the Jacobians (as in the proof of Proposition \ref{pro:mainpsi}) gives $J_f \simeq e^{ 2 \Lambda }$ almost everywhere. The proof is completed by identifying the required $ \epsilon > 0 $ as $ \epsilon = \min \{ \epsilon' , \epsilon'' \} $.
\end{proof}
We are now ready to conclude the proof of Proposition \ref{pro:mainthmred} and so the proof of Theorem \ref{thm:mainint}.
\begin{proof}[Proof of Proposition \ref{pro:mainthmred}]
Let $ \Lambda = \Lambda_\mu \circ g $ be a quasilogarithmic potential with $\mu\in \cM$ and $ g \in \cQ_0(K) $. Define $ \mu_k = \mu \big|_{B(k)}$ so that each $\mu_k$ is compactly supported. Let $ \Lambda_k := \Lambda_{\mu_k} \circ g $. If $ \| \mu \| < \epsilon $ with $ \epsilon > 0 $ as in Proposition \ref{pro:maincptsupp} then (obviously) $ \| \mu_k \| < \epsilon $ for all $ k $. By Proposition \ref{pro:maincptsupp}, there exist $ K'=K'(K)$-quasiconformal mappings $( f_k )$ with $ f_k(0) = 0 $ and $J_{f_k} \simeq e^{ 2 \Lambda_k }$ almost everywhere (for all $k$ and with all implied constants independent of $ k $). Now use Lemma \ref{lem:theta2} to proceed exactly as in the proof of Proposition \ref{pro:maincptsupp}.
\end{proof}

\section{Weighted Sub-Riemannian Metrics} \label{sec:wsrm}

We conclude this paper with a geometric application, proving Theorem \ref{thm:specialgeom} of the introduction. That statement follows from Theorem \ref{thm:mainint} if we demonstrate the following: when a weight $\omega : \mH \to [0,\infty)$ is continuous and comparable to a quasiconformal Jacobian, the canonical sub-Riemannian Heisenberg group $ ( \mH , g_0 ) $ and its `conformal' deformation $ ( \mH , \sqrt{\omega} g_0 ) $ are bi-Lipschitz equivalent.

We write $\rho$ for the Carnot-Carathéodory distance function associated to $ ( \mH , g_0 ) $ (as in Section \ref{subsec:hg}). The above use of the notation $ ( \mH , \sqrt{ \omega } g_0 ) $ is a slight abuse as (see below) we replace not only the metric but also the curve families used in the definition of the distance function (slight in that, were we to make the same replacement in our earlier definition of $ ( \mH , g_0 ) $ the resulting distance function would be identical to $ \rho $).

If $0\leq b < \infty$ then a (bi-infinite) sequence $(a_k)_{k\in \mZ} \subset [0,b]$ will be called end-point limiting if $a_k \to 0$ when $k\to -\infty$, $a_k \to b$ when $k\to \infty$, and for all $k\in \mZ$ we have $a_k \leq a_{k+1}$. A bi-infinte series $\sum_{k=-\infty}^{\infty} c_k$ is defined to be $\sum_{k=0}^{\infty} c_{-k} + \sum_{k=1}^{\infty} c_{k}$ (we only consider such series for non-negative $c_k$ so it is likely other sensible definitions will coincide). The following definition is motivated by our later reliance on the curve families constructed in \cite{NagesCurves} (we need curves of this type to be considered among the competitors over which the distance function is to be defined). 
\begin{defn} \label{def:adcurve}
Let $ p , q \in \mH $. An admissible curve for $(p,q)$ is a number $ 0 \leq b < \infty $, a continuous mapping $ \gamma : [0,b] \to \mH $, an end-point limiting sequence $(a_k)_{k\in \mZ} \subset [0,b]$, and a sequence of horizontal curves $ ( \gamma_k : [a_k,a_{k+1}] \to \mH )_{k\in \mZ} $ such that
\begin{enumerate}
	\item $ \gamma(0) = p $, $ \gamma(b) = q $,
	\item $ \gamma(s) = \gamma_k (s) $ when $a_k \leq s \leq a_{k+1}$, and
	\item $ \sum_{k=-\infty}^{\infty} \int_{a_{k}}^{a_{k+1}} \left| \gamma_{k}'(s) \right|_{H} \, \mathrm{d}s < \infty $. 
\end{enumerate}
\end{defn}
Horizontal curves were defined in Section \ref{subsec:hg}. To call $ \gamma $ an admissible curve is to indicate that there exist $b$ etc. such that $(b,\gamma,\ldots)$ is an admissible curve for some $(p,q)$. For $ \gamma $ an admissible curve and $ g : \mH \to [0,\infty) $ continuous, let
\[
	\int_{\gamma} g := \sum_{k=-\infty}^{\infty} \int_{a_k}^{a_{k+1}} g(\gamma_k (s)) | \gamma_k '(s) |_{H} \, \mathrm{d}s.
\]

Given continuous $ \omega : \mH \to [0,\infty) $, let
\[
	\rho_{\omega}(p,q) := \inf_{\gamma} \int_{\gamma} \omega^{\frac{1}{4}},
\]
where the infimum is taken over all admissible curves for $(p,q)$. As things stand, $\rho_{\omega}$ is not necessarily a metric (only a pseudometric) as we have not assumed anything about the set on which $\omega$ vanishes.

The goal of a large part of this section is stated precisely as follows.
\begin{pro} \label{pro:bilip}
Suppose $ \omega : \mH \to [0,\infty) $ is continuous, and there exist $ C > 0 $ and a $ K $-quasiconformal mapping $ f : \mH \to \mH $ with
\[
	\frac{1}{C} \omega \leq J_{f} \leq C \omega 
\]
almost everywhere. Then there exists $ L \geq 1 $ such that for all $ p , q \in \mH $,
\[
	\frac{1}{L}\rho_{\omega}(p,q)\leq \rho(f(p),f(q))\leq L\rho_{\omega}(p,q).
\]
\end{pro}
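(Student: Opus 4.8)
The plan is to realize $(\mathbb H,\rho_\omega)$ as a David--Semmes deformation of $\mathbb H$ and to compare it with $(\mathbb H,\rho)$ through the single-ball quantity $\delta_\omega(p,q):=\bigl(\int_{B(p,d(p,q))}\omega\bigr)^{1/4}$, proving the two inequalities by different mechanisms. First come the reductions. By Lemma~\ref{lem:qcinv}, $f^{-1}$ is $K$-quasiconformal; being quasiconformal, $f$ and $f^{-1}$ are $\cP$-differentiable, contact, and absolutely continuous on a.e.\ horizontal curve in each foliation, and $f\in HW^{1,4+\epsilon}_{\mathrm{loc}}$ for some $\epsilon>0$ by \eqref{eq:rhi} and \eqref{eq:jdef}; and $\omega\simeq J_f$ with $J_f\in A_\infty$, so $\omega$ is doubling. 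Combining \eqref{eq:bdist}, the change of variables \eqref{eq:cvf}, the hypothesis $\omega\simeq J_f$, and the bi-Lipschitz equivalence of $\rho$ and $d$ gives, for all $p,q\in\mathbb H$,
\[
	\rho(f(p),f(q))\;\simeq_K\;d(f(p),f(q))\;\simeq_K\;\bigl|fB(p,d(p,q))\bigr|^{1/4}=\Bigl(\int_{B(p,d(p,q))}J_f\Bigr)^{1/4}\;\simeq_C\;\delta_\omega(p,q).
\]
It therefore suffices to prove $\rho_\omega(p,q)\simeq\delta_\omega(p,q)$, with constants depending only on $K$ and $C$.

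For the bound $\rho(f(p),f(q))\lesssim\rho_\omega(p,q)$ (equivalently $\delta_\omega\lesssim\rho_\omega$), fix an admissible curve $\gamma$ for $(p,q)$ with horizontal pieces $\gamma_k:[a_k,b_k]\to\mathbb H$. Call $\gamma$ \emph{$f$-good} if on each $\gamma_k$ the map $f$ is absolutely continuous and is $\cP$-differentiable and contact at a.e.\ point. For $f$-good $\gamma$, each $f\circ\gamma_k$ is an absolutely continuous horizontal curve with $(f\circ\gamma_k)'=D_Hf(\gamma_k)\,\gamma_k'$ a.e., so its $\rho$-length is at most $\sqrt K\int_{\gamma_k}J_f^{1/4}$ by \eqref{eq:jdef}; summing over $k$ and using continuity of $f\circ\gamma$ to absorb the null exceptional set gives $\rho(f(p),f(q))\le l_\rho(f\circ\gamma)\le\sqrt K\int_\gamma J_f^{1/4}\lesssim_C\int_\gamma\omega^{1/4}$. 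A general admissible $\gamma$ is handled by approximation: replace each $\gamma_k$ by $L_{h_k}\circ\gamma_k$ for a small left translation $h_k$ --- the $h_k$ for which $L_{h_k}\circ\gamma_k$ fails to be $f$-good form a Lebesgue null set, by the ACL property together with Fubini applied to the null exceptional set of $f$ --- and reconnect consecutive pieces by short horizontal segments; since $\omega^{1/4}$ is continuous, $\int_\gamma\omega^{1/4}$ changes by as little as we please, and passing to the limit yields $\rho(f(p),f(q))\lesssim\int_\gamma\omega^{1/4}$ for every admissible $\gamma$. Taking the infimum over $\gamma$ finishes this half.

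For the reverse bound $\rho_\omega(p,q)\lesssim\delta_\omega(p,q)$ we invoke the measure-controlled curve families of \cite{NagesCurves}: for $p\neq q$ there is a family $\{\gamma_\tau\}_{\tau\in T}$ of admissible curves for $(p,q)$, each contained in $B(p,C_0\,d(p,q))$ and of $\rho$-length at most $C_0\,d(p,q)$, equipped with a probability measure $\nu$ for which the measure $\nu^\ast$ on $\mathbb H$ given by $\int u\,d\nu^\ast=\int_T\bigl(\int_{\gamma_\tau}u\bigr)\,d\nu(\tau)$ satisfies $\nu^\ast\le C_0\,d(p,q)^{-3}\,m$. Applying this with $u=\omega^{1/4}$, then Hölder's inequality with exponents $4$ and $4/3$, then $|B(p,C_0 d(p,q))|\simeq d(p,q)^4$ and the doubling of $\omega$, we obtain
\[
	\int_T\Bigl(\int_{\gamma_\tau}\omega^{1/4}\Bigr)\,d\nu(\tau)\;\lesssim\;d(p,q)^{-3}\Bigl(\int_{B(p,C_0 d(p,q))}\omega\Bigr)^{1/4}\bigl|B(p,C_0 d(p,q))\bigr|^{3/4}\;\simeq\;\delta_\omega(p,q).
\]
Since $\nu$ is a probability measure, some $\gamma_\tau$ satisfies $\int_{\gamma_\tau}\omega^{1/4}\lesssim\delta_\omega(p,q)$, and as $\gamma_\tau$ is an admissible competitor, $\rho_\omega(p,q)\le\int_{\gamma_\tau}\omega^{1/4}\lesssim\delta_\omega(p,q)$. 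Together with the displayed comparison $\rho(f(p),f(q))\simeq\delta_\omega(p,q)$ and the previous paragraph, this yields $L\ge1$ as required.

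The fussy but routine point is the passage from $f$-good to arbitrary admissible curves in the first inequality: one must bound $l_\rho(f\circ\gamma)$ by $\sum_k l_\rho(f\circ\gamma_k)$ across the null set $E$ and arrange the translation-and-reconnection perturbation to be simultaneously $f$-good on all (possibly infinitely many) pieces while leaving $\int_\gamma\omega^{1/4}$ essentially unchanged. The genuine obstacle, however, is the second inequality: it rests entirely on the existence in $\mathbb H$ of curve families controlled in measure with the normalization above --- the ``tube of volume $\simeq d(p,q)^4$ swept by admissible curves of $\rho$-length $\simeq d(p,q)$'' --- which is exactly the content of \cite{NagesCurves}, together with the Hölder bookkeeping that converts that normalization into the bound on $\rho_\omega$; without it there is no way to know $(\mathbb H,\rho_\omega)$ carries enough short rectifiable curves.
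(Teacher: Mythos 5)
Your argument for $\delta_\omega\lesssim\rho_\omega$ (equivalently $d_\omega\lesssim\rho_\omega$) takes a genuinely different route from the paper, and it has a gap. You push an admissible curve forward by $f$ and claim $f\circ\gamma_k$ is absolutely continuous with $(f\circ\gamma_k)'=D_Hf(\gamma_k)\,\gamma_k'$ almost everywhere, for the horizontal pieces $\gamma_k$. But the ACL property of quasiconformal mappings on $\mH$ concerns a.e.\ integral curve of the left-invariant fields $X,Y$ (the fibrations used in \cite{KRII}), not arbitrary $C^1$ horizontal curves. Your remedy --- that the left translates $L_{h_k}\circ\gamma_k$ for which $f$ fails to be well-behaved form a Lebesgue null set of $h_k$ --- is asserted, not argued: the Fubini structure behind ACL is tied to the foliation, and the map $(h,s)\mapsto h\gamma_k(s)$ from a $3$-parameter family of translates of a fixed curve does not furnish the needed disintegration. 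You also replace $\int_\gamma J_f^{1/4}$ by $\int_\gamma\omega^{1/4}$ using the a.e.\ comparability $\omega\simeq J_f$, but this is an a.e.\ statement in $\mH$ being invoked along a Lebesgue-null curve. The paper sidesteps all of this in Lemma~\ref{lem:mu&rhoomega1} by never composing $f$ with curves: the $d_\omega$-length of $\gamma_k$ is estimated directly through $\nu(B_{u_{k,i-1},u_{k,i}})$, using only continuity of $\omega$ and Ahlfors $4$-regularity on each small ball, and the genuine triangle inequality for $d_f=d(f(\cdot),f(\cdot))\simeq d_\omega$ absorbs the exceptional null set $E$ and glues the pieces.

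Your argument for $\rho_\omega\lesssim\delta_\omega$ contains a misstatement of the key input that makes the estimate appear easier than it is. You assert the curve-family measure of \cite{NagesCurves} satisfies $\nu^\ast\le C_0\,d(p,q)^{-3}m$, i.e.\ constant density on the ball, but what is actually available (Proposition~\ref{pro:curvefams}) is the Riesz kernel bound $\nu^\ast\lesssim\bigl(d(p,\cdot)^{-3}+d(q,\cdot)^{-3}\bigr)m$, which blows up at the endpoints. Against that kernel your H\"older split with exponents $(4,4/3)$ fails outright: $\int_B d(p,u)^{-(4/3)\cdot 3}\,du=\int_B d(p,u)^{-4}\,du$ diverges since $\mH$ is Ahlfors $4$-regular. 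To close this you must take the conjugate exponent strictly below $4/3$, which requires $\omega$ to be locally in $L^s$ for some $s>1$ uniformly over balls; that is exactly what the reverse H\"older inequality \eqref{eq:rhi} for $J_f$ (hence for $\omega$) supplies, and why the paper's Lemma~\ref{lem:mu&rhoomega2} splits with exponents $(4s,r)$, $r<4/3$. Without reverse H\"older the Riesz-kernel singularity is not integrable and the second inequality does not go through.
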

In these circumstances $ \rho_{\omega} $ is a genuine metric and a rewording of the conclusion is that $ f $ is a bi-Lipschitz mapping between the metric spaces $ ( \mH , \rho_{\omega} ) $ and $ ( \mH , \rho ) $ (these are the metric spaces intended as implicit in our references to the bi-Lipschitz equivalence of $(\mH,\sqrt{w}g_0)$ and $(\mH,g_0)$). If we write $ \rho_{f}(p,q) = \rho(f(p),f(q))$ we may state our goal as $\rho_{\omega}\simeq\rho_{f}$.

Proposition \ref{pro:bilip} will be a corollary to the lemmas that follow. Let us fix $ \omega $ and $f$ as in the statement of the proposition.

If $U\subset \mH$ is Lebesgue measurable we write $\nu(U)$ for the $\omega(p)\,\mathrm{d}p$ measure of $U$, that is
\[
	\nu(U)=\int_{U}\omega.
\]
Now define the auxiliary function
\[
	d_{\omega}(p,q) = \nu^{\frac{1}{4}}\left(B_{p,q}\right),
\]
where $B_{p,q} := B(p,d(p,q))\cup B(q,d(q,p))$. Despite the suggestive notation, this is not in general a metric but only a quasimetric. The quasimetric space $ ( \mH , d_{\omega} ) $ is called a David-Semmes deformation of $ \mH $, a fascinating topic in its own right. The function $d_{\omega}$ has been introduced since we find it convenient to achieve $\rho_{\omega} \simeq \rho_{f}$ by proving $\rho_f\simeq d_{\omega}$ and $d_{\omega}\simeq\rho_{\omega}$. Before doing so, we take care of the following technicality.
\begin{lem} \label{lem:doubling}
$\nu$ is doubling, that is, there exists $C>0$ such that for all $p\in\mH$ and $r>0$,
\[
	\nu\left(B(p,2r)\right)\leq C\nu\left(B(p,r)\right).
\]
\end{lem}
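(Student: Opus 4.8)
The plan is to transfer the doubling question from $\nu$ to Lebesgue measure on the image side, where it follows from quasisymmetry. First I would observe that, since $\omega \simeq J_f$ almost everywhere, for every ball $B$ we have
\[
	\nu(B) = \int_B \omega \simeq_C \int_B J_f,
\]
and this last integral is finite and positive because $J_f \in L^1_{\text{loc}}$ with $J_f > 0$ almost everywhere. Applying the change of variable formula \eqref{eq:cvf} with $u = \chi_{fB}$ and $\Omega = B$ gives $\int_B J_f = |fB|$, so
\[
	\nu(B) \simeq_C |fB|.
\]
Hence it suffices to prove $|fB(p,2r)| \lesssim_K |fB(p,r)|$, independently of $p$ and $r$.

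For this I would use \eqref{eq:bdist}: picking any $q \in S(p,r)$ and any $q' \in S(p,2r)$, we have
\[
	|fB(p,r)|^{1/4} \simeq_K d(f(p),f(q)), \qquad |fB(p,2r)|^{1/4} \simeq_K d(f(p),f(q')).
\]
Since $d(p,q') = 2r$ and $d(p,q) = r$, the quasisymmetry of $f$, inequality \eqref{eq:qscontrol}, yields
\[
	\frac{d(f(p),f(q'))}{d(f(p),f(q))} \leq \eta\!\left( \frac{d(p,q')}{d(p,q)} \right) = \eta(2),
\]
where $\eta$ depends only on $K$. Raising to the fourth power and combining with the two comparisons above gives $|fB(p,2r)| \lesssim_K |fB(p,r)|$, and therefore
\[
	\nu(B(p,2r)) \simeq_C |fB(p,2r)| \lesssim_K |fB(p,r)| \simeq_C \nu(B(p,r)),
\]
which is the claimed doubling inequality with a constant depending only on $C$ and $K$.

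There is essentially no serious obstacle here: the argument is a routine combination of the change of variable formula, the volume estimate \eqref{eq:bdist}, and quasisymmetry, all of which are already recorded in Section \ref{sec:qc}. The only point deserving a moment's care is confirming that $\nu$ of a ball is genuinely finite and nonzero, so that the comparisons make sense as stated; this is immediate from $J_f \in L^1_{\text{loc}}$ and $J_f > 0$ almost everywhere, together with $\omega \simeq J_f$. (One could alternatively invoke the $A_\infty$ property of $J_f$ directly, since $A_\infty$ weights are doubling, but the self-contained argument above via $|fB|$ is cleaner given what has been set up.)
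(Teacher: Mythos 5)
Your argument is correct and follows essentially the same route as the paper: both reduce the question to the Lebesgue measure of image balls via the change of variable formula and the comparability $\omega \simeq J_f$, then control $|fB(p,2r)|$ by $|fB(p,r)|$ using quasisymmetry. The paper does the middle step by sandwiching between concrete balls $B(f(p),s)\subset fB(p,r)$ and $fB(p,2r)\subset B(f(p),\eta(2)s)$ and invoking the doubling of Lebesgue measure, whereas you invoke the already-packaged estimate \eqref{eq:bdist} directly; these are the same content phrased slightly differently.
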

\begin{proof}
Using our assumed comparability of weight and Jacobian along with the change of variable formula for quasiconformal mappings \eqref{eq:cvf} we find
\begin{equation} \label{eq:dblc1}
	\nu\left(B(p,2r)\right) = \int_{B(p,2r)} \omega \lesssim \int_{B(p,2r)} J_{f} = | fB(p,2r) |.
\end{equation}
Let $ \eta $ be the quasisymmetric control function of $f$ as in \eqref{eq:qscontrol}. With $ s $ defined as the minimum of $ d(f(p),f(q)) $ over $ \partial B(p,r) $ and $ \eta_2 := \eta(2) $, we have
\[
	B(f(p),s) \subset fB(p,r) \subset fB(p,2r) \subset B(f(p),\eta_{2}s).
\]
Using this (and that the Lebesgue measure is doubling with respect to the metric $d$) it follows
\begin{equation} \label{eq:dblc2}
	|fB(p,2r)|\leq |B(f(p),\eta_{2}s)|\lesssim |B(f(p),s)| \lesssim |fB(p,r)| \lesssim \nu\left(B(p,r)\right).
\end{equation}
Putting together \eqref{eq:dblc1} and \eqref{eq:dblc2} we have that $ \nu $ is doubling as required.
\end{proof}
\begin{lem} \label{lem:mu&rhof}
$ d_{\omega}\simeq\rho_{f} $ .
\end{lem}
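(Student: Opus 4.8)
The plan is to route everything through the flattened set $f(B_{p,q})$ and compare all quantities to $d(f(p),f(q))$, then pass from $d$ to the Carnot--Carath\'eodory distance $\rho$ using the fact, recorded in Section~\ref{subsec:hg}, that $d$ and $\rho$ are bi-Lipschitz equivalent on $\mH$. When $p=q$ both sides vanish, so assume $p\neq q$.

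First I would convert the $\nu$-measure into Lebesgue measure of the image. Using the hypothesis $(1/C)\omega\le J_f\le C\omega$ and the change of variable formula \eqref{eq:cvf} with $u\equiv 1$, for every measurable $\Omega$ one has $\nu(\Omega)=\int_\Omega\omega\simeq\int_\Omega J_f=|f\Omega|$; in particular $\nu(B_{p,q})\simeq|f(B_{p,q})|$. Thus it suffices to show $|f(B_{p,q})|^{1/4}\simeq\rho_f(p,q)$.

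Next, write $r:=d(p,q)=d(q,p)$ (the Kor\'anyi gauge is inversion-symmetric, so $d$ is symmetric), so that $B_{p,q}=B(p,r)\cup B(q,r)$ and hence $f(B_{p,q})=fB(p,r)\cup fB(q,r)$. Since $q\in S(p,r)$ and $p\in S(q,r)$, estimate \eqref{eq:bdist} gives both $|fB(p,r)|^{1/4}\simeq_K d(f(p),f(q))$ and $|fB(q,r)|^{1/4}\simeq_K d(f(p),f(q))$. The lower bound $|f(B_{p,q})|\ge|fB(p,r)|\gtrsim_K d(f(p),f(q))^4$ and the upper bound $|f(B_{p,q})|\le|fB(p,r)|+|fB(q,r)|\lesssim_K d(f(p),f(q))^4$ together yield $|f(B_{p,q})|^{1/4}\simeq_K d(f(p),f(q))$. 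Finally, the bi-Lipschitz equivalence of $d$ and $\rho$ gives $d(f(p),f(q))\simeq\rho(f(p),f(q))=\rho_f(p,q)$, and chaining the comparabilities produces $d_\omega(p,q)=\nu(B_{p,q})^{1/4}\simeq\rho_f(p,q)$, as required.

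There is no serious obstacle: the argument is a short chain of comparisons, and its only substantive input is \eqref{eq:bdist} (which itself rests on the quasisymmetry of $f$). The one point that wants a little care is bookkeeping which of the two comparable metrics $d$, $\rho$ appears where --- in particular that $B_{p,q}$ and \eqref{eq:bdist} are formulated in terms of $d$, whereas $\rho_f$ is defined via $\rho$ --- and noting that the implied constant in the conclusion depends only on $K$ and on the comparability constant $C$.
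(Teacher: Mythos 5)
Your argument is correct and lands on the same chain of comparabilities as the paper --- $\nu(B_{p,q})\simeq|fB_{p,q}|$ via the change of variable formula, then $\simeq d(f(p),f(q))^4$ via the distortion estimate \eqref{eq:bdist}, and finally $d\simeq\rho$. The one genuine difference is how you dispose of the union $B_{p,q}=B(p,r)\cup B(q,r)$: the paper invokes the doubling property of $\nu$ (Lemma \ref{lem:doubling}) to replace $\nu(B_{p,q})$ by $\nu(B(p,d(p,q)))$ and then applies \eqref{eq:bdist} once, whereas you bound $|fB_{p,q}|$ from above by subadditivity and from below by monotonicity, applying \eqref{eq:bdist} twice (at $p$ and at $q$). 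Your route avoids Lemma \ref{lem:doubling} altogether, which is a small economy --- though that lemma is proved anyway and used elsewhere, so the saving is local. Both routes rest on the same inputs (the change of variable formula, quasisymmetric control, and Ahlfors $4$-regularity), and your observation that the Kor\'anyi gauge is inversion-symmetric, making $d$ symmetric so that $B_{p,q}=B(p,r)\cup B(q,r)$ with a single radius $r$, is correct and is the point that makes the two-ball bookkeeping painless.
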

\begin{proof}
Using the inclusion $B_{p,q}\subset B(p,2d(p,q))$ and the doubling property of $\nu$, we get
\[
	\nu\left(B_{p,q}\right) \leq \nu\left(B(p,2d(p,q))\right) \lesssim \nu\left(B(p,d(p,q))\right).
\]
It follows that
\[
	d_{\omega}(p,q) \simeq \left(\int_{B(p,d(p,q))}J_f\right)^{\frac{1}{4}}\simeq d(f(p),f(q)),
\]
where for the last comparison we use the change of variable formula \eqref{eq:cvf} and \eqref{eq:bdist} (recall that in deriving \eqref{eq:bdist} we use quasisymmetric control (along with the fact that Lebesgue measure is Ahlfors $4$-regular with respect to the metric $d$) in a similar manner to the proof of Lemma \ref{lem:doubling}).

As remarked in Section \ref{subsec:hg}, it is well known that $ d \simeq \rho $ so that $ d_{\omega} \simeq \rho_f $ is now immediate.
\end{proof}
With Lemma \ref{lem:mu&rhof} in place it remains to show $ d_{\omega} \simeq \rho_{\omega} $; this is the content of following two lemmas.
\begin{lem} \label{lem:mu&rhoomega1}
$ d_{\omega} \lesssim \rho_{\omega} $.
\end{lem}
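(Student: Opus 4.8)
The plan is to reduce the lemma to a one–piece length estimate for the image curve $f\circ\gamma$, thereby bypassing the fact that $f$ is only Sobolev‑regular (so the naive ``push $\gamma$ forward and apply the chain rule'' argument is unavailable). By Lemma \ref{lem:mu&rhof} and the bi‑Lipschitz equivalence $d\simeq\rho$ we have $d_{\omega}(p,q)\simeq\rho_f(p,q)=\rho(f(p),f(q))\simeq d(f(p),f(q))$, so it suffices to prove $d(f(p),f(q))\lesssim\int_{\gamma}\omega^{\frac14}$ for every admissible curve $\gamma$ for $(p,q)$ with $\int_{\gamma}\omega^{\frac14}<\infty$. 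Writing $\nu(E)=\int_E\omega$, the key pointwise inequality is: for any $z,w\in\mH$,
\[
	d(f(z),f(w))\lesssim_K |fB(z,d(z,w))|^{\frac14}=\Big(\int_{B(z,d(z,w))}J_f\Big)^{\frac14}\lesssim_{K,C}\nu\big(B(z,d(z,w))\big)^{\frac14},
\]
obtained by combining \eqref{eq:bdist}, the change of variable formula \eqref{eq:cvf}, and the hypothesis $J_f\le C\omega$.

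Next I would prove the one–piece estimate $l_d(f\circ\sigma)\lesssim\int_{\sigma}\omega^{\frac14}$ for a single horizontal $C^1$ piece $\sigma$ of $\gamma$ (working on a closed subinterval of the open parameter interval and letting it exhaust it at the end). Take a uniform partition of the parameter interval, put $r_i=d(\sigma(s_i),\sigma(s_{i+1}))$, and bound $\sum_i d(f\sigma(s_i),f\sigma(s_{i+1}))$ by $C\sum_i\nu(B(\sigma(s_i),r_i))^{\frac14}$ using the displayed inequality. Since $\omega$ is continuous and the image of $\sigma$ is compact, $\tfrac{1}{|B(z,r)|}\int_{B(z,r)}\omega\to\omega(z)$ uniformly as $r\to0$; together with Ahlfors $4$‑regularity $|B(z,r)|=c_0r^4$, the inequality $(a+b)^{\frac14}\le a^{\frac14}+b^{\frac14}$, and $r_i\le\int_{s_i}^{s_{i+1}}|\sigma'|_H$, this gives, for any fixed $\varepsilon>0$ and sufficiently fine partitions,
\[
	\sum_i d(f\sigma(s_i),f\sigma(s_{i+1}))\le C\Big(\sum_i\omega(\sigma(s_i))^{\frac14}\int_{s_i}^{s_{i+1}}|\sigma'|_H+\varepsilon^{\frac14}\,l_d(\sigma)\Big).
\]
Letting the mesh tend to $0$, the first sum is a Riemann sum of the continuous function $\omega^{\frac14}(\sigma(\cdot))|\sigma'(\cdot)|_H$ and converges to $\int_{\sigma}\omega^{\frac14}$; then sending $\varepsilon\to0$ (and using $l_d(\sigma)<\infty$) yields $l_d(f\circ\sigma)\le C\int_{\sigma}\omega^{\frac14}$, which is finite because $\int_{\gamma}\omega^{\frac14}<\infty$.

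Finally, by Definition \ref{def:adcurve} the curve $\gamma$ is, off a Lebesgue null set $E$, a disjoint concatenation of horizontal pieces $\gamma_k$ with $\sum_k\int_{a_k}^{b_k}|\gamma_k'|_H<\infty$; since $f\circ\gamma$ is continuous, the exceptional set $E$ and the endpoints of the pieces contribute nothing to its length, so $d(f(p),f(q))\le l_d(f\circ\gamma)\le\sum_k l_d(f\circ\gamma_k)\le C\sum_k\int_{\gamma_k}\omega^{\frac14}=C\int_{\gamma}\omega^{\frac14}$. Taking the infimum over admissible curves for $(p,q)$ gives $d_{\omega}(p,q)\lesssim\rho_{\omega}(p,q)$. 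I expect the main obstacle to be the limiting step in the one–piece estimate, namely controlling \emph{uniformly along the curve} the discrepancy between $\nu(B(z,r))^{\frac14}$ and $\omega(z)^{\frac14}r$ as $r\to0$; continuity of $\omega$ together with compactness of the curve image is exactly what makes this work, and the measure‑theoretic bookkeeping that $E$ does not add to $l_d(f\circ\gamma)$ is routine given continuity of $f\circ\gamma$.
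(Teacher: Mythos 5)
Your proposal is correct in substance and uses the same ingredients as the paper's proof, but organizes them on the image side rather than intrinsically. The paper never estimates lengths of image curves: it defines an $\omega$-length $l_\omega$ of each horizontal piece by summing $d_\omega$ over partitions, proves $l_\omega(\gamma_k)\lesssim\int_{\gamma_k}\omega^{1/4}$ purely from the definition of $d_\omega$, Ahlfors $4$-regularity and uniform continuity of $\omega$ (via the length lemma from \cite{TysonHBook}), and only then brings in $f$, using $d_\omega\simeq d_f$ from Lemma \ref{lem:mu&rhof} to obtain the chain inequality $d_\omega(u_1,u_M)\lesssim\sum_i d_\omega(u_{i-1},u_i)$ and so pass from $l_\omega(\gamma)$ to $d_\omega(p,q)$. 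You instead invoke the comparison with $d_f$ at the outset: the pointwise bound $d(f(z),f(w))\lesssim\nu\bigl(B(z,d(z,w))\bigr)^{1/4}$ (which is \eqref{eq:bdist} plus \eqref{eq:cvf} plus $J_f\leq C\omega$, i.e.\ one half of Lemma \ref{lem:mu&rhof}) lets you bound $l_d(f\circ\gamma_k)$ by $\int_{\gamma_k}\omega^{1/4}$, and all chaining then takes place in the genuine metric $d$, so no quasi-triangle-inequality bookkeeping for $d_\omega$ is needed; the limiting computation on each piece (uniform continuity of $\omega$ on a compact neighborhood, $|B(z,r)|\simeq r^4$, $r_i\leq\int_{s_i}^{s_{i+1}}|\sigma'|_H$, mesh $\to0$, then $\varepsilon\to0$) is the same as the paper's. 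One caution about your last step: the claim that $E$ and the gaps between the pieces contribute nothing to the length of $f\circ\gamma$ \emph{because $f\circ\gamma$ is continuous} is not a consequence of continuity alone — a continuous curve can acquire all of its length on a null parameter set (Cantor-function type parametrizations) — so what you actually need is the structural fact that for admissible curves as in Definition \ref{def:adcurve} the horizontal pieces chain from $p$ to $q$ and no displacement occurs across the complementary parameter set. The paper makes exactly the same implicit use of this structure when it writes $d_\omega(p,q)\lesssim\sum_k d_\omega(\gamma_k(a_k),\gamma_k(b_k))$, so your argument is no worse off, but you should state this as a property of admissible curves rather than deduce it from continuity.
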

\begin{proof}
For a horizontal curve $ \gamma_k : [a_k,a_{k+1}] \to \mH $, define the $ \omega $-length of $ \gamma_k $ as
\[
	l_{\omega}( \gamma_k ) = \limsup_{M\to\infty} \sum_{i=1}^{M} d_{\omega}( \gamma_{k}(s_{i-1}) , \gamma_{k}(s_{i}) ),
\]
where for each $ M \in \mN $ the $ s_i $ partition $ [ a_k , a_{k+1} ] $ into $ M $ equal length intervals (with $ s_0 = a_k $ and $ s_M = a_{k+1} $). For an admissible curve $ \gamma $ with sequence of horizontal curves $( \gamma_k : [a_k,a_{k+1}] \to \mH )_{k\in \mZ}$ define
\[
	l_{\omega}(\gamma)=\sum_{k=-\infty}^{\infty} l_{\omega}(\gamma_k).
\]
Now let $ p , q \in \mH $ and $ \gamma $ an admissible curve for $ (p,q) $ be given. We focus for a time on a single horizontal subcurve $\gamma_k$ and write $b_k$ for $a_{k+1}$. Let $ \epsilon > 0 $. Note that $ \gamma_k $ is uniformly continuous on $ [a_k , b_k] $ and there exists a compact set containing the image of $ \gamma_k $ on which $ \omega $ (and so $ \omega^{\frac{1}{4}} $) is uniformly continuous. It follows there exists an $ M_k < \infty $ such that, whenever $ (s_i) $ is a partition with $ | s_i - s_{i-1} | \leq (b_k - a_k) / M_k $ we have
\begin{equation} \label{eq:mlarge}
	\omega^{\frac{1}{4}}(u) \leq \omega^{\frac{1}{4}}(\gamma_{k}(s_{i-1})) + \epsilon
\end{equation}
for all $ u \in B_{\gamma_{k}(s_{i-1}) , \gamma_{k}(s_i)} $.

Assume that a partition $(s_i)$ of $ [a_k,b_k] $ is sufficiently fine  (as in the preceding paragraph) and define $ u_{k,i} = \gamma_{k}(s_{i}) $ with $B_{k,i-1} :=  B_{\gamma_{k}(s_{i-1}) , \gamma_{k}(s_i)} $. Then from the definition of $ d_{\omega} $,
\[
	d_{\omega}(u_{k,i-1},u_{k,i}) \leq \left( \sup_{B_{k,i-1}} \omega \right)^{\frac{1}{4}} | B_{k,i-1} |^{\frac{1}{4}},
\]
or equivalently
\[
	d_{\omega}(u_{k,i-1},u_{k,i}) \leq \sup_{B_{k,i-1}} \left( \omega^{\frac{1}{4}} \right) | B_{k,i-1} |^{\frac{1}{4}}.
\]
Ahlfors 4-regularity again and observation \eqref{eq:mlarge} give that
\[
	d_{\omega}(u_{k,i-1},u_{k,i}) \lesssim \omega^{\frac{1}{4}}(u_{k,i-1})d(u_{k,i-1},u_{k,i}) + \epsilon\, d(u_{k,i-1},u_{k,i}).
\]
It now follows from (the proof of) Lemma 2.4 in \cite{TysonHBook} that
\[
	l_{\omega}(\gamma_{k}) \lesssim \int_{a_{k}}^{b_{k}}\omega^{\frac{1}{4}}(\gamma_{k}(s))|\gamma_{k}'(s)|_{H}\,\mathrm{d}s+\epsilon\, l_{d}(\gamma_k), 
\]
where $ l_d $ is length with respect to $ d $ as defined in Section \ref{subsec:hg}. Since $ \epsilon $ was arbitrary and $ l_{d}(\gamma_k) $ is finite, this improves to
\begin{equation}\label{eq:lomegaintomega}
	l_{\omega}(\gamma_k) \lesssim \int_{a_{k}}^{b_{k}}\omega^{\frac{1}{4}}(\gamma_{k}(s))|\gamma_{k}'(s)|_{H}\,\mathrm{d}s. 
\end{equation}
By the continuity of $ \omega^{\frac{1}{4}} $ on a compact set containing the image of $ \gamma $ and property 3 of Definition \ref{def:adcurve} we have that $\int_{\gamma} \omega^{\frac{1}{4}} < \infty$. The comparison \eqref{eq:lomegaintomega} now leads to
\begin{equation} \label{eq:lmu&omega}
l_{\omega}(\gamma)\lesssim \int_{\gamma}\omega^{\frac{1}{4}}.
\end{equation}
By Lemma \ref{lem:mu&rhof} and the observation that concluded its proof we have $ d_f \simeq d_\omega $, where for $u,u'\in\mH$ we define $ d_f (u,u') = d(f(u),f(u')) $. It follows (using the triangle inequality for $d$) that for any finite collection of points $ u_0 , u_1 , \ldots , u_M \in \mH $,
\[
	d_{\omega}(u_0,u_M) \lesssim d_{f}(u_0,u_M) \leq \sum_{i=1}^{M} d_{f}(u_{i-1},u_{i}) \lesssim \sum_{i=1}^{M} d_{\omega}(u_{i-1},u_{i}).
\]
It is now straightforward that for each horizontal curve $\gamma_{k}$ we have
\[
	d_{\omega}(\gamma_{k}(a_k),\gamma_{k}(a_{k+1})) \lesssim l_{\omega}( \gamma_k ).
\]
We can use this to observe that for all $N,M\in\mZ$ with $N<M$ we have
\[
	d_{\omega}(p,q) \lesssim d_f (p,\gamma(a_N)) + \sum_{k=N}^{M} l_{\omega} (\gamma_k) + d_f (\gamma(a_M),q).
\]
Letting $N\to -\infty$ and $M\to \infty$, we can use appropriate continuity properties of $d_f$ to find that
\[
	d_{\omega}(p,q) \lesssim \sum_{k=-\infty}^{\infty} l_{\omega}(\gamma_k) = l_{\omega}(\gamma).
\]
Consequently, by \eqref{eq:lmu&omega} we find
\[
d_{\omega}(p,q)\lesssim \inf_{\gamma}l_{\omega}(\gamma)\lesssim \rho_{\omega}(p,q),
\]
where the infimum is taken over all admissible curves for $(p,q)$.
\end{proof}
Before proving the other side of the comparison, we state the proposition of \cite{NagesCurves} that dictated our definition of admissible curves (the statement does not appear as a proposition in that paper -- it can be deduced from the discussion in their Section 6.2 combined with their equation (3.3)).
\begin{pro}[Korte, Lahti, Shanmugalingam] \label{pro:curvefams}
There exist $ \lambda > 1 $ and $ C > 0 $ with the following property: for all $ p , q \in \mH $ there is a family $ \Gamma $ of admissible curves for $ (p,q) $ and a probability measure $ \alpha $ on $ \Gamma $ with
\[
	\int_{\Gamma} \left( \int_{\gamma} \omega^{\frac{1}{4}} \right) \, \mathrm{d} \alpha (\gamma) \leq C \int_{B(p,\lambda d(p,q))} \omega^{\frac{1}{4}}(u) \left( \frac{1}{d(p,u)^{3}} + \frac{1}{d(q,u)^{3}} \right) \, \mathrm{d}u.
\]
\end{pro}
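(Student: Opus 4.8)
The plan is to regard Proposition~\ref{pro:curvefams} as the sub-Riemannian incarnation of Semmes' ``finding curves'' estimate \cite{SemmesFindingCurves}, whose explicit Heisenberg version is exactly what \cite{NagesCurves} supplies. The key observation is that the asserted inequality is really \emph{weight-free}: it suffices to produce, for each pair $p\neq q$ (the case $p=q$ being trivial), a family $\Gamma$ of admissible curves for $(p,q)$ and a probability measure $\alpha$ on $\Gamma$ \emph{depending only on $p$ and $q$}, together with a universal $\lambda>1$, so that the push-forward of $\alpha$ under the ``evaluate-with-horizontal-speed'' pairing is absolutely continuous with respect to $m$ with Radon--Nikodym density $\lesssim d(p,\cdot)^{-3}+d(q,\cdot)^{-3}$ supported in $B(p,\lambda d(p,q))$; then for every non-negative Borel $g$,
\[
	\int_{\Gamma}\left(\int_{\gamma} g\right)\,\mathrm d\alpha(\gamma)\ \lesssim\ \int_{B(p,\lambda d(p,q))} g(u)\left(\frac{1}{d(p,u)^{3}}+\frac{1}{d(q,u)^{3}}\right)\mathrm du,
\]
and one applies this with $g=\omega^{1/4}$. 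The exponent $3=Q-1$ reflects the homogeneous dimension $Q=4$ of $\mH$ (the curves are one-dimensional, and the transversal directions contribute $Q-1$ powers of the scale to the Jacobian of the fibering); with $g\equiv 1$ both sides are comparable to $d(p,q)$ by the polar formula \eqref{eq:polarint}, a useful sanity check.

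First I would fix $p,q$, write $d=d(p,q)$, and build the curves by dyadic telescoping: pick a chain of overlapping balls inside $B(p,\lambda d)$ whose radii increase geometrically from scale $\to 0$ near $p$ up to scale $\simeq d$ at a midpoint, then decrease geometrically to scale $\to 0$ near $q$. On each dyadic annular shell $A_k$ centred at $p$ (and symmetrically at $q$), at scale $r_k\simeq 2^{-k}d$, I would push a single fixed ``model'' fibering of the unit annulus $B(2)\setminus B(1)$ by horizontal curves forward under $L_p\circ\delta_{r_k}$ (resp.\ $L_q\circ\delta_{r_k}$). Concatenating one horizontal piece per shell through the overlap regions produces an admissible curve for $(p,q)$ in the sense of Definition~\ref{def:adcurve}: the countably many horizontal pieces that definition allows are precisely what absorbs the infinitely many dyadic scales accumulating at $p$ and $q$, and property~(3) holds because the piece lengths form a geometric series dominated by a multiple of $d$. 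The measure $\alpha$ is the normalised product of the transversal measures of the model fiberings over all shells.

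The estimate is then shell-by-shell bookkeeping. For a $p$-centred shell $A_k\subset B(p,\lambda d)$ at scale $r_k$, the homogeneity of the model fibering together with Ahlfors $4$-regularity, $|B(\cdot,r)|\simeq r^{4}$, gives that the contribution of $A_k$ to $\int_{\Gamma}\int_{\gamma} g\,\mathrm d\alpha$ is comparable to $r_k^{-3}\int_{A_k}g\,\mathrm dm$; since $d(p,u)\simeq r_k$ on $A_k$ this is $\lesssim\int_{A_k}g(u)\,d(p,u)^{-3}\,\mathrm du$. Summing the geometric family of $p$-centred shells gives $\lesssim\int_{B(p,\lambda d)}g(u)\,d(p,u)^{-3}\,\mathrm du$; the $q$-centred shells give the $d(q,u)^{-3}$ term; the finitely many midpoint balls at scale $\simeq d$ contribute a bounded multiple of $d^{-3}\int_{B(p,\lambda d)}g$, which is $\lesssim\int_{B(p,\lambda d)}g(u)\,d(p,u)^{-3}\,\mathrm du$. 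Taking $g=\omega^{1/4}$ gives the claim with $C$ and $\lambda$ absolute.

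The hard part --- and the reason \cite{NagesCurves} is cited rather than this being routine --- is the single ingredient special to $\mH$: constructing the model fibering of an annulus by \emph{genuinely horizontal} curves whose transversal Jacobian is bounded above and below, with no extra powers of the scale lost. In $\mR^{n}$ one fibres an annulus by essentially radial segments; in $\mH$ horizontality forces the fibres to spiral, and one must verify that the evaluation map $(\theta,s)\mapsto\gamma_{\theta}(s)$ is bi-Lipschitz in the transversal variable $\theta$ with the correct $\delta_r$-homogeneity, so that the change-of-variables step above really yields the exponent $-3$ and nothing larger. Once such a model fibering is in hand, the telescoping, the Fubini/coarea accounting, and the verification of Definition~\ref{def:adcurve} proceed as outlined.
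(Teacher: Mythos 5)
This proposition is \emph{not proved} in the paper at all: it is quoted verbatim as a result of Korte, Lahti, and Shanmugalingam from \cite{NagesCurves}, as the attribution in the theorem header and the sentence immediately preceding it (``we state the proposition of \cite{NagesCurves} that dictated our definition of admissible curves'') make explicit. So there is no in-paper proof to compare your attempt against; what you have written is a reconstruction of the argument in the external reference, not of anything in this paper.

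That said, your sketch is a reasonable account of the Semmes-style ``finding curves'' mechanism, and you have located the crux correctly. The reduction to a weight-free statement about the pushforward density, the $\lambda$-ball localization, the dyadic telescoping of annular shells centred at $p$ and at $q$ with a chain joining them at scale $\simeq d(p,q)$, the exponent $-3 = Q-1$ with $Q=4$, and the $g\equiv 1$ sanity check via \eqref{eq:polarint} are all consistent with how such pencils of curves are built in a $Q$-regular space. You also correctly observe that Definition \ref{def:adcurve} allows countably many horizontal pieces precisely so that the accumulation of dyadic scales at the two endpoints can be absorbed, and that the single genuinely Heisenberg-specific ingredient is a model fibering of an annulus by horizontal curves whose transversal evaluation map is bi-Lipschitz with the right $\delta_r$-homogeneity --- which in $\mR^n$ is trivial (radial segments) and in $\mH$ is not. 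Whether the actual construction in \cite{NagesCurves} proceeds by pushing forward a single model annular fibering under dilations, or by a different device (e.g.\ Riesz-potential estimates for the modulus of curve families, or a direct pencil construction), you would need to check against that reference; the paper at hand gives no indication. In short: a plausible outline of the cited result, with the honest hard step flagged rather than carried out, but orthogonal to this paper's exposition, which treats the proposition as a black box.
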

We are now able to conclude the proof of Proposition \ref{pro:bilip} using (once again) the reverse H\"{o}lder inequality for the Jacobian of a quasiconformal mapping \eqref{eq:rhi}. The proof is based on an argument found in \cite{SemmesStronger} (see the proof of Proposition 3.12 of that paper).
\begin{lem} \label{lem:mu&rhoomega2}
$ d_{\omega} \gtrsim \rho_{\omega} $.
\end{lem}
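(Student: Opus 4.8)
The plan is to establish the missing inequality $\rho_{\omega}(p,q)\lesssim d_{\omega}(p,q)$ for arbitrary fixed $p,q\in\mH$; write $r=d(p,q)=d(q,p)$. The starting point is Proposition \ref{pro:curvefams}, which supplies a family $\Gamma$ of admissible curves for $(p,q)$ together with a probability measure $\alpha$ on $\Gamma$ controlling the $\alpha$-average of $\int_{\gamma}\omega^{1/4}$ by a potential-type integral over $B(p,\lambda r)$. Since $\rho_{\omega}(p,q)\leq\int_{\gamma}\omega^{1/4}$ for \emph{every} admissible curve $\gamma$ for $(p,q)$, and $\alpha(\Gamma)=1$, the infimum defining $\rho_{\omega}(p,q)$ is no larger than this average, so
\[
	\rho_{\omega}(p,q)\leq\int_{\Gamma}\Big(\int_{\gamma}\omega^{1/4}\Big)\,\mathrm{d}\alpha(\gamma)\lesssim\int_{B(p,\lambda r)}\omega^{1/4}(u)\left(\frac{1}{d(p,u)^{3}}+\frac{1}{d(q,u)^{3}}\right)\mathrm{d}u.
\]
It then remains to estimate the two resulting integrals; by the triangle inequality $B(p,\lambda r)\subset B(q,(1+\lambda)r)$, so after enlarging the domain for the second term it suffices to bound $\int_{B(x,\Lambda)}\omega^{1/4}(u)\,d(x,u)^{-3}\,\mathrm{d}u$ for $x\in\{p,q\}$ and a radius $\Lambda\simeq r$, and then to compare back to $\nu(B_{p,q})$.

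For this I would decompose $B(x,\Lambda)$ dyadically into the annuli $A_{k}=\{u:\Lambda 2^{-k-1}<d(x,u)\leq\Lambda 2^{-k}\}$, $k\geq 0$, which cover $B(x,\Lambda)$ up to the null set $\{x\}$. On $A_{k}$ one has $d(x,u)^{-3}\simeq(\Lambda 2^{-k})^{-3}$, while $A_{k}\subset B_{k}:=B(x,\Lambda 2^{-k})$ with $|B_{k}|\simeq(\Lambda 2^{-k})^{4}$ by Ahlfors $4$-regularity. Hölder's inequality with exponents $4$ and $4/3$ gives $\int_{A_{k}}\omega^{1/4}\leq\nu(A_{k})^{1/4}|A_{k}|^{3/4}\lesssim\nu(B_{k})^{1/4}(\Lambda 2^{-k})^{3}$, whence $\int_{A_{k}}\omega^{1/4}(u)\,d(x,u)^{-3}\,\mathrm{d}u\lesssim\nu(B_{k})^{1/4}$. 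Summing over $k$ naively only yields $\sum_{k}\nu(B_{k})^{1/4}$, whose convergence is not apparent — this is the one genuinely delicate point of the argument.

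To resolve it I would exploit that $\omega\simeq J_{f}$, so $\omega$ inherits the reverse Hölder inequality \eqref{eq:rhi}: there is $\tau=\tau(K)>1$ with $\left(\frac{1}{|B|}\int_{B}\omega^{\tau}\right)^{1/\tau}\lesssim\frac{1}{|B|}\int_{B}\omega$ for every ball $B$. A one-line application of Hölder's inequality then gives the reverse-doubling bound $\nu(B')\lesssim\nu(B)\,(|B'|/|B|)^{1-1/\tau}$ whenever $B'\subset B$; applied with $B'=B_{k}\subset B_{0}:=B(x,\Lambda)$, so that $|B_{k}|/|B_{0}|\simeq 2^{-4k}$, this produces $\nu(B_{k})^{1/4}\lesssim\nu(B_{0})^{1/4}\,2^{-k(1-1/\tau)}$, i.e. honest geometric decay. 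Hence $\sum_{k\geq 0}\nu(B_{k})^{1/4}\lesssim\nu(B(x,\Lambda))^{1/4}$. Finally, since $\Lambda\simeq r$, the doubling property of $\nu$ (Lemma \ref{lem:doubling}, iterated a bounded number of times) gives $\nu(B(x,\Lambda))^{1/4}\lesssim\nu(B(x,r))^{1/4}$, and the inclusion $B(x,r)\subset B_{p,q}$ for $x\in\{p,q\}$ forces $\nu(B(x,r))\leq\nu(B_{p,q})$. Combining the $p$- and $q$-contributions gives $\rho_{\omega}(p,q)\lesssim\nu(B_{p,q})^{1/4}=d_{\omega}(p,q)$, which is the asserted inequality. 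The only nontrivial input is the passage from reverse Hölder to geometric decay of the annular masses; the rest is the dyadic bookkeeping familiar from the Euclidean treatment in \cite{SemmesStronger}.
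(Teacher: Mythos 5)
Your proof is correct, and it rests on the same two pillars as the paper's: Proposition \ref{pro:curvefams} to control $\rho_{\omega}(p,q)$ by the potential integral $\int_{B(p,\lambda r)}\omega^{1/4}G$, and the reverse H\"older inequality for $\omega\simeq J_f$ to tame that integral. Where you diverge is the organization of the potential estimate. The paper does it in one stroke: Hölder with exponents $4s$ and its conjugate $r$ (where $s>1$ is the reverse-Hölder exponent), so that $r<4/3$ and hence $\int_B G^r$ converges; normalizing by $|B|$ and applying the reverse Hölder inequality in mean form then gives $\int_B\omega^{1/4}G\lesssim\nu(B)^{1/4}$ directly. You instead dyadically decompose $B(x,\Lambda)$ into annuli, apply Hölder with exponents $4$ and $4/3$ on each annulus to get $\int_{A_k}\omega^{1/4}d(x,\cdot)^{-3}\lesssim\nu(B_k)^{1/4}$, and then derive the reverse-doubling bound $\nu(B')\lesssim\nu(B)\bigl(|B'|/|B|\bigr)^{1-1/\tau}$ from reverse Hölder to obtain geometric decay of $\nu(B_k)^{1/4}$ and sum. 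Both are valid; the paper's calculation is shorter, while yours isolates more explicitly the role reverse Hölder plays in keeping mass from concentrating at the singularity of the kernel, and is closer in spirit to the annular bookkeeping in \cite{SemmesStronger}. The final reduction $\nu(B(x,\Lambda))^{1/4}\lesssim\nu(B_{p,q})^{1/4}=d_{\omega}(p,q)$ via doubling and $B(x,r)\subset B_{p,q}$ is the same in both.
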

\begin{proof}
Fix $ p , q \in \mH $. By the definition of $ \rho_\omega $ we have $ \rho_\omega (p,q) \leq \int_\gamma \omega^{1/4} $ for any admissible $ \gamma $ joining $ p $ and $ q $. Let $ \Gamma $ and $ \alpha $ be as in Proposition \ref{pro:curvefams}. Then (as $ \alpha $ is a probability measure) that proposition gives us $ \lambda > 1 $ such that
\[
	\rho_{\omega}(p,q) \lesssim \int_{B(p,\lambda d(p,q))} \omega^{\frac{1}{4}}(u) G(u) \, \mathrm{d}u,
\]
where $ G(u) := d(p,u)^{-3} + d(q,u)^{-3} $.

Since $\omega$ is comparable to a quasiconformal Jacobian it also satisfies a reverse H\"{o}lder inequality: there exists $s>1$ such that if $ B \subset \mH $ is a ball,
\begin{equation}\label{eq:revforomega}
	\left( \frac{1}{|B|} \int_{B} \omega^{s} \right)^{\frac{1}{s}} \lesssim \frac{1}{|B|} \int_{B} \omega
\end{equation}
independently of $ B $.

Let $ B := B(p,\lambda d(p,q)) $ and $ R := d(p,q) $. Let $r$ be the exponent conjugate to $4s$ (so that $1<r<4/3$). It follows from a change of variable and \eqref{eq:polarint} that
\[
    \int_B \frac{1}{d(p,u)^{3r}}\,\mathrm{d}u \lesssim \int_0^{\lambda R} a^{3-3r}\,\mathrm{d}a.
\]
Consequently,
\[
    \int_B \frac{1}{d(p,u)^{3r}}\,\mathrm{d}u \lesssim R^{4-3r}
\]
with $4-3r > 0$ and
\[
    \left( \frac{1}{|B|} \int_{B} G^{r} \right)^{\frac{1}{r}} \lesssim |B|^{-\frac{1}{r}} R^{\frac{4}{r} - 3}.
\]
Ahlfors 4-regularity (which says that $R^{4/r}\lesssim |B|^{1/r}$) allows us to deduce that
\[
    \left( \frac{1}{|B|} \int_{B} G^{r} \right)^{\frac{1}{r}} \lesssim R^{-3}.
\]
This along with \eqref{eq:revforomega} is used to find
\begin{align*}
	\int_{B}\omega^{\frac{1}{4}} G 
	&\lesssim \left( \int_{B} \omega^{s} \right)^{\frac{1}{4s}} \left( \int_{B} G^{r} \right)^{\frac{1}{r}} \\
	&\lesssim R^{4} \left( \frac{1}{|B|} \int_{B} \omega^{s} \right)^{\frac{1}{4s}} \left( \frac{1}{|B|} \int_{B} G^{r} \right)^{\frac{1}{r}} \\
	&\lesssim R^{4} \left( \frac{1}{|B|} \int_{B} \omega \right)^{\frac{1}{4}} R^{-3} \\
	&\lesssim \left( \int_{B} \omega \right)^{\frac{1}{4}} \lesssim d_{\omega}(p,q),
\end{align*}
as required. 
\end{proof}
The proof of Lemma \ref{lem:mu&rhoomega2} concludes the proof of Proposition \ref{pro:bilip}. That proposition combined with Theorem \ref{thm:mainint} gives the following (of which Theorem \ref{thm:specialgeom} is a special case).
\begin{thm} \label{thm:maingeom}
Given $ K \geq 1 $, there exist $ \epsilon = \epsilon(K) > 0 $ and $ L = L(K) \geq 1 $ such that, if $\mu \in \cM(\epsilon)$ and $ g \in \cQ (K) $ then $ (\mH , g_0) $ and $ (\mH , e^{\Lambda_{\mu} \circ g} g_0) $ are $L$-bi-Lipschitz equivalent. 
\end{thm}

\section{Appendix} \label{sec:apx}

For $f: \mH \to \mH$, let $\widehat{f} := (f_1,f_2)$. For $C^1$-smooth $g : \mH \to \mR$, define $\nabla_{\mH}g = (Xg,Yg)$.

If $f : \mH \to \mH$ is $C^1$-smooth and contact (satisfies equations \eqref{eq:contact1} and \eqref{eq:contact2}) and $g : \mH \to \mR$ is $C^1$-smooth then
\[
    X (g \circ f) (p) = \nabla_{\mH} g (f(p)) \cdot X \widehat{f} (p) \quad \text{and} \quad Y (g \circ f) (p) = \nabla_{\mH} g (f(p)) \cdot Y \widehat{f} (p).
\]

For $p = (x,y,t) \in \mH$, let $N(p) = \|p\|^4 = (x^2 + y^2)^2 + t^2$. For the record,
\begin{align*}
    XN &= 4 x (x^2 + y^2) + 4yt, \\
    YN &= 4 y (x^2 + y^2) - 4xt,
\end{align*}
and
\[
\begin{matrix}
    XXN = 12 (x^2 + y^2), & YXN = 4t,\\
    XYN = -4t, & YYN = 12 (x^2 + y^2).
\end{matrix}
\]

With $d_q (p) = d(p,q)$ for $p,q\in\mH$, we have
\[
    X \left(d_q^4\right) (p) = \nabla_{\mH} N \left(L_{q^{-1}}(p)\right) \cdot X\widehat{L_{q^{-1}}} (p), 
\]
and
\[
    Y \left(d_q^4\right) (p) = \nabla_{\mH} N \left(L_{q^{-1}}(p)\right) \cdot Y\widehat{L_{q^{-1}}} (p).
\]
Since $X (L_{q^{-1}})_{1}(p) = 1$, $X (L_{q^{-1}})_{2}(p) = 0$, $Y (L_{q^{-1}})_{1}(p) = 0$, and $Y (L_{q^{-1}})_{2}(p) = 1$, we have that
\[
    X\left(d_q^4\right)(p) = (XN)\circ L_{q^{-1}} (p) \quad \text{and} \quad Y\left(d_q^4\right)(p) = (YN) \circ L_{q^{-1}}(p).
\]
Similarly (with the notation of Section \ref{sec:ap}),
\begin{align*}
    X_j X_i \left(d_q^4\right)(p) = (X_j X_i N) \circ L_{q^{-1}}(p).
\end{align*}

Note, if $p = (x,y,t) \in \mH$ then $|x| \leq \|p\|$, $|y| \leq \|p\|$, and $|t|\leq \|p\|^2$. This means that for $i = 1,2$ we have $|(L_{q^{-1}})_{i}(p)| \leq d(p,q)$ and that
$|(L_{q^{-1}})_3 (p)| \leq d^2 (p,q)$.

The above can be used to find that
\[
    |X_i \left(d_q^4\right)(p)| \lesssim d_q^3 (p) \quad \text{and} \quad |X_j X_i \left(d_q^4\right)(p)| \lesssim d_q^2 (p).
\]

The somewhat artificial device of writing $d_q = \left( d_q^4 \right)^{1/4}$ leads to
\[
    X_i d_q = \left(\frac{1}{4}\right) \frac{X_i \left( d_q^4 \right)}{d_q^3}
\]
on $\mH \setminus \{q\}$ so that $|X_i d_q| \lesssim 1$ there. By the chain rule 
\begin{equation}\label{eq:1ddistest}
    |X_i \left( d_q^k \right)(p)| \lesssim d_q^{k-1}(p), \quad p \in \mH\setminus \{q\}.
\end{equation} 

Taking a second horizontal derivative results in
\[
    X_j X_i d_q = \left(\frac{1}{4}\right) \frac{\left(X_j X_i \left(d_q^4\right)\right)d_q^3 - X_i \left(d_q^4\right) X_j \left(d_q^3\right)}{d_q^6}
\]
on $\mH\setminus\{q\}$. It follows that $|X_j X_i d_q| \lesssim 1/d_q$ on $\mH\setminus\{q\}$ and 
\begin{equation} \label{eq:2ddistest}
\left|X_j X_i \left(d_q^k\right)\right| \lesssim d_q^{k-2}, \quad p \in \mH\setminus\{q\}.
\end{equation}

\bibliographystyle{plain}
\bibliography{aabibdb} 

\def\cprime{$'$} \def\cprime{$'$} \def\cprime{$'$}
\begin{thebibliography}{10}

\bibitem{BFP}
Z.~M. Balogh, K.~F{\"a}ssler, and I.~D. Platis.
\newblock Modulus method and radial stretch map in the {H}eisenberg group.
\newblock {\em Ann. Acad. Sci. Fenn. Math.}, 38(1):149--180, 2013.

\bibitem{BishopA1}
C.~J. Bishop.
\newblock An {$A_1$} weight not comparable with any quasiconformal {J}acobian.
\newblock In {\em In the tradition of {A}hlfors-{B}ers. {IV}}, volume 432 of
  {\em Contemp. Math.}, pages 7--18. Amer. Math. Soc., Providence, RI, 2007.

\bibitem{BHSI}
M.~Bonk, J.~Heinonen, and E.~Saksman.
\newblock The quasiconformal {J}acobian problem.
\newblock In {\em In the tradition of {A}hlfors and {B}ers, {III}}, volume 355
  of {\em Contemp. Math.}, pages 77--96. Amer. Math. Soc., Providence, RI,
  2004.

\bibitem{BHSII}
M.~Bonk, J.~Heinonen, and E.~Saksman.
\newblock Logarithmic potentials, quasiconformal flows, and {$Q$}-curvature.
\newblock {\em Duke Math. J.}, 142(2):197--239, 2008.

\bibitem{BFM}
T.~P. Branson, L.~Fontana, and C.~Morpurgo.
\newblock Moser-{T}rudinger and {B}eckner-{O}nofri's inequalities on the {CR}
  sphere.
\newblock {\em Ann. of Math. (2)}, 177(1):1--52, 2013.

\bibitem{TysonHBook}
L.~Capogna, D.~Danielli, S.~D. Pauls, and J.~T. Tyson.
\newblock {\em An introduction to the {H}eisenberg group and the
  sub-{R}iemannian isoperimetric problem}, volume 259 of {\em Progress in
  Mathematics}.
\newblock Birkh\"auser Verlag, Basel, 2007.

\bibitem{Capogna&Tang}
L.~Capogna and P.~Tang.
\newblock Uniform domains and quasiconformal mappings on the {H}eisenberg
  group.
\newblock {\em Manuscripta Math.}, 86(3):267--281, 1995.

\bibitem{CaseYangPPrime}
J.~S. Case and P.~Yang.
\newblock A {P}aneitz-type operator for {CR} pluriharmonic functions.
\newblock {\em Bull. Inst. Math. Acad. Sin. (N.S.)}, 8(3):285--322, 2013.

\bibitem{CQYNormal}
S.-Y.~A. Chang, J.~Qing, and P.~C. Yang.
\newblock On the {C}hern-{G}auss-{B}onnet integral for conformal metrics on
  {$\bold R^4$}.
\newblock {\em Duke Math. J.}, 103(3):523--544, 2000.

\bibitem{DairbekovHGroup}
N.~S. Dairbekov.
\newblock On mappings with bounded distortion on the {H}eisenberg group.
\newblock {\em Sibirsk. Mat. Zh.}, 41(1):49--59, i, 2000.

\bibitem{David&SemmesStrong}
G.~David and S.~Semmes.
\newblock Strong {$A_\infty$} weights, {S}obolev inequalities and
  quasiconformal mappings.
\newblock In {\em Analysis and partial differential equations}, volume 122 of
  {\em Lecture Notes in Pure and Appl. Math.}, pages 101--111. Dekker, New
  York, 1990.

\bibitem{FollandBook}
G.~B. Folland.
\newblock {\em Real analysis}.
\newblock Pure and Applied Mathematics (New York). John Wiley \& Sons, Inc.,
  New York, second edition, 1999.
\newblock Modern techniques and their applications, A Wiley-Interscience
  Publication.

\bibitem{FollandSteinHardy}
G.~B. Folland and E.~M. Stein.
\newblock {\em Hardy spaces on homogeneous groups}, volume~28 of {\em
  Mathematical Notes}.
\newblock Princeton University Press, Princeton, N.J.; University of Tokyo
  Press, Tokyo, 1982.

\bibitem{HeinonenLectures}
J.~Heinonen.
\newblock {\em Lectures on analysis on metric spaces}.
\newblock Universitext. Springer-Verlag, New York, 2001.

\bibitem{KRI}
A.~Kor{\'a}nyi and H.~M. Reimann.
\newblock Quasiconformal mappings on the {H}eisenberg group.
\newblock {\em Invent. Math.}, 80(2):309--338, 1985.

\bibitem{KRII}
A.~Kor{\'a}nyi and H.~M. Reimann.
\newblock Foundations for the theory of quasiconformal mappings on the
  {H}eisenberg group.
\newblock {\em Adv. Math.}, 111(1):1--87, 1995.

\bibitem{NagesCurves}
R.~Korte, P.~Lahti, and N.~Shanmugalingam.
\newblock Semmes family of curves and a characterization of functions of
  bounded variation in terms of curves.
\newblock {\em Calc. Var. Partial Differential Equations}, 54(2):1393--1424,
  2015.

\bibitem{KovalevMWuQCJP3}
L.~Kovalev, D.~Maldonado, and J.-M. Wu.
\newblock Doubling measures, monotonicity, and quasiconformality.
\newblock {\em Math. Z.}, 257(3):525--545, 2007.

\bibitem{KovalevMQCJP1}
L.~V. Kovalev and D.~Maldonado.
\newblock Mappings with convex potentials and the quasiconformal {J}acobian
  problem.
\newblock {\em Illinois J. Math.}, 49(4):1039--1060 (electronic), 2005.

\bibitem{KMConvex}
L.~V. Kovalev and D.~Maldonado.
\newblock Convex functions and quasiconformal mappings.
\newblock In {\em Harmonic analysis, partial differential equations, and
  related topics}, volume 428 of {\em Contemp. Math.}, pages 93--103. Amer.
  Math. Soc., Providence, RI, 2007.

\bibitem{LaaksoPlaneNoBL}
T.~J. Laakso.
\newblock Plane with {$A_\infty$}-weighted metric not bi-{L}ipschitz embeddable
  to {${\Bbb R}^N$}.
\newblock {\em Bull. London Math. Soc.}, 34(6):667--676, 2002.

\bibitem{Liebermann}
P.~Libermann.
\newblock Sur les automorphismes infinit\'esimaux des structures symplectiques
  et des structures de contact.
\newblock In {\em Colloque {G}\'eom. {D}iff. {G}lobale ({B}ruxelles, 1958)},
  pages 37--59. Centre Belge Rech. Math., Louvain, 1959.

\bibitem{Miner}
R.~R. Miner.
\newblock Quasiconformal equivalence of spherical {CR} manifolds.
\newblock {\em Ann. Acad. Sci. Fenn. Ser. A I Math.}, 19(1):83--93, 1994.

\bibitem{OttaziWarhurst1}
A.~Ottazzi and B.~Warhurst.
\newblock Contact and 1-quasiconformal maps on {C}arnot groups.
\newblock {\em J. Lie Theory}, 21(4):787--811, 2011.

\bibitem{PansuCC}
P.~Pansu.
\newblock M\'etriques de {C}arnot-{C}arath\'eodory et quasiisom\'etries des
  espaces sym\'etriques de rang un.
\newblock {\em Ann. of Math. (2)}, 129(1):1--60, 1989.

\bibitem{ReimannODE}
H.~M. Reimann.
\newblock Ordinary differential equations and quasiconformal mappings.
\newblock {\em Invent. Math.}, 33(3):247--270, 1976.

\bibitem{ReimannCapacity}
H.~M. Reimann.
\newblock An estimate for pseudoconformal capacities on the sphere.
\newblock {\em Ann. Acad. Sci. Fenn. Ser. A I Math.}, 14(2):315--324, 1989.

\bibitem{SemmesStronger}
S.~Semmes.
\newblock Bi-{L}ipschitz mappings and strong {$A_\infty$} weights.
\newblock {\em Ann. Acad. Sci. Fenn. Ser. A I Math.}, 18(2):211--248, 1993.

\bibitem{SemmesFindingCurves}
S.~Semmes.
\newblock Finding curves on general spaces through quantitative topology, with
  applications to {S}obolev and {P}oincar\'e inequalities.
\newblock {\em Selecta Math. (N.S.)}, 2(2):155--295, 1996.

\bibitem{SemmesNoBL}
S.~Semmes.
\newblock On the nonexistence of bi-{L}ipschitz parameterizations and geometric
  problems about {$A_\infty$}-weights.
\newblock {\em Rev. Mat. Iberoamericana}, 12(2):337--410, 1996.

\bibitem{BigStein}
E.~M. Stein.
\newblock {\em Harmonic analysis: real-variable methods, orthogonality, and
  oscillatory integrals}, volume~43 of {\em Princeton Mathematical Series}.
\newblock Princeton University Press, Princeton, NJ, 1993.
\newblock With the assistance of Timothy S. Murphy, Monographs in Harmonic
  Analysis, III.

\bibitem{WangQ1}
Y.~Wang.
\newblock The isoperimetric inequality and quasiconformal maps on manifolds
  with finite total {$Q$}-curvature.
\newblock {\em Int. Math. Res. Not. IMRN}, (2):394--422, 2012.

\bibitem{WangYang1}
Y.~Wang and P.~Yang.
\newblock Isoperimetric inequality on {CR}-manifolds with nonnegative
  {$\mathcal{Q}'$}-curvature.
\newblock {\em Ann. Sc. Norm. Super. Pisa Cl. Sci. (5)}, 18(1):343--362, 2018.

\bibitem{WangYang2}
Y.~Wang and P.~Yang.
\newblock On the {S}obolev–{P}oincar\'{e} {I}nequality of {CR}-manifolds.
\newblock {\em Int. Math. Res. Not. IMRN}, to appear.

\bibitem{ZiemerBook}
W.~P. Ziemer.
\newblock {\em Weakly differentiable functions}, volume 120 of {\em Graduate
  Texts in Mathematics}.
\newblock Springer-Verlag, New York, 1989.
\newblock Sobolev spaces and functions of bounded variation.

\end{thebibliography}
\addresses
\end{document}